\documentclass[a4paper, reqno, twoside, 10pt]{amsart}

\usepackage{amsmath,amssymb,amscd,mathtools,mathrsfs}
\usepackage{booktabs,multirow,array,longtable,float}
\usepackage{afterpage}
\usepackage{enumitem}
\usepackage[T1]{fontenc}
\usepackage{newtxtext,newtxmath}
\usepackage{helvet}
\usepackage{courier}

\usepackage[hidelinks]{hyperref}
\hypersetup{pdftitle={Rational homology disk smoothings of weighted homogeneous surface singularities},pdfauthor={Heesang Park and Dongsoo Shin}}

\newtheorem{theorem}{Theorem}[section]
\newtheorem{maintheorem}{Theorem}

\newtheorem{lemma}[theorem]{Lemma}
\newtheorem{proposition}[theorem]{Proposition}
\newtheorem*{proposition*}{Proposition}
\newtheorem{corollary}[theorem]{Corollary}

\newtheorem*{conjecture*}{Conjecture}

\theoremstyle{definition}

\newtheorem{example}[theorem]{Example}

\newtheorem{remark}[theorem]{Remark}
\newtheorem*{remark*}{Remark}

\numberwithin{equation}{section}

\DeclareMathOperator{\Spec}{Spec}
\DeclareMathOperator{\Proj}{Proj}

\begin{document}

\title[WHSS admitting QHDS]{Weighted homogeneous surface singularities admitting rational homology disk smoothings}

\author[H. Park]{Heesang Park}
\address{Department of Mathematics, Konkuk University, Seoul, Republic of Korea}
\email{HeesangPark@konkuk.ac.kr}

\author[D. Shin]{Dongsoo Shin}
\address{Department of Mathematics, Chungnam National University, Daejeon, Republic of Korea}
\email{dsshin@cnu.ac.kr}

\subjclass[2020]{14B07, 32S25, 14J17}

\keywords{Rational homology disk smoothing, Seifert invariants, reflexive canonical sheaves, equivariant deformations}

\begin{abstract}
We study rational homology disk smoothings through canonical-cover representations and specialization of reflexive canonical powers, and recover the known dual graph and analytic classification. The geometric-genus representation of the canonical covering group, together with its complex conjugate, is a direct summand of the reduced regular representation. Specialization forces the canonical Riemann--Roch corrections to vanish; explicit formulas give equations for the Seifert invariants. For three arms and central weight two, these equations, the character restrictions, and the numerical identities determine the Seifert data without assuming the existence of a smoothing. Projective compactifications treat the remaining cases, and elliptic pencils explain the four-arm attachment points and the parameter independence of their cross ratios. We construct strong \(\mathbb{Q}\)-Gorenstein smoothings for all listed analytic structures by deforming canonical-cover equations and lifting generators of the full first syzygy module. The numerical Milnor formula gives rational homology disk fibers.
\end{abstract}

\maketitle

\section{Introduction}\label{sec:introduction}

Let \((V,0)\) be a normal complex surface singularity. A \emph{rational homology disk smoothing} (QHD smoothing) of \((V,0)\) is a flat deformation \(\pi \colon \mathcal{V} \to \mathbb{D}\) over a disc \(\mathbb{D}\) with smooth nearby fibers whose Milnor fiber \(F\) satisfies \(H_i(F,\mathbb{Q})=0\) for every \(i>0\). Its Milnor number is therefore \(\mu=b_2(F)=0\), and the existence of such a smoothing implies that \((V,0)\) is rational by Durfee's formula. Wahl~\cite{Wahl1981} constructed cyclic quotient examples and further weighted homogeneous examples. Among cyclic quotient singularities, precisely those of type \(\frac1{p^2}(1,pq-1)\), with \(p>q>0\) and \(\gcd(p,q)=1\), admit such smoothings.

Stipsicz--Szab\'{o}--Wahl~\cite{SSW} obtained necessary conditions on the dual graphs of the minimal resolutions by applying Donaldson's diagonalization theorem and classifying lattice embeddings satisfying the adjunction equalities. Their classification separates the possible embeddings into cases and proceeds by induction on the number of vertices. Bhupal--Stipsicz~\cite{BS} then classified the noncyclic star-shaped resolution graphs by studying symplectic fillings of the links and applying McDuff's theorem to configurations of pseudoholomorphic curves. Their argument treats the remaining graph families through successive blow-downs to plane curve configurations, with reverse blow-up constructions verifying existence through Pinkham's negative-weight smoothing method. Fowler~\cite{Fowler} determined the corresponding analytic structures by studying configurations of rational curves on projective rational surfaces arising from Pinkham's compactification, using Riemann--Roch and nef divisors to locate the \((-1)\)-curves needed for their blow-downs; cf.~Wahl~\cite{Wahl2011}. More recently, the authors, jointly with Beke and Stipsicz~\cite{BPSS2026} and separately in~\cite{PS}, proved by different methods that every normal complex surface singularity admitting a QHD smoothing is weighted homogeneous. Combined with the results above, these theorems complete the analytic classification of normal complex surface singularities admitting QHD smoothings.

Our approach combines canonical-cover cohomology, specialization of reflexive canonical powers, and projective compactifications. It gives explicit equations for the Seifert invariants, characterwise restrictions on canonical-cover cohomology, and an elliptic-pencil description of the four-arm attachment points. Equivariant polynomial presentations of the canonical covers and their deformations include lifts of generators of the full first syzygy module. Together these arguments recover the known dual graph and analytic classification, including the existence of the QHD smoothings.

The following obstruction can be stated without reference to the classification. The existence of the canonical cover follows from the strong \(\mathbb{Q}\)-Gorenstein property established in Park--Shin~\cite{PS}.

\begin{maintheorem}[Characters of the canonical cover; Theorem~\ref{obs:equivariant-hodge}]\label{intro:character-obstruction}
Let \((V,0)\) be a normal complex surface singularity admitting a rational homology disk smoothing. Let \(B\) be its canonical cover, with group \(G=\boldsymbol{\mu}_r\), and put \(\mathcal{P}=H^1(X_B,\mathcal{O}_{X_B})\) for a \(G\)-equivariant resolution over a sufficiently small representative. Then \(\mathcal{P}\oplus\overline{\mathcal{P}}\) is isomorphic to a direct summand of \(\mathbb{C}[G]/\mathbb{C}\) as a complex \(G\)-representation. Equivalently, for any faithful character \(\chi\), the multiplicities \(p_j\) of \(\chi^j\) in \(\mathcal{P}\) satisfy
\[
p_j+p_{-j}\le1-\delta_{j,0}\qquad(j\in\mathbb{Z}/r\mathbb{Z}).
\]
\end{maintheorem}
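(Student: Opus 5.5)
The plan is to realize \(\mathcal{P}\oplus\overline{\mathcal{P}}\) inside the rational cohomology of the Milnor fiber of the canonical cover. That Milnor fiber is the unramified \(G\)-cover of the QHD Milnor fiber \(F\).

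\textbf{The lifted smoothing.} By Park--Shin~\cite{PS} the smoothing \(\pi\colon\mathcal{V}\to\mathbb{D}\) is strongly \(\mathbb{Q}\)-Gorenstein. The index-one cover of the total space therefore restricts to the canonical cover \(B\) on the central fiber and gives a \(G\)-equivariant smoothing \(\mathcal{B}\to\mathbb{D}\). Its Milnor fiber \(\widetilde F\) is a connected étale \(\boldsymbol{\mu}_r\)-cover of \(F\).

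\textbf{Cohomology of \(\widetilde F\).} Since \(G\) is finite, transfer gives \(H^i(\widetilde F,\mathbb{C})^{\chi}\cong H^i(F,L_\chi)\), where \(L_\chi\) is the rank-one local system attached to \(\chi\). We have \(\chi(\widetilde F)=r\chi(F)=r\). For trivial \(\chi\) the cohomology is concentrated in degree \(0\), since \(F\) is a QHD. For nontrivial \(\chi\), \(H^0(F,L_\chi)=0\), and \(H^3=H^4=0\) because \(F\) is a Stein surface with the homotopy type of a \(2\)-complex. Each nontrivial character then satisfies \(h^2_\chi-h^1_\chi=1\). Hence \(H^2(\widetilde F,\mathbb{C})\) contains each nontrivial character at least once, and \(H^1\oplus\) (a complement) is controlled.

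The inequality needs more: the \(\chi^j\)-part of the relevant piece of \(H^2\) must have multiplicity at most one. I would bound \(H^1(\widetilde F)\), and hence \(H^2\), by using that \(\pi_1(F)\) is finite with abelianization of order \(r^2\) (or at least that \(H_1(F;\mathbb{Z})\) is finite and \(G\) is a quotient of it). I would try to show \(H^1(\widetilde F,\mathbb{C})=0\). Once that holds, \(H^2(\widetilde F,\mathbb{C})\cong\mathbb{C}[G]/\mathbb{C}\) exactly.

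\textbf{Hodge-theoretic embedding.} Steenbrink's mixed Hodge theory for smoothings, applied equivariantly, identifies \(p_g(B)=\dim H^1(X_B,\mathcal{O})\) with \(\dim \mathrm{Gr}_F^0\) of the weight-\(2\) part of \(H^2(\widetilde F)\), through the standard \(\mu_0,\mu_+\) decomposition. This gives a \(G\)-equivariant injection of \(\mathcal{P}\) into \(H^2(\widetilde F,\mathbb{C})\). Complex conjugation respects the real structure and the \(G\)-action, so it carries \(\mathcal{P}\) to a copy of \(\overline{\mathcal{P}}\) in the \(F^1\)-side. The two copies \(\mathcal{P}\) and \(\overline{\mathcal{P}}\) intersect trivially because they lie in opposite Hodge pieces. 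It follows that \(\mathcal{P}\oplus\overline{\mathcal{P}}\hookrightarrow\mathbb{C}[G]/\mathbb{C}\). The multiplicity form follows by reading off characters: \(\chi^j\) appears in \(\mathcal{P}\oplus\overline{\mathcal{P}}\) with multiplicity \(p_j+p_{-j}\), while it appears in \(\mathbb{C}[G]/\mathbb{C}\) with multiplicity \(1-\delta_{j,0}\).

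\textbf{Main obstacle.} The hardest part is the exact control of \(H^1(\widetilde F)\) together with the equivariant Steenbrink injection. For \(H^1(\widetilde F)\), I would first try the universal coefficient argument using the finiteness of \(\pi_1(F)\) (the link is a rational homology sphere and \(F\) a QHD). If \(H^1\) were nonzero, the embedding would still hold provided the Hodge piece carrying \(\mathcal{P}\) maps injectively to \(H^2/\operatorname{im}\), but this must be checked.
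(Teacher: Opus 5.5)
There is a genuine gap in the Hodge-theoretic step, and that step is the real content of the theorem.

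\textbf{The identification you state is wrong.} \(p_g(B)\) is not \(\dim\operatorname{Gr}_F^0\operatorname{Gr}^W_2H^2(\widetilde F)\). Let \(C_B=\Proj B\) be the central curve of the \(\mathbb{C}^*\)-action on \(B\), and let \(Y\) be the compactified covering Milnor fiber. Then \([\mathcal{P}]=[H^2(Y,\mathcal{O}_Y)]+[H^1(C_B,\mathcal{O}_{C_B})]\). The second summand accounts for the radical of the intersection form, \(\mu_0=b_1(\partial\widetilde F)=2g(C_B)\). That part of \(H^2(\widetilde F)\) is the part detected by restriction to the link, \(H^2(\partial\widetilde F)\cong H^1(C_B)(-1)\). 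It therefore has weight three and does not appear in \(\operatorname{Gr}^W_2\).

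\textbf{The equivariance is not justified.} Steenbrink's relation \(\mu_0+\mu_+=2p_g\) is a numerical identity. Saying ``applied equivariantly'' is not enough: you need an identity in the representation ring \(R(G)\) that matches \(\mathcal{P}\) itself, character by character, with Hodge pieces of \(H^2(\widetilde F,\mathbb{C})\). Your proposal gives no argument for this. You also never use weighted homogeneity, and that is exactly what the paper uses at this point.

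\textbf{How the paper does it.} The good \(\mathbb{C}^*\)-action and the positive-weight parameter make Pinkham's compactification of the covering smoothing \(G\)-equivariant. Its general fiber is \(Y\supset U\simeq\widetilde F\) with boundary \(C_B\cup D_B\), and its central fiber is the projective cone over \(C_B\).
\begin{enumerate}
\item Injectivity of \(H^1(Y)\to H^1(U)=0\) gives \(H^1(Y,\mathcal{O}_Y)=0\).
\item The equivariant Leray sequence on the resolved central fiber gives \(\chi_G=[\mathbb{C}]-[H^1(C_B,\mathcal{O}_{C_B})]+[\mathcal{P}]\).
\item Constancy of equivariant Euler characteristics in the flat family then yields the formula for \([\mathcal{P}]\) above.
\item Deligne's mixed Hodge structure on \(H^2(U)\) realizes \(H^2(Y,\mathcal{O}_Y)\) and its conjugate as the \((0,2)\) and \((2,0)\) parts of \(\operatorname{Gr}^W_2\). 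It realizes \(H^1(C_B,\mathcal{O}_{C_B})\) and its conjugate as the \((1,2)\) and \((2,1)\) parts of \(\operatorname{Gr}^W_3\cong H^1(C_B)(-1)\).
\item Semisimplicity splits the weight filtration equivariantly.
\end{enumerate}
Only after these steps does your ``opposite Hodge pieces'' argument apply.

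\textbf{A smaller point on \(H^1\).} Your planned route to \(H^1(\widetilde F,\mathbb{C})=0\) through \(\pi_1(F)\) is unsupported. Finiteness of \(H_1(F;\mathbb{Z})\) says nothing about finite covers, and you have not established finiteness of \(\pi_1(F)\). The detour is also unnecessary. \(\widetilde F\) is the Milnor fiber of a smoothing of the normal surface singularity \(B\), so Greuel--Steenbrink applies directly. Your transfer count then gives \(H^2(\widetilde F,\mathbb{C})\cong\mathbb{C}[G]/\mathbb{C}\), as in the paper.
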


Thus each character occurs at most once, and a character and its inverse cannot both occur. Summing the inequalities gives \(2p_g(B)\le r-1\), but the individual inequalities retain information lost in this sum. Example~\ref{obs:hodge-example} gives a rational Seifert datum satisfying \(K^2+N=0\), the square-determinant condition, and this genus bound, but violating the character restriction. The proof of Theorem~\ref{intro:character-obstruction} uses the structural theorem of Park--Shin and does not use a classification of the resolution graphs.

For a noncyclic rational weighted homogeneous singularity, let \(t\) denote the number of arms in its minimal good resolution and \(-d\) the self-intersection of the central component. Tables~\ref{table:threearm} and~\ref{table:fourarm} list the Seifert invariants and arm continued fractions of the noncyclic families in the following classification.

\begin{maintheorem}[Classification]\label{completion:classification}
The normal complex surface singularities that admit a rational homology disk smoothing are precisely the following:
\begin{enumerate}
\item the cyclic quotient singularities
\[
 \frac1{p^2}(1,pq-1), p>q>0, \gcd(p,q)=1,
\]

\item the weighted homogeneous singularities determined by the taut graphs in Table~\ref{table:threearm},

\item the weighted homogeneous singularities determined by the dual graphs and the normalized cross ratio in Table~\ref{table:fourarm}.
\end{enumerate}
\end{maintheorem}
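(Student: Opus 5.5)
The plan has two directions, necessity and existence, and both rest on the weighted homogeneity theorem of~\cite{BPSS2026,PS}. For necessity, assume \((V,0)\) admits a QHD smoothing. By that theorem \((V,0)\) is weighted homogeneous, and by Durfee's formula it is rational. A rational weighted homogeneous singularity is either a cyclic quotient or star-shaped with \(t\ge3\) arms. In the cyclic case I would invoke Wahl's classification~\cite{Wahl1981}, or rederive it from the vanishing-correction equations, to obtain exactly \(\frac1{p^2}(1,pq-1)\). In the noncyclic case I would run three arguments in the order set out in the introduction, all on the Seifert invariants \((d; (\alpha_i,\beta_i))\) of the minimal good resolution.

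\emph{Step 1 (numerics).} Since the Milnor fiber is a rational homology disk, \(\mu=0\), so the Laufer--Steenbrink formula gives \(K^2+N=0\), where \(N\) is the number of vertices. The discriminant is a square, because \(|H_1(L)|=m^2\) for a QHD-fillable link.

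\emph{Step 2 (specialization).} By the strong \(\mathbb{Q}\)-Gorenstein property, each reflexive power \(\omega^{[k]}\) specializes from the Milnor fiber, where it is trivial on a \(\mathbb{Q}\)-acyclic Stein fiber. Consequently the Riemann--Roch corrections for \(\chi(\omega^{[k]})\), expressed through Pinkham's formula as sums of Dedekind-type terms \(\sum_i\{k\beta_i/\alpha_i\}\), must vanish for every \(k\). This yields explicit Diophantine equations in the \((\alpha_i,\beta_i)\).

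\emph{Step 3 (characters).} Apply Theorem~\ref{intro:character-obstruction} to the canonical cover \(B\). Using the weighted-homogeneous description of \(B\) as a Brieskorn--Pham type or Dolgachev-type quotient, the characters of \(H^1(\mathcal{O}_{X_B})\) can be read off from the graded pieces \(H^1\) of the cover. The constraint \(p_j+p_{-j}\le1-\delta_{j,0}\) then bounds \(d\) and the \(\alpha_i\).

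For \(t=3\) and \(d=2\), Steps 1--3 together should force the Seifert data into the families of Table~\ref{table:threearm}, after a finite case analysis on the arm lengths. For the remaining cases, \(t=3\) with \(d\ge3\) and \(t=4\), I would use Pinkham's projective compactification \(\bar V\): the smoothing compactifies to a rational surface containing the curve at infinity, and the vanishing in Step 2 forces enough \((-1)\)-curves to blow \(\bar V\) down to \(\mathbb{P}^2\) or to an elliptic pencil. For \(t=4\), the four attachment points on the central curve are base-point data of a pencil of cubics. This determines the normalized cross ratio and shows that it is independent of the remaining parameters, giving Table~\ref{table:fourarm}. Since taut graphs carry a unique analytic structure, the three-arm entries are then determined analytically as well.

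For sufficiency, I would write the canonical cover of each listed singularity as a \(\boldsymbol{\mu}_r\)-equivariant weighted homogeneous complete intersection or determinantal ideal. I would deform its equations equivariantly by lower-weight terms and lift a generating set of the full first syzygy module, which guarantees flatness. Taking the \(\boldsymbol{\mu}_r\)-quotient gives a strong \(\mathbb{Q}\)-Gorenstein smoothing with smooth general fiber. The numerical Milnor formula \(\mu=K^2+N+\ldots=0\) for \(\mathbb{Q}\)-Gorenstein smoothings, combined with rationality (so \(b_1=0\)), shows that the fiber is a QHD.

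I expect the main obstacle to be the case analysis in the noncyclic necessity direction for \(d\ge3\) and \(t=4\), where the arithmetic equations alone do not close. There I would first attempt to locate the \((-1)\)-curves on the compactification via Riemann--Roch and nefness of \(-K\)-related divisors, in the manner of Fowler~\cite{Fowler}. The syzygy lifting in the existence direction is explicit but laborious.
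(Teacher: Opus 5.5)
Your plan has a genuine gap at the point where the noncyclic problem must be reduced to finitely many cases. You treat only $t=3$ and $t=4$, and you split $t=3$ into $d=2$ and ``$d\ge3$''. Nothing in your Steps 1--3 is shown to exclude $t\ge5$ or to bound $d$. Your claim that the character inequalities bound $d$ is unsupported, and $K^2+N=0$ is a single relation in which $d$ can be traded against the arm weights.

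The paper obtains $3\le t\le4$ and $t-1\le d\le t+1$ (Theorem~\ref{thm:arm-weight-bounds}) from the compactification:
\begin{itemize}
\item Contracting the complementary chains $D$ gives a rational homology projective plane with $t$ cyclic quotient points, so Hwang--Keum gives $t\le4$.
\item Artin's criterion applied to $E+E_0$ gives $d\ge t-1$.
\item The boundary curve $C\cong\mathbb{P}^1$ has $C^2=d-t$ and spans a base-point-free system $|C|$. For $C^2\ge2$ this maps birationally onto a surface of minimal degree, which contradicts the linearity of the chains $D_i$.
\end{itemize}
The same system is the actual mechanism for $(t,d)=(3,3),(3,4)$. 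When $C^2=0$, $|C|$ is a $\mathbb{P}^1$-fibration with three disjoint sections. When $C^2=1$, it is a birational morphism to $\mathbb{P}^2$ sending the first components of the chains to a triangle of lines. Counting exceptional curves outside $D$ then determines the arms. Your suggestion that the vanishing of the corrections ``forces enough $(-1)$-curves'' is not an argument: the $\gamma_j$ are numerical conditions on the Seifert data and say nothing directly about curves on the compactification.

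The second missing ingredient is the weight identity $\sum_v(b_v-3)=4-t$. It comes from Donaldson's theorem through the diagonal lattice embeddings of Stipsicz--Szab\'o--Wahl (Lemma~\ref{lat:star-reduction}).
\begin{itemize}
\item For $t=3$, $d=2$ it is one of the two equations underlying Theorem~\ref{d2:classification}. Together with $K^2+N=0$ it yields $e(1-\sum_i a_i^*/n_i)=\eta^2$. The subsequent argument, using $\gamma_j=0$, the character inequalities and Dedekind reciprocity, is parametric rather than a finite case analysis on arm lengths, because the arms in these families have unbounded length.
\item For $t=4$ it forces three one-vertex arms of orders $(3,3,3)$, $(2,4,4)$ or $(2,3,6)$. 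Combined with Wahl's formula $1=h^1(S_X)+\sum_v(b_v-3)$, Noether's formula, and Belousov--Hwang's classification of log del Pezzo surfaces with four quotient points, it gives $d=3$ and the fourth arm.
\end{itemize}
Your plan has no mechanism for the four-arm graphs, only for the cross ratio. Even there, the attachment points are not base points of a pencil. After contracting to a weak del Pezzo surface of degree one, the base point of $|-K|$ lies off $C$, and $C$ becomes a section of the rational elliptic surface. The attachment points lie under the singular fibers $I_3^4$, $I_4^2I_2^2$, $I_6I_3I_2I_1$, and Beauville's classification fixes their parameters.

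Two smaller points remain. The tautness you assume for the three-arm graphs also needs proof: the weight identity gives $h^1(S_X)=0$, and Wahl's criterion then yields tautness. In the existence direction you must check that the covering group acts freely on the nearby fibers, not merely that they are smooth, to obtain a smooth quotient and apply descent.
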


\begin{proof}
The cyclic case is recalled above. For noncyclic singularities, Theorem~\ref{foundation:PS} gives rationality and weighted homogeneity, and Theorem~\ref{thm:arm-weight-bounds} reduces the problem to six pairs \((t,d)\). Theorems~\ref{d2:classification}, \ref{d3:classification}, \ref{d4:classification}, and~\ref{four:classification} determine the graphs; Corollary~\ref{cor:three-arm-taut} and Theorem~\ref{thm:cross-ratio} determine the analytic structures. These results are proved below. Theorem~\ref{cqg:main} gives sufficiency.
\end{proof}

\begin{table}[p]
\centering
\fontsize{9}{10}\selectfont
\renewcommand{\arraystretch}{0.97}
\setlength{\tabcolsep}{5pt}
\setlength{\aboverulesep}{1.5pt}
\setlength{\belowrulesep}{1.5pt}
\setlength{\defaultaddspace}{3pt}
%
\caption{The three-arm families.}\label{table:threearm}
\vspace{-8pt}
\begin{tabular}{@{}clll@{}}
\toprule
\((t,d)\) & Seifert invariant \((n_i,a_i)\) & Arm & BS \\
\midrule
\multirow{3}{*}{\((3,4)\)}
  & \(((q+1)(p+2)+1,\ q(p+2)+1)\) & \([2^q,p+3]\) & \multirow{3}{*}{1(a)} \\
  & \(((r+1)(q+2)+1,\ r(q+2)+1)\) & \([2^r,q+3]\) & \\
  & \(((p+1)(r+2)+1,\ p(r+2)+1)\) & \([2^p,r+3]\) & \\
\midrule
\multirow{3}{*}{\((3,3)\)}
  & \((p+3,\ 1)\)                  & \([p+3]\)          & \multirow{3}{*}{1(b)} \\
  & \(((r+1)(q+3)+1,\ r(q+3)+1)\) & \([2^r,q+4]\)      & \\
  & \((n_b,\ n_b-h_b)\)            & \([2^q,3,2^p,r+3]\) & \\
\addlinespace
  & \((2,\ 1)\)                    & \([2]\)       & \multirow{3}{*}{1(c)} \\
  & \(((q+1)(r+3)+1,\ q(r+3)+1)\) & \([2^q,r+4]\) & \\
  & \(((r+1)(q+3)+1,\ r(q+3)+1)\) & \([2^r,q+4]\) & \\
\midrule
\multirow{3}{*}{\((3,2)\)}
  & \((2,\ 1)\)          & \([2]\)             & \multirow{3}{*}{1(d)} \\
  & \((r+4,\ 1)\)        & \([r+4]\)           & \\
  & \((n_d,\ n_d-h_d)\)  & \([2^q,3,2^r,q+5]\) & \\
\addlinespace
  & \((3,\ 1)\)          & \([3]\)             & \multirow{3}{*}{1(e)} \\
  & \((p+3,\ 1)\)        & \([p+3]\)           & \\
  & \((n_e,\ n_e-h_e)\)  & \([2^q,4,2^p,q+4]\) & \\
\addlinespace
  & \((2,\ 1)\)                    & \([2]\)       & \multirow{3}{*}{1(f)} \\
  & \((3,\ 1)\)                    & \([3]\)       & \\
  & \((q^2+6q+6,\ q^2+5q+1)\)      & \([2^q,q+6]\) & \\
\addlinespace
  & \((r+4,\ 1)\)        & \([r+4]\)                 & \multirow{3}{*}{1(g)} \\
  & \((p+3,\ 1)\)        & \([p+3]\)                 & \\
  & \((n_g,\ n_g-h_g)\)  & \([2^q,3,2^r,3,2^p,q+4]\) & \\
\addlinespace
  & \((4,\ 1)\)                        & \([4]\)         & \multirow{3}{*}{1(h)} \\
  & \((4,\ 1)\)                        & \([4]\)         & \\
  & \((2(q+2)^2,\ 2(q+1)(q+2)-1)\)     & \([2^q,3,q+3]\) & \\
\addlinespace
  & \((6,\ 1)\)                        & \([6]\)           & \multirow{3}{*}{1(i)} \\
  & \((3,\ 1)\)                        & \([3]\)           & \\
  & \(((q+2)^2+1,\ (q+1)(q+2)+1)\)     & \([2^{q+1},q+3]\) & \\
\addlinespace
  & \((2,\ 1)\)                        & \([2]\)           & \multirow{3}{*}{1(j)} \\
  & \((6,\ 1)\)                        & \([6]\)           & \\
  & \(((q+2)(q+3)+1,\ (q+2)^2)\)       & \([2^{q+1},q+4]\) & \\
\bottomrule
\end{tabular}
\vspace{4pt}
%
\caption{The four-arm families.}\label{table:fourarm}
\vspace{-8pt}
\begin{tabular}{@{}clllc@{}}
\toprule
\((t,d)\) & Seifert invariant \((n_i,a_i)\) & Arm & Cross ratio \(\lambda\) & BS \\
\midrule
\multirow{4}{*}{\((4,3)\)}
  & \((3,\ 1)\)                    & \([3]\)         & \multirow{4}{*}{\(\lambda^2-\lambda+1=0\)} & \multirow{4}{*}{2(a)} \\
  & \((3,\ 1)\)                    & \([3]\)         & & \\
  & \((3,\ 1)\)                    & \([3]\)         & & \\
  & \((3p^2+9p+7,\ 3p^2+6p+2)\)    & \([2^p,4,p+2]\) & & \\
\midrule
\multirow{4}{*}{\((4,3)\)}
  & \((2,\ 1)\)                    & \([2]\)         & \multirow{4}{*}{\(\lambda=1/2\)} & \multirow{4}{*}{2(b)} \\
  & \((4,\ 1)\)                    & \([4]\)         & & \\
  & \((4,\ 1)\)                    & \([4]\)         & & \\
  & \((2p^2+6p+5,\ 2(p+1)^2)\)     & \([2^p,3,p+2]\) & & \\
\midrule
\multirow{4}{*}{\((4,3)\)}
  & \((2,\ 1)\)                    & \([2]\)         & \multirow{4}{*}{\(\lambda=9\)} & \multirow{4}{*}{2(c)} \\
  & \((3,\ 1)\)                    & \([3]\)         & & \\
  & \((6,\ 1)\)                    & \([6]\)         & & \\
  & \((p^2+3p+3,\ (p+1)^2+1)\)     & \([2^p,2,p+2]\) & & \\
\bottomrule
\end{tabular}
%
\par\vspace{6pt}
\begin{minipage}{\linewidth}
\fontsize{8.5}{9.5}\selectfont
\setlength{\parindent}{0pt}
\setlength{\abovedisplayskip}{3pt}
\setlength{\belowdisplayskip}{0pt}
\setlength{\abovedisplayshortskip}{3pt}
\setlength{\belowdisplayshortskip}{0pt}
\textit{Notes.} The parameters \(p,q,r\) are nonnegative integers. Each line lists one Seifert invariant and the corresponding arm; arms are read from the central component outward and may be permuted. In negative continued fractions, \(2^q\) denotes \(q\) consecutive entries equal to \(2\), omitted when \(q=0\); thus \([2^3,b]=[2,2,2,b]\) and \([2^0,b]=[b]\). The BS column identifies the corresponding graph in Figures~1 and~2 of Bhupal--Stipsicz~\cite{BS}. The cross ratio in Table~\ref{table:fourarm} is normalized as in Theorem~\ref{thm:cross-ratio}. For the families 1(b), 1(d), 1(e), and 1(g), put
\[
\begin{aligned}
n_b&=(q+2)h_b-r-2,              & h_b&=(p+2)(r+2)+1,\\
n_d&=(q+2)h_d-q-4,              & h_d&=(r+2)(q+4)+1,\\
n_e&=(q+1)h_e+(p+1)(q+3)+1,     & h_e&=(2p+3)(q+3)+2,\\
n_g&=(q+2)h_g-(p+2)(q+3)-1,     & h_g&=(r+3)((p+2)(q+3)+1)-q-3.
\end{aligned}
\]
\end{minipage}
\end{table}

\afterpage{\clearpage}

\subsection{Canonical corrections and character restrictions}

The structural theorem of Park--Shin~\cite[Theorem~10.1 and Corollary~10.2]{PS}, recalled in Section~\ref{sec:foundations}, supplies rationality, weighted homogeneity, and the strong \(\mathbb{Q}\)-Gorenstein property. Its proof does not use the graph classifications of Stipsicz--Szab\'{o}--Wahl or Bhupal--Stipsicz.

Let \(R\) be the total local algebra, \(s\) its parameter, and \(A=R/(s)\). For their common index \(r\) and canonical character \(\kappa\), the specialization cokernels \(C_j=\operatorname{coker}(\omega_R^{[j]}/s\omega_R^{[j]}\to\omega_A^{[j]})\) form the representation \(\mathcal{C}=\bigoplus_{j=0}^{r-1}C_j\otimes\mathbb{C}_{\kappa^{-j}}\). The specialization formula of Park--Shin and the strong \(\mathbb{Q}\)-Gorenstein property give
\[
\dim_{\mathbb{C}}C_j=c_A(jK_A)=0\qquad(j\in\mathbb{Z}).
\]
Proposition~\ref{obs:CSRC} gives this vanishing for every rational surface singularity admitting a strong \(\mathbb{Q}\)-Gorenstein smoothing. Proposition~\ref{obs:correction-identification} evaluates the corrections as the quantities \(\gamma_j\) in \eqref{obs:canonical-correction}, turning specialization into a finite system of explicit equations for the Seifert invariants.

For \(\chi=\kappa^{-1}\), formula~\eqref{obs:genus-character} computes the multiplicities \(p_j\) in Theorem~\ref{intro:character-obstruction}. Both \(p_j\) and \(\gamma_j\) are independent of the attachment points. For three arms and central weight two, the numerical identities, canonical-correction equations, and character inequalities determine the normalized negative-definite Seifert data without assuming the existence of a smoothing; see Theorem~\ref{d2:classification}. These conditions force exactly two arms to have one component, apart from \((2;(2,1),(3,1),(6,1))\). Before classifying the graphs, the same formulas show that every noncyclic QHD singularity has a nonrational canonical cover and index at least three; see Corollary~\ref{obs:noncyclic-cover}.

\subsection{Compactifications and attachment points}

Pinkham's compactification gives a smooth projective general fiber with boundary \(C+D\), where \(C\cong\mathbb{P}^1\), \(C^2=d-t\), and \(D\) consists of the complementary Hirzebruch--Jung chains. Contracting \(D\) gives a rational homology projective plane. Hwang--Keum's theorem, Artin's criterion, and \(|C|\) give the bounds \(3\le t\le4\), \(t-1\le d\le t+1\) in Theorem~\ref{thm:arm-weight-bounds}.

For \((t,d)=(3,3)\) and \((3,4)\), respectively, \(|C|\) gives a ruling or a birational morphism to \(\mathbb{P}^2\); the fibers or line configurations determine the arms. Three-arm tautness follows from Donaldson's theorem and structural lattice lemmas of Stipsicz--Szab\'{o}--Wahl, without their graph lists. For four arms, the lattice reduction and Belousov--Hwang~\cite{BH} give \(d=3\) and Table~\ref{table:fourarm}.

The four-arm analytic restriction has a further interpretation. Contracting the parameter-dependent part of the complementary boundary leaves the marked central curve unchanged and gives a weak del Pezzo surface of degree one. Blowing up the base point of its anticanonical pencil gives a rational elliptic surface with the respective configurations of singular fibers
\[
I_3^4,\qquad I_4^2I_2^2,\qquad I_6I_3I_2I_1.
\]
Its section identifies the attachment points with the singular parameters of the pencil. Beauville's classification~\cite{Beauville} then gives the cross ratios of Theorem~\ref{thm:cross-ratio}. These values were determined by Fowler~\cite{Fowler}; the elliptic-pencil construction explains why they are independent of the graph parameter. Every strong \(\mathbb{Q}\)-Gorenstein smoothing of a weighted homogeneous singularity with a graph in Tables~\ref{table:threearm}--\ref{table:fourarm} is QHD. Thus the four-arm cross-ratio conditions characterize strong \(\mathbb{Q}\)-Gorenstein smoothability itself; see Corollary~\ref{completion:strong-smoothability}.

\subsection{Explicit equivariant constructions}

The existence argument supplies defining equations, group actions, and lifted relations for every parameter value in the tables.

\begin{maintheorem}[Equivariant constructions]\label{intro:equivariant-constructions}\label{cqg:main}
Every analytic structure listed in Tables~\ref{table:threearm} and~\ref{table:fourarm}, with the prescribed attachment points, admits a strong \(\mathbb{Q}\)-Gorenstein rational homology disk smoothing obtained by deforming polynomial defining equations of its canonical cover and lifting generators of their full first syzygy module.
\end{maintheorem}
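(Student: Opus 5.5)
For each analytic structure in Tables~\ref{table:threearm} and~\ref{table:fourarm}, I would write the canonical cover explicitly, deform its equations equivariantly, and read off the QHD property from the Milnor number formula. The canonical cover \(B\to V\) has group \(G=\boldsymbol{\mu}_r\), with \(r\) the index of \(K_V\). For a weighted homogeneous singularity with Seifert data \((d;(n_i,a_i))\), Neumann's description presents \(B\) as a Brieskorn--Pham/Hamm type complete intersection when \(t\le 3\). For \(t=4\) it is a quasi-homogeneous Gorenstein singularity whose equations encode the cross ratio. In both cases \(G\) acts diagonally on the coordinates through the characters determined by the \(a_i\) and by \(\kappa\).

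I would first fix, family by family and uniformly in \(p,q,r\), the following data:
\begin{itemize}
\item a \(G\)-equivariant embedding \(B\subset\mathbb{C}^N\);
\item generators \(f_1,\dots,f_m\) of the ideal, homogeneous for the \(\mathbb{C}^*\)-weights and semi-invariant for \(G\) with character \(\kappa\) (or a common twist);
\item generators \(\rho_1,\dots,\rho_k\) of the full first syzygy module \(\sum_j\rho_{\ell j}f_j=0\).
\end{itemize}
In the three-arm case with \(d=3,4\) the cover is typically a complete intersection, and the syzygies are the Koszul ones. In the families where \(B\) is not a complete intersection, for instance four-arm or index-large cases, I expect determinantal or Pfaffian presentations, and the syzygy module comes from the Eagon--Northcott or Buchsbaum--Eisenbud complex.

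The deformation step is the following. I would choose \(G\)-invariant perturbations \(F_j=f_j+s\,g_j\), with \(g_j\) of the same \(G\)-character as \(f_j\) but of lower (negative) \(\mathbb{C}^*\)-weight, in the spirit of Pinkham's negative-weight smoothings. I would then lift each syzygy to \(\sum_j(\rho_{\ell j}+s\,\sigma_{\ell j})F_j=0\). By the local criterion of flatness (Artin's lifting-of-relations criterion), lifting all generators of the first syzygy module gives flatness of \(\mathcal{B}=\{F=0\}\to\mathbb{D}\). The \(G\)-invariance makes \(\mathcal{B}/G\to\mathbb{D}\) a deformation of \(V\) whose total space has index-\(r\) canonical cover \(\mathcal{B}\), that is, a strong \(\mathbb{Q}\)-Gorenstein deformation. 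I would also check the following:
\begin{itemize}
\item the general fiber \(B_s\) is smooth, by a Jacobian computation at the finitely many candidate points, using quasi-homogeneity to reduce to a weighted projective check;
\item \(G\) acts freely on \(B_s\), which requires the perturbation to move the fixed loci of every nontrivial subgroup off the fiber; I would choose the \(g_j\) to have nonzero constant-type terms in the appropriate invariant directions.
\end{itemize}

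Finally, the Milnor fiber of \(V\) is \(F=B_s/G\), and freeness gives \(\chi(F)=\chi(B_s)/r\). The numerical Milnor formula for \(\mathbb{Q}\)-Gorenstein smoothings of rational singularities, \(\mu=K^2+N\) up to the standard normalization (Laufer--Steenbrink/Wahl), applies because \(V\) is rational. Here \(K^2+N=0\) can be verified directly from the table data as a continued-fraction identity, so \(\mu=b_2(F)=0\). Rationality gives \(b_1(F)=0\), so \(F\) is a QHD.

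The main obstacle is the uniform construction of the perturbation together with the syzygy lifts in the non-complete-intersection families (1(b), 1(d), 1(e), 1(g) and the four-arm cases). The syzygy modules there grow with the parameters, so lifting must be done structurally rather than by case computation. I would try to realize each cover as a degeneracy locus of a \(G\)-equivariant matrix with entries in monomials, for example \(2\times2\) minors of a \(2\times n\) matrix. The deformation is then obtained by perturbing matrix entries. Syzygies of minors lift automatically because the Eagon--Northcott complex is functorial in the matrix. This reduces flatness to checking that the perturbed matrix keeps the expected codimension. For the four-arm families, I would additionally encode the cross ratio from Theorem~\ref{thm:cross-ratio} in the equations and check that the perturbation respects it.
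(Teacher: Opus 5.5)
\textbf{There is a genuine gap.} Your outer framework is the paper's: an equivariant flat deformation of the canonical cover, certified by lifting generators of the full syzygy module; smoothness and freeness on the nonzero fibers; descent to a strong \(\mathbb{Q}\)-Gorenstein smoothing; and the QHD property from \(\mu=K^2+N=0\) (Section~\ref{cqg:section}, Corollary~\ref{cqg:qhd-verification}). But the content of the theorem is exhibiting, for every family, a concrete perturbation with smooth free fibers and liftable syzygies. Your plan supplies none, and both mechanisms you propose for producing one fail.

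\emph{First mechanism.} Neumann's theorem describes the \emph{universal abelian} cover, not the canonical cover. For three arms it is the Brieskorn hypersurface \(x^{n_1}+y^{n_2}+z^{n_3}=0\) with the whole discriminant group acting. The canonical cover is its quotient by the kernel of the induced canonical character, and this is usually not a complete intersection. Already the first member of 1(a), \((4;(3,1),(3,1),(3,1))\), has index \(3\). Its canonical cover is the scalar \(\mu_3\)-quotient of \(xy^2+yz^2+zx^2=0\), the cone over an elliptic normal curve of degree \(9\), of embedding dimension \(9\). So ``complete intersection with Koszul syzygies'' fails even for \(d=4\). You also cannot deform the universal abelian cover instead. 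On the link, an equivariant smoothing with free action would give a cover extending over the Milnor fiber \(F\), hence factoring through \(H_1(F)\), whose order is only the square root of that of the discriminant group.

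\emph{Second mechanism.} The Eagon--Northcott route cannot handle the remaining families. A canonical cover is Gorenstein. The maximal minors of a \(2\times n\) matrix with entries in the maximal ideal and \(n\ge3\), in expected codimension, define a ring of Cohen--Macaulay type \(n-1\), so never Gorenstein. Buchsbaum--Eisenbud only covers codimension three, while the embedding codimensions here grow with the parameters. For example, the four-arm covers are one linear section of the toric algebra \(D_n^K\), which has \(nqP+1\) monomial generators.

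\emph{What the paper does instead.} Its key device is that the ambient is never deformed. The cover is often a larger Gorenstein cover \(W\) with group \(\Gamma\), from which the canonical cover is recovered as \(W/\ker\kappa\) by Corollary~\ref{cqg:larger-cover}. This \(W\) is a Cartier section \(\{f=0\}\) of a fixed three-dimensional Gorenstein algebra (a polynomial ring, a hypersurface, or a toric algebra \(D_n\)) by a \(\Gamma\)-invariant function \(f\), and only \(f\) is deformed, to \(f-t\).
\begin{itemize}
\item After passing to \(\ker\kappa\)-invariants, the full syzygy module of \((a,h-t)\) consists of the undeformed ambient syzygies \(U\) together with the columns \(((h-t)I,-a)\) (Proposition~\ref{uqg:regular-section}).
\item Smoothness of \(f=t\) follows from the weighted Euler field, \(df(\vartheta)=\deg(f)\,f\), on an ambient smooth off its vertex.
\item Freeness is inherited from the free action of \(\Gamma\) on the punctured ambient.
\end{itemize}
Where no such invariant section is available, the paper uses one of two substitutes. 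For 1(i) and 1(j) it uses a regular sequence in an auxiliary toric algebra with exactly one equation of trivial character. For 1(d)--(g) it uses successive adjunctions of fraction-field elements \(q_j\) to the deformed hypersurface \(c^k+ca^Q+b^P+tab=0\), with syzygies from Lemma~\ref{uqg:fraction-syzygies}.

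\emph{Identification of the analytic structure.} For three arms, the constructed quotient is identified with the listed singularity by tautness (Corollary~\ref{cor:three-arm-taut}). For four arms, nothing needs to be ``encoded'': the perturbation does not change the central fiber. The paper smooths the actual quotient \((D_n/(f))^\Gamma\). It then uses the necessity direction of Theorem~\ref{thm:cross-ratio} and the section-ring description \eqref{cqg:section-algebra} to identify it with the prescribed analytic structure.

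\emph{Two smaller corrections.} The formula \(\mu=K^2+N\) is \eqref{foundation:strong-milnor}, for strong \(\mathbb{Q}\)-Gorenstein smoothings; your construction would supply that property. And \(b_1(F)=0\) comes from Greuel--Steenbrink, not from rationality.
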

\begin{proof}
Propositions~\ref{cqg:w}, \ref{cqg:four:main}, \ref{cqg:i:existence}, \ref{cqg:j:existence}, and~\ref{cqg:m:existence}, proved below, construct equivariant deformations of the canonical covers for all listed analytic structures. They verify the common criteria of Section~\ref{cqg:section}, including the polynomial syzygy lifts. Corollary~\ref{cqg:qhd-verification} gives the strong \(\mathbb{Q}\)-Gorenstein rational homology disk smoothings.
\end{proof}

Table~\ref{table:smoothing-covers} records the finite Gorenstein surface covers used in the constructions, including the classical cyclic case.

\begin{table}[p]
\centering
\fontsize{9}{10}\selectfont
\renewcommand{\arraystretch}{0.97}
\setlength{\tabcolsep}{5pt}
\setlength{\aboverulesep}{1.5pt}
\setlength{\belowrulesep}{1.5pt}
\setlength{\defaultaddspace}{3pt}
\caption{Surface covers used in the QHD smoothing constructions.}\label{table:smoothing-covers}
\vspace{-8pt}
\begin{tabular}{@{}clll@{}}
\toprule
Family & Surface cover \(W\) & \(\Gamma\) & Construction \\
\midrule
Cyclic & \(\{xy-z^p=0\}\) & \(\mu_p\) & \shortstack[l]{Classical quotient\\\S\,\ref{cqg:general-foundation}} \\
\midrule
1(a) & \(\{xy^a+yz^b+zx^c=0\}\) & \(\mu_L\) & \shortstack[l]{Invariant function\\\S\,\ref{cqg:wn:section}} \\
\addlinespace
1(b),(c) & \(\left\{\begin{gathered}x^b+z^c-yw=0,\\xw-y^az=0\end{gathered}\right\}\) & \(\mu_L\) & \shortstack[l]{Invariant function\\\S\,\ref{cqg:wn:section}} \\
\addlinespace
1(d)--(g) & \(\{x^P+y^Q+z^n=0\}/H\) & \(\mu_N\) & \shortstack[l]{Adjoining fractions\\\S\,\ref{cqg:m}} \\
\addlinespace
1(h) & \(\left\{\begin{gathered}zw+x^{2m}+\zeta y^{2m}=0,\\xw+yz=0\end{gathered}\right\}\) & \(\Gamma_h\) & \shortstack[l]{Invariant function\\\S\,\ref{cqg:wn:section}} \\
\addlinespace
1(i) & \(\{u^3+v^3+vz^n=0\}/\mu_m(1,1,1)\) & \(\mu_{3m}\) & \shortstack[l]{Regular sequence\\\S\,\ref{cqg:i:section}} \\
\addlinespace
1(j) & \(\{u^2+v^4+vw^n=0\}/\mu_m(2,1,1)\) & \(\mu_{2m}\) & \shortstack[l]{Regular sequence\\\S\,\ref{cqg:j:section}} \\
\midrule
2(a)--(c) & \(\{f_n=0\}\subset Z_n,\quad n=3,4,6\) & \(\Gamma_{n,P}\) & \shortstack[l]{Invariant function\\\S\,\ref{cqg:four:section}} \\
\bottomrule
\end{tabular}
\par\vspace{6pt}
\begin{minipage}{\linewidth}
\fontsize{8.5}{9.5}\selectfont
\setlength{\parindent}{0pt}
\setlength{\abovedisplayskip}{3pt}
\setlength{\belowdisplayskip}{3pt}
\setlength{\abovedisplayshortskip}{3pt}
\setlength{\belowdisplayshortskip}{3pt}
\setlength{\jot}{1pt}
\textit{Notes.} Each row has \(V=W/\Gamma\); the group actions and the deformations are specified in the indicated constructions. The notation \(\mu_m(c_1,\ldots,c_e)\) denotes a diagonal action with those characters. The cyclic row has \(p>q>0\), \(\gcd(p,q)=1\), and action \(\mu_p(1,-1,q)\). In the other rows, \(p,q,r\) are the parameters of Tables~\ref{table:threearm} and~\ref{table:fourarm}.

For 1(a), put \((a,b,c)=(p+2,q+2,r+2)\) and \(L=abc+1\). For 1(b),(c), put \((b,c)=(q+2,r+2)\), with \(a=p+2\) and \(a=1\), respectively, and \(L=a(b+1)c+b\). The arms may be permuted. For 1(h),(i),(j), put \(m=q+2,q+3,q+4\), respectively. In 1(i), take \(n=m^2-2m+2\); in 1(j), take \(n=m^2-3m+3\). In 1(h), \(\zeta=\omega^m\) for a primitive \(2m(m+1)\)-th root \(\omega\), and \(|\Gamma_h|=4m(m+1)\).

For 1(d)--(g), the parameters and the auxiliary quotient are
\[
\begin{array}{c|ccc}
\text{row}&P&Q&k\\\hline
1(d)&r+4&2&q+3\\
1(e)&3&p+3&q+3\\
1(f)&3&2&q+3\\
1(g)&r+4&p+3&q+3
\end{array}
\qquad
\begin{gathered}
\ell=PQ-P-Q,\quad N=P+k\ell,\\
s=kQ-k+1,\quad n=(k-1)N-kQ,\\
H=\mu_N(-s,-k,1).
\end{gathered}
\]
For 2(a)--(c), take \(P=p+2\) and \((n,q)=(3,3),(4,2),(6,1)\), respectively. Put \(d=qP\), \(m=qP(P-1)+1\), and choose \(\zeta\) of order \(n\) as in Section~\ref{cqg:four:section}. The ambient threefolds \(Z_n=\Spec D_n\) and the invariant functions are
\[
\begin{gathered}
D_3=\mathbb{C}[x_1,x_2,x_3],\quad
D_4=\mathbb{C}[x_1,x_2,x_3,x_4]/(x_1x_3-x_2x_4),\\
D_6=\mathbb{C}[x_1,\ldots,x_6,z]/I_6,\quad
I_6=(x_ix_{i+2}-x_{i+1}z\ (i\bmod6),\ x_ix_{i+3}-z^2\ (1\le i\le3)),\\
f_n=\sum_{j=0}^{n-1}\zeta^j x_{j+1}^{d-1}x_{j+2},\qquad |\Gamma_{n,P}|=nqmP.
\end{gathered}
\]
The indices on \(x_i\) are cyclic. For \(n=6\), the surface \(W\) is a Cartier section of the Gorenstein toric threefold \(Z_6\). In 1(i),(j), \(W\) is the canonical cover itself; the construction deforms the displayed quotient surface. In 1(d)--(g), it deforms the \(H\)-quotient by adjoining fractions.
\end{minipage}
\end{table}
\clearpage

Each singularity in Table~\ref{table:smoothing-covers} is \(V=W/\Gamma\), where \(W\) admits a \(\Gamma\)-equivariant smoothing \(\mathcal{W}\to\mathbb{D}\) with free action on noncentral fibers and QHD quotient. Thus
\[
\chi(\mathcal{W}_t)=|\Gamma|,\qquad \mu(\mathcal{W}_t)=|\Gamma|-1,
\]
by multiplicativity of Euler characteristic and the vanishing of the first rational homology and homology above degree two of a normal surface smoothing fiber. The canonical cover is \(W/\ker\kappa\), where \(\kappa\) is the character of a canonical generator; see Corollary~\ref{cqg:larger-cover}. The table includes canonical and larger Gorenstein covers.

The constructions in Table~\ref{table:smoothing-covers} deform an invariant function, a regular sequence in a toric algebra, or successive adjunctions of fractions. Section~\ref{cqg:section} supplies the common algebraic steps, including lifts of generators of the full first syzygy module. For the invariant-function families we use known quotient equations and graphs and compute their canonical characters and invariant presentations. For families \(1(d)\)--\(1(g)\), \(1(i)\), and \(1(j)\), we also identify the central canonical-cover algebras. Their equivariant deformations use fraction adjunctions followed by passage to invariants, or regular sequences in toric algebras. For every parameter value, the central-fiber identifications, syzygy lifts, and smoothness and freeness checks verify the constructions without invoking earlier existence conclusions.

\subsection*{Organization of the paper}

Sections~\ref{sec:foundations}--\ref{sec:three-arm-tautness} give the structural input, arm and weight bounds, and three-arm tautness. Section~\ref{sec:classification} proves the character obstruction and computes the canonical corrections. Sections~\ref{sec:three-arm-classification}--\ref{sec:cross-ratios} determine the graphs and attachment points. Section~\ref{cqg:section} gives the common deformation arguments, applied in Sections~\ref{cqg:invariant-sections}--\ref{cqg:m}. Tables~\ref{table:threearm} and~\ref{table:fourarm} agree with Figures~1 and~2 of Bhupal--Stipsicz~\cite{BS}; their graph lists are used for comparison, not in the proofs of necessity.

\subsection*{Statements and Declarations}

\begin{itemize}[labelindent=0pt, leftmargin=*, align=left]
\item \textbf{Funding.} Heesang Park was supported by the Basic Science Research Program through the National Research Foundation of Korea (NRF) (RS-2026-25590534). Dongsoo Shin was supported by the Basic Science Research Program through the National Research Foundation of Korea (NRF) (NRF-2023R1A2C1007293).


\item \textbf{Use of artificial intelligence.} The authors used ChatGPT-6 Astra for assistance with the constructions using deformations of equations and lifts of their relations, checking calculations, and language editing. The authors take full responsibility for the correctness of its contents.
\end{itemize}

\section{Properties of rational homology disk smoothings}\label{sec:foundations}

Let \(\pi: (X,E) \to (V,0)\) be the minimal good resolution of a normal complex surface singularity \((V,0)\), let \(E=\sum_v E_v\) be the reduced exceptional divisor, and put \(S_X=\Theta_X(-\log E)\). Write \(E_v^2=-b_v\), let \(N\) be the number of components of \(E\), and denote the numerical canonical cycle by \(K\).

\begin{theorem}[Park--Shin, {\cite[Theorem~10.1 and its proof, and Corollary~10.2]{PS}}]\label{foundation:PS}
Let \((V,0)\) be a normal complex surface singularity which is not smooth. If \((V,0)\) admits a rational homology disk smoothing, then it is rational, non-Gorenstein, and weighted homogeneous. Every such smoothing is strong \(\mathbb{Q}\)-Gorenstein and occurs over a one-dimensional component of the semiuniversal deformation base.
\end{theorem}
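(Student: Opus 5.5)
This theorem is quoted from Park--Shin, so the plan is to assemble its conclusions from the cited results and the standard numerical facts about Milnor fibers, in four steps.

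\emph{Step 1: rationality.} Let \(F\) be the Milnor fiber of a rational homology disk smoothing. Since \(H_i(F,\mathbb{Q})=0\) for \(i>0\), we have \(\mu=b_2(F)=0\) and also \(b_1(F)=0\). Laufer's and Steenbrink's formulas give
\[
\mu = 12p_g + K^2 + N - b_1(\partial F) \quad\text{and}\quad \mu_0+\mu_+=2p_g .
\]
Since \(\mu=0\), both \(\mu_0\) and \(\mu_+\) vanish, so \(p_g=0\). Hence \((V,0)\) is rational, and \(K^2+N=0\). This is Durfee's argument, as recalled in the introduction.

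\emph{Step 2: non-Gorenstein.} A rational Gorenstein singularity is a rational double point. The Milnor fiber of an ADE singularity has \(\mu\) equal to the number of exceptional curves, which is at least one unless the point is smooth. So a singular point with a QHD smoothing is not Gorenstein.

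\emph{Step 3: the strong \(\mathbb{Q}\)-Gorenstein property.} The main work is to show that every QHD smoothing is strong \(\mathbb{Q}\)-Gorenstein, meaning that \(\omega_{\mathcal V}^{[j]}\) specializes to \(\omega_V^{[j]}\) for all \(j\). The approach is as follows.
\begin{itemize}
\item Since \(H^1(F,\mathbb{Q})=H^2(F,\mathbb{Q})=0\), the local class group of the total space restricted to \(F\) is torsion, so some multiple \(rK_{\mathcal V}\) is Cartier near the fiber.
\item The Looijenga--Wahl / Wahl comparison of \(\chi\) of the sheaves \(\omega^{[j]}\) on \(F\) with the boundary data then forces all the correction terms \(c_V(jK_V)\) to vanish.
\end{itemize}
That vanishing is exactly the specialization statement for the reflexive powers. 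Once this holds, the smoothing lifts to an equivariant smoothing of the index-one cover. In the cited paper, dimension counting on \(T^1\) in the equivariant setting then places the smoothing on a one-dimensional component of the semiuniversal base.

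\emph{Step 4: weighted homogeneity.} This is the hardest step. I would follow Park--Shin's argument: the smoothing component is one-dimensional and the canonical cover smoothing is Gorenstein. A \(\mathbb{C}^*\)-equivariant degeneration argument then shows that the singularity is equisingular to its associated graded (the weighted homogeneous degeneration along the good \(\mathbb{C}^*\)-orbit). Rigidity of the one-dimensional smoothing component would then force the analytic type to be the weighted homogeneous one. I expect the main obstacle to be making this rigidity argument precise, namely excluding non-quasihomogeneous analytic structures with the same star-shaped graph. For that I would invoke \cite[Theorem~10.1]{PS} directly rather than reprove it.
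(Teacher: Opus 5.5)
The paper gives no proof of its own here. The statement is imported from \cite[Theorem~10.1 and Corollary~10.2]{PS}, so a citation is the expected argument. Your Steps~1--2 correctly recover the easy parts:
\begin{itemize}
\item rationality, from \(\mu=0\) together with \(\mu_0+\mu_+=2p_g\);
\item non-Gorensteinness, since a rational Gorenstein point is an RDP, and every smoothing of an RDP has \(\mu=N\ge1\).
\end{itemize}
The trouble lies in the arguments you offer as your own beyond that.

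In Step~1, the formula \(\mu=12p_g+K^2+N-b_1\) is Laufer's formula. It holds for smoothings of Gorenstein singularities, but not in general. For \(\frac14(1,1)\) one has \(p_g=0\) and \(K^2+N=0\), yet the Milnor fiber on the Artin component has \(\mu=1\). So \(K^2+N=0\) cannot be read off at this stage. The paper obtains it only after the strong \(\mathbb{Q}\)-Gorenstein property, from Park--Shin's numerical Milnor formula \eqref{foundation:strong-milnor}.

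Step~3 is a genuine gap.
\begin{itemize}
\item Your first bullet, stated properly, says the following. The link of the threefold total space has vanishing rational \(H^2\), and the total space is Cohen--Macaulay, so its local class group is finite. This shows only that \(K_{\mathcal V}\) is \(\mathbb{Q}\)-Cartier.
\item It does not show that \(\omega_{\mathcal V}^{[j]}/t\,\omega_{\mathcal V}^{[j]}\to\omega_V^{[j]}\) is an isomorphism for every \(j\), which is what ``strong'' requires. Your second bullet supplies no mechanism for this either.
\item In the specialization formula \(\dim\operatorname{coker}_j=\binom j2(\mu-N-K^2)+c_V(jK_V)\), a QHD smoothing gives you only \(\mu=0\). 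You would still need both \(K^2+N=0\) and \(c_V(jK_V)=0\). The terms \(c_V(jK_V)\) are invariants of \(V\) alone, and nothing in your sketch controls them.
\end{itemize}
The paper's logic runs in the opposite direction: Proposition~\ref{obs:CSRC} derives \(c_V(jK_V)=0\) \emph{from} strong \(\mathbb{Q}\)-Gorensteinness.

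The missing ingredient is exactly Park--Shin's theorem. Step~3, including the one-dimensionality of the smoothing component, therefore has to be cited from \cite{PS}, just as you already do for weighted homogeneity in Step~4.
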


For a QHD smoothing as in Theorem~\ref{foundation:PS}, every reflexive canonical power on the total space specializes to the corresponding power on the central fiber. The canonical indices agree, and the smoothing lifts to an equivariant smoothing of the index-one cover on which the covering group acts trivially on the base. Since \((V,0)\) is rational, each \(E_v\) is a smooth rational curve and \(K\cdot E_v=b_v-2\). Wahl's smoothing-component formula \cite[Theorem~8.1]{Wahl2011} and the numerical Milnor formula of Park--Shin~\cite[Theorem~8.3 and Corollary~8.4]{PS} give
\begin{equation}\label{foundation:dimension-eq}
1=h^1(S_X)+\sum_v(b_v-3),\quad K^2+N=0.
\end{equation}

More generally, let a rational normal surface singularity of canonical index \(r\) admit a strong \(\mathbb{Q}\)-Gorenstein smoothing. Apply Park--Shin~\cite[Theorem~8.3]{PS} with \(i=r\) if \(r>1\), and with \(i=2\) if \(r=1\). The specialization defect and the local correction term vanish, so the numerical Milnor formula gives
\begin{equation}\label{foundation:strong-milnor}
\mu=K^2+N.
\end{equation}
Its Milnor fiber has vanishing first rational homology by Greuel--Steenbrink~\cite{GreuelSteenbrink1983} and the homotopy type of a finite complex of dimension at most two. Thus \(K^2+N=0\) suffices to conclude that such a smoothing is QHD.

\section{Compactification and the six noncyclic cases}\label{sec:bounds}

We use Pinkham's compactification and its boundary curves to bound the number of arms and the central weight.

\subsection{Seifert invariants and complementary chains}\label{sec:notation}
Assume that \((V,0)\) is noncyclic and admits a QHD smoothing. By Theorem~\ref{foundation:PS}, its minimal good resolution graph is a star-shaped tree of smooth rational curves. Write \(E_0^2=-d\) for the central self-intersection, and let the \(i\)-th arm have weights \(-b_{i1},\ldots,-b_{i\ell_i}\), ordered from the central component outwards. We use the negative continued fraction
\[
\frac{n_i}{a_i}=[b_{i1},\ldots,b_{i\ell_i}]
=b_{i1}-\frac{1}{b_{i2}-\dfrac{1}{\ddots-\dfrac{1}{b_{i\ell_i}}}},
\quad 0<a_i<n_i,\quad \gcd(n_i,a_i)=1.
\]
Every \(b_{ij}\) is at least two. The Seifert data are denoted by \((d;(n_1,a_1),\ldots,(n_t,a_t))\). An arm has one vertex if and only if \(a_i=1\). In a continued fraction, \(2^m\) denotes a string of \(m\) entries equal to two; \(2^0\) is the empty string.

Let \(a_i^*\) be the integer in \(\{1,\ldots,n_i-1\}\) satisfying \(a_i a_i^*\equiv1\pmod{n_i}\). We put
\begin{equation*}
e=d-\sum_i\frac{a_i}{n_i},\quad
\eta=t-2-\sum_i\frac{1}{n_i}.
\end{equation*}
Negative definiteness is equivalent to \(e>0\).

The complementary chain to \(n_i/a_i\) has continued fraction \(n_i/(n_i-a_i)\). The components of each chain are ordered starting at the end adjacent to the central curve. Reversing this order replaces the denominator by its inverse modulo the numerator. Put \(W([c_1,\ldots,c_s])=\sum_j(c_j-3)\). Hirzebruch--Jung duality gives
\begin{equation*}
W\left(\frac{n}{a}\right)+W\left(\frac{n}{n-a}\right)=-2.
\end{equation*}
This follows from the dot diagram for complementary continued fractions.

\subsection{Negative-weight smoothings and projective compactifications}\label{sec:compactification}
All weights are function weights; a positive parameter weight gives a negative-weight deformation.

\begin{lemma}\label{cap:positive-weight}
Let \((V,0)\) be a normal complex surface singularity which is not smooth and has a good \(\mathbb{C}^*\)-action. The parameter of an equivariant one-parameter smoothing of \((V,0)\) has positive weight.
\end{lemma}

\begin{proof}
Let \(A\) be the normal graded coordinate ring, with \(A_0=\mathbb{C}\). Any two distinct members \(x,y\) of a minimal homogeneous generating set are algebraically independent. Otherwise the kernel of \(\mathbb{C}[X,Y]\to A\) is generated by an irreducible weighted homogeneous polynomial, which has the form \(X^p-Y^q\) after scaling, with \(p,q\) relatively prime. Choose integers \(u,v\) with \(qu+pv=1\). Then \(h=x^uy^v\) satisfies \(h^q=x\) and \(h^p=y\). Normality implies \(h\in A\), and its weight is positive. Unless \(p=q=1\), one of \(x,y\) belongs to \(A_+^2\). If \(p=q=1\), they are linearly dependent. Either conclusion contradicts minimality.

Order the minimal generator weights as \(a_1\le\cdots\le a_e\). Since the surface is singular, \(e\ge3\). Every defining relation has degree greater than \(a_3\). Indeed, minimality excludes linear terms, so a relation of degree at most \(a_3\) could involve only the first two generators, contrary to their algebraic independence. Flatness, reductivity of \(\mathbb{C}^*\), and Nakayama's lemma give homogeneous lifted equations \(F_j(X,s)\) of degrees \(D_j>a_3\). Suppose that \(w=\operatorname{wt}(s)\le0\). No monomial \(s^k\) can occur in \(F_j\), so \(X=0\) defines a section. A term \(s^kX_i\) would require \(a_i=D_j-kw>a_3\). The first three columns of the relative Jacobian therefore vanish along the section. Every fiber has tangent dimension at least three there, contrary to the smoothness of the nearby surface fibers.
\end{proof}

By Theorem~\ref{foundation:PS}, the smoothing component is one-dimensional. The grading torus preserves this component and acts on its normalization, which is a smooth curve. Pullback to the normalization gives an equivariant smoothing with the same general Milnor fiber. Its parameter has positive weight by Lemma~\ref{cap:positive-weight}. Since the generator weights and parameter weight are positive, the homogeneous formal equations are polynomials.

Pinkham's construction \cite{Pinkham} compactifies this negative-weight smoothing with locally trivial boundary. If \(\mathcal{R}\) is the graded total algebra and \(u\) the parameter of weight \(w>0\), the compactified general fiber is
\[
\Proj(\mathcal{R}[s]/(u-s^w)),\quad\deg s=1.
\]
Its affine chart is the fiber at \(u=1\), with the homotopy type of the Milnor fiber. The boundary is \(\overline C=\Proj A\cong\mathbb{P}^1\), with cyclic quotient points of types \(\frac1{n_i}(1,n_i-a_i)\). Resolving these points gives a smooth projective surface \(Y\) with boundary \(C+D\), where \(D\) consists of the complementary chains \(n_i/(n_i-a_i)\), read from \(C\) outwards. Its affine complement \(U=Y\setminus(C\cup D)\) is a rational homology disk.

\subsubsection{The boundary intersection matrix}

Write \(N_D\) for the number of components of \(D\). Since \(U=Y\setminus(C\cup D)\) is a QHD, Poincar\'e duality gives \(H_c^j(U,\mathbb{Q})=0\) for \(j=1,2,3\). The long exact sequence for cohomology with compact support gives
\[
H^1(Y,\mathbb{Q})=0,\quad
H^2(Y,\mathbb{Q})\cong H^2(C\cup D,\mathbb{Q})\cong\mathbb{Q}^{N_D+1}.
\]
Thus \(q(Y)=0\) and \(b_2(Y)=N_D+1\). The boundary intersection matrix is nondegenerate, since its Schur complement at \(C\) is \(e>0\) and every complementary chain is negative definite. Its \(N_D+1\) component classes therefore form a basis of \(H^2(Y,\mathbb{Q})\), and \(\rho(Y)=b_2(Y)=N_D+1\).

The rational self-intersection at infinity is \(\overline C^2=e=d-\sum_i a_i/n_i\). Resolving the singularity with chain \(n_i/(n_i-a_i)\) subtracts \((n_i-a_i)/n_i\) from the self-intersection of the central curve. Consequently
\[
C^2=e-\sum_i\frac{n_i-a_i}{n_i}=d-t.
\]
For each \(i\), the end component of \(D_i\) adjacent to \(C\) meets \(C\) transversely once. These \(t\) components are pairwise disjoint, and all other components of \(D\) are disjoint from \(C\).

\subsection{The number of arms and the central weight}

\begin{theorem}
\label{thm:arm-weight-bounds}
Let \((V,0)\) be a noncyclic rational weighted homogeneous normal surface singularity admitting a QHD smoothing on a one-dimensional smoothing component. Let its minimal good resolution have \(t\) arms and central self-intersection \(-d\). Then
\[
3\le t\le4,\quad t-1\le d\le t+1.
\]
Thus the possible pairs \((t,d)\) for a noncyclic singularity are
\[
(t,d)=(3,2),(3,3),(3,4),(4,3),(4,4),(4,5).
\]
\end{theorem}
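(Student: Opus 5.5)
The plan is to work on the smooth projective compactification \(Y\) of Section~\ref{sec:compactification}. It has boundary \(C+D\), with \(C\cong\mathbb{P}^1\), \(C^2=d-t\), \(q(Y)=0\) and \(\rho(Y)=b_2(Y)=N_D+1\), and the boundary classes form a rational basis of \(H^2(Y,\mathbb{Q})\). Contracting the negative definite chains \(D\) gives a normal projective surface \(\bar Y\) with \(b_2(\bar Y)=1\), only cyclic quotient singularities (at most \(t\) of them), and \(H^1=0\): a rational homology projective plane. I will first argue that \(\bar Y\) is rational. Since \(U\) is a QHD and \(C\) is a smooth rational curve of self-intersection \(d-t\), the sign of \(C^2\) decides which tools apply. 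If \(C^2\ge0\), then \(C\) moves in a pencil or net on \(Y\), so \(Y\) is rational (ruled with a rational curve of nonnegative square, and \(q=0\)). If \(C^2<0\), I would use the known rationality of rational homology planes with quotient singularities when \(K_{\bar Y}\) is not nef, or derive it from the QHD condition.

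For the upper bound \(t\le4\), I would invoke Hwang--Keum's theorem: a rational homology projective plane with cyclic quotient singularities has at most four singular points. To apply it, I need \(\bar Y\) to have exactly \(t\) singular points, which holds because the \(t\) chains are disjoint. Noncyclic star-shaped graphs give \(t\ge3\). In the case \(t=4\) I would also need to exclude the exceptional configurations of Hwang--Keum, or check that they only occur with the allowed weights.

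For the bounds on \(d\), I would use Artin's criterion together with the linear system \(|C|\). For the \textbf{lower bound} \(d\ge t-1\): if \(C^2=d-t\le-2\), consider contracting \(C\cup D\). This boundary configuration is a star-shaped graph with central weight \(-(t-d)\le -2\) and arms the complementary chains. Its intersection matrix is nondegenerate with one positive eigenvalue (Hodge index), because it spans \(H^2\). Then \(K_Y\) restricted to the boundary is determined by adjunction, and I would compute \(K_Y^2\) via Noether's formula \(K_Y^2=10-\rho(Y)=9-N_D\), which holds since \(Y\) is rational. Comparing with \(K_Y^2\) computed from the adjunction coefficients on the basis \(C,D\) gives an identity that ought to force \(d-t\ge-1\), using \(W(n/a)+W(n/(n-a))=-2\) and \(K^2+N=0\) from \eqref{foundation:dimension-eq}. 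For the \textbf{upper bound} \(d\le t+1\): if \(C^2=d-t\ge2\), Riemann--Roch gives \(h^0(C)\ge C^2+2\ge4\). On a rational surface with \(C\cong\mathbb{P}^1\), the system \(|C|\) is base point free and maps \(Y\) onto a rational normal surface of degree \(C^2\) in \(\mathbb{P}^{C^2+1}\). The disjoint chains \(D_i\) meeting \(C\) once give lines or contracted curves. Counting Picard rank against \(N_D+1\), or using Artin's criterion that the contracted configuration of \(D\) together with the components of the fibers must be rational, should bound \(C^2\le1\).

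The main obstacle is the two \(d\)-bounds. The \(t\)-bound is essentially a citation. I would first try to make the Noether-formula computation of \(K_Y^2\) in terms of the Seifert data completely explicit, getting \(K_Y^2\) as a function of \(d,t\) and the \(W\)'s. The \(K^2+N=0\) identity should then translate into a single linear relation that, combined with positivity of \(e\) and the Hodge index theorem on the span of \(C\) and \(K_Y\), confines \(d-t\) to \(\{-1,0,1\}\).
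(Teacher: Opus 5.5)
Your route to the lower bound \(d\ge t-1\) cannot work, and the rest of your argument depends on it. The Noether comparison is circular. Apply the Schur-complement computation behind \eqref{obs:canonical-square} to \(K_Y+C+D\), with central square \(d-t\), the complementary chains as arms, and Schur complement \(e\) at \(C\). This gives
\[
K_Y^2=9-N_D-(K^2+N).
\]
So \(K_Y^2=9-N_D\), which needs only \(p_g(Y)=q(Y)=0\), is just \(K^2+N=0\) restated, and it gives no lower bound on \(d\). For example, take the five-arm graph with central weight \(-3\) and arms \([2],[2],[2],[4],[4]\). It has \(e=\eta=1\) and \(K^2+N=0\). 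Its boundary is \(3A_1+2A_3\) with \(C^2=-2\), so \(K_Y\equiv0\). Noether's formula and the Hodge index theorem on \(\langle C,K_Y\rangle\) are both satisfied, yet \(d=t-2\). This is exactly the Enriques exception of Hwang--Keum.

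The missing idea is to use the hypothesis that \((V,0)\) is rational. Artin's criterion on its minimal good resolution, applied to the cycle \(E+E_0\), gives \(p_a(E+E_0)=p_a(E)+p_a(E_0)+E\cdot E_0-1=t-d-1\le0\). Artin's criterion does not apply to \(C\cup D\), which has a positive eigenvalue and is not contractible.

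This gap also affects \(t\le4\). The Hwang--Keum exception has five singular points, so it must be excluded to rule out \(t=5\), not in the case \(t=4\). Your plan to prove \(Y\) rational when \(C^2<0\), because \(K_{\bar Y}\) is not nef, fails precisely there, since \(K_{\bar Y}\equiv0\). Once Artin's criterion gives \(C^2\ge-1\), \(Y\) cannot be Enriques, because smooth rational curves on an Enriques surface have square \(-2\). Here \(Y\to\bar Y\) is the minimal resolution, since every component of \(D\) has square at most \(-2\).

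For \(d\le t+1\), your plan via \(|C|\) and a surface of minimal degree is the paper's route, but the count has to be carried out:
\begin{enumerate}
\item exclude the Veronese surface, which contains no lines;
\item show that a smooth scroll leaves exactly \(t-1\) exceptional prime curves outside \(D\), and a cone leaves \(t+\varepsilon-1\);
\item show that every positive-dimensional exceptional fiber contains one of them, namely its last \((-1)\)-curve;
\item show that the remaining configurations violate either this count or the linearity of the chains \(D_i\). These are a line that is a section, two lines in each ruling of the quadric, and a cone whose negative section lies in \(D\).
\end{enumerate}
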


\begin{proof}
A noncyclic rational weighted homogeneous singularity has a star-shaped minimal good resolution with at least three arms. The assertion follows from Lemmas~\ref{lem:lower-central-bound}, \ref{lem:four-arms-bound}, and~\ref{lem:upper-central-bound}, which are proved below.
\end{proof}

Contracting \(D\) gives a projective surface \(\overline Y\) with \(t\) cyclic quotient singularities and the rational homology of \(\mathbb{P}^2\).

\begin{lemma}
\label{lem:lower-central-bound}
For a rational surface singularity with a star-shaped minimal good resolution as above, one has \(d\ge t-1\).
\end{lemma}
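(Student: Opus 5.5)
The plan is to apply Artin's rationality criterion to the reduced exceptional cycle and to one cycle obtained from it by doubling the central curve. Artin's criterion says that a normal surface singularity is rational if and only if \(p_a(D)\le0\) for every positive cycle \(D\) supported on the exceptional set. Here
\[
p_a(D)=1+\tfrac12\bigl(D^2+K\cdot D\bigr),
\]
and for effective cycles \(D_1,D_2\) it satisfies
\[
p_a(D_1+D_2)=p_a(D_1)+p_a(D_2)+D_1\cdot D_2-1.
\]
By rationality every \(E_v\) is a smooth rational curve, so \(p_a(E_v)=0\). The resolution graph is a tree, so the reduced cycle \(E=\sum_vE_v\) satisfies \(p_a(E)=0\). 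This follows by adding the components one at a time along the tree with the additivity formula: each new component meets the previous sum exactly once, so the contribution \(D_1\cdot D_2-1\) is zero at every step.

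Next compute \(E\cdot E_0\). The central curve \(E_0\) has self-intersection \(-d\) and meets exactly \(t\) other components, namely the first curve of each arm, each transversally once. Hence
\[
E\cdot E_0=-d+t.
\]
Now take the positive cycle \(D=E+E_0\). By additivity,
\[
p_a(E+E_0)=p_a(E)+p_a(E_0)+E\cdot E_0-1=t-d-1.
\]
Artin's criterion requires this to be at most \(0\), which gives exactly \(d\ge t-1\).

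Equivalently, one could phrase the argument through Laufer's algorithm for the fundamental cycle. Start at \(Z=E\); every arm vertex has \(Z\cdot E_v\le0\), since \(b_v\ge2\) and \(E_v\) has at most two neighbours. At the central vertex \(Z\cdot E_0=t-d\), and Laufer's rationality test forbids \(Z\cdot E_v\ge2\) at any step, so \(t-d\le 1\).

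I do not expect a real obstacle here. The only points to check are that the hypotheses of the paper give a rational singularity with a tree of smooth rational curves (Theorem~\ref{foundation:PS} and the star-shaped description in Section~\ref{sec:notation}), and that the arms attach to \(E_0\) transversally at \(t\) distinct points, which holds because the resolution is a minimal good resolution.
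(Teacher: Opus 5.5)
Your proposal is correct and is essentially the paper's proof: it applies Artin's numerical criterion to the cycle $E+E_0$ and uses the genus addition formula, with $p_a(E)=p_a(E_0)=0$ and $E\cdot E_0=t-d$, to get $p_a(E+E_0)=t-d-1\le 0$. Your Laufer-algorithm remark is an equivalent rephrasing of the same computation, since adding $E_0$ to $E$ when $E\cdot E_0\ge 2$ is exactly the step that raises the arithmetic genus.
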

\begin{proof}
Let \(E\) be the reduced exceptional divisor of the minimal good resolution and let \(E_0\) be its central component. The divisor \(E\) is a tree of smooth rational curves, so \(p_a(E)=0\). The arithmetic genus addition formula gives
\[
p_a(E+E_0)=p_a(E)+p_a(E_0)+E\cdot E_0-1=t-d-1.
\]
Artin's numerical criterion for rationality \cite{Artin1966} says that every nonzero effective exceptional cycle has arithmetic genus at most zero. Applying it to \(E+E_0\) proves the assertion.
\end{proof}

\begin{lemma}
\label{lem:four-arms-bound}
Under the hypotheses of Theorem~\ref{thm:arm-weight-bounds}, one has \(t\le4\).
\end{lemma}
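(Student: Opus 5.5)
The introduction signals the route: pass to the contracted surface \(\overline Y\) and apply Hwang--Keum's bound on the number of singular points of a rational homology projective plane. By the preceding discussion, \(\overline Y\) is the surface obtained from \(Y\) by contracting the complementary chains \(D_1,\dots,D_t\). It is a normal projective surface with exactly \(t\) cyclic quotient singularities, one for each arm, and it has the rational homology of \(\mathbb{P}^2\), since \(b_1(Y)=0\) and \(b_2(Y)=N_D+1\).

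Hwang--Keum show that a rational homology projective plane with quotient singularities has at most four singular points, unless \(K_{\overline Y}\) is numerically trivial. Moreover, the four-point case can occur only when \(-K_{\overline Y}\) is ample or \(K_{\overline Y}\) is ample. So the plan has three steps.

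\emph{Step 1: \(\overline Y\) is rational.} The open part \(U\) is a rational homology disk, and the image of \(C\) is a smooth rational curve. It remains to rule out the non-rational cases of Hwang--Keum. We have \(q(Y)=0\). The surface also carries the good \(\mathbb{C}^*\)-action coming from the negative-weight smoothing, via Pinkham's \(\Proj\) construction. The \(\mathbb{C}^*\)-action on the total space together with the parameter \(s\) should give a \(\mathbb{C}^*\)-action on \(Y\) with \(C\) in its fixed locus, which would make \(Y\) rational. As a more elementary alternative, \(h^0(Y,C)\) grows, since \(C^2=d-t\) and the Riemann--Roch contributions of the chains can be controlled, so \(\kappa(Y)=-\infty\); combined with \(q=0\) this gives rationality.

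\emph{Step 2: exclude \(K\equiv0\) and \(t\ge5\).} For a rational surface, \(K_{\overline Y}\) is not numerically trivial, because \(Y\) is rational and the singularities are klt with \(\overline Y\) rational, so \(p_g=0\) and \(h^0(-mK)\ne0\). Hwang--Keum then gives at most four quotient singular points, hence \(t\le4\). A rational homology plane with five or more cyclic singularities is excluded outright by their theorem; the key inputs are the orbifold Bogomolov--Miyaoka--Yau inequality and the orbifold Euler characteristic
\[
e_{\mathrm{orb}}(\overline Y)=3-\sum_i\Bigl(1-\frac1{n_i}\Bigr)=3-t+\sum_i\frac1{n_i},
\]
which must be nonnegative when \(K\) is nef, with the case \(-K\) ample handled separately.

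\emph{Step 3, the main obstacle: the case \(-K_{\overline Y}\) ample.} Here orbifold BMY alone does not bound \(t\), and one must genuinely invoke Hwang--Keum's argument via the \(\mathbb{Z}\)-lattice structure of \(H^2(Y,\mathbb{Z})\). Specifically, \(H^2(Y,\mathbb{Z})\) is unimodular and contains the boundary lattice, whose discriminant is \(\bigl(\prod_i n_i\bigr)\cdot e\) up to sign. This lattice must therefore have discriminant a square in the appropriate sense, which bounds the number of cyclic points. I would simply cite Hwang--Keum for the bound "at most four cyclic quotient singularities on a rational \(\mathbb{Q}\)-homology plane" and verify only that its hypotheses hold. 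If a direct argument is needed, I would fall back on the discriminant condition together with Step 2.
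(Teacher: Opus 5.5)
Your route, applying Hwang--Keum to \(\overline Y\), is the paper's. However, the step that disposes of the exceptional case does not work as written.

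\emph{The exception is misstated.} In Hwang--Keum's Corollary~1.2 the exception is not ``\(K_{\overline Y}\) numerically trivial''. It is the specific case of five rational double points of type \(3A_1+2A_3\) whose minimal resolution is an Enriques surface. Your paraphrase forces you to exclude every \(K\equiv0\), and the reason you give (\(p_g=0\) and \(h^0(-mK)\ne0\)) is compatible with \(K\equiv0\). An Enriques surface satisfies both conditions. Rational log Enriques surfaces also show that rationality alone does not prevent a numerically trivial canonical class on a surface with quotient singularities.

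\emph{Rationality in Step~1 is not established.} The \(\mathbb{C}^*\)-action on the total space has positive weight on the parameter. It therefore moves the nearby fibers and induces no action on \(Y\). The growth of \(h^0(Y,\mathcal{O}_Y(C))\) is available only when \(C^2=d-t\ge0\), where Lemma~\ref{lem:rational-curve-system} applies. But \(d=t-1\), i.e.\ \(C^2=-1\), is allowed. Step~3 is only a citation; the square-discriminant remark does not by itself bound \(t\).

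\emph{The missing idea is short.} Every component of \(D\) has self-intersection at most \(-2\), so \(Y\to\overline Y\) is the minimal resolution. In the exceptional case \(Y\) would therefore itself be an Enriques surface. There \(K_Y\equiv0\), so adjunction forces every smooth rational curve to have square \(-2\). But \(C\subset Y\) is a smooth rational curve with \(C^2=d-t\ge-1\) by Lemma~\ref{lem:lower-central-bound}. This excludes the exception, and the corollary gives \(t\le4\). No rationality argument or analysis of the sign of \(K_{\overline Y}\) is needed.
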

\begin{proof}
By Hwang--Keum~\cite[Corollary~1.2]{HwangKeum2011}, a rational homology projective plane with quotient singularities has at most four singular points, unless it has five rational double points of type \(3A_1+2A_3\) and its minimal resolution is an Enriques surface. Apply this result to \(\overline{Y}\). Since every component of \(D\) has self-intersection at most \(-2\), the morphism \(Y\to\overline{Y}\) is the minimal resolution. Lemma~\ref{lem:lower-central-bound} gives \(C^2\ge-1\). The canonical class of an Enriques surface is numerically trivial, so adjunction gives self-intersection \(-2\) for every smooth rational curve. Hence \(Y\) is not an Enriques surface, and \(t\le4\).
\end{proof}

\begin{lemma}\label{lem:rational-curve-system}
Let \(Y\) be a smooth projective surface with \(q(Y)=0\), and let \(C\cong\mathbb{P}^1\) be a curve with \(C^2=m\ge0\). Then \(h^0(Y,\mathcal{O}_Y(C))=m+2\), and \(|C|\) is base point free. For \(m=0\), it defines a \(\mathbb{P}^1\)-fibration over \(\mathbb{P}^1\). For \(m\ge1\), it defines a birational morphism onto a surface of degree \(m\) in \(\mathbb{P}^{m+1}\); in particular the image is \(\mathbb{P}^2\) when \(m=1\).
\end{lemma}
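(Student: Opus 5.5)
The plan is to treat the three numerical assertions in order: the dimension count, base point freeness, and the description of the morphism. All of them will follow from Riemann--Roch on \(Y\) and on \(C\), together with the vanishing of \(H^1(\mathcal{O}_Y)\).

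\emph{Dimension count.} I will use the exact sequence
\[
0\to\mathcal{O}_Y\to\mathcal{O}_Y(C)\to\mathcal{O}_C(C)\to0,
\]
where \(\mathcal{O}_C(C)\cong\mathcal{O}_{\mathbb{P}^1}(m)\). Since \(q(Y)=0\), we have \(H^1(\mathcal{O}_Y)=0\), so the restriction \(H^0(\mathcal{O}_Y(C))\to H^0(\mathcal{O}_C(C))\) is surjective. This gives
\[
h^0(\mathcal{O}_Y(C))=1+(m+1)=m+2.
\]

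\emph{Base point freeness.} Base points can only lie on \(C\), since the section vanishing exactly on \(C\) is nonzero off \(C\). On \(C\), the sections of \(\mathcal{O}_Y(C)\) restrict onto all of \(H^0(\mathcal{O}_{\mathbb{P}^1}(m))\). For \(m\ge0\) this linear system has no base points on \(\mathbb{P}^1\), so \(|C|\) is base point free.

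\emph{The morphism \(\varphi\).} Let \(\varphi\colon Y\to\mathbb{P}^{m+1}\) be the morphism defined by \(|C|\).

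For \(m=0\), we have \(C^2=0\) and \(h^0=2\), so \(\varphi\colon Y\to\mathbb{P}^1\) is a pencil without base points whose member \(C\) is a smooth rational curve. Hence the general fiber is smooth. It is linearly equivalent to \(C\), so by adjunction it has the same arithmetic genus \(0\), and it is therefore \(\mathbb{P}^1\). Fibers of \(\varphi\) are connected because \(C\) is a reduced irreducible fiber; if needed, one can also use the Stein factorization, which is trivial since a general fiber is linearly equivalent to the irreducible curve \(C\). This makes \(\varphi\) a \(\mathbb{P}^1\)-fibration.

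For \(m\ge1\), the restriction of \(\varphi\) to \(C\) is the map given by the complete linear system \(|\mathcal{O}_{\mathbb{P}^1}(m)|\), which is the rational normal curve embedding into a hyperplane \(\mathbb{P}^m\). The image \(\varphi(Y)\) is not a curve. Indeed, if it were a curve, then \(C\), being a fiber component pulled back from a hyperplane section, would satisfy \(C^2=0\); this contradicts \(m\ge1\). Equivalently, \(C^2>0\) means \(C\) is big, so \(\varphi\) is generically finite onto a surface \(S\). Then
\[
m=C^2=\deg(\varphi)\cdot\deg S .
\]
Since \(S\) is nondegenerate in \(\mathbb{P}^{m+1}\), we have \(\deg S\ge\operatorname{codim}+1=m\). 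Therefore \(\deg\varphi=1\) and \(\deg S=m\), so \(\varphi\) is birational onto a surface of minimal degree \(m\). When \(m=1\), the surface has degree one in \(\mathbb{P}^2\), so it is \(\mathbb{P}^2\) itself.

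\emph{Main obstacle.} The delicate step is ensuring the image is a surface and that the degree inequality \(\deg S\ge m\) applies. Both rest on nondegeneracy, which holds because \(|C|\) is the complete linear system. I would settle this with the bigness argument above, using \(C^2>0\).
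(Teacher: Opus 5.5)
Your proposal is correct and follows the paper's proof essentially step by step. The shared steps are the restriction sequence with \(H^1(\mathcal{O}_Y)=0\), base point freeness from the lifted sections and the section defining \(C\), the Stein factorization with the reduced irreducible fiber \(C\) for \(m=0\), and the count \(m=\deg(\varphi)\deg(S)\) with \(\deg S\ge m\) for \(m\ge1\). Your check that the image is a surface, which the paper only asserts, is a useful addition. However, the remark that the Stein factorization is trivial because a general fiber is linearly equivalent to the irreducible curve \(C\) does not work on its own: connectedness of the fibers should rest, as in your main argument, on \(C\) being a reduced connected fiber.
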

\begin{proof}
The restriction sequence
\[
0\to\mathcal{O}_Y\to\mathcal{O}_Y(C)\to\mathcal{O}_{\mathbb{P}^1}(m)\to0
\]
gives \(h^0(Y,\mathcal{O}_Y(C))=m+2\) and the surjectivity of restriction, since \(H^1(Y,\mathcal{O}_Y)=0\). The lifted sections generate along \(C\), and the section defining \(C\) does not vanish outside \(C\). Hence \(|C|\) is base point free. If \(m=0\), the reduced irreducible fiber \(C\) implies that the finite morphism in the Stein factorization has degree one. The general fiber is smooth of genus \(p_a(C)=0\). If \(m>0\), the image \(Z\subset\mathbb{P}^{m+1}\) is a nondegenerate surface, and
\[
m=\deg(\phi)\deg(Z),\quad \deg(Z)\ge m.
\]
Thus \(\deg(\phi)=1\) and \(\deg(Z)=m\).
\end{proof}

For a composition \(\sigma:Y\to Z\) of point blowups, we consider each positive-dimensional fiber with its reduced structure.

\begin{lemma}\label{lem:one-outside-cluster}
Let \(\sigma:Y\to Z\) be a composition of point blowups between smooth projective surfaces. Let \(D\) be a disjoint union of linear chains whose \(\sigma\)-exceptional components have self-intersection at most \(-2\). Every positive-dimensional exceptional fiber contains an exceptional component outside \(D\). Suppose smooth curves \(A,B\subset Z\) meet transversely at \(p\), their strict transforms belong to distinct connected components of \(D\), and the exceptional fiber over \(p\) has exactly one exceptional component \(E\) outside \(D\). Then the exceptional fiber, together with these strict transforms, has the linear dual graph
\[
A-L_A-E-L_B-B,
\]
where \(E^2=-1\) and \(L_A,L_B\) are possibly empty exceptional subchains of \(D\). If \(L_A\) is empty and \(L_B\) has length \(r\), then \(L_B=[2^r]\), and the blowups over this point lower the self-intersections of \(A,B\) by \(r+1,1\), respectively; the symmetric assertion also holds. In particular, if the strict transform of \(B\) contains at least two successive blowup centers over this point, then \(L_A\) is nonempty.
\end{lemma}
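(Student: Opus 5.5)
The plan is to use only the standard structure of iterated point blowups: every exceptional fiber is a tree of smooth rational curves, and the total transform of \(A\cup B\) stays simple normal crossing. I will first find the unique \((-1)\)-curve, then use connectivity to get the linear shape, and finally argue by induction on the number of blowups over \(p\) to get the continued fraction.

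\emph{Step 1: a \((-1)\)-curve outside \(D\).} Each positive-dimensional exceptional fiber contains the curve produced by the last blowup centred over that point. That curve has self-intersection \(-1\). The exceptional components of \(D\) have self-intersection at most \(-2\), so this curve is not in \(D\); this proves the first assertion. Under the hypothesis of the lemma, the unique exceptional component \(E\) outside \(D\) must therefore be this last curve, so \(E^2=-1\). Every other component of the fiber lies in \(D\) and has self-intersection at most \(-2\).

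\emph{Step 2: the linear shape.} Let \(\Gamma\) be the dual graph of the fiber together with the strict transforms of \(A\) and \(B\); it is a connected tree.
\begin{itemize}
\item \(E\) is created by blowing up a point of a simple normal crossing configuration. That point lies on at most two components, so \(E\) has valence at most two in \(\Gamma\).
\item Every vertex of \(\Gamma\setminus E\) is in \(D\). Any path avoiding \(E\) stays inside one connected component of \(D\).
\item The strict transforms of \(A\) and \(B\) lie in distinct components of \(D\). So \(\Gamma\setminus E\) has exactly two connected pieces, one containing \(A\) and one containing \(B\), and \(E\) meets each piece once.
\item Each piece is a connected subgraph of a chain of \(D\), hence a path.
\item The strict transform of a smooth curve through \(p\) meets the exceptional fiber in a single point of a single component. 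So \(A\) has valence one in \(\Gamma\), and likewise \(B\).
\end{itemize}
Hence \(A\) is an end of its path, and the other end is the neighbour of \(E\). This gives the dual graph \(A-L_A-E-L_B-B\).

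\emph{Step 3: the case \(L_A=\emptyset\).} I argue by induction on the number of blowups over \(p\).
\begin{itemize}
\item If \(r=0\), then \(E\) meets both \(A\) and \(B\). A single blowup at \(p\) is the only possibility, and it lowers \(A^2\) and \(B^2\) by \(1\) each.
\item If \(r>0\), contract \(E\). The neighbour \(F\) of \(E\) in \(L_B\) now meets \(A\), and \(F^2\) goes up by one. All other components are unchanged, so the remaining fiber has at most one candidate \((-1)\)-curve, namely \(F\). By Step 1 that fiber must contain a \((-1)\)-curve, so after contraction \(F^2=-1\), and originally \(F^2=-2\).
\item The contracted configuration again satisfies the hypotheses with \(L_A=\emptyset\), with \(F\) as the unique component outside the chain and \(L_B\) shortened by one. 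Induction gives \(L_B=[2^r]\).
\item Each of the \(r+1\) blowups is centred on the strict transform of \(A\). Only the first is centred on \(B\). So \(A^2\) drops by \(r+1\) and \(B^2\) by \(1\).
\end{itemize}
The symmetric statement follows by exchanging \(A\) and \(B\).

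\emph{Last assertion and main obstacle.} If \(L_A\) were empty, \(B\) would be lowered only once, so it would contain only one blowup centre over \(p\). Hence at least two successive centres on \(B\) force \(L_A\neq\emptyset\).

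The step needing care is the induction in Step 3. One must check that after contracting \(E\) the hypotheses really persist: the chain still sits in \(D\) up to the modified component \(F\), and the existence of a \((-1)\)-curve is applied to the smaller fiber. I would handle this by stating the induction for a chain configuration with exactly one non-\(D\) component, rather than for \(D\) itself.
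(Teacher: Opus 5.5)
Your Step 1, the induction in Step 3 once the chain \(A-E-L_B-B\) is known, and the final assertion are fine. The gap is the last sentence of Step 2.

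Your bullets show four things: \(\Gamma\) is a tree, \(E\) has valence exactly two, \(\Gamma\setminus E\) consists of two paths, and \(A,B\) are leaves. They do not show that \(E\) is attached to the far end of each path. Consider the tree with edges \(A-C_1\), \(C_1-C_2\), \(C_1-E\), \(E-B\), where \(C_1,C_2\) are exceptional components of \(D\) of square at most \(-2\). It satisfies every one of your bullets. It is also compatible with \(D\) being a union of linear chains, because the third neighbour \(E\) of \(C_1\) is not in \(D\). Yet it is not linear.

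Such side branches are excluded only by the history of the blowups. Step 2 uses that history only through the valence of the final curve. Step 3 inherits the gap, since its case \(r=0\) uses that the fibre is \(E\) alone. So the linear shape is not proved. That shape is the main content of the lemma, and it is used later with \(L_A,L_B\) both nonempty.

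The paper supplies the missing idea with a monotonicity argument:
\begin{enumerate}
\item Exceptional \((-1)\)-curves in the fibre over \(p\) are pairwise disjoint. So a blowup destroys at most one of them and creates one, and their number never decreases.
\item The final fibre has exactly one such curve, so every intermediate fibre has exactly one. Hence every blowup after the first is centred on the current unique \((-1)\)-curve.
\item Suppose some centre were not a node of the path from \(A\) to \(B\). Then the \((-1)\)-curve would leave that path, and all later centres would stay on the new branch. The final path from \(A\) to \(B\) would then consist of components of \(D\), a contradiction.
\end{enumerate}
Hence every centre is a node on that path, and the graph stays linear.

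Alternatively, you can repair Step 2 within your own framework by inducting on the number of blowups over \(p\). Use the hypothesis ``the fibre has a unique \((-1)\)-curve, all other fibre components have square at most \(-2\), and the \((-1)\)-curve separates \(A\) from \(B\) in \(\Gamma\)''. After contracting \(E\), its two neighbours become adjacent. When the smaller fibre is nonempty, its \((-1)\)-curve must be one of these two neighbours. It cannot be both, since two meeting \((-1)\)-curves would violate negative definiteness. That curve lies on the path from \(A\) to \(B\) in the contracted tree, so the hypothesis persists. Finally, \(\Gamma\) is obtained from the contracted tree by subdividing one edge, so linearity passes back up the induction.
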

\begin{proof}
The last blowup creates an exceptional \((-1)\)-curve, which cannot belong to \(D\). At each stage, the exceptional \((-1)\)-curves are pairwise disjoint. A further blowup therefore destroys at most one such curve and creates one, so their number cannot decrease. If the final exceptional fiber has only one component outside \(D\), every blowup after the first must be centered on its unique exceptional \((-1)\)-curve. This curve must lie on the path between the strict transforms of \(A\) and \(B\). Once a blowup at a smooth point moves the unique \((-1)\)-curve off that path, subsequent centers remain on the resulting branch. The final path would then consist entirely of components of \(D\), contrary to the assumption that \(A\) and \(B\) belong to distinct connected components. Thus every subsequent center is an intersection on the path, and the dual graph remains linear. If \(L_A\) is empty, each subsequent center is the intersection of the strict transform of \(A\) with the unique \((-1)\)-curve. Induction gives \(A-E-[2^r]-B\) and decreases the self-intersections of \(A,B\) by \(r+1,1\), respectively. Interchanging \(A\) and \(B\) proves the symmetric assertion.
\end{proof}

\begin{lemma}
\label{lem:upper-central-bound}
Under the hypotheses of Theorem~\ref{thm:arm-weight-bounds}, one has \(C^2\le1\), and hence \(d\le t+1\).
\end{lemma}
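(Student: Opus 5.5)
The plan is to argue by contradiction. Put \(m=C^2=d-t\) and suppose \(m\ge2\). Since \(q(Y)=0\) (Section~\ref{sec:compactification}), Lemma~\ref{lem:rational-curve-system} applies to \(C\). The complete linear system \(|C|\) is base point free and defines a birational morphism
\[
\phi\colon Y\to Z\subset\mathbb{P}^{m+1}
\]
onto a nondegenerate surface of degree \(m\), that is, a surface of minimal degree. By the classical classification, \(Z\) is one of the following:
\begin{enumerate}
\item the Veronese surface (\(m=4\));
\item a smooth rational normal scroll;
\item a cone over a rational normal curve.
\end{enumerate}
In every case \(Z\) is rational, normal and \(\mathbb{Q}\)-factorial, with \(\rho(Z)\in\{1,2\}\) and \(e(Z)=\rho(Z)+2\).

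The first step is to track the boundary under \(\phi\). Since \(\phi^*\mathcal{O}(1)=\mathcal{O}_Y(C)\), a curve is contracted by \(\phi\) exactly when it has zero intersection with \(C\). From Section~\ref{sec:compactification}, the components of \(D\) not adjacent to \(C\) satisfy \(D_v\cdot C=0\) and are therefore contracted. Each of the \(t\) end components meets \(C\) once, so it maps isomorphically onto a line \(L_i\subset Z\), and \(C\) maps onto a hyperplane section \(H\). The points \(L_i\cap H\) are the \(t\) distinct points where \(C\) meets the end components.

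This immediately excludes the Veronese surface, since it contains no lines while \(t\ge3\). On a smooth scroll of degree \(m\ge3\), the lines are the rulings together with possibly the directrix when it has degree one. On the cone, every line passes through the vertex. On the quadric (\(m=2\)), the lines belong to the two rulings.

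Next I would compare Picard numbers. Since \(Z\) is \(\mathbb{Q}\)-factorial, \(\rho(Y)=\rho(Z)+\#\{\text{\(\phi\)-exceptional curves}\}\). Using \(\rho(Y)=N_D+1\), and counting the \(N_D-t\) contracted components of \(D\) together with \(k\) exceptional curves outside \(D\), this gives
\[
k=t+1-\rho(Z).
\]
Lemma~\ref{lem:one-outside-cluster} says that every positive-dimensional fiber of \(\phi\) over a smooth point contains a curve outside \(D\). Over the cone vertex I would argue directly that the fiber is a chain meeting the lines. Each contracted tail of a chain \(D_i\) lies over a point of \(L_i\).

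I would then compute Euler characteristics. Let \(s\) be the number of intersection points of the lines off \(H\). Then \(e(H\cup\bigcup L_i)=2+t-s\), so
\[
e\bigl(Z\setminus(H\cup\textstyle\bigcup L_i)\bigr)=\rho(Z)-t+s.
\]
The isomorphism \(Y\setminus\operatorname{Exc}(\phi)\cong Z\setminus\phi(\operatorname{Exc})\) relates this to \(e(U)=1\), after correcting for the \(k\) non-\(D\) exceptional curves and the points they lie over. Combined with \(k=t+1-\rho(Z)\) and \(t\ge3\), this should force \(s\) to be large. That contradicts the geometry of lines on minimal-degree surfaces: rulings of a scroll are disjoint, so \(s=0\) unless the directrix is among the \(L_i\); on a quadric only lines from opposite rulings meet; on a cone all lines meet only at the vertex.

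I expect the main obstacle to be this case analysis, in particular the quadric cone and the case where some \(L_i\) is the directrix or passes through the vertex. In those cases the clean Euler and Picard count must be supplemented by the structure of the fibers given by Lemma~\ref{lem:one-outside-cluster}, namely the shape \(A-L_A-E-L_B-B\) and the self-intersection drops. The goal there is to show that the self-intersections of the end components of \(D\), which are at most \(-2\), cannot be reached from lines of self-intersection \(0\) or \(1\), or from the directrix, with only \(k\) curves outside \(D\) available.

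Once \(m\ge2\) is excluded, the bound \(C^2=d-t\le1\) gives \(d\le t+1\).
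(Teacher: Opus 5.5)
Your setup agrees with the paper's proof. The morphism \(\phi\colon Y\to Z\) onto a surface of minimal degree, the exclusion of the Veronese surface, and the count \(k=t+1-\rho(Z)\) of \(\phi\)-exceptional curves outside \(D\) are all correct; this gives \(k=t-1\) for a smooth scroll and \(k=t\) for a cone.

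\textbf{The gap.} The step meant to produce the contradiction does not work. The Euler characteristic computation cannot force \(s\) to be large, because it carries no information beyond the Picard count.

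Every fiber of \(\phi\) is a tree of rational curves. Hence
\(e(Y)=e(Z)+\#\operatorname{Exc}(\phi)=\rho(Z)+2+(N_D-t)+k\), while \(e(C\cup D)=N_D+2\). So \(e(U)=\rho(Z)-t+k\) for every configuration, and \(e(U)=1\) is just the relation \(k=t+1-\rho(Z)\) restated.

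If you decompose \(e(U)\) along \(H\cup\bigcup L_i\) as proposed, the term \(s\) from \(Z\setminus(H\cup\bigcup L_i)\) is cancelled exactly. The parts of the non-\(D\) exceptional curves lying over points of the lines belong to \(U\) and contribute \(k''-s\), where \(k''\) is the number of such curves. So no bound on \(s\) emerges. In particular, the case you treat as clean, where all \(L_i\) are disjoint rulings of a smooth scroll and \(s=0\), is not excluded by anything in your argument. Also, on a cone every line passes through the vertex, so the formula \(e(H\cup\bigcup L_i)=2+t-s\) does not hold as written.

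\textbf{What is missing.} You need to count the exceptional fibers that the configuration forces and compare that number with \(k\). This count must carry every case, not merely supplement an Euler count. The ingredients are:

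\begin{itemize}
\item A line that is a ruling has square \(0\) on the scroll, whereas its strict transform, an end of some \(D_i\), has square at most \(-2\). So some point of it is blown up.
\item Intersection points of lines whose strict transforms lie in different chains must be blown up.
\item Exceptional fibers over distinct points are disjoint, and by Lemma~\ref{lem:one-outside-cluster} each contains a component outside \(D\).
\end{itemize}

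These give the contradiction in each case:

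\begin{itemize}
\item If all lines are rulings, at least \(t>k=t-1\) components outside \(D\) are needed.
\item On the quadric with two lines in each ruling, four are needed, while \(k=3\).
\item If one line is the directrix, the count forces exactly one non-\(D\) component over each of its \(t-1\) intersections with the rulings, and no other exceptional fibers. Each ruling is blown up at least twice, so the last clause of Lemma~\ref{lem:one-outside-cluster} attaches a nonempty subchain of \(D\) to the directrix at each of these points. That end component then has at least \(t-1\ge2\) neighbours in its chain, which is impossible.
\item For the cone, the paper does not analyse the fiber over the vertex directly. It factors \(\phi\) through \(\mathbb{F}_m\to Z\) and distinguishes whether the strict transform of the negative section belongs to \(D\). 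The same counting and valency arguments then apply.
\end{itemize}
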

\begin{proof}
Suppose that \(m=C^2\ge2\). Lemma~\ref{lem:rational-curve-system} gives a birational morphism \(\phi:Y\to Z\subset\mathbb{P}^{m+1}\) onto a surface of minimal degree. Each component of \(D\) meeting \(C\) maps isomorphically to a line, because its intersection with \(C\) is one; every other component of \(D\) is contracted.

By the classification of surfaces of minimal degree by Eisenbud--Harris~\cite{EisenbudHarris1987}, \(Z\) is a rational normal scroll, possibly a cone, or the Veronese surface. The Veronese surface contains no line, whereas \(Z\) contains the images of the components of \(D\) meeting \(C\). Thus \(Z\) is a scroll. Each morphism below is an isomorphism near \(C\), since the contracted curves are disjoint from \(C\).

Suppose first that \(Z\) is a smooth scroll. Write \(Z=\mathbb{F}_e\), with negative section \(T\) and fiber \(F\). Its hyperplane class is \(T+(e+a)F\), where \(a\ge1\). Except for the smooth quadric, its lines are the ruling fibers and, when \(a=1\), the section \(T\). Hence at most one component meeting \(C\) maps to a line which is not a fiber. Since \(\rho(Y)=N_D+1\) and \(\rho(Z)=2\), the morphism \(Y\to Z\) has \(N_D-1\) exceptional components. Exactly \(N_D-t\) of them belong to \(D\), so \(t-1\) exceptional components lie outside \(D\).

If all components meeting \(C\) map to fibers, their images are distinct, since they meet the image of \(C\) at distinct points. Each of these \(t\) fibers contains a point over which \(Y\to Z\) has a positive-dimensional fiber, because its strict transform has self-intersection at most \(-2\). Exceptional fibers over different fibers are disjoint. Lemma~\ref{lem:one-outside-cluster} gives at least \(t\) exceptional components outside \(D\), contrary to their number \(t-1\).

Otherwise one component meeting \(C\) maps to a section line and the remaining \(t-1\) components meeting \(C\) map to distinct fibers. Each intersection of the section with one of these fibers is blown up, since their strict transforms belong to disjoint chains on \(Y\). The \(t-1\) resulting exceptional fibers each contain an exceptional component outside \(D\). The number of these components implies that every exceptional fiber contains exactly one, and that there are no further exceptional fibers. Each fiber is blown up at least twice, since its final self-intersection is at most \(-2\). By Lemma~\ref{lem:one-outside-cluster}, a nonempty subchain of \(D\) is attached to the component mapping to the section over each of the \(t-1\) intersections. Since \(t\ge3\), this component meets at least two other components of \(D_i\), contrary to the fact that it is an end of its chain \(D_i\).

If \(Z\) is the smooth quadric, its lines belong to its two rulings. Since \(3\le t\le4\), one ruling contains at most one image of a component meeting \(C\), unless each ruling contains two. In the first case, the preceding arguments apply after choosing the ruling fibers. In the second, the four intersection points between lines of different rulings are blown up. They give four distinct exceptional fibers, and hence at least four exceptional components outside \(D\), contrary to their number \(t-1=3\).

Finally suppose that \(Z\) is a cone of degree \(m\). The morphism \(\phi\) factors through the minimal resolution \(\mathbb{F}_m\to Z\). On \(\mathbb{F}_m\), the image of \(C\) is linearly equivalent to \(T+mF\) and is disjoint from the negative section \(T\). Every component meeting \(C\) maps to a distinct fiber. Put \(\varepsilon=1\) if the strict transform of \(T\) belongs to \(D\), and put \(\varepsilon=0\) otherwise. The components of \(D\) not contracted by \(Y\to\mathbb{F}_m\) are precisely the \(t\) components meeting \(C\) and, when \(\varepsilon=1\), the strict transform of \(T\). Thus the number of exceptional components outside \(D\) is
\[
(N_D-1)-(N_D-t-\varepsilon)=t+\varepsilon-1.
\]
Each of these fibers contains a point over which \(Y\to\mathbb{F}_m\) has a positive-dimensional fiber. If \(\varepsilon=0\), Lemma~\ref{lem:one-outside-cluster} contradicts the number \(t-1\) of exceptional components outside \(D\). If \(\varepsilon=1\), over each of these ruling fibers there is exactly one positive-dimensional fiber of \(Y\to\mathbb{F}_m\), and each contains exactly one exceptional component outside \(D\), and there are no other exceptional fibers. The strict transform of \(T\) belongs to a unique chain \(D_i\), which contains a path from its end meeting \(C\) to \(T\). For \(j\ne i\), the strict transforms of \(T\) and the \(j\)-th fiber are disjoint, so their intersection is the point over which the unique positive-dimensional fiber lying above that ruling fiber is contracted. The fiber is blown up at least twice. Lemma~\ref{lem:one-outside-cluster} therefore gives a nonempty subchain of \(D_i\) attached to \(T\) over that intersection. Together with the path to the end of \(D_i\) meeting \(C\), these \(t-1\) distinct subchains give \(T\) valency at least \(t\ge3\) in \(D_i\), contrary to the linearity of \(D_i\).
\end{proof}

\section{Diagonal embeddings and star-shaped graphs}
\label{sec:three-arm-tautness}

An embedding of a tree with \(N\) vertices satisfying the adjunction equalities consists of vectors \(v\) in \((\mathbb{Z}^{N},-I)\), with square \(-b_v\le-2\), pairing one on edges and zero otherwise, and satisfying
\[
v^2+\mathbf{k}\cdot v=-2,\quad \mathbf{k}=\sum_{j=1}^{N}e_j.
\]
By Stipsicz--Szab\'{o}--Wahl~\cite[Proposition~2.4(2), (4)]{SSW}, every resolution graph of a singularity admitting a QHD smoothing admits such an embedding. The characteristic vector has square \(-N\), so a change of signs of the diagonal basis makes it equal to \(\mathbf{k}\).

We use the following coefficient properties from Stipsicz--Szab\'{o}--Wahl~\cite[Section~3]{SSW}. Each vertex vector, and the sum of the vectors over any connected subtree, has one of the forms
\begin{equation}\label{lat:subtree-forms}
e_i-\sum_{j\in J}e_j,\quad -2e_i-\sum_{j\in J}e_j,\quad i\notin J.
\end{equation}
These forms follow by writing the adjunction equation as \(\sum_jx_j(x_j+1)=2\). For the whole minimal tree, the first form is excluded. At most one vertex has the negative form, and each positive coefficient occurs in a unique vertex. For \(N>1\), the unique column of total coefficient \(-2\) has entries either \((-1,-1)\) or \((1,-1,-1,-1)\). We distinguish these two patterns by their lists of nonzero entries. Following Stipsicz--Szab\'{o}--Wahl~\cite[Section~3]{SSW}, a tree is full when its total vector has no zero coordinate, hence is \(-2e_i-\sum_{j\ne i}e_j\).

For a positive vertex \(v=e_i-\sum_{j\in J}e_j\), call \(v\) full if the total coefficient in column \(i\) is negative. We also use two lemmas. In the case of a column with nonzero entries \((1,-1,-1,-1)\), if every leaf is full, then the graph is full by Stipsicz--Szab\'{o}--Wahl~\cite[Proposition~5.2]{SSW}. If every vertex has a positive coefficient and some leaf is not full, then there is a vertex \(v=e_i+T\) with
\begin{equation}\label{lat:reducible}
T=-\sum_{j\in J}e_j\ne0,\quad i\notin J,\quad T\cdot w=0\quad(w\ne v)
\end{equation}
\cite[Lemma~6.6]{SSW}. These two lemmas are proved by coefficient calculations and induction on the number of vertices. Their proofs do not use the graph classification. The weight identity for star-shaped graphs is proved below.

\begin{lemma}\label{lat:minimal-definite}
A connected tree of integral vectors satisfying the adjunction equalities in a negative diagonal lattice, with all squares at most \(-2\), has negative definite intersection matrix.
\end{lemma}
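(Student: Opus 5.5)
The plan is to realize the intersection matrix as a Gram matrix in a definite lattice and to use the adjunction equalities to rule out a nontrivial kernel. Let \(v_1,\dots,v_N\) be the vertex vectors and \(M=(v_a\cdot v_b)\) the intersection matrix of the tree. The embedding conditions (squares \(-b_v\), pairing one on edges, zero otherwise) say exactly that \(M\) is the Gram matrix of the \(v_a\) in \((\mathbb{Z}^N,-I)\). Hence \(M\) is negative semidefinite, and it is negative definite if and only if the \(v_a\) are linearly independent. So assume a nonzero \(x\in\ker M\); equivalently, assume \(\sum_a x_a v_a=0\).

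First I normalize the kernel vector. The matrix \(-M\) is positive semidefinite, has nonpositive off-diagonal entries, and has a connected graph. By the standard Perron--Frobenius/Zariski-lemma argument, its kernel is then spanned by a vector with all entries strictly positive. After scaling, I take \(x_a>0\) rational for every \(a\), with \(Z=\sum_a x_a v_a=0\). Next I pair \(Z\) with \(\mathbf{k}\). The adjunction equality gives \(\mathbf{k}\cdot v_a=-2-v_a^2=b_a-2\ge0\), so
\[
0=\mathbf{k}\cdot Z=\sum_a x_a(b_a-2).
\]
Since every \(x_a>0\), this forces \(b_a=2\) for all \(a\). Thus every vertex is a \((-2)\)-vector \(v\) with \(\mathbf{k}\cdot v=0\). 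In coordinates \(v=\sum_j y_je_j\), the conditions are \(\sum_j y_j^2=2\) and \(\sum_j y_j=0\), so \(v=e_i-e_j\) with \(i\ne j\).

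The remaining step is to show that a tree of vectors of the form \(e_i-e_j\) with the prescribed pairings is a chain. This is the step where the adjunction input must really be used, since trees of \((-2)\)-curves such as \(\tilde D_4\) are genuinely only semidefinite.

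\begin{itemize}
\item For a vertex \(v=e_i-e_j\), a neighbour \(w\) with \(v\cdot w=1\) must have the form \(e_j-e_k\) or \(e_l-e_i\), because \((e_i-e_j)\cdot(e_k-e_l)=-(\delta_{ik}-\delta_{il}-\delta_{jk}+\delta_{jl})\).
\item Two neighbours of \(v\) at the same end pair to \(-1\), since \((e_j-e_k)\cdot(e_j-e_l)=-1\) for \(k\ne l\). This contradicts their required pairing \(0\).
\item The remaining case \(k=l\) would give two equal vectors, whose pairing is \(-2\ne 0\). So this is also excluded.
\end{itemize}

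Hence every vertex has valency at most two, and the connected tree is a chain of \((-2)\)-vectors, i.e.\ of type \(A_N\). Its determinant is \((-1)^N(N+1)\ne0\), so \(M\) is nonsingular. This contradicts \(x\ne0\), and negative definiteness follows. The only care needed is the Perron--Frobenius positivity step, which I would state explicitly for connected trees.
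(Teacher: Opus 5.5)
Your proof is correct and follows the paper's argument essentially step by step: a strictly positive kernel vector from Perron--Frobenius, pairing with \(\mathbf{k}\) to force every \(b_v=2\), identification of the vertices as roots \(e_i-e_j\), a valency bound, and nonsingularity of the resulting \(A_N\) chain. The differences are minor. You derive the root form directly from \(v^2=-2\) and \(\mathbf{k}\cdot v=0\) instead of from the subtree forms, and you bound valency by examining two neighbours of a common vertex. Note also that your argument never uses that the lattice rank equals the number of vertices, even though you write \((\mathbb{Z}^N,-I)\). This matters, because the paper applies the lemma to \(N\) vectors in a lattice of rank \(N-1\).
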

\begin{proof}
The intersection matrix is negative semidefinite. If it were singular, connectedness and the Perron--Frobenius theorem would give a kernel vector with strictly positive entries \(m_v\). Then \(\sum_v m_vv=0\), and adjunction gives \(\sum_vm_v(b_v-2)=0\). Hence \(b_v=2\) for every \(v\). Formula~\eqref{lat:subtree-forms} shows that every vertex vector is a root \(e_i-e_j\). Two distinct roots cannot have the same positive index or the same negative index, since their pairing would be negative. Each root has at most two neighbors. Thus the tree is a chain of \((-2)\)-vertices, whose intersection matrix is negative definite, a contradiction.
\end{proof}

The following structural result is due to Stipsicz--Szab\'{o}--Wahl~\cite[Propositions~4.2 and~4.3]{SSW}. Its proof does not use their classification theorem.

\begin{lemma}\label{lat:double-column}
Let a minimal tree be embedded in a diagonal lattice so that the adjunction equalities hold. If one column has nonzero entries \((-1,-1)\), then the tree is a full chain. The two vertices containing that column are its ends.
\end{lemma}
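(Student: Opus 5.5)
Since the column has total coefficient \(-2\) with nonzero entries \((-1,-1)\), it lies in two distinct vertex vectors \(v\) and \(w\), each with coefficient \(-1\) there. The plan is to first show that each vertex has exactly one positive coefficient, and then to locate the two special vertices and check fullness.

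\textbf{Positive coefficients.} By \eqref{lat:subtree-forms}, the total vector is \(-2e_{i_0}-\sum_{J}e_j\), where \(i_0\) is this column. Every coefficient of the total vector lies in \(\{0,-1,-2\}\), and each positive entry occurs in a unique vertex. Hence every column other than \(i_0\) contains at most one \(+1\), and then at most two \(-1\)'s. The column \(i_0\) contains no \(+1\). Thus no vertex has the negative form \(-2e_i-\dots\), since that would force a column with a \(-2\) entry, which is excluded. So every vertex is \(e_{i_v}-\sum_{J_v}e_j\), with distinct positive indices \(i_v\). Count columns: there are \(N\) columns and \(N\) distinct positive indices, and column \(i_0\) carries no positive entry. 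This contradicts the claim as stated unless I recount. The count should instead be done by setting up column sums. The total coefficient of column \(i_v\) is \(1-(\#\text{negative entries})\in\{0,-1\}\), while column \(i_0\) has total \(-2\). Summing columns, the total coefficient sum is \(-2-\#\{\text{columns with total }-1\}\). This must equal \(\mathbf{k}\cdot\sum v=-\sum_v(b_v-2)-2\cdot(\text{edges})+\dots\), computed via adjunction. I would use the pairing relations to bound the number of \(-1\) entries.

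\textbf{Chain structure.} Rather than pushing that count, I would argue by induction on \(N\) as in Stipsicz--Szab\'{o}--Wahl. Take a leaf \(\ell\) with \(b_\ell\) maximal, or a \((-2)\)-leaf root. Removing it, or blowing down, preserves the adjunction form in a smaller lattice after deleting one coordinate. Then track the \((-1,-1)\) column. The key observation is that a vertex containing \(-e_{i_0}\) pairs to zero with every vertex not containing \(e_{i_0}\) positively, except through other shared columns. Since no vertex has \(+e_{i_0}\), the pairing \(v\cdot w\) gets a contribution \(-1\) from column \(i_0\). For nonadjacent \(v,w\), this must be cancelled by a column where one has \(+1\) and the other \(-1\). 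I would show that walking along the path from \(v\) to \(w\) forces each intermediate vertex to have valency two, giving a chain with \(v,w\) as its ends. Fullness then follows because every column is hit: a zero column total would give a leaf that is not full, and \eqref{lat:reducible}-type reduction would produce a vertex that is orthogonal to all others, contradicting connectedness.

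\textbf{Main obstacle.} The hardest step is excluding branching, that is, showing that no vertex of valency \(\ge3\) can exist. I would first try the inductive reduction: contract a \((-1)\)-analogue by removing a coordinate appearing in only one vertex, and check that the \((-1,-1)\) column survives. Then apply induction and reinsert the removed vertex, verifying that it can only attach at an end.
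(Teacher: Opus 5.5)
\paragraph{The first step is false.} Your first step asserts something false, and that is what produces the counting contradiction you ran into. A \(-2\) entry is not excluded, because it can share its column with a \(+1\), giving column total \(-1\). For instance, the chain \([5,2]\) embeds as \(v_1=-2e_1-e_2\), \(v_2=e_1-e_2\) in \((\mathbb{Z}^2,-I)\). Column \(2\) has nonzero entries \((-1,-1)\), and \(v_1\) has the negative form.

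Your own count proves the opposite of your claim. Column \(i_0\) carries no \(+1\), and positive indices are distinct, so at most \(N-1\) vertices are positive. Hence exactly one vertex \(u=-2e_a-\cdots\) is negative, and the positive indices are precisely the columns \(j\ne i_0\). Column \(a\) then consists of \(-2\) at \(u\) and \(+1\) at one other vertex, so your formula \(1-\#\{\text{negative entries}\}\) for column totals also fails there.

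\paragraph{The rest is not yet an argument, and the difficulty is misplaced.} The induction ``as in SSW'' never says which coordinate is deleted or why the \((-1,-1)\) column survives, and the walk from \(v\) to \(w\) is not carried out. The fullness step misuses \eqref{lat:reducible}. Its hypothesis, that every vertex has a positive coefficient, fails here because \(u\) is negative. Even when it applies, it makes \(T\), not the vertex, orthogonal to the others, so connectedness is not contradicted.

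Branching is not the real obstacle. Adjunction gives \((V+\mathbf{k})\cdot x=\operatorname{val}(x)-2\), and here \(V+\mathbf{k}=-e_{i_0}+\sum_{j\in Z}e_j\), where \(Z\) is the set of zero coordinates of \(V\). So once the embedding is full, \(\operatorname{val}(x)=2+x_{i_0}\). This is \(1\) at \(v,w\) and \(2\) elsewhere, so the chain with ends \(v,w\) is immediate. The whole content is fullness. The paper does not prove this; it imports it from \cite[Propositions~4.2 and~4.3]{SSW}.

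\paragraph{One self-contained route.} The vector \(e_{i_0}\) pairs to \(1\) with \(v,w\) and to \(0\) with every other vertex. Deleting column \(i_0\), that is, adding \(e_{i_0}\) to \(v\) and \(w\), therefore gives \(N\) adjunction vectors in rank \(N-1\). Their pairings are those of the tree, except that \(v^2\), \(w^2\) and \(v\cdot w\) each increase by one.

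If \(b_v,b_w\ge3\), the Perron--Frobenius argument of Lemma~\ref{lat:minimal-definite} makes all of them roots with distinct positive indices in only \(N-1\) columns, which is impossible. So, say, \(v=e_p-e_{i_0}\). Pairing with \(v\) gives \(w_p\in\{-1,-2\}\).

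When \(w_p=-1\), \(v\) is a leaf, and column \(p\) consists of \(+1\) at \(v\) and \(-1\) at \(w\) and at the neighbor of \(v\). Removing \(v\) and replacing \(w\) by \(w+e_{i_0}\) then gives a smaller instance with \((-1,-1)\) column \(p\). When \(w_p=-2\), a similar degeneracy argument forces \(N=2\). Induction on \(N\) then yields both fullness and the chain structure.
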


\begin{samepage}
\begin{lemma}\label{lat:star-reduction}
Let a minimal star-shaped tree have \(t=3\) or \(4\) arms and central weight \(-d\), where \(d\ge t-1\). Suppose it is embedded in a diagonal lattice so that the adjunction equalities hold. Then
\[
\sum_v(b_v-3)=4-t.
\]
If \(t=4\), exactly three arms have one vertex, with orders
\[
(3,3,3),\quad (2,4,4),\quad (2,3,6).
\]
\end{lemma}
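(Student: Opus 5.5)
The plan is to reduce the weight identity to counting zero columns of the total vector. Write \(S=\sum_v v\) for the sum of the vertex vectors over the whole tree. Since the tree has \(N\) vertices and \(N-1\) edges, and each edge contributes pairing one,
\[
S^2=-\sum_v b_v+2(N-1).
\]
By \eqref{lat:subtree-forms}, and because the first form is excluded for the whole minimal tree, \(S=-2e_i-\sum_{j\in J}e_j\) with \(i\notin J\). Let \(z=N-1-|J|\) be the number of zero coordinates of \(S\). Then \(S^2=-4-(N-1-z)\), and comparing the two expressions gives
\[
\sum_v(b_v-3)=1-z.
\]
So the identity \(\sum_v(b_v-3)=4-t\) is equivalent to the tree being full when \(t=3\), and to \(S\) having exactly one zero coordinate when \(t=4\).

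Next I would pin down the column of total coefficient \(-2\), which has nonzero entries \((-1,-1)\) or \((1,-1,-1,-1)\).
\begin{itemize}
\item If it is \((-1,-1)\), Lemma~\ref{lat:double-column} forces the tree to be a chain. This is impossible for a star with \(t\ge3\) arms.
\item So the pattern is \((1,-1,-1,-1)\). If every leaf is full, SSW~\cite[Proposition~5.2]{SSW} gives a full tree, so \(z=0\) and the sum is \(1\).
\item Otherwise some leaf is not full. Here I would first show that every vertex has a positive coefficient, since at most one vertex has the negative form and that one carries column \(i\). Then \eqref{lat:reducible} gives a vertex \(v=e_i+T\) with \(T\) orthogonal to every other vertex.
\end{itemize}

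In the last case, replacing \(v\) by \(e_i\) (deleting the columns of \(J\)) lowers \(b_v\) by \(|J|\) and removes columns. I would track how \(z\) and \(\sum(b_v-3)\) change under this move and argue by induction on \(N\). The key point to establish is that each such nonfull configuration forces \(v\) to be a leaf of an arm of one vertex, and that the arms' structure then bounds \(z\) by \(t-3\). The inequality \(d\ge t-1\) should exclude a second zero column: it keeps the central vertex vector of the form \(e_c-\sum\), with at least \(t-2\) negative entries shared with arm vectors. This yields \(z=0\) for \(t=3\) and \(z=1\) for \(t=4\).

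For \(t=4\), with \(\sum_v(b_v-3)=0\) in hand, I would write the vectors explicitly: the central vertex, and the four first arm vertices, each pairing one with it. A single-vertex arm of weight \(-b\) looks like \(e_a-e_c-\cdots\) with \(b-1\) further negative entries. I would then combine three things:
\begin{itemize}
\item the fact that exactly one column is zero in \(S\);
\item the negative definiteness from Lemma~\ref{lat:minimal-definite}, which gives \(e>0\);
\item the Hirzebruch--Jung identity \(W(n/a)+W(n/(n-a))=-2\).
\end{itemize}
Together these should force exactly three single-vertex arms with \(\sum 1/n_i=1\), giving the orders \((3,3,3)\), \((2,4,4)\), \((2,3,6)\). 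The main obstacle is this four-arm coefficient bookkeeping: showing that the reducible vertex from \eqref{lat:reducible} lives on a one-vertex arm, and that the unique zero column makes three arms short with orders summing reciprocally to one. I would attack it first by exhausting how the columns of the central vector can be shared among the four arms.
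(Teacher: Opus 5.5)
Your identity \(\sum_v(b_v-3)=1-z\), where \(z\) counts the zero coordinates of the total vector, is correct. It cleanly restates the lemma: full when \(t=3\), exactly one zero coordinate when \(t=4\). Excluding the pattern \((-1,-1)\) and treating the full case by \cite[Proposition~5.2]{SSW} also agree with the paper.

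\textbf{The nonfull case.} This case carries the content of the lemma, and here you give no argument; the claim you single out as key is false. Replacing \(v=e_i+T\) by \(e_i\) creates a \((-1)\)-vertex and a negative semidefinite configuration of corank one. That is not a minimal tree, so induction on \(N\) within the hypotheses of the lemma is unavailable. Nor does \(v\) lie on a one-vertex arm. For graph 2(a) with \(p=0\), take the embedding with center \(e_1-e_5-e_6\), short arms \(e_2-e_1-e_3\), \(e_3-e_1-e_4\), \(e_4-e_1-e_2\), and long arm \(e_6-e_2-e_3-e_4\), \(e_5-e_6\). Its only vertex satisfying \eqref{lat:reducible} is the \((-4)\)-vertex adjacent to the center. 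In general \(d\ge3\) places \(v\) on the arm with at least two vertices, never at its outer end.

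The missing mechanism is the paper's, in four steps:
\begin{enumerate}
\item Contract \((-1)\)-vertices until the image of the node is a \((-1)\)-vertex of valency three.
\item For a positive kernel cycle \(Z\), the central equation gives \(\sum_i a_i/n_i=1\), and \((\mathbf{k}+\sum_v v)\cdot Z=0\) gives \(\sum_i 1/n_i=1\). Together these force \(a_i=1\) and the three orders.
\item The requirement that \(m\,l_1l_2l_3\) be a square, with \(1\le m\le 12-\sum_i l_i\), fixes \(m=-T^2\).
\item Reverse the contractions. Edge blowups preserve \(\sum_v(b_v-3)\). The single smooth-point blowup lowers it by one; it occurs exactly when \(t=4\) and is followed by an edge blowup because \(d\ge3\). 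Lowering the \((-1)\)-vertex by \(m\) raises the sum by \(m\).
\end{enumerate}
Your proposed tools for the orders (one zero column, \(e>0\), Hirzebruch--Jung duality) yield inequalities or reformulations, never the equality \(\sum_i 1/l_i=1\).

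\textbf{Two smaller repairs.} First, for \(t=4\) you never exclude a full tree, which by your own formula gives \(1\) instead of \(0\). The paper excludes it as follows. For a full tree \(V+\mathbf{k}=-e_s\), so adjunction gives \(v_s=\operatorname{val}(v)-2\). Every leaf then has entry \(-1\) in column \(s\), which has only three negative entries, so there are at most three leaves.

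Second, your reason why every vertex has a positive coefficient fails. A vertex of negative form would carry its \(-2\) in a column of total \(-1\), not in column \(s\). The paper treats the case of exactly \(N-1\) vertices with positive coefficients separately. It uses Lemma~\ref{lat:minimal-definite} to show that the unique vertex containing the remaining coordinate \(e_j\) is \(e_i-e_j\), so that \(T=-e_j\) satisfies \eqref{lat:reducible}.
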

\end{samepage}
\begin{proof}
The graph is not a chain, so Lemma~\ref{lat:double-column} excludes a column with nonzero entries \((-1,-1)\). Let \(s\) be the column with nonzero entries \((1,-1,-1,-1)\). If the graph is full, its total vector is \(V=-2e_s-\sum_{j\ne s}e_j\). Thus \(V^2=-N-3\), and the identity \(V^2=-\sum_vb_v+2(N-1)\) gives \(\sum_v(b_v-3)=1\). Adjunction also gives
\[
\operatorname{val}(v)-2=(V+\mathbf{k})\cdot v=-e_s\cdot v=v_s.
\]
Every leaf has coefficient \(-1\) in column \(s\), which has exactly three negative entries. A full graph therefore has at most three leaves. This proves the weight identity when \(t=3\) and excludes a full graph when \(t=4\).

Suppose that the graph is not full. If every vertex has a positive coefficient, Stipsicz--Szab\'{o}--Wahl~\cite[Proposition~5.2 and Lemma~6.6]{SSW} give a vertex satisfying \eqref{lat:reducible}. Otherwise exactly \(N-1\) vertices have a positive coefficient, at distinct indices. The remaining coordinate is nonzero in some vertex, since the embedding has full rank, and has no positive entry. Its total coefficient is not \(-2\), since that coefficient already occurs in the column with nonzero entries \((1,-1,-1,-1)\). Hence it occurs in exactly one vertex \(v\), with coefficient \(-1\); denote this coordinate by \(e_j\). Removing \(-e_j\) preserves adjunction and the pairings between distinct vertices. If \(v^2\le-3\), the result would be \(N\) vectors of square at most \(-2\) satisfying the adjunction equalities in a lattice of rank \(N-1\), contrary to Lemma~\ref{lat:minimal-definite}. Thus \(v=e_i-e_j\), and \(T=-e_j\) satisfies \eqref{lat:reducible}.

Replace \(v=e_i+T\) by \(e_i\). Since \(T\cdot e_i=0\), all pairings between distinct vertices are unchanged, and the square of this vertex becomes \(-1\). The new vectors lie in \(T^\perp\), and the other \(N-1\) vectors remain linearly independent. The intersection matrix is therefore negative semidefinite of corank one, with a unique \((-1)\)-vertex.

Contract the unique \((-1)\)-vertex whenever its valency is one or two. A vector satisfying the adjunction equality and having square \(-1\) is a diagonal basis vector \(e_j\). Every neighbor contains \(-e_j\), and every nonneighbor has zero coefficient in this coordinate. Add \(e_j\) to each neighbor and omit coordinate \(j\). This contraction preserves adjunction and corank one. At most one new \((-1)\)-vertex appears. Two could appear only after a valency-two contraction, and would be adjacent, whereas two vectors satisfying the adjunction equalities and having square \(-1\) have nonpositive pairing. At least one appears by Lemma~\ref{lat:minimal-definite}. The vector \(T\) has zero coefficient in each omitted coordinate and remains orthogonal to every vector in the contracted graph.

A \((-1)\)-vertex has valency at most three. Otherwise the sum of this vector and its neighbors would have a coefficient at most \(-3\), contrary to \eqref{lat:subtree-forms}. The contractions terminate at a \((-1)\)-vertex of valency three. Since these contractions do not create nodes, this vertex is the image of the original node. Three chains of weights at most \(-2\) are attached to it. Write their fractions as \(n_i/a_i\), and let \(Z=\sum m_vv\) be a positive kernel cycle with central coefficient \(m_0\). Solving the chain intersection equations gives coefficients at the components adjacent to the center and at the outer ends \(m_0a_i/n_i\) and \(m_0/n_i\), respectively. The central intersection equation and adjunction give
\[
\sum_{i=1}^{3}\frac{a_i}{n_i}=1,
\quad
0=(\mathbf{k}+\sum_vv)\cdot Z
=m_0-\sum_{i=1}^{3}\frac{m_0}{n_i}.
\]
Subtracting these two equalities gives \(\sum_i(a_i-1)/n_i=0\). Since \(a_i\ge1\), each \(a_i=1\). The resulting graph has a central \((-1)\)-vertex and three arms with one vertex. After permutation, their orders \((l_1,l_2,l_3)\) are \((3,3,3)\), \((2,4,4)\), or \((2,3,6)\), the solutions of \(\sum_i1/l_i=1\) with \(l_i\ge2\).

Put \(m=-T^2\). The resulting diagonal lattice has rank four. Its central vector is \(e_s\), each neighbor contains \(-e_s\), and the total vector \(V\) has coefficient \(-2\) in column \(s\). All other coordinates of \(V\) are zero or \(-1\). Since
\[
V^2=5-(l_1+l_2+l_3),\quad T\cdot V=0,
\]
the support of \(T\) is contained in the zero coordinates of \(V\). There are \(12-l_1-l_2-l_3\) such coordinates. Adding \(T\) to the central vector gives a full-rank integral embedding of the four-vertex tree with central weight \(-(1+m)\). Indeed, \(T\) is nonzero and orthogonal to the original span, and the kernel cycle has nonzero central coefficient. The determinant of the negative intersection matrix is
\[
m l_1l_2l_3,
\]
and is a square because the four vectors are integral in a rank-four diagonal lattice. Since the nonzero coefficients of \(T\) are \(-1\), we obtain
\[
\begin{array}{c|c|c}
(l_1,l_2,l_3)&1\le m\le 12-l_1-l_2-l_3&m\\\hline
(3,3,3)&1\le m\le3&3\\
(2,4,4)&1\le m\le2&2\\
(2,3,6)&m=1&1
\end{array}
\]
Thus the four-vertex tree with central weight \(-(1+m)\) satisfies \(\sum_v(b_v-3)=1\).

Reverse the contractions, and then decrease the square of the unique \((-1)\)-vertex by \(m\). Every inverse contraction is a blowup at this vertex or an incident edge; the new exceptional vertex is the unique \((-1)\)-vertex. The node of the graph with three arms, each with one vertex persists. A second node cannot be created, since the final graph is star-shaped. The distance of the unique \((-1)\)-vertex from this node does not decrease. If the final graph has three arms, the first blowup cannot be at a smooth point of the node, since this would create a fourth arm. Hence the first blowup, if any, is at an edge. The new \((-1)\)-vertex has valency two, so a blowup at a smooth point of it would create a second node. Every subsequent blowup is therefore at an edge.

If the final graph has four arms, the first blowup is at a smooth point of the node. It creates the fourth arm and changes the central weight from \(-1\) to \(-2\). A first blowup at an edge would leave every subsequent \((-1)\)-vertex away from the node, so the fourth arm could not be created. Since \(d\ge3\), the second blowup is at the edge joining the node to its new leaf. A blowup at a smooth point of the new leaf would move the unique \((-1)\)-vertex away from the node and leave the central weight equal to \(-2\). After the second blowup the unique \((-1)\)-vertex has valency two. All remaining blowups are at edges in the fourth arm. The original three arms with one vertex are unchanged, and the fourth arm has at least two vertices.

A blowup at an edge preserves \(\sum_v(b_v-3)\), and a blowup at a smooth point decreases it by one. There are no blowups at smooth points when \(t=3\), and exactly one when \(t=4\). Decreasing the square of the unique \((-1)\)-vertex by \(m\) increases this sum by \(m\), independently of the number of contractions. Hence the sum is \(1\) when \(t=3\) and \(0\) when \(t=4\).
\end{proof}

The cohomology dimensions and weight identities below also appear in the proof of Wahl~\cite[Corollary~8.2]{Wahl2011}. Here they follow from the structural lattice results above, without using the classification of the graphs.

\begin{proposition}\label{prop:three-arm-lattice-vanishing}
For a noncyclic weighted homogeneous surface singularity admitting a QHD smoothing, with \(t\) arms, one has
\[
h^1(S_X)=t-3,\quad \sum_v(b_v-3)=4-t.
\]
In the four-arm case exactly three arms are arms with one vertex and orders \((3,3,3)\), \((2,4,4)\), or \((2,3,6)\).
\end{proposition}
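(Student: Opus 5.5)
The plan is to combine the three earlier inputs directly. Theorem~\ref{foundation:PS} makes the singularity rational and weighted homogeneous, so its minimal good resolution graph is star-shaped with \(t\ge3\) arms. Theorem~\ref{thm:arm-weight-bounds} then gives \(t\in\{3,4\}\) and \(d\ge t-1\). By Stipsicz--Szab\'{o}--Wahl~\cite[Proposition~2.4]{SSW}, the graph embeds in a negative diagonal lattice with the adjunction equalities. The graph is minimal, since rationality forces every \(b_v\ge2\) and the minimal good resolution of a noncyclic star-shaped rational singularity has no \((-1)\)-curves.

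With these hypotheses verified, Lemma~\ref{lat:star-reduction} applies verbatim. It gives \(\sum_v(b_v-3)=4-t\). In the four-arm case it also gives exactly three arms with one vertex, of orders \((3,3,3)\), \((2,4,4)\), or \((2,3,6)\). This proves every assertion except the formula for \(h^1(S_X)\).

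For \(h^1(S_X)\), I would use the first identity of \eqref{foundation:dimension-eq}, \(1=h^1(S_X)+\sum_v(b_v-3)\). This identity is available because Theorem~\ref{foundation:PS} places the smoothing on a one-dimensional component, which is exactly the input to Wahl's formula. Substituting the weight identity gives \(h^1(S_X)=1-(4-t)=t-3\).

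The only point needing care is checking the hypotheses of Lemma~\ref{lat:star-reduction}: that the embedded tree is the minimal star-shaped graph with \(d\ge t-1\), and that the sign normalization making the characteristic vector equal to \(\mathbf{k}\) is in place. Both are recorded earlier, in the discussion after \eqref{lat:subtree-forms} and in Lemma~\ref{lem:lower-central-bound}. I expect no genuine obstacle beyond this bookkeeping.
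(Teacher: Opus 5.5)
Your proposal is correct and follows essentially the same route as the paper's proof. The paper likewise obtains \(t\in\{3,4\}\) and \(d\ge t-1\) from Theorem~\ref{thm:arm-weight-bounds}, applies Lemma~\ref{lat:star-reduction} to the Stipsicz--Szab\'{o}--Wahl embedding, and then reads off \(h^1(S_X)=t-3\) from \eqref{foundation:dimension-eq}.
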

\begin{proof}
Theorem~\ref{thm:arm-weight-bounds} gives \(t=3\) or \(4\) and \(d\ge t-1\). Apply Lemma~\ref{lat:star-reduction} to the embedding satisfying the adjunction equalities, and then use \eqref{foundation:dimension-eq}.
\end{proof}

The preceding vanishing gives three-arm tautness; see Laufer~\cite{Laufer1973} for the classification of taut rational surface singularities.

\begin{corollary}\label{cor:three-arm-taut}
Every three-arm weighted homogeneous singularity admitting a QHD smoothing is taut.
\end{corollary}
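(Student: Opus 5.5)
Three-arm tautness will follow from Laufer's criterion: a rational singularity with resolution graph \(\Gamma\) is taut exactly when \(H^1(S_X)=0\). More precisely, Laufer~\cite{Laufer1973} shows that the analytic structure is determined by the graph as soon as \(H^1(X,S_X)\) vanishes. Here \(S_X=\Theta_X(-\log E)\) parametrizes equisingular deformations of the exceptional configuration, and its vanishing is equivalent to rigidity of \((X,E)\) as a germ.

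The plan is as follows.

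First, take \((V,0)\) to be a three-arm weighted homogeneous singularity admitting a QHD smoothing. By Theorem~\ref{foundation:PS} it is rational, and by Theorem~\ref{thm:arm-weight-bounds} the number of arms is \(t=3\). Proposition~\ref{prop:three-arm-lattice-vanishing} then gives \(h^1(S_X)=t-3=0\). That proposition is itself proved by inserting the weight identity \(\sum_v(b_v-3)=1\) of Lemma~\ref{lat:star-reduction} into the smoothing-component formula \eqref{foundation:dimension-eq}.

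Second, I would apply Laufer's tautness theorem. The vanishing of \(H^1(S_X)\) means the exceptional configuration admits no nontrivial first-order equisingular deformations. For a star-shaped graph with three arms, the only analytic moduli are the attachment points on the central \(\mathbb{P}^1\) together with the normal bundle data. Three points on \(\mathbb{P}^1\) have no cross-ratio modulus, so this is consistent with the vanishing. Laufer's argument, which uses the formal neighborhood and the vanishing of the obstruction groups \(H^1(\Theta_{X}(-\log E)\otimes\mathcal{O}(-Z))\) inductively, then shows that any two singularities with this graph are analytically isomorphic. Equivalently, one can quote Laufer's classification list directly: star-shaped graphs with three arms whose central curve is rational are taut, a standard case containing the Seifert-data determined quasihomogeneous ones.

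Third, weighted homogeneity plays a role. A weighted homogeneous singularity is determined by its Seifert data together with the positions of the attachment points, modulo \(\mathrm{PGL}_2\) (Pinkham and Dolgachev). With three arms, \(\mathrm{PGL}_2\) moves any three distinct points to \(0,1,\infty\). So every weighted homogeneous singularity with a given three-arm graph is unique. Tautness additionally requires that no non-weighted-homogeneous structure share the graph, and this is exactly what \(h^1(S_X)=0\) via Laufer supplies.

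The main obstacle is the correct citation of Laufer's criterion, since tautness concerns all analytic structures and not only the weighted homogeneous ones. I would rely on Laufer's theorem that vanishing of \(H^1(S_X)\) for one structure implies the graph is taut. Alternatively, since three-arm star graphs with \(\sum_v(b_v-3)=1\) fall into Laufer's explicit list of taut graphs, I would check membership in that list as a fallback.
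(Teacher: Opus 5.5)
Your main line is the paper's argument: rationality from Theorem~\ref{foundation:PS}, then \(h^1(S_X)=t-3=0\) from Proposition~\ref{prop:three-arm-lattice-vanishing}, then a criterion that turns this vanishing into tautness.

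The one thing to fix is how you state that criterion. You state it for an arbitrary analytic structure, as ``taut exactly when \(H^1(S_X)=0\)'' and ``vanishing for one structure implies the graph is taut''. What you need, and what the paper cites, is the version for the weighted homogeneous structure itself. By Wahl~\cite[Corollary~(5.7)]{Wahl1988}, a rational weighted homogeneous singularity that is not taut has a nonzero first-order equisingular deformation, and the tangent space of such deformations is \(H^1(S_X)\). Your third paragraph is already moving toward this. With this citation in place, the argument is complete.

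You should drop the proposed alternatives. The claim that every three-arm star-shaped graph with rational central curve is taut is false. For instance, \(x^2+y^3+z^7\) and \(x^2+y^3+z^7+yz^5\) have the same graph: central weight \(-1\) and arms \([2],[3],[7]\). Yet their Tjurina numbers are \(12\) and \(11\), so they are not isomorphic. Your fallback claim, that every three-arm graph with \(\sum_v(b_v-3)=1\) lies on Laufer's list, is not checked. It is also unnecessary once \(h^1(S_X)=0\) is combined with Wahl's corollary.
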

\begin{proof}
The singularity is rational by Theorem~\ref{foundation:PS}. If a rational weighted homogeneous surface singularity is not taut, then it has a nonzero first-order equisingular deformation Wahl~\cite[Corollary~(5.7)]{Wahl1988}. The tangent space of equisingular deformations is \(H^1(S_X)\), which vanishes by Proposition~\ref{prop:three-arm-lattice-vanishing}.
\end{proof}
\section{Canonical-cover obstructions and Seifert invariants}\label{sec:classification}

We establish a character obstruction and compute it, together with the canonical Riemann--Roch corrections, from the Seifert data.

\subsection{A character obstruction on the canonical cover}

\begin{theorem}[Character obstruction]\label{obs:equivariant-hodge}
Let \((V,0)\) be a normal complex surface singularity admitting a rational homology disk smoothing. Let \(B\) be its canonical cover, with group \(G=\boldsymbol{\mu}_r\), and let \(\mathcal{P}=H^1(X_B,\mathcal{O}_{X_B})\), where \(X_B\to\operatorname{Spec}B\) is a \(G\)-equivariant resolution over a sufficiently small representative. Then \(\mathcal{P}\oplus\overline{\mathcal{P}}\) is isomorphic, as a complex \(G\)-representation, to a direct summand of the reduced regular representation \(\mathbb{C}[G]/\mathbb{C}\). Equivalently, if \(p_j\) is the multiplicity of \(\chi^j\) in \(\mathcal{P}\), for a faithful character \(\chi\) of \(G\), then
\begin{equation}\label{obs:hodge}
p_j+p_{-j}\leq 1-\delta_{j,0}
\qquad\left(j\in\mathbb{Z}/r\mathbb{Z}\right).
\end{equation}
\end{theorem}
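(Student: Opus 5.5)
The plan is to compare the Hodge-theoretic data of the canonical cover with the cohomology of its Milnor fiber. By Theorem~\ref{foundation:PS}, the QHD smoothing is strong \(\mathbb{Q}\)-Gorenstein, so it lifts to a \(G\)-equivariant smoothing \(\mathcal{B}\to\mathbb{D}\) of \(B\), with \(G\) acting trivially on the base. The action is free on the nearby fiber \(F_B\), since it is the index-one cover of a smoothing with smooth fibers, and \(F_B/G=F\) is the QHD Milnor fiber.

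\textbf{Step 1: the Milnor fiber of the cover as a representation.} Since the covering \(F_B\to F\) is an unramified \(G\)-covering, the Lefschetz fixed point formula gives the equivariant Euler characteristic
\[
\sum_i(-1)^i[H^i(F_B,\mathbb{C})]=\chi(F)\,\mathbb{C}[G]=\mathbb{C}[G].
\]
For every character \(\psi\), the \(\psi\)-isotypic part of \(H^i(F_B,\mathbb{C})\) is \(H^i(F,\mathcal{L}_\psi)\) for the associated rank-one local system. For \(i=0\) this space is \(\mathbb{C}\) exactly when \(\psi\) is trivial. The Milnor fiber of a normal surface smoothing has \(H^1(F_B,\mathbb{Q})=0\) by Greuel--Steenbrink, and it has the homotopy type of a 2-complex. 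Hence \(H^2(F_B,\mathbb{C})\cong\mathbb{C}[G]/\mathbb{C}\) as \(G\)-representations.

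\textbf{Step 2: inject \(\mathcal{P}\oplus\overline{\mathcal{P}}\) equivariantly.} I would use Steenbrink's mixed Hodge structure on the vanishing cohomology of the Gorenstein smoothing \(\mathcal{B}\), taken \(G\)-equivariantly; this is possible because \(G\) acts by automorphisms of the family that commute with the projection. The standard identification (Steenbrink, Wahl) gives \(\operatorname{Gr}_F^0H^2(F_B,\mathbb{C})\) with the \(\operatorname{Gr}^0_F\) part of dimension \(p_g(B)\), naturally isomorphic to \(H^1(X_B,\mathcal{O}_{X_B})\) compatibly with \(G\). The conjugate part \(\operatorname{Gr}_F^2\) is obtained from complex conjugation on \(H^2(F_B,\mathbb{C})=H^2(F_B,\mathbb{R})\otimes\mathbb{C}\), which commutes with \(G\) and yields \(\overline{\mathcal{P}}\). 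Concretely, the piece \(F^2\cap\) (the appropriate weight part) gives \(\overline{\mathcal{P}}\), and \(\operatorname{Gr}_F^0\) gives \(\mathcal{P}\). These two appear in different Hodge-graded pieces, so the representation \(H^2(F_B,\mathbb{C})\) contains \(\mathcal{P}\oplus\overline{\mathcal{P}}\) as a subquotient. Since \(G\) is finite, complete reducibility turns this subquotient into a direct summand, so \(\mathcal{P}\oplus\overline{\mathcal{P}}\) is a direct summand of \(\mathbb{C}[G]/\mathbb{C}\).

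\textbf{Step 3: reformulation.} The reduced regular representation of \(\boldsymbol{\mu}_r\) contains each nontrivial character \(\chi^j\) exactly once and the trivial one not at all. The multiplicity of \(\chi^j\) in \(\mathcal{P}\oplus\overline{\mathcal{P}}\) is \(p_j+p_{-j}\), because \(\overline{\chi^{-j}}=\chi^j\). This gives \eqref{obs:hodge}, and the converse direction of the equivalence is immediate.

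The main obstacle is Step 2. It requires making precise the equivariant identification of \(H^1(X_B,\mathcal{O})\) and its conjugate with distinct Hodge-graded pieces of \(H^2\) of the Milnor fiber for a non-isolated-hypersurface, possibly non-smoothable-in-general, Gorenstein cover. I would first try Steenbrink's theorem that \(\dim\operatorname{Gr}_F^0 H^2=p_g\) via the spectral sequence of the semistable reduction, checking functoriality under automorphisms of the family. If that is delicate, a fallback is Wahl's formula \(\mu_0+\mu_-=2p_g\) applied isotypically. Here \(\mu_0+\mu_-\) is the dimension of the non-positive part of the intersection form, and on each isotypic component the Hodge decomposition gives at least \(p_j+p_{-j}\) such classes. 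The multiplicity-one bound from Step 1 then finishes the argument.
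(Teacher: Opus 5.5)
Your Steps 1 and 3 agree with the paper. Freeness of the deck action, \(\chi(F)=1\), and Greuel--Steenbrink give \(H^2(\widetilde F,\mathbb{C})\cong\mathbb{C}[G]/\mathbb{C}\). The reformulation is semisimplicity together with \(\overline{\chi^{-j}}=\chi^{j}\).

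The gap is Step~2, which carries the entire content of the theorem. The mechanism you propose there is wrong, not merely unverified: it treats the Hodge structure on \(H^2\) of the Milnor fiber as if it were pure of weight two. In a mixed Hodge structure, conjugation pairs \(H^{p,q}\) with \(H^{q,p}\) only inside each \(\operatorname{Gr}^W_k\). The pieces \(\operatorname{Gr}_F^0\) and \(\operatorname{Gr}_F^2\) are not conjugate, and their dimensions \(\sum_q h^{0,q}\) and \(\sum_q h^{2,q}\) differ in general.

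Here the structure is genuinely mixed. For the paper's model \(U=Y\setminus(C_B\cup D_B)\) of \(\widetilde F\), one has \(\operatorname{Gr}^W_2H^2(U)\cong H^2(Y,\mathbb{Q})/\langle[Z_\nu]\rangle\) and \(\operatorname{Gr}^W_3H^2(U)\cong H^1(C_B,\mathbb{Q})(-1)\). Moreover \([\mathcal{P}]=[H^2(Y,\mathcal{O}_Y)]+[H^1(C_B,\mathcal{O}_{C_B})]\). Thus \(\mathcal{P}\) is the type \((0,2)\) part of weight two plus the type \((1,2)\) part of weight three, and \(\overline{\mathcal{P}}\) consists of types \((2,0)\) and \((2,1)\). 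Consequently:
\begin{itemize}
\item \(\operatorname{Gr}_F^2\cong\overline{\mathcal{P}}\) does hold;
\item \(\operatorname{Gr}_F^0\) is only \(H^2(Y,\mathcal{O}_Y)\), of dimension \(p_g(B)-g(C_B)\);
\item the weight-three part of \(\mathcal{P}\) lies in \(\operatorname{Gr}_F^1\) together with the \((1,1)\) classes, so no \(F\)-graded piece isolates \(\mathcal{P}\).
\end{itemize}
This already happens for \((2;(2,1),(3,1),(6,1))\), which is row~1(f) with \(q=0\). Its canonical cover is a simple elliptic singularity of degree six. So \(\mathcal{P}=H^1(C_B,\mathcal{O}_{C_B})\) is one-dimensional, \(H^2(Y,\mathcal{O}_Y)=0\), and \(\operatorname{Gr}_F^0H^2(U)=0\).

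Two things are therefore missing.
\begin{enumerate}
\item You must work with the Hodge decompositions of \(\operatorname{Gr}^W_2\) and \(\operatorname{Gr}^W_3\) separately. There the four types \((0,2),(2,0),(1,2),(2,1)\) are distinct, and no self-conjugate type such as \((2,2)\) or \((1,1)\) enters \(\mathcal{P}\).
\item You need an equivariant proof of the identity for \([\mathcal{P}]\) in the representation ring.
\end{enumerate}
The paper obtains (2) without any equivariant form of Steenbrink's \(p_g\) theorem. Using weighted homogeneity from Theorem~\ref{foundation:PS}, it runs Pinkham's compactification \(G\)-equivariantly on the graded covering smoothing. The resolved central compactified fiber is ruled over \(C_B\), with \(R^1f_*\mathcal{O}\) equal to \(\mathcal{P}\) at the vertex. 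Constancy of the equivariant Euler characteristic of \(\mathcal{O}\) in the flat projective family, together with \(H^1(Y,\mathcal{O}_Y)=0\), then gives the identity.

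Your fallback through an isotypic version of \(\mu_0+\mu_-=2p_g\) presupposes the same equivariant location of \(\mathcal{P}\) in the mixed Hodge structure. It therefore does not bypass the gap.
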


\begin{proof}
If \((V,0)\) is smooth, then \(r=1\) and \(\mathcal{P}=0\), so the assertion is immediate. Otherwise, Theorem~\ref{foundation:PS} gives weighted homogeneity, a one-dimensional QHD smoothing component, and the strong \(\mathbb{Q}\)-Gorenstein property. Since \(\mathcal{P}\) depends only on the central canonical cover, we choose the smoothing over the normalization of this component, as in Section~\ref{sec:compactification}. Write \(R\) for its graded total algebra and \(\tau\) for its parameter. The grading is good and \(\deg\tau>0\).

The strong \(\mathbb{Q}\)-Gorenstein property identifies the central fiber of the relative canonical-cover algebra with \(B\):
\[
\widetilde{R}=\bigoplus_{j=0}^{r-1}\omega_R^{[j]},
\qquad
\widetilde{R}/\tau\widetilde{R}\simeq B.
\]
A homogeneous trivialization of \(\omega_R^{[r]}\) defines a rational grading on \(\widetilde{R}\) compatible with multiplication. Multiplying all degrees by a positive integer makes this grading integral. It commutes with the deck group \(G\), which acts trivially on \(\tau\). This grading is good: \(\widetilde{R}\) is a finite graded domain over \(R\), and a homogeneous element of negative degree would satisfy a homogeneous monic equation whose nonleading coefficients all have negative degree in \(R\). It would therefore be nilpotent. A homogeneous element of degree zero satisfies a monic equation over \(R_0=\mathbb{C}\), and hence belongs to \(\mathbb{C}\). Thus \(\widetilde{R}\) has no negative degrees and \(\widetilde{R}_0=\mathbb{C}\); the induced grading on \(B\) is good as well.

Let \(\widetilde{F}\to F\) be the covering of Milnor fibers, on which \(G\) acts freely. The covering fiber is connected, has the homotopy type of a finite CW complex of real dimension at most two, and satisfies \(H^1(\widetilde{F},\mathbb{Q})=0\) by Greuel--Steenbrink~\cite{GreuelSteenbrink1983}. Since \(\chi(\widetilde{F})=r\chi(F)=r\), Wahl~\cite[Lemma~5.7]{Wahl2015} gives
\[
H^2(\widetilde{F},\mathbb{C})\simeq\mathbb{C}[G]/\mathbb{C}
\]
as complex \(G\)-representations.

Apply Pinkham's construction~\cite{Pinkham} to the covering smoothing. The construction is \(G\)-equivariant because the deck action commutes with the grading. Explicitly, if \(w=\deg\tau>0\), the projective family before resolving its boundary is
\[
\operatorname{Proj}_{\mathbb{C}[z]}
\left(\widetilde{R}[s,z]/(\tau-zs^w)\right),
\qquad \deg s=1,\quad\deg z=0,
\]
where \(G\) fixes \(s\) and \(z\). Its central fiber is \(\operatorname{Proj}(B[s])\), and its boundary is locally trivial over the base. Simultaneously resolve the cyclic quotient singularities along the boundary. Their minimal resolutions are unique, so this operation is \(G\)-equivariant. The resulting projective family remains flat, since this resolution is locally a product over the base. Its smooth projective general fiber \(Y\) has boundary \(C_B\cup D_B\), where \(C_B\simeq\operatorname{Proj}B\) is a smooth connected projective curve and \(D_B\) consists of rational chains attached once to \(C_B\). The complement \(U=Y\setminus(C_B\cup D_B)\) has the \(G\)-equivariant homotopy type of \(\widetilde{F}\). By the divisor Gysin sequence, \(H^1(Y,\mathbb{Q})\to H^1(U,\mathbb{Q})\) is injective, so \(H^1(Y,\mathcal{O}_Y)=H^3(Y,\mathbb{Q})=0\).

Let \(Y_0\) be the central compactified fiber after resolving the boundary, and let \(f\colon W\to Y_0\) resolve its vertex. The surface \(W\) is, up to blowups, equivariantly ruled over \(C_B\). Hence \(H^1(W,\mathcal{O}_W)\simeq H^1(C_B,\mathcal{O}_{C_B})\) equivariantly and \(H^2(W,\mathcal{O}_W)=0\). Moreover, \(f_*\mathcal{O}_W=\mathcal{O}_{Y_0}\), and \(R^1f_*\mathcal{O}_W\) is supported at the vertex with space of sections \(\mathcal{P}\). The equivariant Leray spectral sequence gives
\[
\chi_G(Y_0,\mathcal{O}_{Y_0})
=[\mathbb{C}]-[H^1(C_B,\mathcal{O}_{C_B})]+[\mathcal{P}]
\]
in the complex representation ring \(R(G)\). The equivariant Euler characteristic is constant in the flat projective family: the character projectors split its relative cohomology complex, and the Euler characteristic of each summand is constant. Since \(H^1(Y,\mathcal{O}_Y)=0\), comparison with the general fiber yields
\[
[\mathcal{P}]
=[H^2(Y,\mathcal{O}_Y)]+[H^1(C_B,\mathcal{O}_{C_B})].
\]

Put \(H=H^2(U,\mathbb{Q})\). Deligne's mixed Hodge structure and the weight spectral sequence are functorial, so they are \(G\)-equivariant~\cite[Theorem~3.2.5(iii) and Corollary~3.2.13(iii)]{Deligne1971}. If \(Z_\nu\) are the irreducible boundary components, the sequence gives
\[
\operatorname{Gr}^W_2 H
\simeq H^2(Y,\mathbb{Q})/
\langle[Z_\nu]\rangle,
\qquad
\operatorname{Gr}^W_3 H
\simeq H^1(C_B,\mathbb{Q})(-1).
\]
For the second identification, the rational components of \(D_B\) have zero first cohomology and \(H^3(Y,\mathbb{Q})=0\). Boundary divisor classes have type \((1,1)\), and the Tate twist has trivial \(G\)-action. Consequently, the \((0,2)\) and \((2,0)\) parts of \(\operatorname{Gr}^W_2 H_{\mathbb{C}}\), together with the \((1,2)\) and \((2,1)\) parts of \(\operatorname{Gr}^W_3 H_{\mathbb{C}}\), have representation
\[
H^2(Y,\mathcal{O}_Y)\oplus\overline{H^2(Y,\mathcal{O}_Y)}
\oplus H^1(C_B,\mathcal{O}_{C_B})\oplus\overline{H^1(C_B,\mathcal{O}_{C_B})}
\simeq\mathcal{P}\oplus\overline{\mathcal{P}}.
\]
Semisimplicity splits the weight filtration as \(G\)-representations, giving the required summand of \(H^2(U,\mathbb{C})\). The splitting need not be canonical or preserve the mixed Hodge structure.

Finally, \(H^2(U,\mathbb{C})\simeq\mathbb{C}[G]/\mathbb{C}\). Complex conjugation sends \(\chi^j\) to \(\chi^{-j}\), so the multiplicity of \(\chi^j\) in \(\mathcal{P}\oplus\overline{\mathcal{P}}\) is \(p_j+p_{-j}\). Its multiplicity in the reduced regular representation is \(1-\delta_{j,0}\), proving \eqref{obs:hodge}. Conversely, these multiplicity inequalities imply the asserted direct-summand property by semisimplicity.
\end{proof}

\begin{corollary}\label{obs:hodge-consequences}
Under the hypotheses of Theorem~\ref{obs:equivariant-hodge}, the representation \(\mathcal{P}\) is multiplicity-free and contains neither the trivial character nor a nontrivial character of order two. No character and its inverse both occur, and
\[
2p_g(B)\le r-1.
\]
\end{corollary}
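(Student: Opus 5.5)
The corollary follows from inequality~\eqref{obs:hodge} of Theorem~\ref{obs:equivariant-hodge} by reading it off character by character, so the plan is a short bookkeeping argument. Fix a faithful character \(\chi\) of \(G=\boldsymbol{\mu}_r\). Every irreducible complex representation of the cyclic group \(G\) is one-dimensional and equal to \(\chi^j\) for a unique \(j\in\mathbb{Z}/r\mathbb{Z}\). Hence \(\mathcal{P}\simeq\bigoplus_j(\chi^j)^{\oplus p_j}\).

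The first step treats the characters equal to their own inverse. Take \(j=0\) in \eqref{obs:hodge}. This gives \(2p_0\le0\), so \(p_0=0\) and the trivial character does not occur. A character \(\chi^j\) with \(j\ne0\) has order two exactly when \(2j\equiv0\) and \(j\not\equiv0\pmod r\), that is, when \(j=-j\) in \(\mathbb{Z}/r\mathbb{Z}\). For such \(j\), \eqref{obs:hodge} reads \(2p_j\le1\), which forces \(p_j=0\).

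The second step treats the remaining characters. For \(j\ne-j\), both \(p_j\) and \(p_{-j}\) are nonnegative integers with sum at most one. So each is at most one, and at most one of \(\chi^j\) and \(\chi^{-j}\) occurs. Combining this with the first step, every \(p_j\le1\). Thus \(\mathcal{P}\) is multiplicity-free and no character occurs together with its inverse.

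For the genus bound, identify \(p_g(B)\) with \(\dim\mathcal{P}=h^1(X_B,\mathcal{O}_{X_B})=\sum_jp_j\). Summing \eqref{obs:hodge} over all \(j\in\mathbb{Z}/r\mathbb{Z}\), each \(p_j\) appears twice on the left, once as \(p_j\) and once as \(p_{-(-j)}\). This gives
\[
2p_g(B)=\sum_j(p_j+p_{-j})\le\sum_j(1-\delta_{j,0})=r-1.
\]
Equivalently, compare dimensions in the direct-summand statement: \(\dim(\mathcal{P}\oplus\overline{\mathcal{P}})\le\dim\mathbb{C}[G]/\mathbb{C}=r-1\).

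There is no genuine obstacle here. The only points needing care are the indexing: the order-two case \(j=-j\), which occurs only when \(r\) is even, and the double counting in the sum. The one substantive input is the identification \(p_g(B)=\dim H^1(X_B,\mathcal{O}_{X_B})\), which holds for any resolution of the normal surface singularity \(\operatorname{Spec}B\) over a sufficiently small representative.
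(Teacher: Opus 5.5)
Your proposal is correct and follows the paper's proof: you read off each assertion from the inequalities \eqref{obs:hodge} of Theorem~\ref{obs:equivariant-hodge} character by character, and you sum them over \(j\in\mathbb{Z}/r\mathbb{Z}\) to get \(2p_g(B)\le r-1\). Your treatment of the cases \(j=0\) and \(j=-j\), and of the double counting in the sum, is accurate.
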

\begin{proof}
The character inequalities give each assertion, and their sum gives the genus bound.
\end{proof}

The scalar bound \(2p_g(B)\le\mu(\widetilde F)=r-1\) is classical; see Stipsicz--Szab\'{o}--Wahl~\cite[Section~2.3]{SSW}. Theorem~\ref{obs:equivariant-hodge} restricts each inverse pair of characters independently of the graph classification.

\subsection{The three-arm numerical equations}\label{obs:section}

For three-arm graphs of singularities admitting QHD smoothings, the weight identity and \(K^2+N=0\) give two equations in the Seifert invariants. These equations also determine the cases with \(\eta=0\).

\begin{proposition}\label{obs:general-equations}\label{obs:orbifold-sign}
Let \((V,0)\) be a three-arm normal weighted homogeneous surface singularity admitting a QHD smoothing. Then
\begin{equation}\label{obs:reduced-graph-equations}
\sum_{i,j}(b_{ij}-3)=4-d,\quad e\left(1-\sum_i\frac{a_i^*}{n_i}\right)=\eta^2.
\end{equation}
Let a negative definite three-arm graph satisfy these equations with \(2\le d\le4\). Then \(0\le\eta/e<1\), and equality \(\eta=0\) occurs precisely for
\[
\left(4;(3,1),(3,1),(3,1)\right),\quad \left(3;(2,1),(4,1),(4,1)\right),\quad \left(2;(2,1),(3,1),(6,1)\right).
\]
\end{proposition}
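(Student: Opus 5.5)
The plan is to read off the first equation from the lattice identity, and to get the second by expressing \(K^2+N\) in terms of the Seifert invariants. The inequalities then follow from elementary manipulations of the second equation.

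\emph{First equation.} By Proposition~\ref{prop:three-arm-lattice-vanishing} with \(t=3\), we have \(\sum_v(b_v-3)=1\). The central vertex contributes \(d-3\). Subtracting it leaves \(\sum_{i,j}(b_{ij}-3)=4-d\).

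\emph{Second equation.} I would use \(K^2+N=0\) from \eqref{foundation:dimension-eq} and compute \(K^2\) for a star-shaped graph.
\begin{itemize}
\item Write \(K=-Z_K\) and solve the adjunction equations \(Z_K\cdot E_v=2-b_v\) on each arm.
\item On a chain \(n_i/a_i\), the solution is determined by the central coefficient \(k_0\) and the boundary data. This involves \(a_i/n_i\) at the end adjacent to the center and \(a_i^*/n_i\) at the outer end, via the reversed chain.
\item The central equation gives \(k_0\), up to sign, as \(1+\eta/e\).
\item Then compute \(K^2=-Z_K\cdot\sum_v k_v E_v\) arm by arm, using the standard cyclic-quotient evaluation of \(K^2+N\) for a chain, which involves \((a_i+a_i^*+2)/n_i\) and \(\sum_j(b_{ij}-3)\).
\end{itemize}
The result should be a closed formula of the form
\[
K^2+N = c_0 - \sum_{v}(b_v-3) + \frac{\eta^2}{e} - \Bigl(1-\sum_i\frac{a_i^*}{n_i}\Bigr),
\]
with a constant \(c_0\) that makes the lattice identity exactly absorb the weight terms. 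Substituting \(\sum_v(b_v-3)=1\) and \(K^2+N=0\) then gives \(e(1-\sum_i a_i^*/n_i)=\eta^2\).

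The main obstacle is this bookkeeping: getting the chain contributions, and in particular the \(a_i\) versus \(a_i^*\) terms, with correct signs. I would check the formula against a cyclic quotient \(\frac1{p^2}(1,pq-1)\) and against \((2;(2,1),(3,1),(6,1))\) before trusting it.

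\emph{Inequalities.} Assume a negative definite graph, so \(e>0\), satisfying \eqref{obs:reduced-graph-equations}. Since \(a_i^*\ge1\),
\[
0\le\frac{\eta^2}{e}=1-\sum_i\frac{a_i^*}{n_i}\le 1-\sum_i\frac1{n_i}=\eta .
\]
Hence \(\eta\ge0\) and \(\eta\le e\).

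Now suppose \(\eta=e\). Then \(\eta=e>0\), so the middle inequality is an equality, which forces every \(a_i^*=1\) and hence every \(a_i=1\). Then \(e=d-\sum_i1/n_i\) and \(\eta=1-\sum_i1/n_i\), so \(e=\eta\) gives \(d=1\). This contradicts \(d\ge2\). Therefore \(0\le\eta/e<1\).

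\emph{Equality case.} If \(\eta=0\), the displayed chain gives \(\sum_i a_i^*/n_i=1=\sum_i1/n_i\). So every \(a_i=1\), and \((n_1,n_2,n_3)\) is, up to permutation, \((3,3,3)\), \((2,4,4)\), or \((2,3,6)\). Each arm is then a single vertex of weight \(n_i\), so the first equation reads \(\sum_i(n_i-3)=4-d\). This yields \(d=4,3,2\) respectively, which are exactly the three listed data. Conversely, each of these data has \(\eta=0\) and satisfies both equations by direct check.
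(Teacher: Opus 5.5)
Your first identity, the bound $0\le\eta/e<1$, and the case $\eta=0$ follow the paper's argument and are fine. The gap is in the second identity, which is the substance of the proposition. You never compute $K^2+N$. Instead you posit a formula ``of the form'' above, with a constant $c_0$ chosen so that the conclusion comes out, and that posited form is false. Your own test case $(2;(2,1),(3,1),(6,1))$, with $K^2=-4$ and $N=4$, forces $c_0=1$. Now take $(2;(2,1),(2,1),(3,1))$. There $K\cdot E_v=b_v-2$ vanishes except on the $(-3)$-curve, whose coefficient in $K$ is $-\tfrac12$. So $K^2=-\tfrac12$ and $K^2+N=\tfrac72$, whereas your formula gives $\tfrac92$. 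The correct identity is
\[
K^2+N=2-\sum_v(b_v-3)-\sum_i\frac{a_i^*}{n_i}-\frac{\eta^2}{e}
=1-\sum_v(b_v-3)-\Bigl(\frac{\eta^2}{e}-1+\sum_i\frac{a_i^*}{n_i}\Bigr),
\]
so the bracket enters with coefficient $-1$, not $+1$. You only use the vanishing of the bracket once $\sum_v(b_v-3)=1$, so the sign error happens not to change the final equation. Still, as written the step is reverse-engineered rather than proved.

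The paper closes this with one device: replace $K$ by $K+E$. Its intersection numbers are $1$ with $E_0$, $-1$ with each outer end $E_{i\ell_i}$, and $0$ otherwise. Hence $(K+E)^2$ involves only the arm inverse entries $a_i/n_i$, $1/n_i$, $a_i^*/n_i$ and the Schur complement $e$. This gives $(K+E)^2=-\sum_ia_i^*/n_i-\eta^2/e$ at once, and $KE=\sum_v(b_v-2)$ and $E^2=-\sum_vb_v+2(N-1)$ finish the computation.

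Your arm-by-arm route also works if you carry it out:
\begin{itemize}
\item Write the arm part of $K$ as $K_i^{\mathrm{cq}}-k_0Y_i$, where $K_i^{\mathrm{cq}}$ is the canonical cycle of the chain and $Y_i\cdot E_{ij}=\delta_{j1}$.
\item The coefficient of $E_{i1}$ in $K_i^{\mathrm{cq}}$ is $-1+(a_i+1)/n_i$.
\item This gives $k_0=-(1+\eta/e)$ and $K^2=k_0(e+\eta)+\sum_i(K_i^{\mathrm{cq}})^2$.
\item Using $(K_i^{\mathrm{cq}})^2+\ell_i=2-(a_i+a_i^*+2)/n_i-\sum_j(b_{ij}-3)$, the $a_i/n_i$ terms cancel against those in $e$, and you get the formula displayed above.
\end{itemize}
The paper's version is shorter because the $a_i$ enter only through $e$ from the start.
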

\begin{proof}
The first identity follows from Proposition~\ref{prop:three-arm-lattice-vanishing}. To prove the second identity, note that \((K+E)E_0=1\), that \((K+E)E_{i\ell_i}=-1\) for each outer end, and that \(K+E\) has intersection zero with every other component. The positive intersection matrix \(A_i\) of an arm satisfies
\[
(A_i^{-1})_{11}=\frac{a_i}{n_i},\quad (A_i^{-1})_{1\ell_i}=\frac1{n_i},\quad (A_i^{-1})_{\ell_i\ell_i}=\frac{a_i^*}{n_i}.
\]
Taking the Schur complement of the arm matrices gives \((K+E)^2=-\sum_i a_i^*/n_i-\eta^2/e\). Using \(E^2=-\sum_vb_v+2(N-1)\) and \(KE=\sum_v(b_v-2)\), we obtain
\begin{equation}\label{obs:canonical-square}
K^2+N=5-d-\sum_{i,j}(b_{ij}-3)-\sum_i\frac{a_i^*}{n_i}-\frac{\eta^2}{e}.
\end{equation}
Substitution of the first identity and \(K^2+N=0\) proves the second identity.

For the remaining assertions, assume \eqref{obs:reduced-graph-equations}. Since \(a_i^*\ge1\),
\[
0\le\frac{\eta^2}{e}=1-\sum_i\frac{a_i^*}{n_i}\le\eta.
\]
Thus \(\eta\ge0\), and \(\eta>0\) implies \(\eta/e\le1\). Equality would force \(a_i^*=a_i=1\) for all \(i\), contradicting \(e-\eta=d-1>0\). If \(\eta=0\), every arm has one vertex and \(\sum_i1/n_i=1\). For \(2\le n_1\le n_2\le n_3\), this gives \((3,3,3)\), \((2,4,4)\), or \((2,3,6)\): one has \(n_1\le3\), and \(n_1=2\) reduces the equation to \((n_2-2)(n_3-2)=4\). The first identity determines the stated central weights.
\end{proof}

\subsection{Computing the characters from Seifert data}\label{obs:characters}

Let \(A\) be the graded coordinate ring of a rational noncyclic weighted homogeneous surface singularity, let \(C=\mathbb{P}^1\) be the central curve, and let \(P_i\in C\) be the points corresponding to the arms. Choose an integral divisor \(Q\) of degree \(d\) and put
\[
\mathcal{D}=Q-\sum_i\frac{a_i}{n_i}P_i,\quad \Xi=K_C+\sum_i\left(1-\frac1{n_i}\right)P_i.
\]
Thus \(\deg\mathcal{D}=e\) and \(\deg\Xi=\eta\). The description of the graded ring and its reflexive canonical powers in Wahl~\cite[Theorems~4.1 and 4.4]{Wahl2013} gives
\[
(N_j)_k=H^0\left(C,\mathcal{O}_C\left(\left\lfloor k\mathcal{D}+j\Xi\right\rfloor\right)\right),\quad N_j=\omega_A^{[j]}.
\]
Let \(r\) be the least positive integer for which \(c=r\frac{\eta}{e}\) is integral and
\[
n_i\mid r-ca_i\quad\text{for every }i.
\]
This is the canonical index by Wahl~\cite[Corollary~4.5]{Wahl2013}: the divisor \(r\Xi-c\mathcal{D}\) is integral precisely when the displayed congruences hold, and an integral divisor of degree zero on \(C\) is principal. Let \(\kappa\) be the character of a canonical generator on the cover and put \(\chi=\kappa^{-1}\). The summand \(N_j=\omega_A^{[j]}\) of the canonical-cover algebra has character \(\chi^j\). We index these characters by \(j\in\mathbb{Z}/r\mathbb{Z}\). For an integer representative \(j\), set
\begin{equation*}
D_j(k)=dk+(t-2)j-\sum_i\left\lceil\frac{ka_i+j}{n_i}\right\rceil.
\end{equation*}
The divisor \(\lfloor k\mathcal{D}+j\Xi\rfloor\) has degree \(D_j(k)\). Applying the cohomology formula of Pinkham~\cite{Pinkham1977} to the lifted grading on the canonical cover and taking the \(j\)-th eigensummand gives the multiplicity of the corresponding character in the cohomology of an equivariant resolution
\begin{equation}\label{obs:genus-character}
p_j=\sum_{k=-\lfloor j\frac{\eta}{e}\rfloor}^{\lfloor(1-j)\frac{\eta}{e}\rfloor}[-D_j(k)-1]_+,\quad [u]_+=\max\{u,0\}.
\end{equation}
The summand is the dimension of the first cohomology of a line bundle of degree \(D_j(k)\) on \(\mathbb{P}^1\). To determine the summation range, write \(\frac{\eta}{e}=p/q\) in lowest terms. A section with indices \((j,k)\) has weight \(qk+pj\), and the canonical weight is \(p\). The identity
\[
D_{1-j}(-k)=-D_j(k)-2
\]
identifies its Serre dual with a section of weight \(p-(qk+pj)\). Hence only indices in the interval \(0\le qk+pj\le p\) contribute. The congruences defining \(r\) identify the formula for \(j+r\) with that for \(j\) after translating \(k\) by an integer.

\begin{corollary}\label{obs:noncyclic-cover}
Let \((V,0)\) be noncyclic and admit a rational homology disk smoothing. Then \(0\le\eta/e<1\). With the character indexing in \eqref{obs:genus-character}, its canonical cover satisfies
\[
p_1=1,\qquad p_{-1}=0.
\]
In particular, the canonical cover is not rational, and the canonical index is at least three.
\end{corollary}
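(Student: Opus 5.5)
The plan is to read everything off formula~\eqref{obs:genus-character} at \(j=\pm1\), using Theorem~\ref{obs:equivariant-hodge} to dispose of \(p_{-1}\) rather than computing it.

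\textbf{Step 1: the bound \(0\le\eta/e<1\).} I split by the number of arms, which is \(t=3\) or \(4\) by Theorem~\ref{thm:arm-weight-bounds}.
\begin{itemize}
\item For \(t=3\), Theorem~\ref{thm:arm-weight-bounds} gives \(2\le d\le4\). Proposition~\ref{obs:general-equations} then applies directly and gives \(0\le\eta/e<1\).
\item For \(t=4\), Proposition~\ref{prop:three-arm-lattice-vanishing} says three arms have one vertex, with \(a_i=1\) and \(\sum_{i\le3}1/n_i=1\). Hence
\[
\eta=2-1-\frac1{n_4}=1-\frac1{n_4}>0,\qquad e=d-1-\frac{a_4}{n_4}.
\]
It follows that \(e-\eta=d-2-(a_4-1)/n_4\). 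This is positive because \(d\ge3\) and \(a_4<n_4\). So \(0<\eta/e<1\) in this case as well.
\end{itemize}

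\textbf{Step 2: \(p_1=1\).} Take \(j=1\) in \eqref{obs:genus-character}. By Step 1 the summation range runs from \(-\lfloor\eta/e\rfloor=0\) to \(\lfloor0\rfloor=0\), so only \(k=0\) contributes. Since \(0<1/n_i\le1\), every ceiling \(\lceil1/n_i\rceil\) equals \(1\). Therefore
\[
D_1(0)=(t-2)-t=-2,
\]
and the single summand is \([-D_1(0)-1]_+=[1]_+=1\). Thus \(p_1=1\).

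I should check that using the representative \(j=1\) is legitimate. The text after \eqref{obs:genus-character} says the formula is independent of the representative modulo \(r\), so this is fine.

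\textbf{Step 3: \(p_{-1}=0\).} A direct evaluation of the range for \(j=-1\) would need a case analysis: one would have to rule out \(k=1\) when \(\eta/e\ge1/2\) and \(d=2\) with all arms long. That is the step I expected to be the obstacle, and I avoid it. Theorem~\ref{obs:equivariant-hodge} applies, since a smoothing exists, and gives
\[
p_1+p_{-1}\le1-\delta_{1,0}.
\]
Combined with \(p_1=1\), this forces \(p_{-1}=0\).

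\textbf{Step 4: consequences.}
\begin{itemize}
\item Since \(p_g(B)\ge p_1=1\), the canonical cover \(B\) is not rational.
\item If \(r=1\), then \(j=1\equiv0\), and Theorem~\ref{obs:equivariant-hodge} gives \(p_0=0\), which contradicts \(p_1=1\).
\item If \(r=2\), then \(1\equiv-1\) modulo \(r\), so \(p_1=p_{-1}\), which contradicts Step 3.
\end{itemize}
Hence \(r\ge3\).
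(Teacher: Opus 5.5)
Your proposal is correct and follows the paper's own proof essentially step for step. The bound on \(\eta/e\) comes from Proposition~\ref{obs:general-equations} for three arms and from the explicit value of \(e-\eta\) for four arms. The single term \(k=0\) with \(D_1(0)=-2\) gives \(p_1=1\), and Theorem~\ref{obs:equivariant-hodge} then forces \(p_{-1}=0\) and \(r\ge3\). Your explicit handling of \(r=1\) and \(r=2\) simply spells out the paper's remark that the characters indexed by \(1\) and \(-1\) are distinct.
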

\begin{proof}
For three arms, Proposition~\ref{obs:orbifold-sign} gives \(0\le\eta/e<1\). For four arms, Lemma~\ref{lat:star-reduction} gives three arms with one vertex whose reciprocal orders sum to one. Write \(n/a\) for the fourth arm. Then \(d\ge3\), \(1<a<n\), and
\[
\eta=1-\frac1n>0,\qquad e-\eta=d-2-\frac{a-1}{n}>0.
\]
At \(j=1\), the summation interval in \eqref{obs:genus-character} consists only of \(k=0\), and \(D_1(0)=(t-2)-t=-2\). Hence \(p_1=1\). Theorem~\ref{obs:equivariant-hodge} gives \(p_{-1}=0\). Thus the cover has positive geometric genus, and the characters indexed by \(1\) and \(-1\) are distinct. Its index is therefore at least three.
\end{proof}

This consequence precedes the classification and uses only the numerical equations and the structural lattice reduction of Section~\ref{sec:three-arm-tautness}.

\begin{example}\label{obs:hodge-example}
The character inequalities are stronger than the scalar bound \(2p_g(B)\le r-1\), even for rational weighted homogeneous singularities satisfying \(K^2+N=0\) and having square determinant. Consider the Seifert datum
\[
\left(2;(3,2),(7,4),(22,3)\right).
\]
Its arms are \([2,2]\), \([2,4]\), and \([8,2,2]\). The corresponding graph is negative definite, since
\[
e=\frac{289}{462}>0,\qquad \eta=\frac{221}{462},\qquad \frac{\eta}{e}=\frac{13}{17}.
\]
Its fundamental cycle, with the coefficients listed along the arms from the central component outward, is
\[
Z=(3;2,1;2,1;1,1,1).
\]
One obtains this cycle from the reduced exceptional divisor by adding the central component, the first components of the first and second arms, and the central component again. At each step the component added has intersection one with the current cycle. The resulting cycle is anti-nef, with \(Z^2=-10\) and \(KZ=8\). Hence \(p_a(Z)=0\), so the singularity is rational. The numerical canonical divisor has coefficients
\[
K=-\frac1{17}(30;20,10;22,14;18,12,6).
\]
Consequently, \(N=8\), \(K^2=-8\), and the absolute determinant is \(289=17^2\). The canonical index is \(r=17\): the denominator of \(\eta/e\) forces \(17\mid r\), and \(r=17\), \(c=13\) satisfy the defining congruences. Formula~\eqref{obs:genus-character} gives
\[
p_1=p_{16}=1,\qquad p_j=0\quad(j\not\equiv1,16\pmod{17}).
\]
Thus \(p_g(B)=2\) satisfies the scalar bound, whereas \(p_1+p_{-1}=2\) violates Theorem~\ref{obs:equivariant-hodge}. This datum therefore admits no QHD smoothing. The example compares the character inequalities with the stated numerical tests; it does not separate them from all the restrictions used in the classification. In fact, its arm-weight sum is \(1\), so it also fails the weight identity in \eqref{obs:reduced-graph-equations}.
\end{example}

\subsection{Reflexive canonical sheaves and Riemann--Roch correction terms}\label{obs:CSRC-section}
We use the graded coordinate ring \(A\) and its reflexive canonical powers \(N_j\) as above, and localize at the vertex when considering a smoothing. If \([u]_n\) denotes the least nonnegative residue of \(u\) modulo \(n\), put
\[
T(a,n;z)=\sum_{v=0}^{n-1}(n-2v)[-av-z]_n.
\]
For \(f=\lfloor j\eta/e\rfloor\) and \(\theta=j\eta/e-f\), define
\begin{equation}\label{obs:canonical-correction}
\gamma_j=p_j+\frac e2\theta\left(1+\frac{\eta}{e}-\theta\right)+\sum_i\frac{T(a_i,n_i;0)-T(a_i,n_i;j-a_if)}{2n_i^2}.
\end{equation}

The local correction terms are those of Riemann--Roch for normal surfaces~\cite{Blache1995}. We calculate them using the graded cohomology of the Seifert construction~\cite{Pinkham1977} and the canonical modules described in Wahl~\cite[Section~4]{Wahl2013}.

\begin{proposition}\label{obs:correction-identification}
For a rational weighted homogeneous surface singularity, the quantity \(\gamma_j\) in \eqref{obs:canonical-correction} is the local Riemann--Roch correction \(c_A(jK_A)\), with the normalization
\[
\chi(\mathcal{O}_Y(D))=\chi(\mathcal{O}_Y)+\frac{D(D-K_Y)}2+\sum_{x\in\operatorname{Sing}Y}c_x(D).
\]
\end{proposition}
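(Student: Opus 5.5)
Fix \(j\) and work on the partial resolution \(\pi\colon X\to\operatorname{Spec}A\) given by the Seifert construction, or equivalently on the weighted projective compactification \(Y=\operatorname{Proj}A[s]\) whose boundary is the central curve \(C\) with the cyclic quotient points of types \(\frac1{n_i}(1,n_i-a_i)\). The plan is to compute \(\chi(\mathcal{O}_Y(jK_Y))\) in two ways: once by Blache's Riemann--Roch, and once directly from the graded description of the summands \(N_j=\omega_A^{[j]}\). By construction, the global correction is the sum of the local corrections at the vertex and at the boundary cyclic quotient points. The boundary contributions are the classical cyclic quotient corrections, which are Dedekind-type sums. They can be written in the form \(\bigl(T(a,n;0)-T(a,n;z)\bigr)/(2n^2)\) with \(z\) equal to the local index of the divisor class at that point. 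This is where the sums \(T\) in \eqref{obs:canonical-correction} should come from, with \(z=j-a_if\) being the local class of \(jK\) after twisting by \(f\) copies of the hyperplane class \(\mathcal{D}\).

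The first step is to identify this local class. On \(Y\), the \(\mathbb{Q}\)-divisor \(jK_Y\) restricts near the boundary to \(j\Xi\) shifted by multiples of \(\mathcal{D}\). Choose the integer shift \(f=\lfloor j\eta/e\rfloor\), so that the fractional remainder is \(\theta\), and record the local type at \(P_i\) as the residue of \(j-a_if\) modulo \(n_i\). The second step is the graded side. By Wahl's formula \((N_j)_k=H^0(C,\mathcal{O}_C(\lfloor k\mathcal{D}+j\Xi\rfloor))\) and Pinkham's graded cohomology, \(H^0\) and \(H^1\) of \(\mathcal{O}_Y(jK_Y)\) split into degree pieces given by \(\mathbb{P}^1\)-line bundles of degree \(D_j(k)\). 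The \(H^1\) pieces are exactly the terms summed in \eqref{obs:genus-character}, which accounts for the term \(p_j\). The remaining \(\chi\) is a sum over \(k\) in a window of \((D_j(k)+1)\). I will evaluate this window sum by splitting \(\lceil (ka_i+j)/n_i\rceil\) into its linear part and its sawtooth part. The linear part gives a quadratic in \(\theta\), namely \(\frac e2\theta(1+\eta/e-\theta)\), after comparing with \(jK(jK-K)/2\). The sawtooth part produces sums of \((n-2v)[\,\cdot\,]_n\) over a full period, which is the \(T\)-expression.

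The third step subtracts the Riemann--Roch main term together with the boundary corrections. What is left is the vertex correction \(c_A(jK_A)\). Rationality of \(A\) is used here so that \(\chi(\mathcal{O}_Y)=1\) and \(p_g(A)=0\) contribute nothing extra. The last check is that the result is independent of the chosen representative \(j\) modulo \(r\), and this follows from the congruences defining \(r\).

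The main obstacle will be the bookkeeping in the second step. The floor and ceiling sums must match Blache's normalization exactly, with consistent signs for the dual chains \(n_i/(n_i-a_i)\) versus \(n_i/a_i\). There is also a boundary issue, since the window endpoints \(k=-f\) and \(k=\lfloor (1-j)\eta/e\rfloor\) produce the \(\theta\)-dependent terms. I would first verify the formula at \(j=0\) and \(j=1\), where both sides must give \(c=0\), to fix these normalizations before doing the general computation.
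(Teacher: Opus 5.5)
\textbf{The gap.} You treat the Seifert partial resolution \(X\) and the compactification \(Y=\operatorname{Proj}A[s]\) as interchangeable. They are not. The identifications you make for the terms of \(\gamma_j\) hold on \(X\), but the computation you set up is on \(Y\), where they fail:
\begin{enumerate}
\item On \(Y\) the points at infinity have the complementary types \(\frac1{n_i}(1,n_i-a_i)\). The local class of \(jK_Y\) at \(P_i\) is \(j\) times the intrinsic local class of \(K_Y\), so it depends only on \(j\) and the local type. It cannot depend on the global quantity \(f=\lfloor j\eta/e\rfloor\): changing the central weight \(d\) changes \(f\) but not the points at infinity. So the boundary corrections are not \(\bigl(T(a_i,n_i;0)-T(a_i,n_i;j-a_if)\bigr)/(2n_i^2)\).
\item The main term is \(\frac12 jK_Y(jK_Y-K_Y)=\frac{j(j-1)}2e(1-\alpha)^2\), with \(\alpha=\eta/e\). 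It contains no \(\theta\).
\item The graded side is not what you describe. Since \(N_j\) is maximal Cohen--Macaulay, graded local cohomology gives \(H^1(Y,\omega_Y^{[j]})=0\). For \(j\ge1\) and \(0\le\alpha<1\) one gets \(\chi(\omega_Y^{[j]})=h^2=p_j-\sum_{k=1-j}^{-f-1}(D_j(k)+1)\). That window has \(j-f-1\) terms, unrelated to \(n_i\), so its sawtooth part gives partial sums of \(\lceil(ka_i+j)/n_i\rceil\), not full-period sums \(T\).
\end{enumerate}
To conclude on \(Y\) you would need two things. First, a proved formula for Blache's corrections at \(\frac1{n_i}(1,n_i-a_i)\) in every local class. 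Second, a Dedekind-sum identity converting those corrections and the partial sums into \(\gamma_j\). Neither is supplied, and this conversion is the actual content of the proposition, not bookkeeping.

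\textbf{What the paper does, and a repair.} The paper never evaluates a single Euler characteristic or any boundary correction. It takes the constant coefficient \(F_j\) of the Laurent expansion at \(z=0\) of the normalized Hilbert series \(\sum_k\dim(N_j)_k\,e^{-(k+j\alpha)z}\). The \(T\)-sums arise there from the periodic sawtooth \(\epsilon_i(v;j-a_if)\) after the shift \(k=v-f\), and \(F_j-F_0=\gamma_j\). It then sums Riemann--Roch over all twists \(\mathcal{F}_j(m)\) on \(\operatorname{Proj}(A[z_0])\). Since \(C_\infty\) generates each local class group at infinity, every boundary correction is periodic in \(m\) and contributes only its mean, which is independent of \(j\). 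Hence \(F_j-c_A(jK_A)\) is independent of \(j\), and \(j=0\) finishes the proof.

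If you want a direct computation instead, stay on \(X\). Blache's formula relative to a partial resolution with quotient singularities reads
\[
c_A(D)=\tfrac12\{\pi^*D\}\cdot\bigl(\{\pi^*D\}+K_X\bigr)+\sum_i c_{P_i}\bigl(\lfloor\pi^*D\rfloor\bigr)+h^1\bigl(\mathcal{O}_X(\lfloor\pi^*D\rfloor)\bigr)-p_g.
\]
Since \(C^2=-e\) and \((K_X+C)\cdot C=\eta\), one has \(\pi^*(jK_A)=jK_X+j(1+\alpha)C\). Then:
\begin{enumerate}
\item \(\{\pi^*(jK_A)\}=\theta C\), and the first term is exactly \(\frac e2\theta(1+\alpha-\theta)\).
\item The round-down \(jK_X+(j+f)C\) has local class \(j-a_if\) at the arm points, up to sign convention. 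This is where \(f\) genuinely enters.
\item The \(h^1\) of that round-down is \(p_j\).
\end{enumerate}
This route gives \(\gamma_j\) term by term. It still requires you to prove that the correction at \(\frac1n(1,a)\) in the class indexed by \(z\) equals \(\bigl(T(a,n;0)-T(a,n;z)\bigr)/(2n^2)\), with the correct orientation. The paper's averaging argument is designed precisely to avoid needing that input.
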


\begin{proof}
Put \(\alpha=\eta/e\) and consider the normalized Hilbert series
\[
H_j(z)=\sum_{k\in\mathbb{Z}}\dim_{\mathbb{C}}(N_j)_k\exp(-(k+j\alpha)z).
\]
If \(k<-f\), the orbifold degree \(ek+j\eta\) is negative, so there are no sections. For \(k=v-f\), with \(v\ge0\), set \(c_i=j-a_if\) and
\[
\epsilon_i(v;c_i)=\left\lceil\frac{a_iv+c_i}{n_i}\right\rceil-\frac{a_iv+c_i}{n_i}.
\]
The sequence \(\epsilon_i\) has period \(n_i\) and mean \((n_i-1)/(2n_i)\). The degree formula gives
\[
D_j(v-f)+1=e(v+\theta)+1-\sum_i\epsilon_i(v;c_i).
\]
The differences between the dimensions of sections and the Euler characteristics are zero for all sufficiently large \(v\), and their sum is \(p_j\), by \eqref{obs:genus-character} and Serre duality. Expanding the linear and periodic geometric series at \(z=0\), we obtain the constant coefficient
\[
F_j=p_j-\frac e2\left(\theta^2-\theta+\frac16\right)+\frac12+\frac{\eta\theta}{2}-\sum_i\frac{T(a_i,n_i;c_i)}{2n_i^2}.
\]
Indeed, the constant contribution of a periodic sequence is
\[
\sum_{v=0}^{n_i-1}\epsilon_i(v;c_i)\left(\frac12-\frac{v+\theta}{n_i}\right)
=\frac{T(a_i,n_i;c_i)}{2n_i^2}-\theta\frac{n_i-1}{2n_i}.
\]
Since \(A\) is rational, \(p_0=0\). Subtracting \(F_0\) gives \(F_j-F_0=\gamma_j\).

We compute this difference by Riemann--Roch on the projective cone \(Y=\Proj(A[z_0])\), where \(\deg z_0=1\), and put \(C_\infty=(z_0=0)\). Its singularities consist of the vertex and cyclic quotient points on \(C_\infty\). The identity for canonical modules \(\omega_{A[z_0]}=\omega_A[z_0](-1)\) identifies the sheaf associated with \(N_j[z_0]\) as
\[
\mathcal{F}_j=(\omega_Y^{[j]}(jC_\infty))^{**}.
\]
Numerically,
\[
C_\infty^2=e,\quad K_Y\equiv(\alpha-1)C_\infty.
\]
These identities follow from the Seifert construction, or from the leading terms of the degree formula and \(D_r(k)=D_0(k+r\alpha)\). For an integer \(m\) sufficiently large, Serre vanishing and Riemann--Roch give
\begin{equation}\label{obs:cone-RR}
\sum_{k\le m}\dim_{\mathbb{C}}(N_j)_k
=\chi(\mathcal{O}_Y)+\frac e2(m+j\alpha)(m+j\alpha+1-\alpha)+c_A(jK_A)+\sum_i c_i(m,j).
\end{equation}
Here \(c_i(m,j)\) are the corrections at infinity. Riemann--Roch for this normal surface is obtained by resolving and applying ordinary Riemann--Roch, as in the calculation with full pullbacks in Park--Shin~\cite[Section 8]{PS}; see also Blache~\cite{Blache1995}.

At the \(i\)-th boundary point, \(\mathcal{O}_Y(C_\infty)\) generates the cyclic local divisor class group. In the Seifert chart the stabilizer acts faithfully on its normal coordinate. Thus \(c_i(m,j)\), as \(m\) varies through a period, takes the values of the corrections for all local divisor classes. Its mean is independent of \(j\). Multiply the generating series of \eqref{obs:cone-RR} by \(1-\exp(-z)\), with the shift \(\exp(-j\alpha z)\). The finitely many terms below the range in which Serre's vanishing theorem applies contribute \(O(z)\). The shifted quadratic, linear, and constant sums have principal parts \(2z^{-3}\), \(z^{-2}\), and \(z^{-1}\), respectively, independent of the shift. Each periodic correction contributes its mean to the resulting constant coefficient. It follows that \(F_j-c_A(jK_A)\) is independent of \(j\). Since \(c_A(0)=0\), we obtain
\[
\gamma_j=F_j-F_0=c_A(jK_A).
\]

\end{proof}

\begin{proposition}[Vanishing of canonical correction terms]\label{obs:CSRC}
Let \(A\) be the local algebra of a rational normal surface singularity admitting a strong \(\mathbb{Q}\)-Gorenstein smoothing. Then \(c_A(jK_A)=0\) for every \(j\in\mathbb{Z}\). In particular, every noncyclic normal surface singularity admitting a rational homology disk smoothing satisfies
\begin{equation}\label{obs:CSRC-equations}
\gamma_j=c_A(jK_A)=0\qquad(j\in\mathbb{Z}),
\end{equation}
where \(\gamma_j\) is computed from its Seifert data by \eqref{obs:canonical-correction}.
\end{proposition}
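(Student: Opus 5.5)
The plan is to derive the vanishing from the specialization formula of Park--Shin, using the fact that a strong \(\mathbb{Q}\)-Gorenstein smoothing leaves no specialization defect in any degree. Let \(\mathcal{V}\to\mathbb{D}\) be a strong \(\mathbb{Q}\)-Gorenstein smoothing with total local algebra \(R\), parameter \(s\), and \(A=R/(s)\). Strong \(\mathbb{Q}\)-Gorenstein means exactly that every reflexive power \(\omega_R^{[j]}\) restricts to \(\omega_A^{[j]}\). Equivalently, \(\omega_R^{[j]}/s\omega_R^{[j]}\to\omega_A^{[j]}\) is an isomorphism for all \(j\), so every cokernel \(C_j\) vanishes. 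The first step is to invoke the specialization formula of Park--Shin \cite[Section~8]{PS}. For a rational singularity it identifies \(\dim_{\mathbb{C}}C_j\) with \(c_A(jK_A)\): one compares \(\chi\) of the full pullback of \(jK\) on a resolution of a general fiber with the corresponding central-fiber computation, and on a smooth general fiber there is no local correction. Rationality is needed here so that the \(R^1\) terms vanish. This comparison is the same one that produced \eqref{foundation:strong-milnor}, now carried out for each \(j\) rather than only for \(j=r\) or \(j=2\).

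Next, \(c_A(jK_A)\) depends only on the class of \(jK_A\) in the local class group. Moreover \(rK_A\) is Cartier, where \(r\) is the canonical index, so \(c_A(jK_A)\) is periodic in \(j\) with period \(r\). It therefore suffices to treat \(0\le j<r\), and for these \(j\) the vanishing \(C_j=0\) has already been established. A subtle point is sign and normalization conventions: the Park--Shin defect may be stated as \(\dim C_j = c_A(jK_A)\) only up to a correction involving \(c_A\) at a dual class. If so, I would use Serre duality, \(c_A(D)=c_A(K_A-D)\) up to the normalization in Proposition~\ref{obs:correction-identification}, to pair \(j\) with \(1-j\), and argue that both defects vanish.

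For the second assertion, Theorem~\ref{foundation:PS} shows that a noncyclic singularity with a QHD smoothing is rational and that the smoothing is strong \(\mathbb{Q}\)-Gorenstein. The first part then gives \(c_A(jK_A)=0\), and Proposition~\ref{obs:correction-identification} rewrites this as \(\gamma_j=0\) with \(\gamma_j\) given by \eqref{obs:canonical-correction}. We also need weighted homogeneity, which Theorem~\ref{foundation:PS} supplies, so that the Seifert formula applies; passing to the graded ring and then localizing does not change \(c_A\).

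The main obstacle will be making precise the identity \(\dim C_j=c_A(jK_A)\) for every \(j\), including \(j\) not divisible by the index. For this I would first try to verify that Park--Shin's Theorem~8.3 is stated for general \(i\). The "specialization defect" in that theorem is \(\dim C_i\) and the "local correction term" is \(c_A(iK_A)\), with the general fiber smooth. If the theorem only equates a combination, such as the defect plus a correction, with the Milnor-type difference, I would exploit that this difference is independent of \(i\) once \(\mu\) is fixed. Evaluating at \(i=0\), where both terms vanish, would then force the combination to vanish for every \(i\), and \(C_i=0\) would give the result.
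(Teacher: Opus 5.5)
Your overall route matches the paper's. You use the Park--Shin specialization formula, the vanishing of every cokernel \(C_j\) from the strong \(\mathbb{Q}\)-Gorenstein hypothesis, and then Theorem~\ref{foundation:PS} and Proposition~\ref{obs:correction-identification} to pass to \(\gamma_j\).

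The gap is in the central identity. The formula of \cite[Theorem~8.3 and Corollary~8.4]{PS} does not identify \(\dim_{\mathbb{C}}C_j\) with \(c_A(jK_A)\) outright. As used in the paper it reads
\[
\dim_{\mathbb{C}}C_j=\binom j2(\mu-N-K^2)+c_A(jK_A),
\]
and nothing in your main argument disposes of the first term. Your fallback for such a term fails. The term is not independent of \(j\); it is quadratic in \(j\). At \(j=0\) (or \(j=1\)) its coefficient \(\binom j2\) vanishes, so evaluating there gives \(0=0\) and forces nothing about other values of \(j\). You mention \eqref{foundation:strong-milnor} only as an analogy, but it is exactly the input that removes this term.

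The missing step is to evaluate at \(j=r\) when \(r>1\). The case \(r=1\) is trivial, since then every \(jK_A\) is Cartier. At \(j=r\) you have:
\begin{itemize}
\item \(C_r=0\) by hypothesis;
\item \(c_A(rK_A)=0\), because \(rK_A\) is Cartier;
\item \(\binom r2\ne0\).
\end{itemize}
Hence \(\mu=K^2+N\), which is exactly \eqref{foundation:strong-milnor}. Substituting this back kills the quadratic term for every \(j\), so \(\dim_{\mathbb{C}}C_j=c_A(jK_A)\). Then \(C_j=0\) gives the vanishing for all integers \(j\) at once.

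With this in place, your periodicity reduction and Serre-duality pairing are unnecessary, though harmless. The second assertion then follows as you describe.
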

\begin{proof}
If \(A\) is Gorenstein, every \(jK_A\) is Cartier and its local correction is zero. We may therefore assume that the canonical index is greater than one.

Let \(R\) be the total local algebra, let \(s\) be the smoothing parameter, and put \(M_j=\omega_R^{[j]}\) and \(N_j=\omega_A^{[j]}\). Formula~\eqref{foundation:strong-milnor} gives \(\mu=K^2+N\). The Riemann--Roch specialization formula of Park--Shin~\cite[Theorem~8.3 and Corollary~8.4]{PS} therefore yields
\[
\dim_{\mathbb{C}}\operatorname{coker}(M_j/sM_j\to N_j)
=\binom j2(\mu-N-K^2)+c_A(jK_A)=c_A(jK_A).
\]
Every specialization map is an isomorphism by the strong \(\mathbb{Q}\)-Gorenstein hypothesis, so \(c_A(jK_A)=0\) for every integer \(j\). For a QHD smoothing, Theorem~\ref{foundation:PS} gives rationality, weighted homogeneity, and the strong \(\mathbb{Q}\)-Gorenstein property. Proposition~\ref{obs:correction-identification} then identifies \(c_A(jK_A)\) with \(\gamma_j\).

For this QHD smoothing, choose compatible trivializations at the common canonical index \(r\). The cokernel \(\mathcal{C}_{\varphi}\) of specialization of the canonical-cover algebras decomposes as
\[
[\mathcal{C}_{\varphi}]
=\sum_{j=0}^{r-1}\gamma_j[\mathbb{C}_{\kappa^{-j}}],
\]
where \(\kappa\) is the canonical character. Thus \eqref{obs:CSRC-equations} expresses the vanishing of this actual representation in the character basis.
\end{proof}

The vanishing from Park--Shin~\cite{PS} concerns the cyclic subgroup generated by \(K_A\); formula~\eqref{obs:canonical-correction} expresses it in Seifert invariants. Both \(\gamma_j\) and \(p_j\) are independent of the attachment points, since line-bundle cohomology on \(\mathbb{P}^1\) depends only on degree. Section~\ref{sec:cross-ratios} treats the four-arm attachment points.

For the datum in Example~\ref{obs:hodge-example}, formula~\eqref{obs:canonical-correction} gives \(\gamma_9=1\), so it also violates canonical correction vanishing.

Under the reduced equations \eqref{obs:reduced-graph-equations} and in the range \(0\le \frac{\eta}{e}<1\), we shall also use the following expansion of \eqref{obs:canonical-correction} for \(j\ge0\):
\begin{equation}\label{obs:correction-expanded}
\begin{split}
\gamma_j={}&p_j-(j-f)-\frac{j(j-1)}2\frac{\eta^2}{e}+jf\eta-\frac{f(f+1)}2e\\
&+\sum_i\sum_{v=1}^{j-1}\frac{[-va_i^*]_{n_i}}{n_i}-\sum_i\sum_{h=1}^{f}\frac{[ha_i-j]_{n_i}}{n_i}.
\end{split}
\end{equation}
The identities
\[
T(a,n;z+1)-T(a,n;z)=n\left(n-1-2[-a^*z]_n\right),
\]
\[
T(a,n;z-a)-T(a,n;z)=-n(n-1)+2n[a-z]_n
\]
give this expansion by summing the successive differences.

\begin{proposition}\label{obs:singleton}
Suppose \(t=3\), \(d=2\), and \(\eta>0\). If \eqref{obs:reduced-graph-equations}, \eqref{obs:hodge}, and \eqref{obs:CSRC-equations} hold, exactly two arms have one vertex.
\end{proposition}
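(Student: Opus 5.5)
The plan is to count the arms with one vertex, writing \(s\) for this number, and to eliminate every value other than \(s=2\).

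\emph{Excluding \(s=3\).} This case is immediate from the proof of Proposition~\ref{obs:general-equations}. If all \(a_i=1\), then \(a_i^*=1\), so \(1-\sum_i a_i^*/n_i=\eta\). The second equation in \eqref{obs:reduced-graph-equations} then gives \(e\eta=\eta^2\). Since \(\eta>0\), this forces \(e=\eta\), contradicting \(e-\eta=d-1=1\).

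\emph{Evaluating the correction at \(j=2\).} The main tool will be the expanded correction \eqref{obs:correction-expanded} at small \(j\), combined with the two global identities
\[
\sum_i\frac{a_i}{n_i}=2-e,\qquad \sum_i\frac1{n_i}=1-\eta.
\]
For \(j=1\) one has \(f=0\), and \(\gamma_1=p_1-1=0\) gives nothing new. For \(j=2\), I use the second equation of \eqref{obs:reduced-graph-equations} to rewrite the starred sum as
\[
\sum_i\frac{[-a_i^*]_{n_i}}{n_i}=3-\sum_i\frac{a_i^*}{n_i}=2+\frac{\eta^2}{e}.
\]
Since \(\eta/e<1\), we have \(f\in\{0,1\}\).
\begin{itemize}
\item If \(f=0\), i.e.\ \(\eta/e<1/2\), everything cancels and \(\gamma_2=p_2\).
\item If \(f=1\), one needs \(\sum_i[a_i-2]_{n_i}/n_i\). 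This equals \(\sum_i(a_i-2)/n_i\) plus \(1\) for each arm with \(a_i=1\). It is therefore \(2-e-2(1-\eta)+s\), and a short computation gives
\[
\gamma_2=p_2+1-s.
\]
\end{itemize}

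\emph{The case \(\eta/e\ge 1/2\).} Canonical correction vanishing \eqref{obs:CSRC-equations} gives \(s=1+p_2\). Multiplicity-freeness from Corollary~\ref{obs:hodge-consequences} gives \(p_2\le1\), so \(s\in\{1,2\}\). It remains to exclude \(p_2=0\), i.e.\ \(s=1\). For this I would compute \(p_2\) directly from \eqref{obs:genus-character}. The range is \(k\in\{-1,0\}\), with \(D_2(0)=2-\sum_i\lceil 2/n_i\rceil\) and \(D_2(-1)=-2+2-\sum_i\lceil(2-a_i)/n_i\rceil\). Then I would show that \(s=1\) forces \(D_2(-1)\le-2\) or \(D_2(0)\le -2\), hence \(p_2\ge1\), a contradiction. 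Here the arm with \(a_i=1\) and the two arms with \(a_i\ge2\) contribute explicit ceilings: \(\lceil(2-a_i)/n_i\rceil\) is \(1\) for the singleton and \(\le0\) otherwise. If this direct count is not decisive, I would combine \(p_2=0\) with the weight identity \(\sum_{i,j}(b_{ij}-3)=2\) and the quadratic equation to pin down a finite family and check it.

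\emph{The case \(\eta/e<1/2\).} Here \(\gamma_2=0\) only says \(p_2=0\). Instead, I would take the least \(j\ge2\) with \(j\eta/e\ge1\), so \(f=1\) at this \(j\). Repeating the computation with \eqref{obs:correction-expanded}, the terms \(\sum_{v=1}^{j-1}[-va_i^*]_{n_i}/n_i\) and \(\sum_i[a_i-j]_{n_i}/n_i\) telescope against the quadratic terms as before. This should express \(\gamma_j\) as \(p_j\) plus a contribution counting arms with \(a_i<j\) minus the \(p_{j'}\) for \(j'<j\). The intermediate vanishings \(\gamma_{j'}=0\) give \(p_{j'}=0\) for \(2\le j'<j\). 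Together with the pair inequalities \(p_j+p_{-j}\le1\) from \eqref{obs:hodge}, the aim is to derive either a contradiction or again \(s=2\).

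I expect this last case, with small \(\eta/e\) and the bookkeeping of the residues \([-va_i^*]_{n_i}\) for general \(j\), to be the main obstacle. If the telescoping does not close cleanly, I would first try to show directly from the quadratic equation \(e(1-\sum_i a_i^*/n_i)=\eta^2\) with \(d=2\) that \(\eta/e\ge1/2\) whenever \(s\le1\). That would reduce everything to the \(j=2\) computation.
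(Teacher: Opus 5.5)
Your evaluation of \(\gamma_2\) is correct in both ranges: \(\gamma_2=p_2\) for \(f=0\) and \(\gamma_2=p_2+1-s\) for \(f=1\). Your exclusion of \(s=3\) is also correct. The step meant to exclude \(s=1\) when \(\eta/e\ge1/2\), however, cannot work.

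For \(j=2\) the upper limit in \eqref{obs:genus-character} is \(\lfloor-\eta/e\rfloor=-1\), so only \(k=-1\) contributes. Moreover \(D_2(-1)=-\sum_i\lceil(2-a_i)/n_i\rceil=-s\), because the ceiling is \(1\) for \(a_i=1\) and \(0\) for every \(a_i\ge2\). Thus \(p_2=[s-1]_+\) identically, and \(\gamma_2=p_2+1-s\) vanishes automatically when \(s=1\). Including \(k=0\) changes nothing, since \(D_2(0)=2-3=-1\).

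So the \(j=2\) equation together with \eqref{obs:hodge} yields exactly \(1\le s\le2\) and nothing more. No direct count of \(p_2\) can separate \(s=1\) from \(s=2\). For the same reason, your fallback of proving \(\eta/e\ge1/2\) whenever \(s\le1\) would not finish the argument. ``Pinning down a finite family'' from the numerical identities is not a concrete step either.

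Your plan for \(\eta/e<1/2\) rests on a false expectation. At \(j=3\) with \(f=0\), the residues \([-2a_i^*]_{n_i}\) in \eqref{obs:correction-expanded} do not cancel against the quadratic term as they did at \(j=2\). The intermediate equations therefore do not merely say \(p_{j'}=0\); the leftover residue count is where the information lies.

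The missing idea is the single equation \(\gamma_3=0\), used in every range of \(\alpha=\eta/e\). Put \(s_2=\#\{i:a_i=2\}\) and \(A_2=\#\{i:2a_i^*>n_i\}\). Since \(a_i=2\) forces \(a_i^*=(n_i+1)/2\), one has \(A_2\ge s_2\). Expanding \eqref{obs:correction-expanded} at \(j=3\) and computing \(p_3\) (and \(p_{-2}\)) from \eqref{obs:genus-character} gives
\[
\gamma_3=\begin{cases}
A_2,&\alpha<1/3,\\
[2-s-s_2]_++A_2,&1/3\le\alpha\le1/2,\\
2-s+A_2-s_2,&1/2<\alpha<2/3,\\
2-s+p_{-2}+A_2-s_2,&2/3\le\alpha<1.
\end{cases}
\]
The conclusion in each range is as follows.
\begin{itemize}
\item For \(\alpha>1/2\), nonnegativity of \(A_2-s_2\) and \(p_{-2}\), together with \(s\le2\) from your \(j=2\) step, forces \(s=2\).
\item For \(1/3\le\alpha\le1/2\), one gets \(A_2=s_2=0\) and \(s\ge2\).
\item For \(\alpha<1/3\), the equation \(A_2=0\) gives \(s_2=0\). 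Then \(0<e-3\eta=-1+\sum_i(3-a_i)/n_i\le s-1\), so \(s\ge2\).
\end{itemize}
Combined with your exclusion of \(s=3\), this gives \(s=2\); this is how the paper's proof proceeds.
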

\begin{proof}
Put \(\alpha=\eta/e\), so \(0<\alpha<1\) by Proposition~\ref{obs:orbifold-sign}. Set
\[
z=\#\{i:a_i=1\},\quad s_2=\#\{i:a_i=2\},\quad A_2=\#\{i:2a_i^*>n_i\}.
\]
If \(a_i=2\), then \(a_i^*=(n_i+1)/2\), so \(A_2\ge s_2\).

For \(0<\alpha<1/3\), the genus formula gives \(p_3=0\), and \eqref{obs:correction-expanded} at \(j=3\) gives \(A_2=0\). Thus \(s_2=0\), and
\[
0<e-3\eta=-1+\sum_i\frac{3-a_i}{n_i}\le-1+z.
\]
Hence \(z\ge2\). For \(1/3\le\alpha\le1/2\), put \(A=z+s_2-2\). The same formulas give
\[
p_3=[A]_+,\quad 0=[-A]_++A_2.
\]
Thus \(A_2=s_2=0\) and \(z\ge2\).

For \(1/2<\alpha<1\), the equation for \(j=2\) gives \(p_2=z-1\). Write \(J=z-1\); the Hodge bound gives \(0\le J\le1\). If \(\alpha<2/3\), then \(p_3=p_{-2}=0\), and the equation for \(j=3\) is \(0=1-J+A_2-s_2\). If \(\alpha\ge2/3\), put \(F=\#\{i:2a_i\ge n_i+3\}\). Then \(p_3=[z-F]_+\), \(p_{-2}=[F-z]_+\), and the equation becomes \(0=1-J+p_{-2}+A_2-s_2\). Nonnegativity forces \(J=1\) in both cases, so \(z=2\).

Finally, three arms with one vertex would give \(e\eta=\eta^2\), whereas \(e-\eta=1\). Thus \(z=2\) in every case.
\end{proof}

\section{The three-arm classification}\label{sec:three-arm-classification}

We determine the possible three-arm resolution graphs for central weights two, three, and four. The case of central weight two uses the numerical and character restrictions; the cases of central weights three and four use the projective compactification.

\subsection{Three arms and central weight two}\label{d2:section}
Proposition~\ref{obs:general-equations} gives the numerical identities and \(0\le\eta/e<1\).
\begin{theorem}\label{d2:classification}
Let a normalized three-arm graph of central weight two satisfy the numerical identities
\[
\sum_{i,j}(b_{ij}-3)=2,\quad e(1-\sum_i\frac{a_i^*}{n_i})=\eta^2,
\]
and the equations \eqref{obs:CSRC-equations} and Hodge inequalities \eqref{obs:hodge}. Then, after permuting the arms, its Seifert data belong to the following families \((2;(p,1),(q,1),(n,n-h))\). In the fourth row put \(U=pq-p-q\), and choose \(s=p\) or \(s=q\).
\[
\begin{array}{c|c|c}
(p,q)&\text{parameter}&n\\\hline
(2,3)&h\ge5&h^2-4h+1\\
(2,q),q\ge4&h=1+k(q-2),k\ge4&\dfrac{h^2-(2q-2)h+1}{q-2}\\
(2,6)&h\ge3&h^2-h+1\\
3\le p\le q&h=s-1+kU,k\ge3&\dfrac{h^2-(pq-2)h+1}{U}\\
(3,6)&h\ge2&h^2+1\\
(4,4)&h\ge5\text{ odd}&(h-1)^2/2
\end{array}
\]
The entry \(h=5\) in the first row is the graph \((2;(2,1),(3,1),(6,1))\). Every other entry has \(\eta>0\). The third, fifth, and sixth rows give additional families for the indicated orders of the arms with one vertex.
\end{theorem}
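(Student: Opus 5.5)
We first dispose of the case \(\eta=0\). By Proposition~\ref{obs:general-equations}, the only datum with \(d=2\) and \(\eta=0\) is \((2;(2,1),(3,1),(6,1))\). This is the entry \(h=5\), \(n=6\) of the first row, since \(h^2-4h+1=6\) at \(h=5\).

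From now on \(\eta>0\). Proposition~\ref{obs:singleton} then gives exactly two arms with one vertex. After permuting, the data are \((2;(p,1),(q,1),(n,a))\) with \(p\le q\) and \(1<a<n\). We write \(h=n-a\), so that the complementary chain of the third arm is \(n/h\).

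The plan is to turn the two numerical identities into a single Diophantine relation in \((n,a,a^*)\) for fixed \((p,q)\). The weight identity reads \((p-3)+(q-3)+W(n/a)=2\). Hirzebruch--Jung duality converts it into
\[
W(n/h)=p+q-10,
\]
which bounds the length and shape of the complementary chain in terms of \(p+q\). For the second identity we put \(u=1-1/p-1/q>0\), so that \(e=1+u-a/n\) and \(\eta=u-1/n\). Writing \(aa^*=1+nm\) and clearing denominators, the quadratic terms cancel and leave the linear relation
\[
un-(1+u)a^*-ua+m+2u=0 .
\]
Substituting \(a=n-h\) and eliminating \(a^*\) and \(m\) through \(aa^*\equiv1\pmod n\) should express \(n\) as a quadratic in \(h\) divided by \(U=pq-p-q\). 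This is exactly the shape \(n=(h^2-(pq-2)h+1)/U\) of the fourth row. Integrality then forces \(h\equiv s-1\pmod U\) with \(s\in\{p,q\}\), because \(h^2-(pq-2)h+1\equiv(h+1-p)(h+1-q)\pmod U\).

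The special rows arise when \(U\) is small. For \((2,q)\) we have \(U=q-2\). For \((3,6)\), \((4,4)\), \((2,6)\), \((2,3)\) the factorization degenerates, and we expect the extra solution families to appear there. Since \(u<1\) and \(e>\eta\), only finitely many shapes of small \((p,q)\) need separate treatment; beyond them the general formula applies.

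The remaining work uses \eqref{obs:CSRC-equations} and \eqref{obs:hodge} to cut down the arithmetic solutions to the stated parameter ranges. These are \(k\ge3\) or \(k\ge4\), \(h\ge5\), \(h\ge3\), \(h\ge2\), and \(h\) odd in the \((4,4)\) row, and they exclude the small residues \(k\). We evaluate \(p_j\) by \eqref{obs:genus-character} and \(\gamma_j\) by \eqref{obs:correction-expanded} for small \(j\) (\(j=2,3\), and \(j=-1,-2\)), split by the position of \(\alpha=\eta/e\) relative to \(1/3\), \(1/2\), \(2/3\) as in the proof of Proposition~\ref{obs:singleton}. In each case we check that \(\gamma_j\ne0\) or \(p_j+p_{-j}\ge2\) for the excluded small parameters. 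Where a family has no exclusion, we confirm that all conditions hold.

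The main obstacle will be this last step together with the case distinctions for small \((p,q)\). One must show that the congruence solutions not in the table genuinely violate some \(\gamma_j=0\) or Hodge inequality. The first attempt is to test \(j=2,3\), which already gave strong constraints in Proposition~\ref{obs:singleton}. If these are insufficient, we would use the first \(j\) at which the floor \(f=\lfloor j\alpha\rfloor\) jumps, exploiting that for small \(k\) the ratio \(\alpha\) sits in a different interval than for the generic family.
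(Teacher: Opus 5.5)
Your central reduction fails. The linear relation you derive cannot express \(n\) as a function of \(h\). Substitute \(a=n-h\) and write your \(m\) as \(a^*-m_0\), where \(m_0=(ha^*+1)/n\) is the integer used in the paper. The terms in \(n\) cancel, and your relation becomes \(a^*=h+2-(pq/U)m_0\). Thus \(n=(ha^*+1)/m_0\) depends on two parameters, \(h\) and \(m_0\), and the fourth-row formula is only the case \(m_0=U\).

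The weight identity does not fix \(m_0\) either. By Dedekind reciprocity it becomes \(12s(h,m_0)=p+q-9+pq/U-2/m_0\), which has further solutions. The special rows are such solutions, not a degeneration of a factorization. For instance, for \((3,6)\) the table has \(n=h^2+1\), with \(m_0=1\), whereas your formula would give \((h^2-16h+1)/9\).

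A datum outside the table is \((2;(2,1),(3,1),(51,22))\). Its arms are \([2]\), \([3]\), \([3,2,2,8]\), and \(h=29\), \(a^*=7\), \(m_0=4\). The arm weights sum to \(2\), and \(e=25/34\), \(\eta=5/34\), \(1-\sum_i a_i^*/n_i=1/34\). So both numerical identities hold, although \(51\ne 29^2-4\cdot29+1\). Separately, \(U\mid(h+1-p)(h+1-q)\) does not force \(h\equiv p-1\) or \(q-1\pmod U\) when \(U\) is composite; take \((p,q)=(5,8)\), \(h=13\).

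The same example shows that your proposed tests cannot close the gap. Its canonical index is \(15\) and \(\eta/e=1/5\). The only nonzero multiplicity is \(p_1=1\), so every Hodge inequality holds. Moreover \(\gamma_j=0\) for every \(j\not\equiv 8\pmod{15}\), including \(j=2,3,-1,-2\) and the first floor jump \(j=5\). The datum is excluded only by \(\gamma_8=-1\).

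The missing idea is to use the correction equations at characters comparable to the canonical index. The paper subtracts the equations for \(\ell\) and \(-\ell\) to get \(q_{1+\ell}-q_{1-\ell}=\Phi_\ell\), and compares this with the section threshold for \(q_{1-\ell}\). This yields two conditions:
\begin{itemize}
\item the divisibility \(\gcd(p,q)m_0\mid U(h+1)\), which the example violates at \(\ell=8\);
\item the forbidden-interval condition.
\end{itemize}
Together these pin down the canonical index modulo \(U/\gcd(p,q)\), and Dedekind reciprocity then forces \(m_0=U\). The exceptions are pairs with \(pq/U\) integral, where \(m_0=1\) or \(2\) also survive and give the third, fifth, and sixth rows.

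Finally, the ranges \(k\ge3\), \(k\ge4\), \(h\ge5\) and the parity in the \((4,4)\) row come from \(0<a<n\) and integrality, not from the character conditions. The character conditions are needed to determine \(m_0\), not to trim parameter ranges.
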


\begin{proof}
If \(\eta=0\), Proposition~\ref{obs:orbifold-sign} gives the datum \((2;(2,1),(3,1),(6,1))\). For \(\eta>0\), Proposition~\ref{obs:singleton} reduces the problem to two arms with one vertex. The numerical and character equations in Section~\ref{d2:equations}, together with Lemma~\ref{d2:reciprocity-closure}, proved below, determine the remaining arm. The case analysis below treats \(p=2\), coprime \(p,q\), \(\gcd(p,q)>1\) with \(p\nmid q\), and \(p\mid q\), and gives precisely the displayed families.
\end{proof}

\subsubsection{The numerical equations and \(\gamma_j=0\)}\label{d2:equations}
Assume \(\eta>0\), and write the data as \((2;(p,1),(q,1),(n,a))\), with \(2\le p\le q\). Put
\[
T=pq,\quad U=pq-p-q,\quad h=n-a,\quad b=a^*,\quad m=(hb+1)/n,\quad z=b-1.
\]
The condition \(\eta>0\) implies \(U>0\). Since \(ab\equiv1\pmod n\), \(m\) is a positive integer and \(\gcd(h,m)=1\). The numerical equations are equivalent to
\begin{equation}
b=h+2-\frac{T}{U}m,\quad n=\frac{h(h+2-(T/U)m)+1}{m},\quad 12s(h,m)=p+q-9+\frac{T}{U}-\frac2m.
\label{d2:base}
\end{equation}
We use the classical Dedekind sum
\[
s(h,m)=\sum_{v=1}^{m-1}\left(\frac vm-\frac12\right)\left(\frac{[hv]_m}{m}-\frac12\right),\quad s(h,1)=0.
\]
For the last arm, \(W(n,a)=\sum_j(b_j-3)=8-p-q\). Dedekind reciprocity gives
\[
W(n,a)=12s(m,h)-\frac{n+m}{h}+2=\frac{h-b}{m}-1-12s(h,m),
\]
which proves the last equation of \eqref{d2:base}. The normalization conditions \(a\ge2\), \(1<b<n\), and integrality are retained throughout. Put \(Q=h+2-b\), \(G=\gcd(Q,h+1)\), \(u=Q/G\), \(v=(h+1)/G\), and \(L=\operatorname{lcm}(p,q)\). Then
\[
\frac{U}{T}un=Gv(v-u)+u,\quad \frac{\eta}{e}=\frac{v-u}{v},\quad n>Gv,\quad \gcd(u,v)=1,
\]
and the canonical index is \(r=Lv/\gcd(L,u)\). Define
\[
q_j=\sum_{0\le k+j\frac{\eta}{e}\le \frac{\eta}{e}}[D_j(k)+1]_+.
\]
The identity \(D_j(k)+D_{1-j}(-k)=-2\) gives \(q_j=p_{1-j}\). For an integer \(\ell\) such that \(\ell u/v\) is not integral, write
\[
P=\lfloor\ell u/v\rfloor,\quad \tau=\ell u-vP,\quad E=\{P/p\}+\{P/q\}.
\]
Subtracting the equations \(\gamma_j=0\) gives
\begin{equation}
q_{1+\ell}-q_{1-\ell}=\Phi_\ell,
\quad \Phi_\ell=P-1-\lfloor P/p\rfloor-\lfloor P/q\rfloor-\left\lfloor\frac{P+\ell G(v-u)}n\right\rfloor
=\left\lceil E-\frac{\eta\tau}{v}\right\rceil-1.
\label{d2:recursion}
\end{equation}
We obtain \eqref{d2:recursion} from the equations \(\gamma_j=0\). Reindexing the defining sum by \(v\mapsto n-1-v\) gives
\[
T(a,n;a-c)-T(a,n;c)=n\left(n-1-2[-a^*c]_n\right).
\]
For the two characters \(\ell\) and \(-\ell\), the floor terms are \(\ell-P-1\) and \(-\ell+P\), and the fractional parts are \(1-\tau/v\) and \(\tau/v\). Subtracting the equations for the corresponding character multiplicities and using \(q_{1+\ell}=p_{-\ell}\), \(q_{1-\ell}=p_\ell\), therefore gives
\[
q_{1+\ell}-q_{1-\ell}
=1+\eta-\frac{\eta\tau}{v}
-\sum_{i=1}^3\frac{[\ell-P-1-\ell a_i^*]_{n_i}}{n_i}.
\]
For the two arms with one vertex the residues are \(p-1-[P]_p\) and \(q-1-[P]_q\). For the third arm the residue is \([-P-1-\ell z]_n\), where \(z=b-1=G(v-u)\). The numerical equation gives
\[
\frac{P+\ell z}{n}=\left(1-\frac1p-\frac1q\right)P+\frac{\eta\tau}{v}.
\]
Substitute these identities into the difference of the two equations, and use
\[
\left\lceil\frac{P+1+\ell z}{n}\right\rceil
=\left\lfloor\frac{P+\ell z}{n}\right\rfloor+1.
\]
The result is both expressions for \(\Phi_\ell\) in~\eqref{d2:recursion}.

For the degree calculation, put \(z_0=k+j\), \(w=vz_0-uj\), and \(\epsilon(z_0)=\lceil z_0/p\rceil+\lceil z_0/q\rceil-(1/p+1/q)z_0\). Then
\[
D_j(k)=\left\lfloor\frac{Gv}{un}w-\epsilon(z_0)\right\rfloor.
\]
The terms contributing to \(q_j\) satisfy \(0\le w\le v-u\). For \(j=1-\ell\), a section is possible only when \(\tau\ge u\), and then \(z_0=-P\), \(w=\tau-u\). In particular,
\begin{equation}
q_{1-\ell}=0\quad\text{if }\tau<u,
\quad q_{1-\ell}=\left[\left\lfloor\frac{\eta(\tau-u)}{v-u}-E\right\rfloor+1\right]_+\quad\text{if }\tau\ge u.
\label{d2:section-threshold}
\end{equation}
If \(u\nmid L\), take \(\ell=\lceil Lv/u\rceil<r\). Then \(P=L\), \(0<\tau<u\), and \(\Phi_\ell=-1\), whereas \eqref{d2:section-threshold} gives \(q_{1-\ell}=0\). This contradicts \eqref{d2:recursion}. Hence
\begin{equation}
u\mid L,\quad\text{equivalently}\quad \gcd(p,q)m\mid U(h+1).
\label{d2:divisibility}
\end{equation}
The conditions on the character multiplicities also give a fractional-part restriction. For an integer \(k\in\{0,\ldots,L-1\}\), set \(E=\{k/p\}+\{k/q\}\), and suppose \(0<E<\eta\). If an integer \(\ell\) belonged to
\begin{equation}
\left[\frac{vk}{u}+\frac{vE}{u\eta},\quad 1+\frac{vk}{u}+\frac{(v-u)E}{u\eta}\right),
\label{d2:forbidden}
\end{equation}
then \(P=k\), \(vE/\eta\le\tau<u+(v-u)E/\eta<v\), so \(\Phi_\ell=-1\) and \(q_{1-\ell}=0\), again a contradiction. Thus this interval contains no integer. If \(A\) is its lower endpoint, its length is \(1-E/\eta\), and consequently
\begin{equation}
0<\{A\}\le E/\eta.
\label{d2:fractional}
\end{equation}
If \(A=R/W+c/(Wz)\), \(E/\eta=c/W+c/(Wz)-c/(WH)\), and \(z>c>0\), where \(R\) is an integer, \(W\) is a positive integer, and \(H>0\), then \eqref{d2:fractional} implies \(0\le[R]_W<c\): since \(c/(Wz)<1/W\), taking the fractional part introduces no carry, and cancellation gives \([R]_W\le c-c/H<c\). We use this observation below with \(H=h+1\).

We also use the normalization bound. Put \(\lambda=T/U\). Then
\begin{equation}
h+1=z+\lambda m,\quad n=\frac{z^2}{m}+\lambda(z+1),\quad z\ge m.
\label{d2:z-bound}
\end{equation}
Indeed \(a=z^2/m+(\lambda-1)z+\lambda+1-\lambda m\) is strictly increasing for \(z\ge0\), and its value at \(z=m-1\) is \(1/m\), whereas \(a\ge2\). We use the following three-term Dedekind reciprocity identity:
\begin{equation}
12s(Ad-1,Kd)=3-\frac dK-\frac2{Kd}+12s(A,K)+12s(d-A^{-1},K),
\label{d2:three-term}
\end{equation}
where \(\gcd(A,K)=\gcd(Ad-1,Kd)=1\), and \(A^{-1}\) is taken modulo \(K\). This identity is an application of the Rademacher cocycle for Dedekind sums; see Rademacher--Grosswald~\cite[Chapter~4]{RademacherGrosswald1972}. We include the matrix calculation in the normalization used here. Choose \(s\) with \(As\equiv1\pmod K\), put \(M_1=\left(\begin{smallmatrix}A&(As-1)/K\\K&s\end{smallmatrix}\right)\), and choose a determinant-one matrix \(M_2\) with first column \((d-s,K)^{\mathsf{t}}\). The first column of \(M_1M_2\) is \((Ad-1,Kd)^{\mathsf{t}}\). The cocycle identity, whose three lower-left entries are positive, gives \eqref{d2:three-term}; its rational correction is \((-d^2-2)/(Kd)\). The coprimality conditions hold in each application below.

\begin{lemma}\label{d2:reciprocity-closure}
Let \(p,q\ge3\), write \(p=gP\), \(q=gQ\), with \(\gcd(P,Q)=1\), and set \(W=gPQ-P-Q>1\). Then
\[
12s(P,W)+12s(Q,W)=p+q-11+\frac{g+P+Q}{W}.
\]
For normalized data satisfying \eqref{d2:base}, suppose that positive integers \(b_0,r\) satisfy
\[
m=Wb_0,\quad h+1=b_0r,\quad r\equiv P\text{ or }Q\pmod W,\quad b_0\equiv g\pmod W.
\]
Then \(b_0=g\), and the data are of the form
\begin{equation}
\begin{gathered}
m=U,\quad h=s_0-1+kU,\quad s_0\in\{p,q\},\quad k\ge3,\\
n=\frac{h^2-(pq-2)h+1}{U},\quad a=n-h.
\end{gathered}
\label{d2:ordinary-data}
\end{equation}
\end{lemma}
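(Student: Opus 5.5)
The plan has two parts. First I prove the Dedekind-sum identity by ordinary two-term reciprocity. Then I substitute the hypotheses into the last equation of \eqref{d2:base}, evaluating \(s(h,m)\) with the three-term identity \eqref{d2:three-term}, and show that the result collapses to the linear equation \(b_0=g\).

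\emph{The identity.} Since \(W=gPQ-P-Q\), we have \(W\equiv-Q\pmod P\) and \(W\equiv-P\pmod Q\). Hence \(s(W,P)=-s(Q,P)\) and \(s(W,Q)=-s(P,Q)\). Both pairs \((P,W)\) and \((Q,W)\) are coprime, because \(\gcd(P,Q)=1\). Apply the classical reciprocity law
\[
12\bigl(s(x,y)+s(y,x)\bigr)=-3+\frac xy+\frac yx+\frac1{xy}
\]
to the pairs \((P,W)\), \((Q,W)\), and \((P,Q)\), and add the first two. This expresses \(12(s(P,W)+s(Q,W))\) through \(12(s(P,Q)+s(Q,P))\) plus explicit rational terms. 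Next substitute \(W/P=gQ-1-Q/P\) and \(W/Q=gP-1-P/Q\). The terms \(P/Q+Q/P\) cancel. The remaining fractions combine as
\[
\frac1{PQ}+\frac{P+Q}{PQW}=\frac{W+P+Q}{PQW}=\frac gW,
\]
which leaves exactly \(p+q-11+(g+P+Q)/W\).

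\emph{The closure.} We have \(T=g^2PQ\) and \(U=gW\), so \(T/U=gPQ/W\). By the symmetry \(P\leftrightarrow Q\), assume \(r\equiv P\pmod W\). Apply \eqref{d2:three-term} with \(A=r\), \(K=W\), and \(d=b_0\), so that \(Ad-1=h\) and \(Kd=m\). The coprimality conditions follow from \(\gcd(h,m)=1\). This gives
\[
12s(h,m)=3-\frac{b_0}W-\frac2{Wb_0}+12s(r,W)+12s(b_0-r^{-1},W).
\]
Here \(s(r,W)=s(P,W)\). For the other term, the congruence \(gPQ\equiv P+Q\pmod W\) gives \(Q(gP-1)\equiv P\), hence \(g-P^{-1}\equiv Q^{-1}\pmod W\). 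Using \(b_0\equiv g\), we get \(s(b_0-r^{-1},W)=s(Q^{-1},W)=s(Q,W)\).

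Now insert the first part and compare with \eqref{d2:base}. The terms \(-2/(Wb_0)\) cancel, and the equation reduces to
\[
\frac{g+P+Q-b_0}W=\frac{gPQ-W}W=\frac{P+Q}W,
\]
so \(b_0=g\).

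\emph{Consequences.} With \(b_0=g\) we get \(m=gW=U\). Also \(h+1=gr\equiv gP=p\pmod{gW}\), which is \(h=s_0-1+kU\) with \(s_0=p\); the other congruence gives \(s_0=q\). Substituting \(m=U\) and \((T/U)m=pq\) into \eqref{d2:base} gives the stated formula for \(n\), and \(a=n-h\) by definition.

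For \(k\ge3\), use \eqref{d2:z-bound}. There \(h+1=z+pq\) with \(z\ge m=U\), so
\[
s_0+kU\ge U+pq=2U+p+q.
\]
Hence \(kU\ge2U+(p+q-s_0)>2U\), which forces \(k\ge3\).

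\emph{Expected obstacle.} The main difficulty is the bookkeeping in the three-term step. One has to verify the coprimality hypotheses of \eqref{d2:three-term} and identify the inverse residue \(b_0-r^{-1}\) with \(Q^{-1}\) modulo \(W\). Without that identification the two Dedekind sums do not combine into the closed form of the first part.
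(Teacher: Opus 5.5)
Your proof is correct, and it departs from the paper in two of its three steps.

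\textbf{Closure step (same as the paper).} You apply \eqref{d2:three-term} with \(K=W\), \(d=b_0\), identify \(b_0-r^{-1}\equiv Q^{-1}\pmod W\), and compare with \eqref{d2:base}. The paper does exactly this. It takes \(A=P\) where you take \(A=r\); these are equivalent because only residues modulo \(W\) enter the identity.

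\textbf{The Dedekind-sum identity (different route).} The paper applies \eqref{d2:three-term} a second time, with \((A,K,d)=(P,W,g)\). It then evaluates \(12s(gP-1,gW)\) through the two-entry continued fraction \(gW/(gP-1)=[gQ-1,gP-1]\). You use only classical two-term reciprocity together with the congruences \(W\equiv-Q\pmod P\) and \(W\equiv-P\pmod Q\). Your route is more elementary: it needs no continued-fraction evaluation of Dedekind sums. The paper's route reuses a single tool.

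\textbf{The bound \(k\ge3\) (different route).} The paper computes \(a=Uk(k-2)+(q-p)k-2q+1\) explicitly and checks its sign for each \(k\). You read the bound directly from \(h+1=z+pq\) and \(z\ge m=U\) in \eqref{d2:z-bound}. Your argument is shorter. The paper's explicit formula, however, also shows that every \(k\ge3\) actually yields normalized data. That converse is not required by the lemma as stated, so omitting it is fine.

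\textbf{One small correction.} The hypothesis \(\gcd(A,K)=\gcd(r,W)=1\) of \eqref{d2:three-term} does not follow from \(\gcd(h,m)=1\). A common prime of \(r\) and \(W\) would divide \(m\) but not \(h\equiv-1\), so \(\gcd(h,m)=1\) does not rule it out. It does follow at once from \(r\equiv P\pmod W\) and \(\gcd(P,W)=1\).
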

\begin{proof}
Both \(P,Q\) are units modulo \(W\), and \((g-P^{-1})^{-1}\equiv Q\pmod W\). The continued fraction with two entries
\[
\frac{gW}{gP-1}=[gQ-1,gP-1]
\]
gives \(12s(gP-1,gW)=p+q-8+(p+q-2)/(gW)\). Applying \eqref{d2:three-term} with \((A,K,d)=(P,W,g)\) proves the first identity. For \(r\equiv P\pmod W\), apply it again with \(d=b_0\); the congruence \(b_0\equiv g\pmod W\) gives
\[
12s(h,m)=p+q-8+\frac{g+P+Q-b_0}{W}-\frac2m.
\]
The required value in \eqref{d2:base} is \(p+q-8+(P+Q)/W-2/m\), so \(b_0=g\). The other residue is symmetric. Thus \(m=U\), \(h\equiv p-1\) or \(q-1\pmod U\), and \eqref{d2:base} gives the formula for \(n\). Since \(U>p,q\), writing \(h=s_0-1+kU>0\) gives \(k\ge0\). For \(s_0=q\),
\[
a=Uk(k-2)+(q-p)k-2q+1.
\]
This is negative at \(k=0,1,2\) and at least two for every \(k\ge3\); interchange \(p,q\) for the other residue. Thus the normalization conditions hold precisely for \(k\ge3\). Conversely these formulas give integral normalized data satisfying the numerical equations: \(\gcd(h,U)=1\), and \(nm=hb+1\) gives \(\gcd(n,a)=1\) and \(ab\equiv1\pmod n\).
\end{proof}

For the cases \(p,q\ge3\), use \(p=gP\), \(q=gQ\), \(\gcd(P,Q)=1\), and set \(s=P+Q\), \(W=gPQ-s\), \(L=gPQ\), and \(\delta=\gcd(g,s)\). Thus \(U=gW\), \(T=gL\), and \(\gcd(W,L)=\delta\). Integrality of \(Lm/W\), together with \eqref{d2:divisibility}, gives an integer \(d>0\) such that
\begin{equation}
 m=Wd/\delta,\quad h+1=dr/\delta,\quad
 \delta Wn=dr(r-L)+\delta L,\quad Wn-L=rz,
\label{d2:nd1}
\end{equation}
where \(r\) is the canonical index. If \(W>1\) and \(r\equiv P\) or \(Q\pmod W\), one has \(\gcd(r,\delta)=1\), since \(\delta\mid W\) and \(\gcd(PQ,\delta)=1\). Thus \(d=\delta b_0\). Reducing the equation for \(n\) modulo \(W\) gives \((g-b_0)PQ\equiv0\), hence \(b_0\equiv g\pmod W\), since \(\gcd(PQ,W)=1\). Lemma~\ref{d2:reciprocity-closure} then gives \eqref{d2:ordinary-data}. Thus these two residues suffice for the ordinary family whenever \(W>1\).

\subsubsection{The pairs with integral ratio}\label{d2:integral}
Suppose \(T/U\) is an integer \(k\). The equation \(1/p+1/q=1-1/k\) has precisely the following six unordered solutions:
\[
(2,3),(2,4),(3,3),(2,6),(3,6),(4,4).
\]
Indeed \(2\le k\le6\), and the equation becomes \(((k-1)p-k)((k-1)q-k)=k^2\), which gives these six pairs. Integrality in \eqref{d2:base} gives \(m\mid(h+1)^2\). For \((2,3),(2,4),(3,3)\), equation \eqref{d2:divisibility} gives \(m\mid h+1\). Thus \(12s(h,m)=-m+3-2/m\), and \eqref{d2:base} gives \(m=1,2,3\), respectively. For \((2,6),(4,4)\), the divisibility is \(m\mid2(h+1)\). If \(m\mid h+1\), the same calculation gives \(m=1,2\), respectively. Otherwise write \(m=2d'\) and \(h+1\equiv d'\pmod{2d'}\). The condition \(m\mid(h+1)^2\) forces \(d'\) to be even, so \(m=4d\) and \(h\equiv2d-1\pmod{4d}\). In this case
\[
12s(2d-1,4d)=3-d-\frac2{4d}.
\]
For \(d=1\) the negative continued fraction is \([4]\); for \(d\ge2\) it is \([3,2^{d-2},3]\). Equation \eqref{d2:base} gives \(d=1\) for \((2,6)\), and \(d=2\) for \((4,4)\). For \((3,6)\), the divisibility is \(m\mid3(h+1)\). Either \(m\mid h+1\), giving \(m=1\), or \(m=9d\) and \(h\equiv3d-1\) or \(6d-1\pmod{9d}\). The extra factor of three follows from \(m\mid(h+1)^2\). For each of these two residue classes,
\[
12s(h,9d)=3-d-\frac2{9d}.
\]
The fractions are \([5,2]\) and \([2,5]\) for \(d=1\), and \([4,2^{d-2},3,2]\) and its reverse for \(d\ge2\). Thus \eqref{d2:base} gives \(d=1\). The solutions with \(m=U\) and \((p,q)\ne(2,3)\) give the second and fourth rows of Theorem~\ref{d2:classification}: their residues are \(h\equiv1\pmod{q-2}\) when \(p=2\), and \(h\equiv p-1\) or \(q-1\pmod U\) otherwise. Substitution in \eqref{d2:base} gives the stated formulas, and \(a=n-h\ge2\) gives \(k\ge4\) and \(k\ge3\), respectively. The remaining solutions are
\[
\begin{array}{c|c|c|c}
(p,q)&m&h&n\\\hline
(2,3)&1&h\ge6&h^2-4h+1\\
(2,6)&1&h\ge3&h^2-h+1\\
(3,6)&1&h\ge2&h^2+1\\
(4,4)&2&h\ge5\text{ odd}&(h-1)^2/2.
\end{array}
\]
\subsubsection{One arm with Seifert invariants \((2,1)\)}\label{d2:order-two}
The orders \(q=3,4,6\) have been treated. For the remaining orders put \(U=q-2\), \(T=2q\), \(\lambda=T/U\), and retain \(z=b-1\). Equations \eqref{d2:base} and \eqref{d2:z-bound} become
\begin{equation}
n=\frac{z^2}{m}+\lambda(z+1),\quad 12s(h,m)=U-3+\frac4U-\frac2m.
\label{d2:st1}
\end{equation}
Set
\begin{equation*}
\alpha=\frac{n-2}{z}=\frac{h+1}{m}+\varepsilon,\quad \varepsilon=\frac4{Uz},\quad \rho=\frac2{q\eta}=\frac4U+\varepsilon-\frac4{U(h+1)}.
\end{equation*}
These identities follow from \(Un-T=U(h+1)z/m\). We apply the fractional-part condition \eqref{d2:fractional} to suitable integers modulo \(L\).
\paragraph{Odd values of \(q\ge5\).}
Here \(U\) is odd and \(\gcd(T,U)=1\). Integrality of \(Q=Tm/U\) gives \(m=Ud\) for an integer \(d>0\). The index formula gives \(h+1=dr\), and hence
\begin{equation}
 Un=dr(r-T)+T.
\label{d2:st5}
\end{equation}
Use \(k=q+1\), which is even, so \(E=1/q\). The inequality \(E<\eta\) holds: it is equivalent to \(n>2q/(q-4)\), and follows from \(z\ge m\ge U\) and the first equation in \eqref{d2:st1}, including \(q=5\). The lower endpoint of \eqref{d2:forbidden} is
\[
 A=\frac{r+\alpha}{2}
 =\frac{(U+1)r}{2U}+\frac\varepsilon2.
\]
Since \(z\ge U\ge3\), the observation following \eqref{d2:fractional}, with \(W=U\), \(R=(U+1)r/2\), and \(c=2\), gives \(j=[(U+1)r/2]_U<2\). The case \(j=0\) would give \(r\equiv0\pmod U\), contradicting \eqref{d2:st5}, since \(T\equiv4\pmod U\) and odd \(U\ge3\) does not divide four. Thus \(j=1\), equivalently \(r\equiv2\pmod U\). Equation \eqref{d2:st5} then gives \(d\equiv1\pmod U\). Modulo \(m=Ud\), we have \(h\equiv2d-1\). The three-term Dedekind reciprocity identity, with \(A=2,K=U\), gives
\[
 12s(h,m)=3-\frac dU-\frac2m
 +12s(2,U)+12s(d-2^{-1},U).
\]
Because \(d\equiv1\pmod U\), the last Dedekind sum equals \(s(2,U)\). Using
\[
 12s(2,U)=\frac U2-3+\frac5{2U}\quad(U\text{ odd})
\]
therefore gives
\[
 12s(h,m)=U-3+\frac{5-d}{U}-\frac2m.
\]
Comparison with \eqref{d2:st1} forces \(d=1\). Hence \(m=U\) and \(h\equiv1\pmod U\).

\paragraph{Even values of \(q\ge8\).}
Write \(q=2Q_0\), \(V=Q_0-1\), so \(U=2V\) and \(V\ge3\). Integrality of \(Q=2Q_0m/V\) forces \(d=2m/V\) to be an integer, since \(\gcd(Q_0,V)=1\). The index formula gives
\begin{equation}
 h+1=\frac{dr}{2},\quad m=\frac{Vd}{2},\quad
 2Vn=dr(r-2Q_0)+4Q_0.
\label{d2:st6}
\end{equation}
Use \(k=2\), so \(E=2/q=1/Q_0\). Again \(E<\eta\): it is equivalent to \(n>2q/(q-6)\); the bounds in \eqref{d2:st1} imply this for all \(q\ge8\). The lower endpoint in \eqref{d2:forbidden} is \(A=\alpha=r/V+\varepsilon\). Here \(z>2\). Indeed \(z\ge m\), while \(V\mid2m\), so \(z\le2\) could only give \(V=4,m=2,z=2\); \eqref{d2:st1} would give the noninteger value \(n=19/2\). The observation following \eqref{d2:fractional}, with \(W=V\), \(R=r\), and \(c=2\), gives \(j=[r]_V<2\). If \(j=0\), \eqref{d2:st6} first forces \(V\mid4\), hence \(V=4\). But \(r\equiv0\pmod4\) makes \(dr(r-10)\) divisible by eight, whereas \(4Q_0=20\) is not, contradicting \eqref{d2:st6}. Therefore \(r\equiv1\pmod V\). The integer \(d\) must be even: if odd, integrality of \(m=Vd/2\) forces \(V\) even, and integrality of \(h+1=dr/2\) forces \(r\) even, contrary to \(r\equiv1\pmod V\). Write \(d=2b_0\). Then \(m=Vb_0\), \(h+1=b_0r\), and \eqref{d2:st6} gives \(b_0\equiv2\pmod V\). Modulo \(m\), \(h\equiv b_0-1\). Three-term reciprocity, now with \(A=1,K=V\), gives
\[
 12s(h,m)=3-\frac{b_0}{V}-\frac2m
 +12s(1,V)+12s(b_0-1,V)
 =2V-3+\frac{4-b_0}{V}-\frac2m.
\]
The required value \eqref{d2:st1} is \(2V-3+2/V-2/m\); hence \(b_0=2\). Thus \(m=2V=U\), and \(h+1=2r\) gives \(h\equiv1\pmod U\).

Thus every order \(q\ge4\), apart from the additional solution already recorded at \(q=6\), has
\[
m=q-2,\quad h\equiv1\pmod{q-2},\quad n=\frac{h^2-(2q-2)h+1}{q-2}.
\]
Writing \(h=(q-2)k+1\), normalization is equivalent to \(k\ge4\), and
\[
n=(q-2)k(k-2)-2,\quad a=(q-2)k(k-3)-3.
\]

\subsubsection{The case of coprime \(p\) and \(q\)}
Let \(3\le p<q\) and \(\gcd(p,q)=1\). In \eqref{d2:nd1}, one has \(g=\delta=1\), \(W=U\), \(L=T\), and \(m=Ud\). Normalization \(a\ge2\) gives \(z\ge m\), by \eqref{d2:z-bound}. In particular \(z\ge U>q>p\), and
\begin{equation*}
 n=\frac{z^2}{m}+\frac TU(z+1)>T.
\end{equation*}
Recall that \(\eta=U/T-1/n\).

Choose \(j\in\{1,\ldots,p-1\}\) so that \(k=jq+1\equiv0\pmod p\). Then \(0<k<L\) and
\[
 E=\{k/p\}+\{k/q\}=1/q<\eta.
\]
The last inequality follows from \(n>T\) and \(U-p\ge1\). The interval \eqref{d2:forbidden} has lower endpoint
\[
 A=\frac{vk}{u}+\frac{v}{uq\eta}
   =\frac{jr}{p}+\frac{\alpha_p}{p},
 \quad \alpha_p=\frac{(p-1)n-p}{z}.
\]
The numerical equations simplify this to
\begin{equation*}
 \alpha_p=\frac{(p-1)r}{U}+\varepsilon_p,
 \quad \varepsilon_p=\frac{p^2}{Uz},
 \quad
 \frac1{q\eta}=\frac pU+\frac{\varepsilon_p}{p}
                      -\frac p{U(h+1)}.
\end{equation*}
Set \(C=(jU+p-1)/p\), an integer since \(U\equiv-q\pmod p\). Then \(A=Cr/U+p/(Uz)\), so the observation following \eqref{d2:fractional}, with \(W=U\), \(R=Cr\), and \(c=p<z\), gives \([Cr]_U<p\). The residue zero is impossible: \(C\equiv(p-1)p^{-1}\pmod U\) is a unit, so it would imply \(r\equiv0\pmod U\), contrary to \eqref{d2:nd1} and \(\gcd(T,U)=1\). Since \((p-1)(q-1)=U+1\), we obtain
\begin{equation*}
 r\equiv k_1q\pmod U,\quad 1\le k_1\le p-1.
\end{equation*}
Indeed \(C^{-1}\equiv p(q-1)\equiv q\pmod U\). Repeating the same argument with \(p,q\) interchanged gives
\begin{equation*}
 r\equiv k_2p\pmod U,\quad 1\le k_2\le q-1.
\end{equation*}
The integer \(k_1q-k_2p\) is divisible by \(U\) and lies in \([-U,U]\). It cannot be zero, since \(\gcd(p,q)=1\) and \(0<k_1<p\). Its value \(U\) occurs only at \((k_1,k_2)=(p-1,1)\), and its value \(-U\) only at \((1,q-1)\). Therefore
\begin{equation*}
 r\equiv p\text{ or }q\pmod U.
\end{equation*}
The reduction following \eqref{d2:nd1} gives \eqref{d2:ordinary-data}.

\subsubsection{The case \(\gcd(p,q)>1\) and \(p\nmid q\)}
Suppose \(g\ge2\) and \(2\le P<Q\), retaining the notation and equations \eqref{d2:nd1}.

We first prove
\begin{equation}
 z>s,\quad W(W-s)>\delta L.
\label{d2:nd2}
\end{equation}
For the first, \(z\ge W/\delta\). If \(\delta=1\), then \(W\ge2PQ-P-Q>s\). If \(\delta\ge2\), then \(W/\delta\ge PQ-s/\delta\). For \(s\ge8\), the inequality \(PQ\ge2s-4\) gives \(PQ-s/\delta\ge3s/2-4\ge s\); equality would require \(s=8\), \((P,Q)=(2,6)\), which is not coprime. For \(s=7\), one has \(\delta=7\) and \(PQ\ge10\), so the bound is strict. The only smaller coprime pair is \((P,Q)=(2,3)\), with \(s=5\). Here \(\delta>1\) means \(\delta=5\), and \(W/\delta\ge5\), with equality only if \(g=5\). The remaining possibility \(m=z=5\) would give \(n=61/5\), impossible. This proves \(z>s\). For the second bound, if \(g\ge3\), then
\[
 \frac{W(W-s)}{L}>\frac23(g-2)PQ
 \ge4(g-2)\ge g\ge\delta.
\]
If \(g=2\), set \(v=PQ-P-Q\ge1\); then \(W=PQ+v\) and \(W-s=2v\). For \(\delta=1\), their product is at least \(2PQ+2>L\). For \(\delta=2\), both \(P,Q\) are odd and \(v\ge2\), so their product is at least \(4PQ+8>2L\). This proves \eqref{d2:nd2}. We also have \(W>q>p\).

First take the residue index \(k=1\), for which \(E=1/p+1/q=s/L\). Since \(n>m\ge W/\delta>L/(W-s)\), we have \(0<E<\eta=W/L-1/n\). The lower endpoint of the forbidden interval \eqref{d2:forbidden} is
\begin{equation}
 A=\frac{n-1}{z}=\frac rW+\frac{s}{Wz},
 \quad \frac E\eta=\frac sW+\frac{s}{Wz}-\frac{s}{W(h+1)}.
\label{d2:nd3}
\end{equation}
The observation following \eqref{d2:fractional}, with \(c=s<z\), gives \([r]_W<s\). The residue zero would imply \(W\mid L\) by \eqref{d2:nd1}, impossible since \(W>\delta=\gcd(W,L)\). Thus
\begin{equation}
 [r]_W\in\{1,\ldots,s-1\}.
\label{d2:nd4}
\end{equation}

Choose \(j\in\{1,\ldots,P-1\}\) with \(jQ+1\equiv0\pmod P\), and put \(k=g(jQ+1)\). Then \(E=1/Q\), and \(0<E<\eta\). Indeed the numerical equation and \(z\ge m\) give \(n>2z\), while \eqref{d2:nd2} gives \(z>L/(W-s)\). Since
\[
 2(W-q)\ge W-s>0
\]
(the difference is \(L-2q=gQ(P-2)\)), we obtain \(n>L/(W-q)\). Thus \(E<\eta\). The same argument applies after interchanging \(P,Q\), when \(E=1/P\). The lower endpoint \(A\) and the ratio \(E/\eta\) are
\begin{equation*}
\begin{aligned}
 A&=\frac{Cr}{W}+\frac{p}{Wz},&
 C&=\frac{jW+p-1}{P}\in\mathbb{Z},\quad C^{-1}\equiv Q\pmod W,\\
 \frac E\eta&=\frac pW+\frac p{Wz}-\frac p{W(h+1)}.
\end{aligned}
\end{equation*}
Since \(p/(Wz)<1\), there is an integer \(i\equiv Cr\pmod W\) such that
\[
 \{A\}=\frac{i+p/z}{W}.
\]
Condition \eqref{d2:fractional} gives \(-p/z<i<p\). If \(i\le0\), then
\[
 -W<-L/z-[r]_W<iQ-[r]_W<0
\]
by \eqref{d2:nd2} and \eqref{d2:nd4}, contradicting \(iQ-[r]_W\equiv0\pmod W\). Hence \(1\le i\le p-1\), and
\begin{equation*}
 r\equiv iQ\pmod W.
\end{equation*}
Interchanging \(P,Q\) gives \(r\equiv j'P\pmod W\) with \(1\le j'\le q-1\). The difference \(iQ-j'P\) is a multiple of \(W\) lying in \([-W,W]\). At the two endpoints it yields \(r\equiv P\) or \(Q\). If the difference is zero, then \(i=cP\), \(j'=cQ\), with \(1\le c\le g-1\). But in that case \([r]_W=cPQ\), which lies strictly between \(s\) and \(W\), contradicting \eqref{d2:nd4}. We conclude
\begin{equation*}
 r\equiv P\text{ or }Q\pmod W.
\end{equation*}

The reduction following \eqref{d2:nd1} gives \eqref{d2:ordinary-data}.

\subsubsection{The case \(p\mid q\)}
The Dedekind sum satisfies
\[
 12s(h,m)\le m-3+2/m.
\]
Together with \eqref{d2:base}, this gives
\begin{equation}
 m+4/m\ge p+q-6+T/U.
\label{d2:de1}
\end{equation}
By Cauchy--Schwarz, \(s(h,m)\le\sum_{j=1}^{m-1}(j/m-1/2)^2=(m-1)(m-2)/(12m)\). The bound also holds at \(m=1\). As before \(z=b-1\ge m\), and the numerical equations imply
\begin{equation}
 n=\frac{z^2}{m}+\lambda(z+1)>2z,
 \quad \lambda=T/U>1.
\label{d2:de2}
\end{equation}

\paragraph{The case \(p\mid q\) and \(p<q\).}
Let \(P=1\), \(g\ge3\), and \(Q\ge2\), excluding \((p,q)=(3,6)\). Then \(W=(g-1)Q-1\), \(L=gQ\), and \(s=Q+1\). Inequality \eqref{d2:de1} forces \(m>s\). Indeed, on \(1\le m\le s\), the maximum of \(m+4/m\) is \(\max(5,s+4/s)\), whereas the required lower bound is \(gs-6+L/W\). It strictly exceeds that maximum for \(g\ge3,s\ge4\), and for \(g\ge4,s=3\). The excluded pair is \(g=3,s=3\), that is, \((p,q)=(3,6)\). Hence
\begin{equation*}
 z>s.
\end{equation*}
We also have
\begin{equation}
 z>\frac{L}{W-s}.
\label{d2:de5}
\end{equation}
For \(g\ge5,Q\ge2\), for \(g=4,Q\ge3\), and for \(g=3,Q\ge5\), this follows from \(z>s\) and
\[
 s(W-s)-L=(g-2)Q^2-4Q-2\ge0.
\]
The three remaining pairs are \((p,q)=(3,9),(3,12),(4,8)\). Divisibility of \(m\) and \eqref{d2:de1} respectively give \(m\ge10,14,10\), whereas \(L/(W-s)\) is respectively \(9,6,4\). Thus \eqref{d2:de5} holds for every pair under consideration.

The character \(k=1\) has \(E=s/L\). By \eqref{d2:de2}--\eqref{d2:de5}, \(n>L/(W-s)\), so \(E<\eta\). Its threshold is \eqref{d2:nd3}, so the observation following \eqref{d2:fractional}, with \(c=s<z\), gives
\begin{equation}
 1\le [r]_W\le Q.
\label{d2:de6}
\end{equation}
The residue zero is impossible because it would imply \(W\mid L\), while \(\gcd(W,L)=\delta<W\).

Next use \(k=g\), so \(E=1/Q\). This also satisfies \(E<\eta\): since
\[
 2(W-g)\ge W-s>0
\]
(the difference is \(g(Q-2)\)), equations \eqref{d2:de2} and \eqref{d2:de5} give \(n>L/(W-g)\). The lower endpoint of the excluded interval is
\begin{equation*}
 A=\frac{(g-1)r}{W}+\frac g{Wz},
 \quad
 \frac E\eta=\frac gW+\frac g{Wz}-\frac g{W(h+1)}.
\end{equation*}
Choose the integer \(i\equiv(g-1)r\pmod W\) such that \(\{A\}=(i+g/z)/W\). Condition \eqref{d2:fractional} gives \(-g/z<i<g\). Since \((g-1)Q=W+1\), we have \(iQ\equiv r\pmod W\). If \(i\le0\), \eqref{d2:de5}--\eqref{d2:de6} would give
\[
 -W<-L/z-[r]_W<iQ-[r]_W<0,
\]
a contradiction. Thus \(1\le i\le g-1\). If \(i\le g-2\), then \(0<iQ<W\), so \([r]_W=iQ\le Q\) forces \(i=1\). If \(i=g-1\), then \(iQ=W+1\), giving \([r]_W=1\). Therefore
\begin{equation*}
 r\equiv1\text{ or }Q\pmod W.
\end{equation*}
The reduction following \eqref{d2:nd1} gives \eqref{d2:ordinary-data}; the excluded pair \((3,6)\) was treated above.

\paragraph{The case \(p=q=g\ge5\).}
Now \(W=g-2\), \(L=g\), \(\delta=\gcd(g,2)\). Equation \eqref{d2:de1} gives \(m>2\), hence \(z>2\) and \(n>6\). The character \(k=1\) has \(E=2/g<\eta\), since \(n>g/(g-4)\). Its threshold is
\[
 0<\left\{\frac rW+\frac2{Wz}\right\}
 \le\frac2W+\frac2{Wz}-\frac2{W(h+1)}.
\]
The same observation with \(c=2<z\) gives \([r]_W<2\). Zero is impossible because \(W\mid L\) would imply \(g-2\mid2\), contrary to \(g\ge5\). Thus \(r\equiv1\pmod W\). The reduction following \eqref{d2:nd1}, with \(P=Q=1\), gives \eqref{d2:ordinary-data}.

The cases above exhaust all pairs \(2\le p\le q\). Together with the case \(\eta=0\), they prove Theorem~\ref{d2:classification}.

\subsubsection{The continued fractions}\label{d2:comparison}
The data in Theorem~\ref{d2:classification} give the rows with central self-intersection \(-2\) of Table~\ref{table:threearm}. This follows by multiplying the continued-fraction matrices \(M(b)=\left(\begin{smallmatrix}b&-1\\1&0\end{smallmatrix}\right)\), using \(M(2)^k=\left(\begin{smallmatrix}k+1&-k\\k&1-k\end{smallmatrix}\right)\). In particular, the family with \(p=2\) and \(m=U\) has last arm \([2^{k-4},3,2^{q-4},k+1]\). For \(3\le p\le q\), the residue class \(h\equiv q-1\pmod U\) gives the last arm
\[
[2^{k-3},3,2^{q-4},3,2^{p-3},k+1]
\]
when \(q\ge4\); the other residue is obtained by interchanging \(p,q\). If the order of the arm with one vertex defining the residue is three, the continued fraction is \([2^{k-3},4,2^{q-3},k+1]\). The third, fifth, and sixth rows of Theorem~\ref{d2:classification} give the last three families.
\subsection{Three arms and central weight three}\label{sec:d3}

Use the compactification of Section~\ref{sec:compactification}, and write \(D_i=S_i+T_i\), where \(S_i\) meets \(C\) and \(T_i\) may be empty. Here \(C^2=0\), \(C\cdot S_i=1\), and \(C\cdot T_i=0\).

\begin{theorem}\label{d3:classification}
Let a weighted homogeneous normal surface singularity with three arms and central self-intersection \(-3\) admit a rational homology disk smoothing. Up to permutation of the arms, its resolution graph belongs to row~1(b) or~1(c) of Table~\ref{table:threearm}.
\end{theorem}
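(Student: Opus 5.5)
Since \(t=3\) and \(d=3\), we have \(C^2=d-t=0\), and Lemma~\ref{lem:rational-curve-system} makes \(|C|\) a base-point-free pencil, that is, a \(\mathbb{P}^1\)-fibration \(\phi\colon Y\to\mathbb{P}^1\) with fiber class \(C\). The plan is to read off the arms from the singular fibers of \(\phi\). Each \(S_i\) satisfies \(S_i\cdot C=1\), so it is a section of \(\phi\). Every component of \(T_i\) has \(T_i\cdot C=0\), hence is vertical. Since \(\rho(Y)=N_D+1\) and a ruled surface over \(\mathbb{P}^1\) has \(\rho=2+\sum_F(\#\text{components of }F-1)\), the number of extra fiber components is \(N_D-1\). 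Exactly \(N_D-3\) of them lie in \(D\), namely the components of the \(T_i\). Thus precisely two fiber components lie outside \(C\cup D\), and each is a \((-1)\)-curve or part of a chain that contracts. The three sections \(S_1,S_2,S_3\) are pairwise disjoint and have self-intersection at most \(-2\). On a relatively minimal model \(\mathbb{F}_k\), three pairwise disjoint sections cannot all survive unless their images meet, so the blow-downs must separate them.

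The key steps are as follows. First, contract successively the \((-1)\)-curves in singular fibers disjoint from the chosen sections, reaching some \(\mathbb{F}_k\). Second, classify the singular fibers. Each singular fiber contains at least one component outside \(D\), since a fiber consisting only of \((-2)\)-or-worse curves of \(D\) is impossible. By Lemma~\ref{lem:one-outside-cluster}, each singular fiber is a linear chain \(L_A-E-L_B\) with a single \((-1)\)-curve \(E\). With only two components outside \(D\), either there are exactly two singular fibers, each with one \((-1)\)-curve, or there is one singular fiber containing both. Third, use the images of the sections on \(\mathbb{F}_k\). These have self-intersections \(S_i^2+(\text{number of blowups on }S_i)\), they must pairwise coincide or intersect only at blown-up points, and they satisfy \(\bar S_i\cdot\bar S_j=\bar S_i^2+k\)-type relations. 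These relations force the images to be sections of \(\mathbb{F}_1\) or \(\mathbb{F}_0\) meeting at one or two points. Then, as in the proof of Lemma~\ref{lem:upper-central-bound}, the lengths of the \(2\)-chains are determined by how many times each section's strict transform passes through successive centers.

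Fourth, convert the fiber chains into the continued fractions \(n_i/(n_i-a_i)\) of the complementary chains \(S_i+T_i\). Applying Hirzebruch--Jung duality \(W(n/a)+W(n/(n-a))=-2\), together with Proposition~\ref{prop:three-arm-lattice-vanishing}, which gives \(\sum(b_v-3)=1\), then yields the arms. In the two-singular-fiber configuration they are \([p+3]\), \([2^r,q+4]\), \([2^q,3,2^p,r+3]\), which is row~1(b). In the configuration where one arm is the single \((-2)\)-curve \([2]\), whose dual chain is again \([2]\) and which arises when a section passes through a \(-2\) fiber, the arms are \([2]\), \([2^q,r+4]\), \([2^r,q+4]\), which is row~1(c). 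The constraint \(K^2+N=0\) in the form of Proposition~\ref{obs:general-equations} will be used as a check, to kill spurious configurations such as degenerate cases where a section has self-intersection \(-1\).

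I expect the main obstacle to be the combinatorial bookkeeping of the second and third steps. This means enumerating how three disjoint sections of self-intersection at most \(-2\) can arise from two \((-1)\)-components, including the case of a single singular fiber with two \((-1)\)-curves, to which Lemma~\ref{lem:one-outside-cluster} does not apply directly. There I would first try to show that such a fiber forces two of the sections to meet the same end chain, contradicting the disjointness of the \(D_i\), and otherwise reduce to Lemma~\ref{lem:one-outside-cluster} by contracting one \((-1)\)-curve at a time.
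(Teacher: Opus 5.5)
There is a genuine gap, and it begins with your Picard-number count. In $\rho(Y)=N_D+1=2+\sum_F(\#F-1)$, the right side discounts one component in each reducible fiber. The $N_D-3$ vertical components of $D$, by contrast, are actual components, so you cannot subtract them from that discounted count. The correct identity is $\sum_F(u_F-1)=2$, where $u_F$ is the number of components of the reducible fiber $F$ that are not in $D$. Hence the number of fiber components outside $D$ is two plus the number of reducible fibers, and it is never exactly two. If you instead count exceptional curves of a contraction to a Hirzebruch surface, you get $q+2$, where $q$ is the number of original fibers whose strict transforms lie in $D$. Your count tacitly sets $q=0$. The configuration that actually occurs has $q=2$, with two reducible fibers each containing two components outside $D$.

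Your proposed fiber shape is also exactly the case that must be excluded. Suppose a ruling fiber has a single $(-1)$-curve $E$ as its only component outside $D$. Adjunction gives $E$ multiplicity $m=2+\sum_{v\in D\cap F}(b_v-2)m_v\ge2$, so no section meets $E$. Each section then meets its own nonempty $T_i$ in this fiber, and $E$ meets all three $T_i$. The $E$-cofactor of the fiber matrix (adjugate identity) gives $m^2=\prod_i\det A_i$, while $A_iy_i=me_{j_i}$ with $(y_i)_1=1$ gives $m\mid\det A_i$. Together these are a contradiction. Note also that Lemma~\ref{lem:one-outside-cluster} describes the exceptional fiber over a transverse intersection point, not a fiber of the ruling. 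It does not make ruling fibers into chains $L_A-E-L_B$.

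Your third step rests on a false premise as well. Three disjoint sections do survive on a suitable relatively minimal model, and producing that model is the first key step. A $(-1)$-curve meeting two sections has multiplicity one, and at most one such curve lies in a fiber. So $\sum_v m_v(2-b_v)=2$ forces every reducible fiber to contain a $(-1)$-curve meeting at most one section, and contracting such curves preserves disjointness. On $\mathbb{F}_n$, three disjoint sections $S_0+a_iF$ satisfy $a_i+a_j=n$, which forces $n=0$ and constant sections. Contracting only $(-1)$-curves disjoint from all sections, as you propose, gets stuck: in the configuration of row~1(b) or~1(c), every $(-1)$-curve of a reducible fiber meets a section.

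From there the argument runs as follows.
\begin{enumerate}
\item One has $u_F\ge2$, so there is either one reducible fiber with $u_F=3$, or two with $u_F=2$.
\item Each section needs a blow-up center, since its square drops from $0$ to at most $-2$; this gives $1\le q\le2$.
\item The case $q=1$ is excluded by Lemma~\ref{lem:one-outside-cluster} and linearity of the $D_i$, or because some section would end with square $-1$.
\item For $q=2$, the lemma applied at $F\cap S_2$, $F\cap S_3$, $G\cap S_1$ and $G\cap S_3$ gives $D_1=[s+1,k+2,2^r]$, $D_2=[r+1,2^{s+1}]$ and $D_3=[2^{k+1}]$.
\end{enumerate}
Rows~1(b) and~1(c) are the cases $k\ge1$ and $k=0$ of this single configuration, not two different fiber configurations. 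Your fourth step quotes the table rather than deriving it, and the weight identity $\sum(b_v-3)=1$ alone cannot determine the arms.
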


\begin{proof}
By Lemma~\ref{lem:rational-curve-system}, \(|C|\) defines a \(\mathbb{P}^1\)-fibration over \(\mathbb{P}^1\). The surface \(Y\) is rational, the curves \(S_1,S_2,S_3\) are disjoint sections, and each nonempty \(T_i\) is contained in one fiber.

We first contract vertical \((-1)\)-curves while preserving the disjointness of the sections. In a reducible fiber, suppose that every \((-1)\)-curve meets at least two sections. Such a curve has fiber multiplicity one. Since each section has intersection number one with the fiber, at most one \((-1)\)-curve in that fiber can meet two sections. For a fiber \(\sum_v m_vE_v\), with \(E_v^2=-b_v\), adjunction gives
\[
\sum_v m_v(2-b_v)=2.
\]
Under this supposition, the left side is at most one. Thus every reducible fiber contains a \((-1)\)-curve meeting at most one section. Contracting such a curve preserves the disjointness of the sections. Repeating the contraction gives a ruled surface with three disjoint sections. Write their classes on \(\mathbb{F}_n\) as \(S_0+a_iF\), where \(S_0^2=-n\) and \(F\) is a fiber. Their pairwise intersections give \(a_i+a_j=n\) for \(i\ne j\), so \(a_i=n/2\) and each section has square zero. If \(n>0\), an irreducible section different from \(S_0\) must have \(a_i\ge n\), a contradiction. Hence \(n=0\), and the three sections are constant sections of \(\mathbb{F}_0\). We obtain a morphism
\[
Y\to\mathbb{F}_0
\]
which contracts only vertical curves.

Let \(u_F\) be the number of components of a reducible fiber \(F\) which do not belong to \(D\). Since \(\rho(Y)=N_D+1\) and \(D\) has \(N_D-3\) vertical components, the formula for the Picard number of a rational fibration gives
\[
\sum_F(u_F-1)=2.
\]
Every reducible fiber contains a component outside \(D\): its intersection matrix is negative semidefinite, whereas every subconfiguration of \(D\) is negative definite.

Suppose that \(u_F=1\), and denote the unique component outside \(D\) by \(E\). It is the unique \((-1)\)-curve in the fiber. If its multiplicity is \(m\), adjunction gives
\[
m=2+\sum_{v\in D\cap F}(b_v-2)m_v\ge2.
\]
No section meets \(E\). Each section therefore meets the nonempty subchain \(T_i\) attached to it, and the component adjacent to the section has multiplicity one. Since the fiber has a tree as its dual graph, \(E\) meets each \(T_i\) at one component. Let \(A_i\) be the positive intersection matrix of \(T_i\), let \(y_i\) be its multiplicity vector, and let \(j_i\) index the component meeting \(E\). Then
\[
A_iy_i=me_{j_i},\quad (y_i)_1=1.
\]
Thus \(\det A_i=m\delta_i\), where \(\delta_i\) is a positive integral cofactor of \(A_i\). The adjugate of the negative fiber intersection matrix is the outer product of the primitive multiplicity vector with itself. This identity holds for a smooth fiber and is preserved by a point blowup. Deleting the row and column indexed by \(E\) therefore gives
\[
m^2=\prod_{i=1}^{3}\det A_i=m^3\delta_1\delta_2\delta_3,
\]
which contradicts \(m\ge2\). Hence \(u_F\ge2\) for every reducible fiber. There is either one reducible fiber with \(u_F=3\), or two reducible fibers with \(u_F=2\) for each fiber.

Let \(q\) be the number of fibers of \(\mathbb{F}_0\) whose strict transforms belong to \(D\). The morphism \(Y\to\mathbb{F}_0\) is a composition of \(N_D-1\) point blowups. Among the \(N_D-3\) vertical components of \(D\), precisely \(q\) are strict transforms of fibers of \(\mathbb{F}_0\). Consequently the number of exceptional prime curves outside \(D\) is
\[
(N_D-1)-(N_D-3-q)=q+2.
\]
Each exceptional fiber contains a \((-1)\)-curve created by its last blowup, and this curve does not belong to \(D\). Each section has square zero on \(\mathbb{F}_0\) and square at most \(-2\) on \(Y\), so each section contains a blowup center. These centers lie in three distinct exceptional fibers. Thus \(q\ge1\). Since there are at most two reducible fibers, \(q\le2\).

Suppose that \(q=1\) and there is one reducible fiber. Let \(F\) be the fiber of \(\mathbb{F}_0\) whose strict transform belongs to \(D\), and assume that \(F\subset T_1\). There are exactly three positive-dimensional fibers of \(Y\to\mathbb{F}_0\), lying over one point on each section, and each contains exactly one exceptional curve outside \(D\). Lemma~\ref{lem:one-outside-cluster} applies at \(F\cap S_2\) and \(F\cap S_3\). Since \(S_2^2,S_3^2\le-2\), the exceptional subchain adjacent to \(F\) in each exceptional fiber is nonempty. Both subchains belong to \(D_1\). The curve \(F\) is also joined to \(S_1\) by a path in \(D_1\). Hence \(F\) has at least three neighbors in \(D_1\), contrary to the linearity of \(D_1\).

Suppose that \(q=1\) and there are two reducible fibers. Denote the fiber whose strict transform belongs to \(T_1\) by \(F\), and denote the other fiber of \(\mathbb{F}_0\) by \(G\). The points \(F\cap S_2\) and \(F\cap S_3\) must be blown up to separate the corresponding chains. There are only three exceptional prime curves outside \(D\). The reducible fiber over \(G\) requires another exceptional fiber, and the condition \(S_1^2\le-2\) requires a center on \(S_1\). Thus the third exceptional fiber is over \(G\cap S_1\). No exceptional component in this exceptional fiber belongs to \(D\): it cannot belong to \(T_1\), which lies over \(F\), and it is not connected to either of the other sections. The exceptional fiber therefore consists of its unique exceptional curve outside \(D\). It is a single blowup, which gives \(S_1^2=-1\), a contradiction.

It follows that \(q=2\) and there are exactly two reducible fibers. Their strict transforms belong to different subchains \(T_i\); label them so that
\[
F\subset T_1,\quad G\subset T_2.
\]
The points
\[
F\cap S_2,\quad F\cap S_3,\quad G\cap S_1,\quad G\cap S_3
\]
must be blown up. These four exceptional fibers account for all four exceptional prime curves outside \(D\), and each contains precisely one such curve. Lemma~\ref{lem:one-outside-cluster} implies that each exceptional fiber has a linear dual graph. There are no centers over \(F\cap S_1\) or \(G\cap S_2\).

Since \(T_2\) lies over \(G\), the exceptional fiber over \(F\cap S_2\) has no exceptional subchain adjacent to \(S_2\). By Lemma~\ref{lem:one-outside-cluster}, its dual graph is
\[
F-[2^r]-E-S_2.
\]
The section \(S_2\) has no other blowup center, so \(S_2^2\le-2\) implies \(r\ge1\). Similarly, the exceptional fiber over \(G\cap S_1\) has dual graph
\[
G-[2^s]-E'-S_1,\quad s\ge1.
\]
Thus \(S_1^2=-(s+1)\) and \(S_2^2=-(r+1)\).

The curve \(F\) meets \(S_1\) and the nonempty exceptional subchain over \(F\cap S_2\). Since \(D_1\) is linear, the exceptional fiber over \(F\cap S_3\) has no exceptional subchain adjacent to \(F\). The same argument applies at \(G\cap S_3\). The unique exceptional curve outside \(D\) in each of these exceptional fibers therefore meets the strict transform of the original fiber. The subchain \(T_3\) is contained in one fiber. After interchanging \(F,G\) and the corresponding section labels, let \(k\ge0\) be its length over \(F\); there is no such subchain over \(G\). The self-intersections are
\[
S_3^2=-2,\quad F^2=-(k+2),\quad G^2=-2.
\]
The complementary chains, ordered starting at their section components, are
\[
D_1=[s+1,k+2,2^r],\quad D_2=[r+1,2^{s+1}],\quad D_3=[2^{k+1}],\quad r,s\ge1,\quad k\ge0.
\]
Hirzebruch--Jung duality gives row~1(c) when \(k=0\), with table parameters \(q=s-1\) and \(r_{\mathrm{table}}=r-1\). For \(k\ge1\), it gives row~1(b), with \((p,q,r_{\mathrm{table}})=(k-1,s-1,r-1)\).
\end{proof}

\subsection{Three arms and central weight four}
\label{sec:d4-plane-cap}

For central weight four, \(C^2=1\), and \(|C|\) maps the compactification birationally to \(\mathbb{P}^2\).

\begin{theorem}\label{d4:classification}
Let a weighted homogeneous normal surface singularity with three arms and central self-intersection \(-4\) admit a rational homology disk smoothing. Up to permutation of the arms, its resolution graph belongs to row~1(a) of Table~\ref{table:threearm}.
\end{theorem}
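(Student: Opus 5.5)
Since \(C^2=d-t=1\), Lemma~\ref{lem:rational-curve-system} gives a birational morphism \(\phi\colon Y\to\mathbb{P}^2\) defined by \(|C|\), with \(\phi(C)\) a line \(\ell_0\). Each section component \(S_i\) meets \(C\) once, so \(\phi(S_i)\) is a line \(\ell_i\). All other components of \(D\) are contracted: a curve disjoint from \(C\) has image meeting \(\ell_0\) only if it is contracted. The \(S_i\) are pairwise disjoint on \(Y\) and meet \(C\) at distinct points. Hence \(\ell_1,\ell_2,\ell_3\) are distinct lines through distinct points of \(\ell_0\), and so are in general position with \(\ell_0\), with three pairwise intersection points \(x_{ij}=\ell_i\cap\ell_j\) off \(\ell_0\).

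Next I count the exceptional curves of \(\phi\). From \(\rho(Y)=N_D+1\) and \(\rho(\mathbb{P}^2)=1\), the morphism \(\phi\) has \(N_D\) exceptional prime curves. The noncontracted components of \(D\) are exactly the three \(S_i\). Hence \(N_D-3\) exceptional curves lie in \(D\), and exactly three lie outside \(D\).

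The strict transforms of the \(\ell_i\) belong to distinct chains, so each point \(x_{ij}\) must be blown up. This gives three distinct exceptional fibers. Each contains an exceptional component outside \(D\) by Lemma~\ref{lem:one-outside-cluster}, so each contains exactly one, and there are no other blowup centers. In particular nothing is blown up on \(\ell_0\), consistent with \(C^2=1\). Lemma~\ref{lem:one-outside-cluster} then describes each fiber over \(x_{ij}\) as a linear chain \(\ell_i-L-E-L'-\ell_j\). Linearity of \(D_i\) is the key constraint. The strict transform of \(\ell_i\) is the end of \(D_i\) adjacent to \(C\), so it has at most one neighbour in \(D_i\). Of the two fibers over \(\ell_i\cap\ell_j\) and \(\ell_i\cap\ell_k\), at most one can have a nonempty subchain attached to \(\ell_i\).

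A cyclic argument then applies. Each \(S_i^2=1-(\text{drops})\le-2\), so each line must lose at least three in self-intersection. The fiber over \(x_{ij}\) with \(L_A\) empty lowers \(\ell_i\) by \(1\) and \(\ell_j\) by \(r+1\), with \(L_B=[2^r]\). When both subchains are nonempty I need the general drop pattern, and that is the main obstacle: I must show that configurations with both \(L_A,L_B\) nonempty are incompatible with the linearity constraint. Here is how I would handle it. Orient each point \(x_{ij}\) toward the line that receives the attached chain. Each line can receive at most one chain, three points must distribute their chains, and each line must drop by at least three. These conditions force a cyclic orientation \(x_{12}\to\ell_2\), \(x_{23}\to\ell_3\), \(x_{31}\to\ell_1\), with every fiber of the form \(\ell_i-E-[2^{r}]-\ell_j\). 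I expect this step to rule out both subchains being nonempty, since then the fiber would attach chains to two lines simultaneously, exhausting their quotas. A line receiving its chain from a single-sided fiber \([2^r]\), where \(r\ge1\) is forced by \(S_j^2\le-2\), drops by \(r+1\) there and by \(1\) at its other point.

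With parameters \(r_1,r_2,r_3\ge1\), the chains read from the section are
\[
D_j=[r_j+2,2^{r_j}]\text{-type},
\]
more precisely \([\,r_{j}+2,2^{r_{j}}]\) after the correct bookkeeping of which drop comes from which point. Finally Hirzebruch--Jung duality converts these complementary chains into the arms. I expect \([r+2,2^{r}]\)-type chains to dualize to \([2^{q},p+3]\) with parameters cyclically shifted, as in row~1(a). Consistency can be checked against the weight identity \(\sum(b_v-3)=0\) from Proposition~\ref{prop:three-arm-lattice-vanishing}.
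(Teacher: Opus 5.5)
Your outline follows the paper's route: the birational morphism to \(\mathbb{P}^2\) given by \(|C|\), the count of three exceptional curves outside \(D\), Lemma~\ref{lem:one-outside-cluster} at the vertices of the line configuration, and Hirzebruch--Jung duality. However, there are genuine gaps.

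\textbf{Concurrent lines.} The step ``distinct lines through distinct points of \(\ell_0\), hence in general position'' is false. Distinct points on \(\ell_0\) only exclude concurrences involving \(\ell_0\). The lines \(\ell_1,\ell_2,\ell_3\) may still pass through one point \(P\notin\ell_0\). Then the three points \(x_{ij}\) coincide and there is a single exceptional fiber, so your count of three distinct exceptional fibers fails. Lemma~\ref{lem:one-outside-cluster} also does not apply, since it concerns two curves through the center. This case needs its own argument. The paper uses the pencil of lines through \(P\), which gives a fibration on \(Y\) with the strict transform \(E\) of the first exceptional curve as a section. The three fibers containing the \(S_i\) are reducible, so each contains a \((-1)\)-curve outside \(D\). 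These exhaust the three curves outside \(D\), so \(E\subset D\). Adjunction, \(2=\sum_j m_j(2+G_j^2)\), forces the outside curve in each such fiber to have multiplicity at least two. Hence the section \(E\) meets a component of \(D\) in each of the three fibers, giving \(E\) valency three in a linear chain, a contradiction.

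\textbf{The combinatorial step.} The step you call the main obstacle is only anticipated, and your drop law is reversed. By Lemma~\ref{lem:one-outside-cluster}, if \(L_A=\emptyset\) and \(L_B=[2^r]\), then \(A\), the line \emph{without} the attached chain, drops by \(r+1\), and the line carrying the chain drops by \(1\). The missing ingredient is the last sentence of that lemma: a drop of at least two on \(\ell_i\) at \(x_{ij}\) forces a nonempty subchain attached to \(\ell_j\) there. Put \(\varepsilon_{ij}=1\) when \(D_i\) has a nonempty subchain over \(x_{ij}\). Each line must drop by at least three but gets only one from the first blowup at each of its two vertices, so \(\sum_{j}\varepsilon_{ji}\ge1\). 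Your end-of-chain bound gives \(\sum_j\varepsilon_{ij}\le1\). Summing forces equality and a directed three-cycle, which excludes both two-sided and empty fibers.

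Your quota heuristic cannot do this. Take both subchains nonempty at \(x_{12}\), a chain on \(\ell_3\) at \(x_{13}\), and nothing at \(x_{23}\). Every line receives exactly one chain, yet the configuration is excluded only because \(\ell_3\) drops by \(1+1=2\).

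\textbf{The final chains.} For the same reason your final chains are wrong. The correct complementary chains are \(D_i=[r_{i+1}+1,2^{r_i}]\): the square of \(S_i\) is governed by the chain at its \emph{other} vertex. This index shift is exactly what produces the cyclic parameters of row~1(a) after duality. Your \([r_j+2,2^{r_j}]\) dualizes to \([2^{r_j},r_j+2]\), whose weights violate the arm identity \(\sum_{i,j}(b_{ij}-3)=4-d=0\) of Proposition~\ref{obs:general-equations}. Each arm would contribute \(-1\), for a total of \(-3\).
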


\begin{proof}
Use the compactification of Section~\ref{sec:compactification}, with \(C^2=1\), and denote the component of \(D_i\) meeting \(C\) by \(H_i\).

Lemma~\ref{lem:rational-curve-system} gives a birational morphism \(\phi:Y\to\mathbb{P}^2\) with \(\phi^*\mathcal{O}_{\mathbb{P}^2}(1)=\mathcal{O}_Y(C)\). Its exceptional curves are disjoint from \(C\). Thus \(\phi\) is an isomorphism near \(C\), and its image is a line \(\ell_\infty\).

Since \(C\cdot H_i=1\), the restriction of \(\phi\) maps \(H_i\) isomorphically to a line \(\ell_i\). These three lines are distinct because their intersection points with \(\ell_\infty\) are distinct. Every component of \(D-H_1-H_2-H_3\) is contracted. Since \(\rho(Y)=N_D+1\), the morphism has \(N_D\) exceptional prime curves, of which \(N_D-3\) belong to \(D\). Thus exactly three lie outside \(D\).

By Lemma~\ref{lem:one-outside-cluster}, every nonempty exceptional fiber contains a prime curve outside \(D\). All blowup centers lie outside \(\ell_\infty\).

Suppose that \(\ell_1,\ell_2,\ell_3\) pass through a common point \(P\). Their strict transforms are disjoint, so \(\phi\) factors through the blowup of \(P\). The pencil of lines through \(P\) induces a morphism \(g:Y\to\mathbb{P}^1\). The strict transform \(E\) of the first exceptional curve is a section of \(g\).

The fibers containing \(H_1,H_2,H_3\) are reducible: an irreducible fiber has square zero, whereas \(H_i^2\le-2\). Each contains a \((-1)\)-curve, and every such curve lies outside \(D\). It is also \(\phi\)-exceptional, since the only component in that fiber which maps to a curve in \(\mathbb{P}^2\) is \(H_i\). The three \(\phi\)-exceptional prime curves outside \(D\) therefore consist of one curve \(F_i\) in each of these fibers. In particular, \(E\) belongs to \(D\), and each of the three fibers has exactly one component outside \(D\), with \(F_i^2=-1\).

Write one of these fibers as \(G=\sum_jm_jG_j\). Its components are smooth rational curves, and \(-K_Y\cdot G=2\). Adjunction gives
\[
2=\sum_jm_j(2+G_j^2)\le m(F_i),
\]
because every component other than \(F_i\) belongs to \(D\). Thus \(m(F_i)\ge2\). A section meets one component of multiplicity one in each fiber, so \(E\) does not meet \(F_i\). It meets a component of \(D\) in each of the three fibers. Hence \(E\) has at least three neighbors in \(D\), contrary to the linearity of its connected component. The three lines therefore form a triangle.

Let \(P_{ij}=\ell_i\cap\ell_j\). Since \(D_i\) and \(D_j\) are disjoint, the exceptional fiber over \(P_{ij}\) is nonempty. Each of the three fibers contains a curve outside \(D\), so each contains precisely one such curve \(F_{ij}\), and there are no exceptional fibers over other points. Moreover, \(F_{ij}^2=-1\).

By Lemma~\ref{lem:one-outside-cluster}, the exceptional configuration over \(P_{ij}\), together with the two strict transforms, is a linear chain. The curve \(F_{ij}\) separates the subchains belonging to \(D_i,D_j\).

Put \(\varepsilon_{ij}=1\) if \(D_i-H_i\) has a nonempty subchain over \(P_{ij}\), and put \(\varepsilon_{ij}=0\) otherwise. Since \(H_i\) is an end component of \(D_i\),
\[
\sum_{j\ne i}\varepsilon_{ij}\le1.
\]
Lemma~\ref{lem:one-outside-cluster} implies that, if the blowups over \(P_{ij}\) decrease \(H_i^2\) by more than one, then \(\varepsilon_{ji}=1\).

The first blowup at each of the two vertices on \(\ell_i\) decreases its self-intersection from \(1\) to \(-1\). Since \(H_i^2\le-2\), a further decrease occurs at one of those vertices. Thus
\[
\sum_{j\ne i}\varepsilon_{ji}\ge1.
\]
Summing these inequalities and the preceding upper bounds gives equality in every inequality. The directed graph defined by \(i\to j\) when \(\varepsilon_{ij}=1\) has one incoming and one outgoing edge at each of its three vertices, and has no loops. It is therefore a directed three-cycle. At each \(P_{ij}\), exactly one of the two exceptional subchains is nonempty.

By Lemma~\ref{lem:one-outside-cluster}, this subchain is \([2^r]\) for some \(r\ge1\). The blowups decrease the self-intersection of the line component adjacent to this subchain by one, and that of the other line component by \(r+1\).

Choose the cyclic ordering and lengths of the subchains \(r_1,r_2,r_3\ge1\) so that
\[
D_i=[r_{i+1}+1,\underbrace{2,\ldots,2}_{r_i}],\quad i\in\mathbb{Z}/3\mathbb{Z}.
\]
These complementary fractions equal \(n_i/(n_i-a_i)\). Hirzebruch--Jung duality gives
\[
\frac{n_i}{a_i}=[\underbrace{2,\ldots,2}_{r_{i+1}-1},r_i+2].
\]
Putting \(p=r_1-1\), \(q=r_2-1\), and \(r=r_3-1\) proves the assertion.
\end{proof}

\section{The four-arm classification}
\label{four:section}

Contract the complementary divisor \(D\), and denote the resulting surface by \(Z\). It has Picard number one and four cyclic quotient points. Let \(\overline{C}\) be its ample boundary, and let \(\pi:Y\to Z\) be the minimal resolution. The identities
\[
\overline{C}^2=e,\quad K_Z\equiv\left(\frac{\eta}{e}-1\right)\overline{C}
\]
imply that \(Z\) is log del Pezzo if \(\eta<e\). We use the following consequence of the classification of Belousov--Hwang.

\begin{proposition}[Belousov--Hwang]\label{four:BH}
Let \(Z\) be a log del Pezzo surface with Picard number one and four cyclic quotient singularities. Suppose that at least one of its singularities is not a rational double point. Then exactly one is not a rational double point. On the minimal resolution \(\pi:Y\to Z\), write \(D=D'+D''\), where \(D''\) lies over that point. There is a smooth rational curve \(F\), not contracted by \(\pi\), such that
\begin{equation}\label{four:anticanonical-cycle}
F^2=-1,\quad F\cdot D''=2,\quad K_Y+F+D''\sim0,
\end{equation}
and \(D'\) is disjoint from \(F+D''\). The three rational double points have one of the following collections of types:
\[
2A_1+A_3,\quad 3A_2,\quad A_1+2A_3,\quad A_1+A_2+A_5.
\]
For the last three collections, if \(n\) is the order of the point corresponding to \(D''\), then
\begin{equation}\label{four:degree-invariant}
nK_Z^2=3,\quad 2,\quad 1,
\end{equation}
respectively.
\end{proposition}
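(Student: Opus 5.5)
This proposition is a consequence of the Belousov--Hwang classification~\cite{BH} of log del Pezzo surfaces of Picard number one with four singular points, so the plan is to extract the stated assertions from that classification rather than reprove it. The first step is to recall their main theorem. Such a surface with at least one point that is not a rational double point has exactly one non-Du Val point. Moreover, its minimal resolution carries a distinguished configuration: a \((-1)\)-curve \(F\) meeting the chain \(D''\) over the non-Du Val point twice, with \(F+D''\) forming an anticanonical cycle (a cycle of rational curves whose support is an anticanonical divisor). I would state this in the normalization used here. \(D''\) is the resolution chain of the non-Du Val point, and \(D'\) is the union of the three rational double point chains. In these terms the Belousov--Hwang description gives \(F^2=-1\), \(F\cdot D''=2\), and \(K_Y+F+D''\sim0\).

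The second step is the disjointness of \(D'\) from \(F+D''\). The chains over distinct singular points are disjoint, so \(D'\cdot D''=0\) and it remains to show \(D'\cdot F=0\). Each component \(G\) of \(D'\) is a \((-2)\)-curve, so \(K_Y\cdot G=0\). Intersecting the linear equivalence \(K_Y+F+D''\sim0\) with \(G\) then gives \(F\cdot G=-D''\cdot G=0\).

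The third step is the list of types. The index-one points form a Du Val configuration on a rank-one log del Pezzo surface, and Belousov--Hwang enumerate exactly the four collections \(2A_1+A_3\), \(3A_2\), \(A_1+2A_3\), \(A_1+A_2+A_5\). These are the configurations whose local fundamental groups are compatible with the orbifold Euler characteristic and Bogomolov-type bound. Note that the last three correspond to the orders \((3,3,3)\), \((2,4,4)\), \((2,3,6)\), consistent with Lemma~\ref{lat:star-reduction}.

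The last step is the degree identity~\eqref{four:degree-invariant}, which I would verify directly rather than cite. Write \(K_Y=\pi^*K_Z-\sum\delta_vE_v\); only components of \(D''\) appear, since Du Val points are crepant. This gives \(K_Z^2=K_Y^2+\delta^2\), where \(\delta^2\) is the local contribution of the cyclic point of order \(n\). Using \(K_Y\equiv-(F+D'')\), one has \(K_Y^2=(F+D'')^2\). The remaining computation compares \(\rho(Y)=10-K_Y^2\) with the number of components, together with Noether's formula and the local contribution of the point \(\frac1n(1,a)\). Since \(D''\) is a chain closing into a cycle with \(F\), the local contribution has a closed form in \(n\) and the chain length, and I expect \(nK_Z^2\) to reduce to \(12-\sum_{\text{RDP}}(\text{Milnor}+1)\) divided by a constant determined by the Du Val types. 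The main obstacle is this bookkeeping. The alternative I would try first is to use that \(Z\) has a rational homology of \(\mathbb{P}^2\)-type Picard lattice. In that approach the determinant of the full intersection lattice of \(D\) equals \(n\cdot|{\det}|_{\mathrm{RDP}}\cdot K_Z^2\), up to the square coming from unimodularity of \(H^2(Y,\mathbb{Z})\). This yields \(nK_Z^2=9/(\text{product of RDP orders})\cdot(\ldots)\), which gives \(3,2,1\) for \(3\cdot3\cdot3\), \(2\cdot4\cdot4\), \(2\cdot3\cdot6\) after matching the index. The first collection \(2A_1+A_3\) is excluded from the degree statement as in the proposition, since its orders do not satisfy \(\sum 1/l_i=1\).
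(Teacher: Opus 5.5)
Your treatment of the cited assertions is acceptable. Your disjointness argument, intersecting \(K_Y+F+D''\sim0\) with a \((-2)\)-component of \(D'\), is correct. You should derive, rather than build into your normalization, that the support \(D''\) of the Belousov--Hwang cycle is the whole chain over the non-Du Val point; the paper does this by adjunction, \(F\cdot E=2-\operatorname{val}_{D''}(E)\) on \(D''\).

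The genuine gap is the degree identity \eqref{four:degree-invariant}. It is the one assertion that is not simply cited, and your last paragraph does not prove it. Write \(D''\) as \(n/a=[b_1,\ldots,b_s]\) with \(aa'\equiv1\pmod n\). Carried out, your Noether and discrepancy bookkeeping gives only two facts: \(W(D'')=\sum_il_i-11\) and \(nK_Z^2=a+a'+2-n\). These do not determine \(nK_Z^2\). The chain \([2,5,2]\) has \(n=16\), \(a=a'=9\), and \(W=0\), as Noether requires for \(A_1+A_2+A_5\), yet it gives \(a+a'+2-n=4\) instead of \(1\). Your guessed value \(12-\sum_il_i\) is therefore an extra assertion, not a consequence of the bookkeeping.

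The lattice alternative does not close the gap either. The components of \(D\) span a lattice of absolute determinant \(n\prod_il_i\), with no factor \(K_Z^2\). Adding \(F\) gives a lattice of rank \(\rho(Y)\) and absolute determinant \(nK_Z^2\prod_il_i\), which unimodularity of \(H^2(Y,\mathbb{Z})\) forces to be a square. That only yields \(nK_Z^2=3k^2\), \(2k^2\), \(k^2\), and \([2,5,2]\) passes this test too, since \(4\cdot36=12^2\). The expression \(9/\prod l_i\cdot(\ldots)\) has no consistent meaning: it would need \((\ldots)=64/9\) for \(A_1+2A_3\). Nor can you read \(D''\) off Table~\ref{table:fourarm}, because Theorem~\ref{four:classification} derives that table from \eqref{four:degree-invariant}.

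What is missing is an invariant that survives the Belousov--Hwang reduction, together with their terminal configurations. Let \(M\) be the negative intersection matrix of \(D''\) and \(v=(F\cdot E_i)_i\). Pushing \(K_Y+F+D''\sim0\) to \(Z\) and pulling back numerically gives \(\pi^*(-K_Z)=F+\sum_i(M^{-1}v)_iE_i\). Hence \(K_Z^2=-1+v^{\mathsf T}M^{-1}v\), and a Schur complement identifies \(nK_Z^2\) with the absolute determinant of the intersection matrix of the cycle \(F+D''\). Contracting a \((-1)\)-curve of the cycle is again a Schur complement, so this absolute determinant is unchanged, and pushforward preserves the anticanonical relation. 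It remains to use the terminal cycles of Belousov--Hwang (their Section~4, cases (c) and (d)). These have squares \((-1,-2,-2)\), \((-1,-2)\), \((-1,-3)\), and by adjunction their components meet once, respectively twice. Their absolute determinants are \(3\), \(2\), \(1\). This classification input on the shape of the cycle, beyond the list of Du Val types, is what your plan lacks.
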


\begin{proof}
Belousov--Hwang~\cite[Theorem~1.1]{BH} gives the singularity types. Cyclicity excludes their Section~3, whose hypothesis requires an exceptional component of valency three. Their Section~5 therefore gives a minimal curve with nonempty adjoint system. By their Lemmas~2.9 and~4.1, this is a smooth rational \((-1)\)-curve \(F\), and \(D=D'+D''\) with \(K_Y+F+D''\sim0\), \(F\cdot D''=2\), and \(F\cdot E=D''\cdot E=K_Y\cdot E=0\) for every component \(E\) of \(D'\). Hence \(D'\) consists of \((-2)\)-curves and is disjoint from \(F+D''\). Adjunction gives \(F\cdot E=2-\operatorname{val}_{D''}(E)\) on \(D''\), so each connected chain meets \(F\) twice. Thus \(D''\) is the full chain over the unique point which is not a rational double point, proving \eqref{four:anticanonical-cycle}.

Write \(D''=\sum_{i=1}^{s}E_i\), let \(M=-(E_i\cdot E_j)\), and put \(v=(F\cdot E_i)_{i=1}^{s}\). The Hirzebruch--Jung determinant formula gives \(\det M=n\). Pushing \eqref{four:anticanonical-cycle} to \(Z\) and taking the numerical pullback gives
\[
\pi^*(-K_Z)=F+\sum_{i=1}^{s}(M^{-1}v)_iE_i.
\]
Consequently,
\[
K_Z^2=-1+v^{\mathsf{T}}M^{-1}v,\quad
nK_Z^2=\left|\det\begin{pmatrix}-1&v^{\mathsf{T}}\\v&-M\end{pmatrix}\right|.
\]
The terminal cycles in Belousov--Hwang~\cite[Section~4, cases~(c) and~(d)]{BH} have squares \((-1,-2,-2)\), \((-1,-2)\), and \((-1,-3)\) for the respective collections \(3A_2\), \(A_1+2A_3\), and \(A_1+A_2+A_5\). Contracting a \((-1)\)-curve preserves the absolute determinant by the Schur complement formula, and preserves the anticanonical relation by pushforward. For a terminal cycle \(\Theta\), adjunction gives \((\Theta-E)\cdot E=2\) on every component \(E\). Thus distinct components meet once in the three-component cycle and twice in each two-component cycle. The terminal matrices are therefore
\[
\begin{pmatrix}-1&1&1\\1&-2&1\\1&1&-2\end{pmatrix},\quad
\begin{pmatrix}-1&2\\2&-2\end{pmatrix},\quad
\begin{pmatrix}-1&2\\2&-3\end{pmatrix}.
\]
Their determinants are \(3,-2,-1\), respectively. Thus their absolute determinants are \(3,2,1\), proving \eqref{four:degree-invariant}.
\end{proof}

\begin{theorem}\label{four:classification}\label{completion:intrinsic-bound}\label{four:central-weight}
Let a weighted homogeneous rational surface singularity with four arms admit a rational homology disk smoothing. Its compactification \(Z\) is log del Pezzo and has precisely one singular point which is not a rational double point. Moreover,
\[
d=3,\quad \sum_{i,j}(b_{ij}-3)=0,\quad h^1(S_X)=1.
\]
Up to permutation of the arms, its Seifert data are those in Table~\ref{table:fourarm}.
\end{theorem}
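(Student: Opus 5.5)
The plan is to combine the lattice reduction of Lemma~\ref{lat:star-reduction} with the structure theory of Proposition~\ref{four:BH}, and then to solve a Diophantine equation for the fourth arm.

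\textbf{Step 1: the four arms and the basic invariants.} By Proposition~\ref{prop:three-arm-lattice-vanishing}, exactly three arms have one vertex, with orders \((l_1,l_2,l_3)\in\{(3,3,3),(2,4,4),(2,3,6)\}\). That proposition also gives \(h^1(S_X)=t-3=1\) and \(\sum_v(b_v-3)=0\); the latter is the displayed weight identity. Write \(n/a\) for the fourth arm. Since \(\sum 1/l_i=1\), we have
\[
e=d-1-\frac an,\qquad \eta=1-\frac1n.
\]
The proof of Corollary~\ref{obs:noncyclic-cover} already shows \(1<a<n\) and \(e-\eta=d-2-(a-1)/n>0\). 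Hence \(\eta<e\), and \(Z\) is log del Pezzo by the identity for \(K_Z\).

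\textbf{Step 2: the singular points of \(Z\).} An arm with one vertex of order \(l\) has complementary chain \([2^{l-1}]\). The three such arms therefore give the rational double points \(3A_2\), \(A_1+2A_3\), or \(A_1+A_2+A_5\). The fourth complementary chain \(n/(n-a)\) is a rational double point only if \(a=1\), which was excluded. So exactly one point is not a rational double point, and Proposition~\ref{four:BH} applies with \(D''=D_4\).

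Two consequences follow from \eqref{four:anticanonical-cycle}.
\begin{itemize}
\item Intersecting \(K_Y+F+D''\sim0\) with \(C\) gives \(F\cdot C=d-3\), using \(C^2=d-4\), \(K_Y\cdot C=2-d\), and \(D''\cdot C=1\).
\item Since \(K_Z^2=(e-\eta)^2/e\), relation \eqref{four:degree-invariant} becomes \(n(e-\eta)^2/e=k\) with \(k=3,2,1\). Putting \(x=n(d-2)-(a-1)\), this reads
\[
x^2=k(x+n-1),
\]
because \(ne=x+n-1\).
\end{itemize}

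\textbf{Step 3: the weight identity for the fourth arm.} The weights of the three arms with one vertex contribute \(0,1,2\) in the three respective cases. The weight identity therefore gives \(W(n/a)=3-d-c\) with \(c=0,1,2\). By Hirzebruch--Jung duality this becomes \(W(n/(n-a))=d-5+c\). Since \(D_4=D''\) is an anticanonical chain together with \(F\), I would use adjunction on \(F+D''\) to compute \(W(n/(n-a))\) directly. Comparing the two expressions, together with the bound \(d\le5\) from Theorem~\ref{thm:arm-weight-bounds}, the value of \(F\cdot C\), and the quadratic relation, I expect \(d=4\) and \(d=5\) to be excluded.

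For the exclusion I would first try the quadratic relation. For \(d\ge4\), it forces \(x\approx\sqrt{kn}\), which is much smaller than \(n(d-2)\), while also \(x\ge n(d-2)-n+2\). This gives \(x\ge n+2\) for \(d\ge4\), and then \(x^2\ge(n+2)^2>k(2x)\), a contradiction. This should yield \(d=3\) cleanly. If it does not, the fallback is to use \(F\cdot C=d-3\ge1\) and argue with the pencil \(|F+D''|\).

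\textbf{Step 4: solving for the fourth arm.} With \(d=3\), the relation becomes
\[
(n-a+1)^2=k(2n-a).
\]
Writing \(h=n-a\) and solving gives \(n\) as a quadratic in \(h\) with the arithmetic constraints \(\gcd(n,a)=1\) and \(W(n/a)=-c\). I then need to show that the integral solutions are exactly \(n=3p^2+9p+7\), \(2p^2+6p+5\), and \(p^2+3p+3\), with continued fractions \([2^p,4,p+2]\), \([2^p,3,p+2]\), and \([2^p,2,p+2]\). I would verify these by the matrix products \(M(2)^p\) as in Section~\ref{d2:comparison}.

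The main obstacle is excluding the spurious integral solutions of this quadratic. For this I would use the weight condition \(W(n/a)=-c\) via a Dedekind-sum computation, as in Lemma~\ref{d2:reciprocity-closure}, or alternatively reconstruct \(D''\) from the terminal cycles of Belousov--Hwang by reversing the contractions, as in the proof of Lemma~\ref{lat:star-reduction}. The second route directly produces chains of the form \([2^p,\ast,p+2]\).
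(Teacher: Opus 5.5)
Up to the quadratic relation your argument is sound, and your route to \(d=3\) differs from the paper's. The paper computes \(K_Y^2\) in two ways: as \(9-N_D\) by Noether's formula, and as \((F+D'')^2\). This gives \(W(D'')=l_1+l_2+l_3-11\), hence \(\sum_{i,j}(b_{ij}-3)=0\). Comparing \(h^1(S_X)=4-d\) from \eqref{foundation:dimension-eq} with \(h^1(S_X)=1\) then gives \(d=3\).

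You instead apply \eqref{four:degree-invariant} for every \(d\). The single relation \(x^2=k(x+n-1)\) then controls both \(d\) and the fourth arm. The arm identity follows afterwards from \(\sum_v(b_v-3)=0\) once \(d=3\) is known. Before that, \(\sum_v\) and \(\sum_{i,j}\) differ by \(d-3\), so your identification of the two in Step~1 is premature. The paper's computation derives the arm identity directly from the anticanonical cycle; yours lets one relation do all the work.

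Two repairs are needed.
\begin{enumerate}
\item Your chain \(x^2\ge(n+2)^2>2kx\) does not follow. Argue instead as follows. From \(x\ge n+2\) you get \(n-1\le x-3\), so \(x^2\le k(2x-3)\), that is, \((x-k)^2\le k^2-3k\). This forces \(k=3\) and \(x=3\), contradicting \(x\ge n+2\ge5\), since \(n\ge3\).
\item The suggested ``adjunction on \(F+D''\)'' yields nothing, because \(p_a(F+D'')=1\) is automatic from \(K_Y+F+D''\sim0\). The global input in the paper's route is Noether's formula together with \(\rho(Y)=N_D+1\).
\end{enumerate}

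The genuine gap is Step~4, which you leave open and misdiagnose. There are no spurious integral solutions, so neither Dedekind sums nor a reconstruction of \(D''\) is needed. The weight condition \(W(n/a)=-c\) could not exclude anything anyway, since every table entry satisfies it. The missing observation is that \(k\in\{3,2,1\}\) is squarefree. From \((h+1)^2=k(n+h)\), with \(h=n-a\), you get \(k\mid(h+1)^2\) and therefore \(k\mid h+1\). Writing \(h+1=kv\) gives
\[
n=kv(v-1)+1,\qquad a=kv(v-2)+2,
\]
and \(1<a<n\) forces \(v\ge2\). Conversely, \(n=(kv-1)(v-1)+v\) gives \(\gcd(n,a)=\gcd(n,h)=\gcd(v,kv-1)=1\), so every \(v\ge2\) yields normalized Seifert data. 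Putting \(v=p+2\) produces exactly the three rows of Table~\ref{table:fourarm}; this is how the paper finishes.
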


\begin{proof}
By Lemma~\ref{lat:star-reduction}, three arms are arms with one vertex and orders \(l_1,l_2,l_3\), with \(\sum_i1/l_i=1\). The fourth arm \(n/a\) has at least two components, so \(a>1\). Since \(d\ge3\),
\[
\eta=1-\frac1n,\quad e=d-1-\frac an,\quad e-\eta=d-2-\frac{a-1}{n}>0.
\]
Hence \(-K_Z\) is ample. The complementary chains of the arms with one vertex consist of \((-2)\)-curves. The complementary chain of \(n/a\) contains a component of self-intersection at most \(-3\), since \(a>1\). Proposition~\ref{four:BH} applies. Write \(D''=[b_1,\ldots,b_s]\) for this fourth complementary chain.

Put \(r_0=\sum_i(l_i-1)\). Noether's formula and \eqref{four:anticanonical-cycle} give
\[
9-r_0-s=K_Y^2=(F+D'')^2=1-\sum_{j=1}^{s}b_j+2s.
\]
Thus
\begin{equation*}
W(D'')=r_0-8=l_1+l_2+l_3-11.
\end{equation*}
Using \(W(n/a)+W(n/(n-a))=-2\), we obtain
\[
\sum_{i,j}(b_{ij}-3)=\sum_{i=1}^{3}(l_i-3)-2-W(D'')=0.
\]
Equation~\eqref{foundation:dimension-eq} gives \(h^1(S_X)=4-d\), whereas Proposition~\ref{prop:three-arm-lattice-vanishing} gives \(h^1(S_X)=1\). Therefore \(d=3\).

Set \(h=n-a\), and let \(m=3,2,1\) for the respective triples in the table. The boundary identities and \eqref{four:degree-invariant} give
\[
e=\frac{n+h}{n},\quad \eta=\frac{n-1}{n},\quad nK_Z^2=\frac{(h+1)^2}{n+h}=m.
\]
Consequently
\begin{equation*}
(h+1)^2=m(n+h).
\end{equation*}
Since \(m\) is squarefree, \(h+1=mv\) for an integer \(v\). It follows that
\[
n=mv(v-1)+1,\quad a=mv(v-2)+2.
\]
The inequalities \(0<a<n\) imply \(v\ge2\). Conversely, these expressions give normalized Seifert data for every \(v\ge2\), since \(\gcd(n,a)=\gcd(n,h)=\gcd(v,mv-1)=1\). Substituting \(p=v-2\) gives Table~\ref{table:fourarm}.
\end{proof}

\section{The cross ratios of the attachment points}\label{sec:cross-ratios}

We give another proof of the cross ratios determined by Fowler~\cite[Theorem~5(a)]{Fowler}. His proof uses the uniqueness of configurations on rational surfaces and explicit plane-curve constructions; see Fowler~\cite[Theorem~3.3.1 and Sections~3.5.4--3.5.6]{Fowler}. Here the attachment points become the singular parameters of anticanonical elliptic pencils.

The anticanonical cycle in Proposition~\ref{four:BH} allows us to contract the parameter-dependent part of the fourth complementary chain while preserving the marked central curve. The resulting elliptic pencils have fibers \(I_3^4\), \(I_4^2I_2^2\), or \(I_6I_3I_2I_1\). Beauville's classification~\cite{Beauville} fixes their singular parameters, explaining the independence of the cross ratios from the graph parameter.

Let \(P_1,P_2,P_3\) be the attachment points of the arms with one vertex on the central rational curve, in the order of Table~\ref{table:fourarm}, and let \(P_4\) be the attachment point of the fourth arm.

\begin{theorem}\label{thm:cross-ratio}
Let a weighted homogeneous surface singularity with a graph in Table~\ref{table:fourarm} admit a rational homology disk smoothing. Its ordered attachment points satisfy
\[
\begin{array}{c|c|c}
\text{orders}&\text{normalization of }(P_1,P_2,P_3)&\lambda=z(P_4)\\ \hline
(3,3,3)&(0,1,\infty)&\lambda^2-\lambda+1=0\\
(2,4,4)&(\infty,0,1)&\lambda=\frac12\\
(2,3,6)&(0,1,\infty)&\lambda=9.
\end{array}
\]
These values are independent of the parameter \(p\ge0\) of the graph. The two values in the first row are equivalent under permutation of the three arms with Seifert invariants \((3,1)\). Conversely, each indicated analytic structure admits a rational homology disk smoothing.
\end{theorem}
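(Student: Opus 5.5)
The plan is to realize the attachment points $P_1,\dots,P_4$ as the singular parameters of an extremal rational elliptic surface, using the anticanonical cycle of Proposition~\ref{four:BH}. Start from the compactification $Y$ of Section~\ref{sec:compactification}. By Theorem~\ref{four:classification} we have $d=3$, $t=4$, hence $C^2=-1$ and $C\cong\mathbb{P}^1$. The boundary splits as $D=D'+D''$, where:
\begin{itemize}
\item $D'$ is the union of the three $(-2)$-chains $[2^{l_i-1}]$ complementary to the arms with one vertex;
\item $D''$ is the complementary chain of the fourth arm;
\item there is a $(-1)$-curve $F$ with $F\cdot D''=2$ and $K_Y+F+D''\sim0$.
\end{itemize}
Each $D_i$ meets $C$ exactly once, at the end component lying over $P_i$.

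The first step contracts the parameter-dependent part of $D''$. The cycle $F+D''$ is anticanonical, and $F$ is a $(-1)$-curve in it. Contracting $F$ produces a new $(-1)$-curve in the image cycle, and iterating contracts down to the terminal cycle of Belousov--Hwang~\cite[Section~4]{BH}. I would check, using the explicit chain $D''$ (the dual of $[2^p,\,\cdot\,,p+2]$ from Table~\ref{table:fourarm}), that:
\begin{itemize}
\item the contracted curves never include the end component of $D''$ meeting $C$;
\item none of them meets $C$, so $C$ keeps $C^2=-1$ and meets $D$ as before.
\end{itemize}
Each contraction raises $K^2$ by one. Noether's formula, with $\rho(Y)=N_D+1$, then shows that the resulting surface $Y_1$ has $K_{Y_1}^2=1$. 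On $Y_1$ the class $-K$ is nef and big, because it is the pullback of the ample class $-K_Z$ modified only along the cycle. So $Y_1$ is a weak del Pezzo surface of degree one, and the terminal cycle $\Theta$ is a member of $|-K_{Y_1}|$. Its three types are cycles of lengths $3$, $2$, $2$ with the self-intersections listed in the proof of Proposition~\ref{four:BH}.

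The second step builds the elliptic fibration. Blow up the base point of $|-K_{Y_1}|$ to obtain a rational elliptic surface $X\to\mathbb{P}^1$ whose section is the exceptional curve. Since $C\cdot(-K_{Y_1})=K^2... $ more precisely $C\cdot(-K_{Y_1})=C^2+2=1$, the curve $C$ is a $(-1)$-curve on $Y_1$; its strict transform on $X$ is a section, provided $C$ avoids the base point, which I would verify, possibly using $C$ itself as the zero section. The singular fibers are obtained as follows.
\begin{itemize}
\item Each chain $[2^{l_i-1}]$ of $D'$ is orthogonal to $K$, so it lies in a fiber. Completing it by the unique $(-2)$ or $(-1)$ component making an $\tilde A_{l_i-1}$ cycle gives a fiber of type $I_{l_i}$.
\item The terminal cycle gives a further fiber: $I_3$, $I_2$ or $I_1$, after checking that the blowup of the base point does not meet its components. (The terminal matrices are not those of fibers before this blowup, so this needs a short computation.)
\end{itemize}
The Euler numbers add up to $12$ in each case:
\[
I_3^4:\ 9+3,\qquad I_4^2I_2^2:\ 8+2+2,\qquad I_6I_3I_2I_1:\ 6+3+2+1,
\]
where the $I_2$, $I_2$ entry in the middle case is the fiber from the terminal cycle together with the $D_1$ fiber. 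This forces the fibration to be extremal with exactly these fibers.

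The third step reads off the cross ratio. The section $C$ maps isomorphically to the base $\mathbb{P}^1$. The arm $D_i$ meets $C$ at $P_i$, so $P_i$ is the parameter of the $I_{l_i}$ fiber. Since the contractions did not touch the component of $D''$ meeting $C$, the point $P_4$ is the parameter of the fiber coming from the terminal cycle. By Beauville~\cite{Beauville}, each of these fiber configurations determines the surface uniquely up to isomorphism, with explicit models:
\begin{itemize}
\item $I_3^4$: the Hesse pencil, singular at $\infty$ and at $t^3=1$;
\item $I_4^2I_2^2$: the pencil with singular parameters $0,\infty,\pm1$, giving a harmonic configuration;
\item $I_6I_3I_2I_1$: the pencil with singular parameters $\infty,0,1,-8$.
\end{itemize}
Normalizing $(P_1,P_2,P_3)$ as in the statement gives $\lambda^2-\lambda+1=0$, $\lambda=1/2$, and $\lambda=9$ respectively. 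In the $I_4^2I_2^2$ case, the two $I_2$ fibers are the $D_1$ fiber (order $2$) and the $P_4$ fiber, and the normalization must be chosen accordingly. The graph parameter $p$ disappears at the first step, which explains the independence. The two roots in the first row are exchanged by the transpositions of the three $I_3$ fibers coming from arms. Finally, the converse, that each of these analytic structures admits a QHD smoothing, is Theorem~\ref{cqg:main}.

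I expect the main obstacle to be the bookkeeping in the first two steps:
\begin{itemize}
\item showing that the successive contractions stay away from $C$ and from the component of $D''$ meeting $C$;
\item identifying exactly which fiber type carries $P_4$, especially separating the two $I_2$ fibers in the $(2,4,4)$ case.
\end{itemize}
For this I would first work out the case $p=0$ by hand. For general $p$, I would then show that the tail $[2^p]$ of $D''$ away from $C$ is exactly what the contractions consume.
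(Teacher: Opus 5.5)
Your route is the paper's: contract part of the anticanonical cycle to reach a weak del Pezzo surface of degree one, blow up the base point, use \(C\) as a section, force the fiber types by the Euler count \(12\), and read the parameters off Beauville's models. However, your first step fails as stated in the \((2,3,6)\) case.

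\textbf{The \((2,3,6)\) case.} There the Belousov--Hwang terminal cycle has squares \((-1,-3)\), with the two components meeting twice, so its square is \(-1-3+4=0\). Write \(D''=E_0+\cdots+E_p\) starting from \(C\). Since \(D\) has \(p+9\) components, \(K_Y^2=-p\). You reach that cycle from \(F+[p+3,2^p]\) after only \(p\) contractions: \(F\) and then \(E_p,\ldots,E_2\) when \(p\ge1\), and none at all when \(p=0\), where \(F+D''\) is already terminal. Hence \(K_{Y_1}^2=0\), not \(1\). Consequently:
\begin{itemize}
\item \(Y_1\) is not a weak del Pezzo surface of degree one;
\item \(|-K_{Y_1}|\) has no reason to be a pencil with a simple base point;
\item a two-component cycle could never become the \(I_1\) fiber your Euler count requires.
\end{itemize}
You need one more contraction: the \((-1)\)-component of that cycle, which is \(E_1\), or \(F\) when \(p=0\), and is disjoint from \(C\). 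This turns \(E_0\) into an irreducible nodal anticanonical curve of square \(-3+4=1\). That is why the paper uniformly contracts \(F\) together with the whole tail \(2^p\), i.e.\ \(p+1\) curves. For \((3,3,3)\) and \((2,4,4)\) this stops exactly at the terminal cycles \((-1,-2,-2)\) and \((-1,-2)\), each of square \(1\). For \((2,3,6)\) it goes one step past the terminal cycle.

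\textbf{Smaller corrections.}
\begin{itemize}
\item The base point of \(|-K_{Y_0}|\) lies on every member, hence on \(D_0\), at a smooth point of the component \(E\) with \(-K_{Y_0}\cdot E=1\). Blowing it up is exactly what turns \((-1,-2,-2)\), \((-1,-2)\), and the square-one nodal curve into fibers \(I_3\), \(I_2\), \(I_1\). So the check that this blowup ``does not meet its components'' is backwards.
\item For \(C\) avoiding the base point, argue as the paper does. Otherwise the strict transform of \(C\) has square \(-2\) and fiber degree \(0\), so it is a reduced fiber component. The three end components of \(D'\) meeting \(C\) then lie in the same fiber, and intersecting that fiber with \(C\) gives \(0=-2+\sum_A m_A(A\cdot C)\ge1\), a contradiction.
\item Nefness and bigness of \(-K_{Y_0}\) are cleanest read off \(D_0\) itself: it meets each of its own components nonnegatively, and \(D_0^2=1\).
\end{itemize}
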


\begin{proof}
Propositions~\ref{prop:degree-one} and~\ref{prop:fibers}, proved below, identify the ordered attachment points with the singular parameters of the three elliptic pencils. The singular-parameter computations below give the stated cross ratios. For a weighted homogeneous surface singularity, the Seifert invariants and the attachment points determine the graded section ring \eqref{cqg:section-algebra}; see Pinkham~\cite{Pinkham1977}. Thus they determine the analytic structure under consideration. The converse follows from Theorem~\ref{cqg:main}, proved by the constructions in Sections~\ref{cqg:invariant-sections}--\ref{cqg:m}.
\end{proof}

\subsection{The projective compactification and a weak del Pezzo surface of degree one}

Use the compactification of Section~\ref{sec:compactification}, with \(C^2=-1\), and write \(D=D'+D''\) as in Proposition~\ref{four:BH}. The ordered attachment points on \(C\) have the same cross ratio as those on the original central curve. Continued-fraction duality gives
\[
\begin{array}{c|c|c}
\text{orders}&D'&D''\text{, oriented from }C\\ \hline
(3,3,3)&3A_2&[p+2,2,3,2^p]\\
(2,4,4)&A_1+2A_3&[p+2,3,2^p]\\
(2,3,6)&A_1+A_2+A_5&[p+3,2^p].
\end{array}
\]
By Proposition~\ref{four:BH}, there is a curve \(F\) satisfying \eqref{four:anticanonical-cycle} and disjoint from \(D'\).

\begin{proposition}\label{prop:degree-one}\label{lem:disjoint}
Contracting \(F\) and then the end subchain \(2^p\) gives a weak del Pezzo surface \(Y_0\) of degree one. The contraction is an isomorphism near \(C\) and preserves its four attachment points, its square \(-1\), and the chains \(D'\). The image \(D_0\) of \(F+D''\) is a reduced anticanonical divisor with the following components:
\begin{enumerate}
\item for \((3,3,3)\), three rational curves with squares \(-1,-2,-2\) and pairwise intersection numbers one;
\item for \((2,4,4)\), two rational curves with squares \(-1,-2\) and intersection number two;
\item for \((2,3,6)\), one irreducible rational curve with arithmetic genus one and square one.
\end{enumerate}
\end{proposition}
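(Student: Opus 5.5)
The plan is to read off everything from the anticanonical relation \eqref{four:anticanonical-cycle} of Proposition~\ref{four:BH}, applied to the chain \(D''\) listed in the table above. The first step is to locate \(F\) on \(D''\). For a component \(E\) of \(D''\) with \(E^2=-b\), adjunction gives \(K_Y\cdot E=b-2\). Also \(D''\cdot E=-b+\operatorname{val}_{D''}(E)\). Intersecting \(K_Y+F+D''\sim0\) with \(E\) therefore gives
\[
F\cdot E=2-\operatorname{val}_{D''}(E).
\]
Hence, when \(D''\) has at least two components, \(F\) meets exactly its two end components, once each. In the case \((2,3,6)\) with \(p=0\) we have \(D''=[3]\), and \(F\) meets it with multiplicity two.

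Next we check that the contraction avoids \(C\) and \(D'\). Since \(C\cong\mathbb{P}^1\) and \(C^2=-1\), we have \(K_Y\cdot C=-1\). Also \(D''\cdot C=1\), because \(C\) meets only the first component of \(D''\). Intersecting the anticanonical relation with \(C\) gives \(F\cdot C=0\). The chain \(D'\) is disjoint from \(F+D''\) by Proposition~\ref{four:BH}. The first component \(A\) of \(D''\) is the only one meeting \(C\), and \(A\) is never contracted. Thus the contraction is an isomorphism near \(C\) and near \(D'\), and it preserves the attachment points, \(C^2=-1\), and \(D'\).

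We then perform the contractions one at a time. Contracting \(F\) turns the outer end \((-2)\)-curve of \(2^p\) into a \((-1)\)-curve meeting \(A\). Contracting it repeats the situation at the next curve of \(2^p\). Each contraction raises \(A^2\) by one and keeps the image curves smooth, since the meetings are transversal and occur at distinct points. After contracting \(F\) and the \(p\) curves of \(2^p\), we obtain the following.
\begin{enumerate}
\item For \((3,3,3)\): \(A\) has square \(-(p+2)+p+1=-1\). The \(3\)-curve becomes a \((-2)\)-curve now meeting \(A\). Together with the original \(2\)-curve this gives a triangle with squares \(-1,-2,-2\) and pairwise intersections one.
\item For \((2,4,4)\): \(A\) has square \(-1\). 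The \(3\)-curve becomes a \((-2)\)-curve meeting \(A\) at two points.
\item For \((2,3,6)\): \(A\) acquires a self-intersection point from the last contraction, or from \(F\cdot A=2\) when \(p=0\). Its square is \(-(p+3)+p+4=1\), and its arithmetic genus is one.
\end{enumerate}
I expect this bookkeeping to be the most error-prone step. The issue is that at every stage exactly one \((-1)\)-curve meets \(A\) and the remaining chain transversally, at distinct points. I would handle it by induction on \(p\), following Lemma~\ref{lem:one-outside-cluster} in reverse.

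Finally we compute the degree and verify the weak del Pezzo property. Noether's formula gives \(K_Y^2=9-r_0-s\), with \(r_0=6,7,8\) and \(s=p+3,p+2,p+1\) respectively. In each case \(K_Y^2=-p\). The \(p+1\) contractions raise this to \(K_{Y_0}^2=1\).

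The pushforward of \(K_Y+F+D''\sim0\) gives \(-K_{Y_0}\sim D_0\), a reduced effective divisor. The surface \(Y_0\) is smooth and rational, since \(Y\) is. For nefness it suffices to check \(D_0\cdot E\ge0\) on the components \(E\) of \(D_0\). In the triangle, \(-1+1+1=1\) and \(-2+1+1=0\). In the two-curve case, \(-1+2=1\) and \(-2+2=0\). In the nodal-curve case, \(D_0^2=1\). Together with \(K_{Y_0}^2=1>0\), this shows that \(-K_{Y_0}\) is nef and big, so \(Y_0\) is a weak del Pezzo surface of degree one.
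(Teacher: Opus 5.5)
Your proof is correct and follows the paper's argument essentially step by step. You use adjunction to get \(F\cdot C=0\) and \(F\cdot E=2-\operatorname{val}_{D''}(E)\), track the squares through the same sequence of contractions, get \(K_Y^2=-p\) (hence degree one) from Noether's formula, and check nefness of \(-K_{Y_0}\sim D_0\) on the components of \(D_0\).

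The only slip is cosmetic. When \(D''\) has one or two components, you have not shown that \(F\) meets it transversally away from the other component, so ``two distinct points'' and ``a node'' are unproved in those cases. The statement only needs intersection numbers and \(p_a(D_0)=1\), and these follow regardless from the pullback formula and adjunction, using \(D_0^2=1\) and \(K_{Y_0}\cdot D_0=-1\).
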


\begin{proof}
Adjunction and \eqref{four:anticanonical-cycle} give
\[
F\cdot C=(-K_Y-D'')\cdot C=1-1=0,\quad F\cdot D_j=2-\operatorname{val}_{D''}(D_j)
\]
for every component \(D_j\) of \(D''\). Since \(C^2=-1\), the first equality implies \(F\ne C\), and hence \(F\) is disjoint from \(C\). The second equality implies that \(F\) meets each end once and no interior component, or meets the sole component twice when \(D''\) has length one.

Contract \(F\), and then contract the successive \((-1)\)-curves arising from the end subchain \(2^p\). These curves are disjoint from \(C\). In the first two families, the end component with initial square \(-(p+2)\) has final square \(-1\), and the component with initial square \(-3\) has final square \(-2\). In the first family, the intervening component retains square \(-2\). This gives the stated intersection numbers. In the third family, the curve contracted at the last step has intersection number two with the surviving component. Its contraction increases the square of that component from \(-3\) to \(1\). When \(p=0\), the last step is the contraction of \(F\). The image is an irreducible rational anticanonical curve of arithmetic genus one.

In each case, \(D\) has \(p+9\) components. Hence \(\rho(Y)=p+10\) and \(K_Y^2=-p\). The \(p+1\) contractions give \(K_{Y_0}^2=1\), and \eqref{four:anticanonical-cycle} gives \(-K_{Y_0}\sim D_0\). In the first two cases, \(D_0\) has intersection number one with the component of square \(-1\) and zero with every other component. In the third case, \(D_0^2=1\). Every curve outside the support of \(D_0\) has nonnegative intersection with \(D_0\). Thus \(-K_{Y_0}\) is nef and big.
\end{proof}

\subsection{The elliptic pencil and its singular fibers}

The anticanonical pencil on a degree-one weak del Pezzo surface has a unique simple base point, which is a smooth point of every member. Blowing up that point gives \(\beta:T\to Y_0\) and a relatively minimal rational elliptic surface \(f:T\to\mathbb{P}^1\), with exceptional section \(O\); see Dolgachev~\cite[Theorem~8.3.2 and Section~8.8.1]{Dolgachev}.

\begin{proposition}\label{prop:fibers}\label{lem:section}
The base point does not lie on \(C\), and the strict transform of \(C\) is a section of \(f\). Its attachment points lie over the four singular fibers, whose types in the order \(P_1,P_2,P_3,P_4\) are
\[
\begin{array}{c|c}
\text{orders}&\text{singular fibers}\\ \hline
(3,3,3)&I_3,I_3,I_3,I_3\\
(2,4,4)&I_2,I_4,I_4,I_2\\
(2,3,6)&I_2,I_3,I_6,I_1.
\end{array}
\]
\end{proposition}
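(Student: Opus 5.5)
The plan is to read everything off intersection numbers on \(Y_0\), and then use the Euler-number count on the rational elliptic surface \(T\).

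\textbf{Step 1: the base point is not on \(C\).} By Proposition~\ref{prop:degree-one}, \(C\) is a smooth rational curve with \(C^2=-1\), so adjunction gives \(-K_{Y_0}\cdot C=1\). Hence every member of \(|-K_{Y_0}|\) not containing \(C\) meets \(C\) in exactly one point, transversally.

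Each chain \(D'_i\) consists of \((-2)\)-curves disjoint from \(D_0\). So \(-K_{Y_0}\cdot E=D_0\cdot E=0\) for every component \(E\) of \(D'_i\). In particular \(D_0\cdot D'_i=0\), so the pencil restricts to a degree-zero linear system on the connected chain \(D'_i\). It follows that \(D'_i\) lies in the support of a single member \(M_i\).

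The base point \(b\) cannot lie on any component \(E\) of \(D'_i\). Otherwise a general member, which passes through \(b\) and does not contain \(E\), would meet \(E\) positively, contradicting \(-K_{Y_0}\cdot E=0\).

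Now \(C\) meets the end of \(D'_i\) at the point lying over \(P_i\). Since \(C\cdot M_i=1\), this is the only point of \(C\cap M_i\). Any point of \(C\) lying on all members must lie on \(M_i\), hence would equal that point on \(D'_i\). But that point is not \(b\), so \(b\notin C\).

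\textbf{Step 2: \(C\) becomes a section.} After blowing up \(b\), the strict transform of \(C\) satisfies \(C\cdot F=-K_T\cdot C=1\) for a fiber \(F\). It is therefore a section of \(f\).

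Consequently \(P_i\) lies over the fiber \(f(M_i)\) for \(i=1,2,3\). The point \(P_4\) is the unique point of \(C\cap D_0\), since \(C\) meets only the first component of \(D''\). So \(P_4\) lies over the fiber \(\beta^*D_0-O\), the strict transform of \(D_0\). These four fibers are distinct, because a section meets each fiber once.

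\textbf{Step 3: the fiber containing \(D_0\).} The base point is a smooth point of \(D_0\), lying on the unique component having \(-K\)-degree one. Blowing it up lowers that component's square by one. By Proposition~\ref{prop:degree-one}, this turns \(D_0\) into the following fiber:
\begin{enumerate}
\item a cycle of three \((-2)\)-curves, that is \(I_3\), for \((3,3,3)\);
\item two \((-2)\)-curves meeting in two distinct points, that is \(I_2\), for \((2,4,4)\);
\item a nodal rational curve of square \(0\), that is \(I_1\), for \((2,3,6)\).
\end{enumerate}
To check that the two intersection points are distinct in the second case, and that the singular point is a node rather than a cusp in the third, I would trace the contraction in Proposition~\ref{prop:degree-one}. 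There \(F\) meets the two ends of \(D''\) at distinct points, or the sole component transversally twice.

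\textbf{Step 4: Euler-number count.} The fiber \(f(M_i)\) contains an \(A_{l_i-1}\) chain of components not meeting \(O\). So it has at least \(l_i\) components, and its Euler number is at least \(l_i\). Equality holds only for type \(I_{l_i}\): types \(III\) and \(IV\) have Euler numbers \(3\) and \(4\) with only \(2\) and \(3\) components, and the additive and \(I_n^*\) types exceed the count.

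Since \(\chi(T)=12\) and the \(D_0\)-fiber contributes \(3\), \(2\), or \(1\), the remaining budget is \(9\), \(10\), or \(11\). This equals \(l_1+l_2+l_3\) in each case. Equality therefore holds everywhere: the three fibers are \(I_{l_1},I_{l_2},I_{l_3}\), and there are no other singular fibers. This gives exactly the ordered types in the table.

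The main obstacle I expect is Step 3, namely verifying reducedness and the node-versus-cusp distinction for the \(D_0\)-fiber. The contraction bookkeeping in Proposition~\ref{prop:degree-one} should settle it. If needed, the Euler count itself excludes a cusp, since a cusp would give Euler number \(2\) and exceed \(12\).
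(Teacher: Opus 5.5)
Your Steps~2--4 follow the paper's route, but Step~1 has a gap. The paper's route is: \(C\) becomes a section, the fibers through the chains of \(D'\) and through \(D_0\) have Euler numbers at least \(l_i\) and \(3,2,1\), and \(c_2(T)=12\) with Kodaira's classification forces equality. The gap is the sentence ``Since \(C\cdot M_i=1\), this is the only point of \(C\cap M_i\)''. It assumes that \(C\) is not a component of \(M_i\), and that is precisely the case you must exclude. If \(b\in C\) and \(C\not\subset M_i\), then \(M_i\) meets \(C\) both at \(b\) and at the point over \(P_i\). These points differ because \(b\notin D'\), so \(C\cdot M_i\ge2\). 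Hence the hypothesis \(b\in C\) forces \(C\subset M_i\) for every \(i\). Since the pencil has no fixed component, this means a single member contains \(C+D'\), and your argument says nothing about that configuration. The equality \(C\cdot M_i=1\) alone does not exclude it, because \(C^2=-1\) contributes negatively to \(M_i\cdot C\).

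The paper closes this case on \(T\). If \(b\in C\), the strict transform of \(C\) has square \(-2\) and \(-K_T\cdot C=0\), so it is vertical. It meets the section \(O\) once, so it has multiplicity one in its fiber. The three end components of \(D'\) are vertical and meet \(C\), so they lie in that same fiber. Intersecting the fiber with \(C\) then gives \(0\ge-2+3\), a contradiction. You can also argue directly on \(Y_0\): \(M\cdot E=0\) on the chain components makes the multiplicities of \(M\) concave along each chain, and with integrality this gives \(M\cdot C\ge2\).

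With this case added, the rest of your proof goes through. In Step~3 you do not need to trace the contractions. The lower bounds \(3,2,1\) for the \(D_0\)-fiber already suffice: they come from its number of components, or from its being singular. Equality in the Euler count then excludes types \(\mathrm{IV}\), \(\mathrm{III}\), and \(\mathrm{II}\) there, exactly as it does for the three fibers through \(D'\).
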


\begin{proof}
The divisor \(D'\) is disjoint from the anticanonical member \(D_0\), so it is disjoint from the base point. Its components remain vertical \((-2)\)-curves on \(T\). Suppose that the base point lies on \(C\). The strict transform of \(C\) then has square \(-2\) and fiber degree \((-K_{Y_0})\cdot C-1=0\). It is a fiber component of multiplicity one, since it meets \(O\) once and \(O\) has intersection number one with a fiber. The three end components of \(D'\) meeting \(C\) belong to the same fiber. Intersecting that fiber with \(C\) gives
\[
0=-2+\sum_{\text{other components }A}m_A(A\cdot C)\ge-2+3=1,
\]
a contradiction. Thus the base point does not lie on \(C\). Its fiber degree is one, so \(f|_C\) is an isomorphism.

The three chains in \(D'\) lie in distinct fibers, and these fibers are distinct from the fiber corresponding to \(D_0\). A fiber containing an \(A_{l-1}\)-chain has at least \(l\) components, because the negative definite chain cannot be the whole fiber. Its topological Euler characteristic is therefore at least \(l\). By Proposition~\ref{prop:degree-one}, the fourth fiber has topological Euler characteristic at least three, two, or one, respectively. The lower bounds sum to
\[
3+3+3+3=12,\quad 2+4+4+2=12,\quad 2+3+6+1=12.
\]
The surface \(T\) is rational and \(K_T^2=0\), so Noether's formula gives \(c_2(T)=12\). Equality holds in each lower bound, and there are no other singular fibers. By the Kodaira classification, an additive fiber has topological Euler characteristic one greater than its number of components. Thus every reducible fiber is multiplicative, and the irreducible fiber with topological Euler characteristic one is nodal. This proves the table.
\end{proof}

\subsection{The cross ratios of the singular parameters}

Beauville's classification of nonisotrivial semistable elliptic fibrations over \(\mathbb{P}^1\) with four singular fibers~\cite{Beauville} determines each fibration in Proposition~\ref{prop:fibers} up to isomorphisms of the surface and the base. The configurations \(I_3^4\), \(I_4^2I_2^2\), and \(I_6I_3I_2I_1\) have the following models:
\begin{equation*}
\begin{array}{c|l}
(3,3,3)&X^3+Y^3+Z^3-3sXYZ=0\\
(2,4,4)&y^2=x(x-1)(x-s^2)\\
(2,3,6)&(X+Y)(Y+Z)(Z+X)+sXYZ=0.
\end{array}
\end{equation*}
Let \(\omega^3=1\), with \(\omega\ne1\), and write \(s_i=s(P_i)\). The ordered singular parameters and the normalizing coordinates are
\[
\begin{array}{c|c|c|c}
\text{orders}&(s_1,s_2,s_3,s_4)&z(s)&z(P_4)\\ \hline
(3,3,3)&(1,\omega,\omega^2,\infty)&\dfrac{(\omega-\omega^2)(s-1)}{(\omega-1)(s-\omega^2)}&\dfrac{\omega-\omega^2}{\omega-1}\\[6pt]
(2,4,4)&(1,0,\infty,-1)&\dfrac{s}{s-1}&\dfrac12\\[6pt]
(2,3,6)&(1,0,\infty,-8)&1-s&9.
\end{array}
\]
The four singular fibers of the Hesse pencil are triangles. The value of \(\lambda\) in the table satisfies \(\lambda^2-\lambda+1=0\), and permutations of the arms with the same Seifert invariants identify the two roots. For the second model,
\[
\Delta_{\mathrm{ell}}=16s^4(s^2-1)^2.
\]
The fibers at \(0,\infty\) are of type \(I_4\), and those at \(1,-1\) are of type \(I_2\). 

For the third pencil, \(s=0\) gives three lines, and \(s=1\) gives
\[
(X+Y+Z)(XY+YZ+ZX)=0.
\]
At \(s=-8\), the cubic has a node at \([1:1:1]\), with affine quadratic term \(2(x^2-xy+y^2)\). Resolving the base points gives an \(I_6\)-fiber at infinity.

\subsection{Numerical properties of the listed graphs}

\begin{proposition}\label{completion:candidate-data}
Every weighted homogeneous surface singularity with a graph in Tables~\ref{table:threearm} and~\ref{table:fourarm} is rational and satisfies \(K^2+N=0\), independently of the existence of a smoothing and of the attachment points.
\end{proposition}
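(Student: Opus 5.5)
Both properties are numerical, so I will work only with the dual graphs. Rationality follows from Laufer's algorithm, or Artin's criterion, applied to the fundamental cycle. For \(K^2+N=0\) I will use the closed formula \eqref{obs:canonical-square} and its four-arm analogue, both derived from Schur complements. Neither property depends on the attachment points: by Pinkham's description, rationality of a weighted homogeneous singularity and the numerical cycle \(K\) are determined by the Seifert invariants alone.

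\emph{Rationality.} For a star-shaped graph with all \(b_v\ge2\), the reduced cycle \(E\) satisfies \(E\cdot E_v\le0\) at every vertex except possibly the center, where \(E\cdot E_0=t-d\). If \(d\ge t\), as in the rows \((3,3),(3,4),(4,3)\) except those with \(d<t\), then \(E\) is anti-nef. Hence \(Z=E\), \(p_a(Z)=0\), and a reduced tree of rational curves with anti-nef fundamental cycle is rational by Laufer's criterion. Rows with \(d=t\) also have \(E\cdot E_0=0\).

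The only rows with \(d<t\) are the \((3,2)\) families. For these I will run Laufer's algorithm: add \(E_0\), then the first components of the arms adjacent to components on which the cycle becomes positive, as in Example~\ref{obs:hodge-example}. Since two arms have a single vertex of weight at least \(3\) in rows 1(d)--(j), one center addition makes the cycle anti-nef after at most one further step along the long arm. At each step the added component meets the current cycle with intersection exactly one, so \(p_a\) stays zero. This step is a finite case check uniform in the parameters, and it is the fiddliest part of the argument.

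\emph{The identity \(K^2+N=0\), three arms.} By \eqref{obs:canonical-square}, it suffices to verify
\[
\sum_{i,j}(b_{ij}-3)=4-d\quad\text{and}\quad e\Bigl(1-\sum_i a_i^*/n_i\Bigr)=\eta^2 .
\]
The first identity is a direct count from the arm continued fractions in Table~\ref{table:threearm}. For example, in 1(a), \([2^q,p+3]\) contributes \(p-q\), and the three contributions cancel to \(0=4-4\). The other rows are checked similarly. For \(d=2\), I will not redo the table rows directly. Instead I use Theorem~\ref{d2:classification}: its families were characterized in \S\ref{d2:equations} precisely as the solutions of \eqref{d2:base}, which are equivalent to the numerical identities, and the converse statements there, for example in Lemma~\ref{d2:reciprocity-closure} and \S\ref{d2:integral}, verify that the listed data satisfy them. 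Section~\ref{d2:comparison} then matches those families with the table rows.

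For \(d=3,4\), the second identity is a rational-function identity in \(p,q,r\). To check it, I compute \(a_i^*\) from the reversed continued fraction, for example \([p+3,2^q]\) in 1(a). Alternatively, I run the boundary computation of Sections~\ref{sec:d3}--\ref{sec:d4-plane-cap} backwards: the compactification constructed there has \(b_2(Y)=N_D+1\) and rational \(Y\) with \(\chi_{\mathrm{top}}=N_D+3\). Noether's formula then gives \(K_Y^2=9-(N_D+1)\), and converting through Hirzebruch--Jung duality yields \(K^2+N=0\). I expect the cleaner route to be the direct algebraic check using the explicit \(n_i,a_i\). The main obstacle is the bookkeeping for 1(b) and 1(g), where \(a_i^*\) of the long arm must be computed through the auxiliary quantities \(h_b,h_g\).

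\emph{Four arms.} Three arms are \((l_i,1)\) with \(\sum 1/l_i=1\), and the fourth has \(n=mv(v-1)+1\), \(h=mv-1\). The same Schur-complement computation gives
\[
K^2+N=5-d-\sum(b_{ij}-3)-\sum a_i^*/n_i-\eta^2/e+(\text{the extra unit from }t=4).
\]
I will recompute this expression with \(t=4\) and substitute \(d=3\), \(\sum(b_{ij}-3)=0\) (a direct count), \(\eta=(n-1)/n\), and \(e=(n+h)/n\). Since \(a^*=h+1-\dots\) follows from \(n=mv(v-1)+1\), the relation \((h+1)^2=m(n+h)\) turns the result into zero.
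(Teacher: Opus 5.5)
The plan for \(K^2+N=0\) is the paper's: the Schur-complement identity \(K^2+N=5-d-S-C\), followed by a direct check of \(C\). However, two steps fail as written.

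First, rationality of the four-arm rows is never proved. You list \((4,3)\) among the rows with \(d\ge t\), and then assert that the \((3,2)\) families are the only rows with \(d<t\); both statements are false. In Table~\ref{table:fourarm} one has \(t=4\), \(d=3\), so \(E\cdot E_0=t-d=1\) and the reduced cycle is not anti-nef. These rows need the same Laufer step as the \((3,2)\) rows. Add \(E_0\); a one-vertex arm of weight \(l_i\) then meets the cycle in \(2-l_i\le0\). Then add the initial \(2\)-string of the fourth arm. Each added curve meets the current cycle once, and the final cycle has \(Z\cdot E_0\le-1\).

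Your description of the \((3,2)\) step also needs repair. Rows 1(d), 1(f), 1(j) contain an arm \([2]\), so it is false that both one-vertex arms have weight at least \(3\); this is harmless, since such a vertex meets \(E+E_0\) in \(0\). More seriously, you must add the entire initial \(2\)-string of the long arm, which has length \(q\) or \(q+1\) and is unbounded, not ``at most one further step''. Once repaired, the Laufer route is a valid alternative to the paper's argument. The paper instead shows \(D(k)=kd-\sum_i\lceil ka_i/n_i\rceil\ge-1\) in \(p_g=\sum_{k\ge0}h^1(\mathcal{O}(D(k)))\), which works uniformly and without tracking fundamental cycles.

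Second, there is no ``extra unit from \(t=4\)''. The Schur-complement computation gives \(K^2+N=5-d-S-C\) for every \(t\), because \((K+E)\cdot E_0=t-2\) enters only through \(\eta=t-2-\sum_i1/n_i\); this is exactly why \(\eta=(n-1)/n\) here. With your extra term, the true value \(C=2\) would give \(K^2+N=1\). What must be shown is \(C=2\). From \(a^*=m(v-1)+1\), \(n-1=(h+1)(v-1)\), and \((h+1)^2=m(n+h)\), one gets \(\eta^2/e=m(v-1)^2/n\), hence \(C=1+\bigl(m(v-1)+1+m(v-1)^2\bigr)/n=2\).

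Finally, your \(d=2\) shortcut through Theorem~\ref{d2:classification} needs the reverse implication, which the theorem does not state. Its families are not the full solution set of \eqref{d2:base}. For example, \((2;(2,1),(3,1),(51,22))\) satisfies \eqref{d2:base} with \(m=4\), \(h=29\), but is excluded by \eqref{d2:divisibility}. That each listed family satisfies \eqref{d2:base} is explicit only in Lemma~\ref{d2:reciprocity-closure}. For the \(p=2\) and integral-ratio families you must verify the Dedekind equation yourself, e.g.\ \(12s(1,U)=(U-1)(U-2)/U\) for \(m=U=q-2\), \(h\equiv1\pmod U\). The paper avoids this by checking \(C=1\) directly, uniformly for rows 1(d)--(g) via the parameters \((P,Q,k)\), and separately for 1(h)--(j).
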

\begin{proof}
For \(k\ge0\), put \(D(k)=kd-\sum_i\lceil ka_i/n_i\rceil\). The graded formula for the section ring gives \(p_g=\sum_{k\ge0}h^1(\mathbb{P}^1,\mathcal{O}(D(k)))\). In the three-arm rows with \(d\ge3\), one has \(D(k)\ge0\); in every row with \(d=2\), two arms have one vertex and order at least two, so \(D(k)\ge k-2\lceil k/2\rceil\ge-1\). In the four-arm rows, the reciprocal orders of the three arms with one vertex sum to one, whence their three ceilings sum to at most \(k+2\), and \(D(k)\ge k-2\ge-1\) for \(k\ge1\). Since \(D(0)=0\), rationality follows in all cases.

Set \(S=\sum_{i,j}(b_{ij}-3)\), \(\eta=t-2-\sum_i1/n_i\), and \(C=\sum_i a_i^*/n_i+\eta^2/e\). The proof using the Schur complement of \eqref{obs:canonical-square} applies to any number of arms, with \((K+E)E_0=t-2\), and gives \(K^2+N=5-d-S-C\). Reading the entries of the two tables gives
\[
\begin{array}{c|ccc}
\text{row}&d&S&C\\ \hline
1\mathrm{(a)}&4&0&1\\
1\mathrm{(b)},1\mathrm{(c)}&3&1&1\\
1\mathrm{(d)}\text{--}1\mathrm{(j)}&2&2&1\\
2\mathrm{(a)}\text{--}2\mathrm{(c)}&3&0&2
\end{array}
\]
We give the arithmetic verifying the last column. The continued fractions \(n_i/a_i\) and the modular inverses \(a_i^*\), defined by \(a_ia_i^*\equiv1\pmod{n_i}\), are computed by multiplying \(M(b)=\left(\begin{smallmatrix}b&-1\\1&0\end{smallmatrix}\right)\), using \(M(2)^j=\left(\begin{smallmatrix}j+1&-j\\j&1-j\end{smallmatrix}\right)\).

For row~1(a), put \((a,b,c)=(p+2,q+2,r+2)\), \(\alpha=a(b-1)+1\), \(\beta=b(c-1)+1\), \(\gamma=c(a-1)+1\), \(L=abc+1\), and \(R=abc-ab-ac-bc+a+b+c-2\). For rows~1(b),(c), respectively take \((a,b,c)=(p+2,q+2,r+2)\) and \((1,r+2,q+2)\), and put \(h=bc-b+c\), \(k=abc+b-c\), \(L=a(b+1)c+b\), and \(R=abc-ab-c\). The matrix products give
\[
\begin{array}{c|c|c}
\text{row}&(n_1,n_2,n_3)&(a_1^*,a_2^*,a_3^*)\\ \hline
1\mathrm{(a)}&(\alpha,\beta,\gamma)&(b-1,c-1,a-1)\\
1\mathrm{(b)},1\mathrm{(c)}&(a+1,h,k)&(1,c-1,ab-1).
\end{array}
\]
Writing \(Q=n_1n_2n_3\), direct substitution gives
\[
e=\frac{L^2}{Q},\quad \eta=\frac{RL}{Q},\quad R^2+\sum_{i=1}^3a_i^*\frac{Q}{n_i}=Q,
\]
and hence \(C=1\).

For rows~1(d)--(g), take \((P,Q,k)=(r+4,2,q+3),(3,p+3,q+3),(3,2,q+3),(r+4,p+3,q+3)\), respectively, and set \(\ell=PQ-P-Q\), \(U=P+k\ell\), \(n=(k-1)U-kQ\). The Seifert pairs are \((P,1),(Q,1),(n,n-U+1)\), and \(a_3^*=U-PQ+1\). In fact,
\[
\begin{gathered}
(n-U+1)(U-PQ+1)-1=((k-2)\ell-Q+1)n,\\
e=\frac{U^2}{PQn},\quad
\eta=\frac{U(U-PQ)}{PQn},\quad
n\ell=U(U-PQ)+PQ.
\end{gathered}
\]
Consequently
\[
C=\frac{n(P+Q)+PQ(U-PQ+1)+(U-PQ)^2}{PQn}
=\frac{n(P+Q)+n\ell}{PQn}=1.
\]
For rows~1(h),(i),(j), put \(m=q+2,q+3,q+4\), respectively. If \(\sigma\) is the sum of the reciprocal orders of the first two arms, the last-arm data are
\[
\begin{array}{c|c|c|c|c}
\text{row}&\sigma&n&a^*&e\\ \hline
1\mathrm{(h)}&\frac12&2m^2&2m-1&\frac{(m+1)^2}{2m^2}\\[2pt]
1\mathrm{(i)}&\frac12&m^2-2m+2&m-1&\frac{m^2}{2n}\\[2pt]
1\mathrm{(j)}&\frac23&m^2-3m+3&m-2&\frac{m^2}{3n}.
\end{array}
\]
Substituting \(\eta=1-\sigma-1/n\) gives \(C=\sigma+a^*/n+\eta^2/e=1\).

Finally, for rows~2(a),(b),(c), put \(P=p+2\), and take \(q=3,2,1\), respectively. Set \(m=qP(P-1)+1\). The fourth arm has numerator \(m\), denominator \(m-qP+1\), and modular inverse \(a_4^*=q(P-1)+1\); the product of the last two integers is \(1+(q(P-2)+1)m\). Since \(e=qP^2/m\) and \(\eta=(m-1)/m\), one obtains
\[
C=1+\frac{q(P-1)+1+q(P-1)^2}{m}=2.
\]
Substitution into the displayed table proves \(K^2+N=0\).
\end{proof}

\begin{corollary}\label{completion:strong-smoothability}
Let \((V,0)\) be a weighted homogeneous surface singularity whose graph occurs in Table~\ref{table:threearm} or~\ref{table:fourarm}. Every strong \(\mathbb{Q}\)-Gorenstein smoothing of \((V,0)\) is a rational homology disk smoothing. In the four-arm case, \((V,0)\) admits a strong \(\mathbb{Q}\)-Gorenstein smoothing if and only if its ordered attachment points satisfy the conditions in Theorem~\ref{thm:cross-ratio}.
\end{corollary}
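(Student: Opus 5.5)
The first assertion will follow from the strong Milnor formula \eqref{foundation:strong-milnor}, the second from the necessity argument of Section~\ref{sec:cross-ratios} combined with Theorem~\ref{cqg:main}.

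\textbf{Every strong \(\mathbb{Q}\)-Gorenstein smoothing is QHD.} By Proposition~\ref{completion:candidate-data}, each listed singularity is rational and satisfies \(K^2+N=0\), and this holds for arbitrary attachment points. For a rational singularity with a strong \(\mathbb{Q}\)-Gorenstein smoothing, formula \eqref{foundation:strong-milnor} gives \(\mu=K^2+N=0\). The Milnor fiber has the homotopy type of a finite complex of dimension at most two, and by Greuel--Steenbrink its first rational homology vanishes. Hence \(b_2=\mu=0\), so the Milnor fiber is a rational homology disk. This argument uses no classification and applies to every row of both tables.

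\textbf{Necessity of the cross ratios.} Suppose a four-arm singularity with a graph in Table~\ref{table:fourarm} has a strong \(\mathbb{Q}\)-Gorenstein smoothing. By the first part, this smoothing is QHD. Theorem~\ref{thm:cross-ratio} then applies and forces the ordered attachment points to satisfy the listed conditions. Concretely, the chain runs as follows:
\begin{enumerate}
\item Theorem~\ref{foundation:PS} gives a one-dimensional component and a negative-weight equivariant smoothing.
\item Pinkham's compactification applies.
\item Proposition~\ref{four:BH} supplies the anticanonical cycle.
\item Propositions~\ref{prop:degree-one} and~\ref{prop:fibers} identify the attachment points with the singular parameters of the elliptic pencil, whose cross ratios are those of Beauville's models.
\end{enumerate}
One point needs checking: the compactification argument takes as input only the existence of a QHD smoothing and the graph, so it legitimately applies here.

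\textbf{Sufficiency.} Theorem~\ref{cqg:main} constructs a strong \(\mathbb{Q}\)-Gorenstein QHD smoothing for each listed analytic structure with the prescribed attachment points. Because the Seifert invariants and attachment points determine the analytic type, via Pinkham's description of the graded section ring, every singularity satisfying the cross-ratio conditions is one of the constructed structures. This includes both roots in the \((3,3,3)\) case, which are identified by permuting the arms.

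\textbf{Main obstacle.} I expect the only delicate point to be this sufficiency step. One must confirm that the constructions in Theorem~\ref{cqg:main} realize exactly the normalized attachment points of Theorem~\ref{thm:cross-ratio}, in the stated order and for every \(p\ge0\), rather than some other configuration with the same graph. To settle this, I would compute the positions of the attachment points on the central curve \(\operatorname{Proj}\) of the invariant ring of \(f_n\) in Section~\ref{cqg:four:section}. Equivalently, I would rely on the uniqueness statement in the necessity part: any structure admitting such a smoothing must have these cross ratios, so a construction for a graph in Table~\ref{table:fourarm} automatically realizes them.
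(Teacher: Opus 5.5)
Your proposal is correct and follows the paper's proof. The first assertion comes from Proposition~\ref{completion:candidate-data} together with \eqref{foundation:strong-milnor}; necessity comes from Theorem~\ref{thm:cross-ratio} applied to the resulting QHD smoothing; and sufficiency comes from Theorem~\ref{cqg:main}. The obstacle you flag is resolved in the paper exactly by your second route: Proposition~\ref{cqg:four:main} applies the necessity part of Theorem~\ref{thm:cross-ratio} to the constructed quotient to fix its ordered attachment points, and then uses \eqref{cqg:section-algebra} to identify it with the prescribed analytic structure, so no direct computation on \(\operatorname{Proj}\) is needed.
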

\begin{proof}
Proposition~\ref{completion:candidate-data} gives rationality and \(K^2+N=0\) for every choice of attachment points. Formula~\eqref{foundation:strong-milnor} therefore gives \(\mu=0\) for every strong \(\mathbb{Q}\)-Gorenstein smoothing. Its Milnor fiber has zero first rational homology and the homotopy type of a finite complex of dimension at most two, so it is a rational homology disk. In the four-arm case, Theorem~\ref{thm:cross-ratio} gives the necessary conditions on the attachment points. Their sufficiency follows from the constructions in Theorem~\ref{cqg:main}.
\end{proof}

Thus, within each four-arm graph family, the special cross ratios characterize strong \(\mathbb{Q}\)-Gorenstein smoothability itself. This is a consequence of the numerical properties of the graphs and the analytic classification, rather than an additional classification theorem.

\section{Equivariant deformations of canonical covers}\label{cqg:section}

We use lifting of relations and canonical-cover descent to verify the deformations. Proposition~\ref{uqg:regular-section} treats invariant sections and regular sequences, used in Sections~\ref{cqg:invariant-sections} and~\ref{cqg:toric-sections}. Lemmas~\ref{uqg:fractions} and~\ref{uqg:fraction-syzygies} treat the fraction-field adjunctions used in Section~\ref{cqg:m}. Corollary~\ref{cqg:qhd-verification} supplies the rational homology disk property.

\subsection{Canonical covers and descent}\label{cqg:general-foundation}\label{uqg:presentations}

The construction applies to any normal complex \(\mathbb{Q}\)-Gorenstein surface algebra \(A\). Let \(r\) be its index, fix a trivialization of \(\omega_A^{[r]}\), and write \(B=\bigoplus_{j=0}^{r-1}\omega_A^{[j]}\) for the resulting canonical cover. Write \(G=\mu_r\), and normalize its faithful character \(\kappa\) so that a generator of \(\omega_B\) has character \(\kappa\); the summand \(\omega_A^{[j]}\) then has character \(\kappa^{-j}\). The cover \(B\) is normal and Gorenstein, and the action is free away from the closed point. We use \(t\) for the deformation parameter throughout this section; all finite groups act trivially on \(t\).

We recall the canonical-cover description of \(\mathbb{Q}\)-Gorenstein deformations, including the character and index identifications needed below; see Lee--Nakayama~\cite[Section~7]{LeeNakayama2018} for general base-change results.

\begin{proposition}\label{cqg:descent}
Let \(\mathcal{B}\) be a flat analytic deformation of \(B\) over a smooth pointed curve. Suppose that the canonical action of \(G\) extends to \(\mathcal{B}\), and that its nonzero fibers are smooth and carry a free \(G\)-action. Then \(R=\mathcal{B}^G\) is a strong \(\mathbb{Q}\)-Gorenstein smoothing of \(A\), of index \(r\), and
\[
\mathcal{B}_{\kappa^{-j}}\cong\omega_R^{[j]},\qquad
\omega_R^{[j]}/t\omega_R^{[j]}\cong\omega_A^{[j]}
\qquad(j\in\mathbb{Z}).
\]
The same assertion holds for an algebraic family after restriction to a neighborhood of the central point.
\end{proposition}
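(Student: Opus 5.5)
The plan is to reduce everything to the quotient of a flat family by a finite group acting freely off the central fiber, and then identify character summands with reflexive canonical powers. Since \(G\) is finite and acts trivially on \(t\), taking invariants is exact, so \(R=\mathcal{B}^G\) is flat over the base with \(R/tR=(\mathcal{B}/t\mathcal{B})^G=B^G=A\). Normality of \(\mathcal{B}\) follows from Serre's criterion: the central fiber \(B\) is normal, the nearby fibers are smooth, and flatness gives \(S_2\) and \(R_1\) for the total space. Then \(R\) is normal, and its nonzero fibers \(\mathcal{B}_t/G\) are smooth because the action there is free. Hence \(R\) is a smoothing of \(A\).

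Next I identify the summands. Let \(\mathcal{U}\) be the complement of the closed point in \(\operatorname{Spec}\mathcal{B}\). The action is free on \(\mathcal{U}\): on the central fiber this holds away from the closed point by hypothesis, and elsewhere it holds by the freeness assumption on the nonzero fibers. Thus \(\mathcal{U}\to\mathcal{U}/G\) is étale, and \(\mathcal{B}\) decomposes as \(\bigoplus_\psi \mathcal{B}_\psi\), each summand a reflexive rank-one \(R\)-module. Because \(B\) is Gorenstein, flatness over a smooth curve makes \(\mathcal{B}\) Gorenstein, with \(\omega_{\mathcal{B}}\) free. I lift the generator of \(\omega_B\), which has character \(\kappa\), to a generator of \(\omega_{\mathcal{B}}\) of the same character; this is possible by averaging with the projector onto the \(\kappa\)-isotypic part and applying Nakayama. Étale descent on \(\mathcal{U}\) then gives \(\omega_{\mathcal{U}}=\pi^*\omega_{\mathcal{U}/G}\). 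Comparing isotypic parts yields \(\omega_R\cong\mathcal{B}_{\kappa^{-1}}\), after matching the sign convention already used for \(A\). More generally, \(\omega_R^{[j]}\cong\mathcal{B}_{\kappa^{-j}}\), since both sides are reflexive and agree on \(\mathcal{U}/G\), where multiplication \(\mathcal{B}_{\kappa^{-1}}^{\otimes j}\to\mathcal{B}_{\kappa^{-j}}\) is an isomorphism of line bundles. In particular \(\omega_R^{[r]}\cong\mathcal{B}^G=R\) is free, while \(\omega_R^{[j]}\) is not free for \(0<j<r\). Non-freeness follows on restricting to the central fiber, where the restriction is \(\omega_A^{[j]}\), which is not free; this gives index exactly \(r\).

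The strong \(\mathbb{Q}\)-Gorenstein specialization is then automatic: \(\omega_R^{[j]}/t\omega_R^{[j]}=\mathcal{B}_{\kappa^{-j}}/t\mathcal{B}_{\kappa^{-j}}=B_{\kappa^{-j}}=\omega_A^{[j]}\), using exactness of isotypic projection. For the algebraic version I pass to the analytic germ, or localize at the central point, and use the fact that freeness and smoothness are open conditions.

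The main obstacle is the claim that \(\mathcal{B}_{\kappa^{-1}}\) equals the reflexive \(\omega_R\) and not merely agrees with it off the closed point. This needs normality and \(S_2\) of \(\mathcal{B}\) together with the \(G\)-equivariant generator of \(\omega_{\mathcal{B}}\). I would handle it via the étale descent on \(\mathcal{U}\) described above, together with reflexive extension across codimension two.
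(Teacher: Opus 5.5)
Your proposal is correct and follows essentially the same route as the paper. The shared steps are exactness of invariants for flatness and \(R/tR=A\), Serre's criterion for normality of \(\mathcal{B}\), reflexivity of the character summands, and an equivariant canonical generator of character \(\kappa\) obtained by character projection and Nakayama. The rest also matches: descent on the punctured total space extended by reflexivity, specialization because isotypic projection commutes with reduction modulo \(t\), and the index read off from the central fiber. The only cosmetic difference is that the paper gets reflexivity by noting that the summands are maximal Cohen--Macaulay over \(R\), whereas you derive it from the \(S_2\) property of \(\mathcal{B}\), which amounts to the same thing.
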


\begin{proof}
Exactness of invariants gives flatness and \(R/tR=A\). The local total algebra \(\mathcal{B}\) is Gorenstein because \(t\) is regular and \(B\) is Gorenstein. It is regular in codimension one, by smoothness off the central fiber and regularity at its generic point, so Serre's criterion gives normality of \(\mathcal{B}\) and hence of \(R\). Parameter sequences of \(R\) are regular on its finite extension \(\mathcal{B}\); thus the character summands are maximal Cohen--Macaulay and reflexive. Character projection and Nakayama's lemma lift a canonical generator with character \(\kappa\). Descent on the complement of the central closed point identifies the character summands with the canonical powers, and reflexivity extends these identifications. Character projection commutes with reduction modulo \(t\), giving specialization. The index divides \(r\); a smaller index would specialize to a smaller index of \(A\).
\end{proof}

\begin{corollary}\label{cqg:larger-cover}
Let \(\widetilde B\) be a normal Gorenstein surface algebra with a finite group \(\Gamma\) acting freely off its closed point. Suppose a canonical generator has character \(\chi\), and put \(H=\ker\chi\). Then \(\widetilde B^H\), with group \(\Gamma/H\), is the canonical cover of \(A=\widetilde B^\Gamma\). A flat equivariant deformation of \(\widetilde B\) with smooth nonzero fibers on which the group acts freely induces a deformation of this canonical cover and a strong \(\mathbb{Q}\)-Gorenstein smoothing of \(A\).
\end{corollary}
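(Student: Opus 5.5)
The corollary has three parts, and I will prove them in order. First, \(\widetilde B^H\) with the group \(\Gamma/H\) is the canonical cover of \(A=\widetilde B^\Gamma\). Second, an equivariant deformation of \(\widetilde B\) induces a deformation of that cover. Third, the smoothing of \(A\) is strong \(\mathbb{Q}\)-Gorenstein, which will follow from Proposition~\ref{cqg:descent}.

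\emph{Step 1: the canonical cover.} Since \(\Gamma\) acts freely off the closed point, \(\operatorname{Spec}\widetilde B\to\operatorname{Spec}A\) is étale in codimension one. Reflexive descent therefore gives
\[
\omega_A^{[j]}\cong\bigl(\omega_{\widetilde B}^{\otimes j}\bigr)^\Gamma=\widetilde B_{\chi^{-j}}\cdot\sigma^j ,
\]
where \(\sigma\) is a canonical generator, \(\widetilde B_\psi\) denotes the \(\psi\)-isotypic part, and the equality uses that \(\omega_{\widetilde B}\) is free. Tensoring with \(\chi^{-j}\) identifies \(\omega_A^{[j]}\) with \(\widetilde B_{\chi^{-j}}\) as \(A\)-modules.

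The image of \(\chi\) is \(\mu_r\) for some \(r\). The modules \(\omega_A^{[j]}\) are free exactly when \(\widetilde B_{\chi^{-j}}\) contains a unit. A homogeneous unit of character \(\psi\) forces \(\psi\) to be trivial in a local ring, since evaluating at the fixed closed point gives \(\psi=1\). Hence the index of \(A\) is exactly \(r=|\Gamma/H|\).

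Moreover, the characters of \(\Gamma\) trivial on \(H\) are exactly the powers of \(\chi\). So
\[
\widetilde B^H=\bigoplus_{j=0}^{r-1}\widetilde B_{\chi^{-j}}\cong\bigoplus_{j=0}^{r-1}\omega_A^{[j]} .
\]
The multiplication on the right is the one induced by the trivialization \(\sigma^r\) of \(\omega_A^{[r]}\). The action of \(\Gamma/H\cong\mu_r\) is the canonical one. This proves the first assertion.

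\emph{Step 2: the deformation.} Let \(\widetilde{\mathcal B}\) be the given flat equivariant deformation, and put \(\mathcal B=\widetilde{\mathcal B}^H\). Taking invariants of the finite group \(H\) is exact, so \(\mathcal B\) is flat and \(\mathcal B/t\mathcal B=\widetilde B^H=B\).

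Nonzero fibers of \(\mathcal B\) are quotients of smooth fibers by the free \(H\)-action, hence smooth. The induced \(\Gamma/H\)-action on them is free, because \(\Gamma\) acts freely there.

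\emph{Step 3: apply Proposition~\ref{cqg:descent}.} Its remaining hypothesis is that the \(G\)-action on \(\mathcal B\) extends the canonical action on \(B\). This holds because the action on the central fiber is the restricted one identified in Step 1. The proposition then gives that \(R=\mathcal B^{\Gamma/H}=\widetilde{\mathcal B}^\Gamma\) is a strong \(\mathbb{Q}\)-Gorenstein smoothing of \(A\).

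The main obstacle is the index identification in Step 1, specifically showing that no smaller power of \(\omega_A\) is trivial. The argument for this is the isotypic one above: in a local ring, a unit must have trivial character. The remaining steps are formal exactness statements.
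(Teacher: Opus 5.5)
Your proposal is correct and follows the paper's proof step for step. The paper likewise descends the canonical generator to identify the character summands with \(\omega_A^{[j]}\), and computes the index by evaluating a semi-invariant unit at the fixed closed point. It then uses exactness of \(H\)-invariants for flatness and the central fiber, and applies Proposition~\ref{cqg:descent} to the smooth \(H\)-quotients with free residual action.

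The one step you assert without justification is that freeness of \(\omega_A^{[j]}\) forces \(\widetilde B_{\chi^{-j}}\) to contain a unit. The paper supplies exactly the argument you need: since the cover is \'etale in codimension one, a free generator of \(\omega_A^{[j]}\) pulls back to \(u\sigma^j\), where \(u\) is a unit of \(\widetilde B\) of character \(\chi^{-j}\).
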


\begin{proof}
Descent of the canonical generator identifies the character summands with the reflexive canonical powers as above. Since \(H\) fixes this generator, \(\widetilde B^H\) is Gorenstein. The index is the order of \(\chi\): if a smaller canonical power were free, its pullback would be \(u\rho^j\) with \(u\) a unit and \(g(u)=\chi(g)^{-j}u\); evaluation at the fixed closed point forces \(\chi^j=1\). Taking invariants commutes with reduction modulo \(t\) and preserves flatness. The nonzero \(H\)-quotients are smooth, with free residual action, so Proposition~\ref{cqg:descent} applies.
\end{proof}

For the cyclic row of Table~\ref{table:smoothing-covers}, the canonical cover and its equivariant smoothing are
\[
W=\{xy-z^p=0\},\qquad \mathcal{W}=\{xy-z^p=t\},\qquad \Gamma=\mu_p(1,-1,q),
\]
where \(p>q>0\) and \(\gcd(p,q)=1\); see Wahl~\cite{Wahl1981}. Writing \(x=u^p\), \(y=v^p\), and \(z=uv\) identifies the central quotient with \(\frac1{p^2}(1,pq-1)\). The residue canonical form \(dx\wedge dz/x\) has faithful character \(q\). The nonzero fibers are smooth and the action is free, since all three coordinate characters are faithful. Their Euler characteristic is \(p\), so the quotient has Euler characteristic one and is QHD. Proposition~\ref{cqg:descent} gives the strong \(\mathbb{Q}\)-Gorenstein property.

\subsection{Equivariant equations and lifted relations}\label{uqg:equivariant-relations}

Choose a finite-dimensional \(G\)-representation \(W\), put \(e=\dim W\), and fix an equivariant presentation
\[
P_0\otimes Q_2\xrightarrow{\Sigma_0}P_0\otimes Q_1\xrightarrow{f}P_0\to B\to0,
\]
exact at the middle and right terms. Here \(P_0=\mathbb{C}\{x_1,\ldots,x_e\}\), or \(P_0=\operatorname{Sym}(W^\vee)\) followed by localization or analytification. The columns of \(\Sigma_0\) generate the first syzygy module. Throughout, a syzygy matrix means a matrix whose columns generate the entire module of relations; no minimality is assumed. The representations \(Q_1,Q_2\) on the chosen equation and relation generators need not be trivial.

We use the one-parameter lifting-of-relations criterion~\cite[Proposition~I.1.91]{GreuelLossenShustin2007}, with equivariant generators.

\begin{samepage}
\begin{lemma}[Lifting relations]\label{uqg:flat-relations}
Suppose equivariant polynomial or convergent matrices \(F,\Sigma\) satisfy
\[
F\bmod t=f,\qquad \Sigma\bmod t=\Sigma_0,\qquad F\Sigma=0.
\]
Then the quotient by \((F)\) is flat over the local parameter ring and its central fiber has coordinate ring \(B\). Conversely, a flat equivariant deformation of this presentation admits such lifts of all central syzygies.
\end{lemma}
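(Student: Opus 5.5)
The plan is the standard local flatness criterion via lifting relations, carried out equivariantly. Write \(S=\mathbb{C}\{t\}\), \(P=P_0\{t\}\) (or \(P_0\otimes\mathbb{C}[t]\) localized), and \(\mathcal{B}=P/(F)\), where \(F\) is the row of lifted equations. Since \(F\bmod t=f\), right exactness of \(\otimes_S S/tS\) gives \(\mathcal{B}/t\mathcal{B}\cong P_0/(f)=B\), so the central fiber has coordinate ring \(B\). Equivariance of \(F\) implies that \((F)\) is \(G\)-stable, so \(G\) acts on \(\mathcal{B}\), trivially on \(t\). This part is formal.

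For flatness, we use that a module \(M\) over the discrete valuation ring \(S\) (or its localization) is flat if and only if \(t\) is a nonzerodivisor on \(M\), i.e.\ \(\operatorname{Tor}_1^S(M,S/t)=0\). Suppose \(tg\in(F)\), so \(tg=F\alpha\) for a column \(\alpha\). Reducing mod \(t\) gives \(f\bar\alpha=0\). Exactness at the middle term means \(\bar\alpha=\Sigma_0\bar\beta\) for some \(\bar\beta\). Lift \(\bar\beta\) to \(\beta\); then \(\Sigma\beta\equiv\alpha\pmod t\), say \(\alpha=\Sigma\beta+t\alpha'\). Since \(F\Sigma=0\), this yields \(tg=tF\alpha'\), and since \(t\) is a nonzerodivisor on the free ring \(P\), we get \(g=F\alpha'\in(F)\). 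Hence \(t\) is regular on \(\mathcal{B}\).

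In the analytic local setting this argument requires the localization to be compatible: we work in the local ring at the origin, and regularity of \(t\) there gives flatness over \(\mathbb{C}\{t\}\). This matches the cited criterion of Greuel--Lossen--Shustin.

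For the converse, assume \(\mathcal{B}=P/(F)\) is flat with \(F\bmod t=f\). Given a column \(\sigma_0\) of \(\Sigma_0\), lift it arbitrarily to \(\sigma\). Then \(F\sigma=tg\) for some \(g\in P\), and flatness (\(t\)-regularity) gives \(g\in(F)\), say \(g=F\gamma\). Setting \(\sigma'=\sigma-t\gamma\) gives \(F\sigma'=0\) with \(\sigma'\equiv\sigma_0\pmod t\).

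This step is only done to first order. To get exact vanishing, no induction is needed: the argument above already gives \(F\sigma'=0\) exactly, because \(F\sigma\in tP\cap(F)=t(F)\) by flatness. That identity, \(tP\cap(F)=t(F)\), is precisely \(\operatorname{Tor}_1\) vanishing.

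Equivariance is arranged by averaging. Since \(G\) is finite and acts trivially on \(t\), we replace \(\sigma'\) by its projection onto the isotypic component prescribed by \(Q_2\); the Reynolds operator commutes with multiplication by \(F\), which is equivariant, and with reduction mod \(t\). This yields an equivariant \(\Sigma\).

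The only real obstacle is the bookkeeping of the analytic versus polynomial settings, namely ensuring that the intersection \(tP\cap(F)=t(F)\) is used in the ring where flatness holds. I would state everything in the local analytic ring and note that the polynomial case follows by faithful flatness of analytification.
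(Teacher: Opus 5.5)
Your proposal is correct and follows essentially the paper's route. The paper gives no separate argument: it cites the one-parameter lifting-of-relations criterion of Greuel--Lossen--Shustin (Proposition I.1.91), which over the discrete valuation ring \(\mathbb{C}\{t\}\) is exactly your \(t\)-torsion-freeness argument together with its converse via \(tP\cap(F)=t(F)\). It then obtains equivariant lifts by averaging, as you do; you should phrase the averaging as the Reynolds operator \(\Sigma\mapsto|G|^{-1}\sum_{g}g\Sigma g^{-1}\) on \(P\)-linear maps \(P\otimes Q_2\to P\otimes Q_1\), which for \(G=\mu_r\) with an eigenbasis of \(Q_2\) is your columnwise isotypic projection.
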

\end{samepage}

Equivariant lifts are obtained by averaging.

\begin{proposition}\label{uqg:criterion}
Put
\[
\mathfrak{J}=(F)+I_{e-2}(J_xF),\qquad
\mathfrak{F}_g=(F)+(g(x_i)-x_i:1\le i\le e).
\]
If the matrices of Lemma~\ref{uqg:flat-relations} are convergent and
\[
t^N\in\mathfrak{J},\qquad t^{N_g}\in\mathfrak{F}_g\quad(g\ne1)
\]
for positive integers \(N,N_g\), their invariant quotient is a strong \(\mathbb{Q}\)-Gorenstein smoothing of \(A\). Conversely, every such smoothing supplies these data after choosing an equivariant presentation. Here \(I_0\) means the unit ideal.
\end{proposition}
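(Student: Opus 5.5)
The plan is to reduce Proposition~\ref{uqg:criterion} to Lemma~\ref{uqg:flat-relations} and Proposition~\ref{cqg:descent}. First, Lemma~\ref{uqg:flat-relations} applied to the lifted matrices \(F,\Sigma\) shows that \(\mathcal{B}=P_0\{t\}/(F)\), localized at the origin, is flat over the local parameter ring with central fiber \(B\). Because the chosen generators span the representations \(Q_1,Q_2\), the group \(G\) acts on \(\mathcal{B}\) and fixes \(t\). Fiber dimension is then two everywhere, and every fiber \(\mathcal{B}_t\) is a complete intersection of codimension \(e-2\) in the flat sense. The remaining task is to show that nonzero fibers are smooth and carry a free action on a small representative, after which Proposition~\ref{cqg:descent} gives all the conclusions.

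For smoothness, fix \(t_0\ne0\) and a point \(x\) of the fiber over \(t_0\) in the neighbourhood. The hypothesis \(t^N\in\mathfrak{J}\) means that \(t^N=\sum c_kF_k+\sum d_\mu M_\mu\), where the \(M_\mu\) are the \((e-2)\)-minors of \(J_xF\). Evaluating at \((x,t_0)\), where \(F=0\) and \(t_0^N\ne0\), shows that some \((e-2)\)-minor of the \(x\)-Jacobian is nonzero. So the fiber, which has pure dimension two, is cut out near \(x\) by \(e-2\) of the equations with independent differentials. The other equations vanish on this smooth germ, because it is irreducible of dimension two and contained in a two-dimensional germ. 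The fiber is therefore smooth at \(x\). The same argument with \(\mathfrak{F}_g\) proves freeness. If \(g\ne1\) fixed \(x\) on the fiber over \(t_0\), then all generators of \(\mathfrak{F}_g\) would vanish at \((x,t_0)\), while \(t^{N_g}\) does not, which is a contradiction. Since \(G\) is finite, only finitely many conditions are involved, and one neighbourhood serves for all of them.

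For the converse, start from a strong \(\mathbb{Q}\)-Gorenstein smoothing \(R\). The canonical cover \(\widetilde R=\bigoplus\omega_R^{[j]}\) specializes to \(B\), is flat, and is \(G\)-equivariant. Its nonzero fibers are étale covers of smooth fibers, so they are smooth and carry a free action. By Nakayama's lemma and averaging, the equations lift equivariantly, and Lemma~\ref{uqg:flat-relations} supplies \(\Sigma\). The singular locus of the family is contained in \(t=0\) and is cut out by \(\mathfrak{J}\), so the Nullstellensatz, in its analytic Rückert form, gives \(t^N\in\mathfrak{J}\). The same argument with fixed-point loci gives \(t^{N_g}\in\mathfrak{F}_g\).

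The main obstacle is the Jacobian step. One has to be sure that the minors \(I_{e-2}(J_xF)\) detect smoothness even though \((F)\) may have more than \(e-2\) generators. This requires the pure two-dimensionality of the nonzero fibers, which comes from flatness. The convention \(I_0=(1)\) covers the case \(e=2\).
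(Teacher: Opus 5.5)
Your proposal is correct and follows essentially the same route as the paper: Lemma~\ref{uqg:flat-relations} gives flatness, the minors and the fixed-point ideals exclude nonsmooth and fixed points from the nonzero fibers, and Proposition~\ref{cqg:descent} then applies. For the converse, both you and the paper use the equivariant canonical cover, lifted equations and syzygies, and the analytic Nullstellensatz. One small correction: the nonzero fibers need not be complete intersections, and their pure two-dimensionality comes from flatness together with the Cohen--Macaulay property of the normal surface \(B\), as the paper states, not from flatness alone.
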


\begin{proof}
Flatness and the Cohen--Macaulay central surface give pure two-dimensional fibers near the central point. The first ideal defines the relative nonsmooth locus, by the Jacobian criterion, and the other ideals define the fixed loci. The indicated powers exclude these loci from the nonzero fibers, so Proposition~\ref{cqg:descent} applies. Conversely, the equivariant canonical cover of a strong \(\mathbb{Q}\)-Gorenstein smoothing is a flat deformation of \(B\), with smooth nonzero fibers on which the covering group acts freely. Lift generators, equations, and central syzygies, using Lemma~\ref{uqg:flat-relations}. The nonsmooth and fixed loci are supported on \(t=0\), so the analytic nullstellensatz gives the powers. Only convergent or polynomial lifts are asserted.
\end{proof}

\begin{corollary}\label{cqg:qhd-verification}
Let \(A\) be the local algebra of a weighted homogeneous surface singularity whose resolution graph occurs in Table~\ref{table:threearm} or~\ref{table:fourarm}. The data of Proposition~\ref{uqg:criterion}, or a deformation of a larger cover as in Corollary~\ref{cqg:larger-cover}, give a strong \(\mathbb{Q}\)-Gorenstein rational homology disk smoothing as soon as the central quotient is identified with \(A\).
\end{corollary}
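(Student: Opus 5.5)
The plan is to combine Proposition~\ref{uqg:criterion} (or Corollary~\ref{cqg:larger-cover}) with the numerical Milnor formula \eqref{foundation:strong-milnor}. Those results produce a strong \(\mathbb{Q}\)-Gorenstein smoothing \(R\) of the central quotient. Once that quotient is identified with \(A\), Proposition~\ref{completion:candidate-data} and the Milnor formula show that the smoothing is QHD.

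\emph{Step 1: smoothing of \(A\).} Under the hypotheses of Proposition~\ref{uqg:criterion}, the equivariant deformation \(\mathcal{B}=P_0[t]/(F)\) is flat with central fiber \(B\), by Lemma~\ref{uqg:flat-relations}. Its nonzero fibers are smooth with free \(G\)-action. Proposition~\ref{cqg:descent} then shows that \(R=\mathcal{B}^G\) is a strong \(\mathbb{Q}\)-Gorenstein smoothing of \(R/tR=B^G\).

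For a larger cover \(\widetilde B\) with group \(\Gamma\), Corollary~\ref{cqg:larger-cover} gives the same conclusion for \(\widetilde B^\Gamma\). It passes through the intermediate canonical cover \(\widetilde B^{\ker\chi}\). The hypothesis that the central quotient is \(A\) turns this into a strong \(\mathbb{Q}\)-Gorenstein smoothing of \(A\) itself.

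\emph{Step 2: Milnor number.} Proposition~\ref{completion:candidate-data} shows that \(A\) is rational and that \(K^2+N=0\) for every graph in Tables~\ref{table:threearm} and~\ref{table:fourarm}. This holds independently of the attachment points. Since \(A\) is rational with a strong \(\mathbb{Q}\)-Gorenstein smoothing, \eqref{foundation:strong-milnor} applies and gives \(\mu=K^2+N=0\). The specialization defect and the local correction vanish by Proposition~\ref{obs:CSRC}.

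\emph{Step 3: rational homology disk.} The Milnor fiber \(F\) of a smoothing of a normal surface singularity is connected and has the homotopy type of a finite CW complex of dimension at most two. Moreover \(H_1(F,\mathbb{Q})=0\) by Greuel--Steenbrink. Together with \(b_2(F)=\mu=0\), this gives \(H_i(F,\mathbb{Q})=0\) for all \(i>0\), so the smoothing is QHD.

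As a cross-check in the equivariant setting, the free action gives \(\chi(\mathcal{W}_t)=|\Gamma|\chi(F)\). Hence the conclusion is equivalent to \(\chi(\mathcal{W}_t)=|\Gamma|\).

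The only delicate point is Step 1. One must check that the index and the character conventions in Corollary~\ref{cqg:larger-cover} really produce the canonical cover of \(A\) and not merely some Gorenstein cover. I would verify this by the evaluation-at-the-fixed-point argument in that corollary. That argument forces the order of \(\chi\) to be the index of \(A\), and identifies the character summands of \(\widetilde B^{\ker\chi}\) with the reflexive canonical powers. The remaining steps are direct citations.
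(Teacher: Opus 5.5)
Your proposal is correct and follows the paper's proof, which simply invokes the first assertion of Corollary~\ref{completion:strong-smoothability}; your Steps~2 and~3 unpack exactly the proof of that assertion (Proposition~\ref{completion:candidate-data}, formula~\eqref{foundation:strong-milnor}, and Greuel--Steenbrink). One small correction: in \eqref{foundation:strong-milnor} the specialization defect vanishes by the strong \(\mathbb{Q}\)-Gorenstein property, and the local correction vanishes because \(rK_A\) is Cartier; they do not vanish by Proposition~\ref{obs:CSRC}, which is itself deduced from \eqref{foundation:strong-milnor}, so that attribution is circular---though harmless, since you can cite \eqref{foundation:strong-milnor} directly.
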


\begin{proof}
The stated criteria give a strong \(\mathbb{Q}\)-Gorenstein smoothing of \(A\). The first assertion of Corollary~\ref{completion:strong-smoothability}, which follows from the graph identity \(K^2+N=0\), gives the rational homology disk property.
\end{proof}

\begin{lemma}\label{uqg:graded-flatness}
Suppose the central polynomial sequence in Lemma~\ref{uqg:flat-relations} is exact over \(P_0\). If its lifted polynomial quotient has a nonnegative grading with \(\deg t>0\), it is flat over \(\mathbb{C}[t]\).
\end{lemma}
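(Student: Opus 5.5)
The plan is to reduce flatness over \(\mathbb{C}[t]\) to flatness after localization at the origin, and to obtain the latter from Lemma~\ref{uqg:flat-relations}. Write \(\mathcal{R}=P_0[t]/(F)\) for the lifted polynomial quotient, graded nonnegatively with \(\deg t>0\). Over the principal ideal domain \(\mathbb{C}[t]\), flatness of \(\mathcal{R}\) is equivalent to \(t-c\) being a nonzerodivisor on \(\mathcal{R}\) for every \(c\in\mathbb{C}\). For \(c\ne0\) this is automatic unless some element is killed by \(t-c\). The approach is therefore to show first that the \(t\)-torsion submodule \(\tau=\{x\in\mathcal{R}: t^kx=0\text{ for some }k\}\) vanishes, and then to handle \(t-c\) with \(c\neq 0\) by the \(\mathbb{C}^*\)-action.

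The first step is the \(t\)-torsion. The central sequence is exact over \(P_0\) and the lifts satisfy \(F\Sigma=0\) with \(\Sigma\bmod t=\Sigma_0\). The standard lifting-of-relations argument, which is the argument behind Lemma~\ref{uqg:flat-relations}, applies to the graded ring rather than to its completion. Suppose \(t\,x\in(F)\), say \(tx=F\cdot g\). Reducing modulo \(t\) gives \(f\cdot\bar g=0\). By exactness, \(\bar g=\Sigma_0 h_0\), so \(g-\Sigma h_0\in tP_0[t]\), and hence \(tx=F(g-\Sigma h_0)=t\,F g'\). Since \(t\) is a nonzerodivisor on \(P_0[t]\), this gives \(x=Fg'\in(F)\). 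Thus \(t\) is a nonzerodivisor on \(\mathcal{R}\), with no grading needed. Homogeneity of \(F\) and \(\Sigma\) is not even required for this step, provided \(\Sigma\) is polynomial.

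The second step handles \(c\ne0\), and it is where the grading enters. The \(\mathbb{C}^*\)-action \(\lambda\cdot t=\lambda^{\deg t}t\) permutes the ideals \((t-c)\) for \(c\neq0\) transitively. So it suffices to show that the annihilator \(K_c=\{x:(t-c)x=0\}\) is zero. Take \(x\in K_c\) nonzero and decompose it into homogeneous components, \(x=\sum_{i\le D}x_i\) with \(x_D\ne0\). The top-degree component of \((t-c)x\) is \(t\,x_D\), of degree \(D+\deg t\). It must vanish, contradicting the first step. Hence \(t-c\) is a nonzerodivisor for all \(c\), so \(\mathcal{R}\) is torsion-free, and therefore flat over \(\mathbb{C}[t]\).

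The main obstacle is making sure the first step genuinely uses exactness over the polynomial ring \(P_0\), not merely after localization or analytification. The hypothesis "exact over \(P_0\)" is exactly what permits the global lift without passing to the local ring. Nonnegativity of the grading is not actually needed in the second step as written, since only the top component is used; I would remark that it is needed to ensure that \(\mathcal{R}\) is finitely generated in each degree and positively graded over \(\mathbb{C}[t]\), if one prefers a Nakayama-type argument instead.
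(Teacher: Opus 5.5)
Your proof is correct and follows the paper's route: you first show that \(t\) is a nonzerodivisor by the lifting-of-relations argument, then use the grading to rule out the remaining torsion, and conclude that the quotient is torsion-free and hence flat over \(\mathbb{C}[t]\); redoing the lifting step directly in \(P_0[t]\), rather than citing Lemma~\ref{uqg:flat-relations} (which is stated over the local parameter ring), is a useful detail. The differences are minor: you compare top-degree components for each linear factor \(t-c\), which depends on your first step, whereas the paper compares lowest-degree components for any polynomial with nonzero constant term and then factors out a power of \(t\); your \(\mathbb{C}^*\)-transitivity remark is unnecessary, since your argument already handles every \(c\ne0\) directly.
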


\begin{proof}
Multiplication by \(t\) is injective by Lemma~\ref{uqg:flat-relations}. A polynomial in \(t\) with nonzero constant term acts injectively by comparison of the lowest nonzero homogeneous terms. Factor a power of \(t\) from any nonzero polynomial. The quotient is torsion-free, hence flat over \(\mathbb{C}[t]\).
\end{proof}

\subsection{Regular sequences and adjoining elements of the fraction field}\label{uqg:operations}

\begin{proposition}[Deforming a regular sequence]\label{uqg:regular-section}
Let \(P_0\) be polynomial, \(a=(a_1,\ldots,a_s)\), and let \(U\) generate \(\ker a\). If \(h_1,\ldots,h_c\) form a regular sequence modulo \((a)\), a syzygy matrix of \((a,h)\) consists of
\[
\begin{pmatrix}U\\0\end{pmatrix},\qquad
h_j e_\alpha-a_\alpha e_{s+j},\qquad
h_j e_{s+i}-h_i e_{s+j}
\quad(\alpha\le s,\ j\le c,\ i<j).
\]
Replacing \(h\) by a polynomial lift \(H\equiv h\pmod t\), respecting the representation spanned by \(h_1,\ldots,h_c\) in the equivariant case, gives a polynomial syzygy lift for \(F=(a,H)\). After localization at \(t=0\), the resulting quotient is flat over \(\mathbb{C}[t]_{(t)}\), with central fiber \(P_0/(a,h)\). If the quotient admits a nonnegative grading with \(\deg t>0\), it is flat over \(\mathbb{C}[t]\). For one invariant function, the lifted equations and syzygies are
\begin{equation}\label{uqg:cartier-matrix}
F=(a,h-t),\qquad
\Sigma=\begin{pmatrix}U&(h-t)I_s\\0&-a\end{pmatrix}.
\end{equation}
\end{proposition}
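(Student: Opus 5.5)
This is a statement in commutative algebra, and I will prove it in three parts: the syzygy matrix for \((a,h)\), flatness of the lifted family, and the explicit matrices \eqref{uqg:cartier-matrix}.

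\textbf{Step 1: the listed columns are relations and they generate.} First I check that each displayed column is a relation. The block \(\binom{U}{0}\) is a relation because \(aU=0\). The Koszul-type columns \(h_je_\alpha-a_\alpha e_{s+j}\) and \(h_je_{s+i}-h_ie_{s+j}\) are relations by commutativity.

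For generation, take a relation \((x,y)\), so \(\sum x_\alpha a_\alpha+\sum y_jh_j=0\). I argue by induction on \(c\).

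\emph{Base case \(c=1\).} Here \(y_1h_1\in(a)\). Since \(h_1\) is a nonzerodivisor modulo \((a)\), we get \(y_1=\sum_\alpha z_\alpha a_\alpha\). Subtracting \(\sum_\alpha z_\alpha(h_1e_\alpha-a_\alpha e_{s+1})\) kills the \(y\)-part. What remains is a relation of \(a\) alone, which lies in the span of \(U\).

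\emph{Inductive step.} Regularity of \(h_c\) modulo \((a,h_1,\ldots,h_{c-1})\) gives \(y_c\in(a,h_1,\ldots,h_{c-1})\). Subtracting suitable multiples of the columns \(h_ce_\alpha-a_\alpha e_{s+c}\) and \(h_ce_{s+i}-h_ie_{s+c}\) makes \(y_c=0\). The result is a relation of \((a,h_1,\ldots,h_{c-1})\), and induction finishes.

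\textbf{Step 2: the lifted columns still form a syzygy matrix.} Replace \(h\) by \(H\) with \(H\equiv h\pmod t\). The same displayed columns, with \(H\) in place of \(h\), are relations of \(F=(a,H)\) by the identical computation. They reduce to the central syzygies modulo \(t\).

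In the equivariant case, the lift is taken \(G\)-stably on the span of \(h_1,\ldots,h_c\), for instance by averaging a lift of each isotypic basis. Then \(F\) and \(\Sigma\) are equivariant.

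\textbf{Step 3: flatness.} Lemma~\ref{uqg:flat-relations} then gives flatness over the local ring \(\mathbb{C}[t]_{(t)}\) with central fiber \(P_0/(a,h)\). Its hypotheses are polynomial matrices with \(F\Sigma=0\) reducing to the central presentation, and this is exactly what Step 2 produced.

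I expect the main subtlety here. Lemma~\ref{uqg:flat-relations} is formulated locally or analytically, while the statement is about polynomials localized at \(t=0\). I would check that the lifting-of-relations criterion holds over \(P_0[t]_{(t)}\). The argument is that the Tor vanishing \(\operatorname{Tor}_1(P_0[t]/(F),\mathbb{C}[t]/t)=0\) is equivalent to surjectivity of lifted syzygies onto central syzygies. Nakayama's lemma does not apply globally, so I would reduce to this equivalent formulation, which is the content of Lemma~\ref{uqg:flat-relations}. Note that \(t\)-torsion could only appear away from the central fiber, which localization removes.

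For the graded statement, I invoke Lemma~\ref{uqg:graded-flatness}. The central sequence is exact over \(P_0\) by Step 1, so that lemma yields flatness over \(\mathbb{C}[t]\) whenever there is a nonnegative grading with \(\deg t>0\).

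\textbf{Step 4: the case of one invariant function.} Take \(c=1\) and \(H=h-t\), which is invariant. Then the columns \(He_\alpha-a_\alpha e_{s+1}\) assemble to the block \(\binom{(h-t)I_s}{-a}\). There are no columns of the third type because \(c=1\). This gives \eqref{uqg:cartier-matrix}.
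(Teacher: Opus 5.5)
Your proposal is correct and is essentially the paper's argument: your induction on \(c\) unrolls the paper's single appeal to exactness of the first Koszul homology of \(\bar h\) on \(P_0/(a)\), followed by the mixed columns and then \(U\), and flatness comes from Lemmas~\ref{uqg:flat-relations} and~\ref{uqg:graded-flatness} exactly as you say. Two small points. The paper justifies equivariance of \(\Sigma\) by noting that the mixed and Koszul columns transform by the tensor product of the two equation representations and by the exterior square of \(\operatorname{Span}(h_1,\ldots,h_c)\), respectively, which you only assert. Your aside on torsion is misphrased: the lifting argument rules out \(t\)-torsion on all of \(P_0[t]/(F)\) without Nakayama, and localization at \(t=0\) is needed only to invert the polynomials \(p(t)\) with \(p(0)\ne0\), whose torsion is what lies away from the central fiber.
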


\begin{proof}
Reduce a relation \(av+hb=0\) modulo \((a)\). Exactness of the first Koszul homology expresses \(\overline b\) by the last columns. After subtracting their lifts, every component of \(b\) lies in \((a)\); the mixed columns remove these components. The remaining relation is generated by \(U\). Each column remains a relation upon replacing \(h\) by \(H\). Equivariance follows by using the tensor product of the representations on the two sets of equations for the mixed columns and the exterior square of the representation spanned by \(h_1,\ldots,h_c\) for the Koszul columns. Flatness and the assertion about the central fiber follow from Lemma~\ref{uqg:flat-relations}; the graded assertion follows from Lemma~\ref{uqg:graded-flatness}.
\end{proof}

Adjoining an element of the fraction field by equations of the form \(Qh_i-u_i\) is closely related to unprojection; see Papadakis--Reid~\cite[Definition~1.2 and Theorem~1.5]{PapadakisReid2004}. Resolutions in this setting are studied in Papadakis~\cite{Papadakis2004}. We prove the statements needed below under their stated hypotheses, including compatibility with specialization and generators for the first syzygy module.

\begin{lemma}\label{uqg:fractions}
Let \(S\) be a domain, \(0\ne I\subset S\), and \(q\in\operatorname{Frac}S\) with \(qI\subset S\). For
\[
T=S[Q]/(Qu-qu:u\in I)
\]
the \(Q\)-degree filtration has \(\operatorname{gr}T=S[Q]/IQ\). If \(S/I\) is a domain and \(qu_0\notin I\) for some \(u_0\in I\), then \(T=S[q]\). If \(S\) and \(S/I\) are flat over a ring \(R\), then so is \(T\). For \(R=\mathbb{C}[t]\), flatness of \(S/I\) identifies the specialized ideal with \(I/tI\), so the presentation specializes exactly.
\end{lemma}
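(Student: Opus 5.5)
The plan is to work throughout with the filtration of \(T\) by \(Q\)-degree. Let \(J=(Qu-qu:u\in I)\subset S[Q]\), let \(F_kT\) be the image of the polynomials of \(Q\)-degree at most \(k\), and let \(\varphi\colon S[Q]\to\operatorname{Frac}S\) be the map with \(Q\mapsto q\). The leading form of \(Qu-qu\) is \(uQ\), so there is a surjection \(S[Q]/IQ\to\operatorname{gr}T\).

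\emph{The graded ring.} Injectivity of this surjection is a Gröbner-type statement: every element of \(J\) of degree \(k\ge1\) has leading coefficient in \(I\), and \(J\cap S=0\).

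The second claim is immediate, since \(J\subset\ker\varphi\) and \(\varphi\) is injective on \(S\).

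For the first claim, write \(g=\sum_i(Qu_i-qu_i)h_i\) with \(h_i=\sum_jh_{ij}Q^j\). I will induct on \(M=\max\deg h_i\). If \(M+1>\deg g\), the top coefficients satisfy \(\sum_iu_ih_{iM}=0\). This syzygy of the leading forms lifts to the zero element, because
\[
\sum_i(Qu_i-qu_i)h_{iM}=(Q-q)\sum_iu_ih_{iM}=0 .
\]
Hence \(\sum_i(Qu_i-qu_i)h_{iM}Q^M=0\), and deleting these terms lowers \(M\) without changing \(g\). When \(M+1=\deg g\), the leading coefficient of \(g\) is \(\sum_iu_ih_{iM}\in I\).

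\emph{The identification \(T=S[q]\).} The map \(\varphi\) induces a surjection \(T\to S[q]\), and it suffices to show that \(\ker\varphi=J\). Suppose not, and choose \(f\in\ker\varphi\setminus J\) of minimal degree \(k\), with leading coefficient \(c\).

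Consider \(u_0f\). Replacing \(Qu_0\) by \(qu_0\in S\) in the top term produces \(f'\equiv u_0f\pmod J\) of degree at most \(k-1\) with \(Q^{k-1}\)-coefficient \((qu_0)c+(\text{next coefficient of }f)\,u_0\). Since \(u_0\in I\), this coefficient is congruent to \((qu_0)c\) modulo \(I\). Moreover \(f'\in\ker\varphi\).

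By minimality of \(k\), we have \(f'\in J\). If \(k-1\ge1\), the graded description gives \((qu_0)c\in I\). Because \(S/I\) is a domain and \(qu_0\notin I\), this forces \(c\in I\). If \(k=1\), then \(f'\in J\cap S=0\), so the displayed coefficient of \(f'\) vanishes and in particular lies in \(I\), and again \(c\in I\).

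With \(c\in I\), we can subtract \(c\,Q^{k-1}(Qu-qu)\)-type combinations from \(f\). This lowers the degree while staying in \(\ker\varphi\setminus J\), contradicting the choice of \(f\).

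\emph{Flatness and specialization.} By the first part, \(\operatorname{gr}T\cong S\oplus\bigoplus_{k\ge1}(S/I)Q^k\). Its graded pieces are flat over \(R\). The exact sequences \(0\to F_{k-1}\to F_k\to\operatorname{gr}_k\to0\) then give flatness of every \(F_kT\) by induction on \(k\). Hence \(T=\varinjlim F_kT\) is flat.

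For \(R=\mathbb{C}[t]\), flatness of \(S/I\) gives \(\operatorname{Tor}_1(S/I,R/t)=0\), so \(I/tI\) injects into \(S/tS\) as the specialized ideal. Tensoring the presentation of \(T\) with \(R/t\) then gives the same construction with \(I/tI\) in place of \(I\), so the presentation specializes exactly.

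The main obstacle is the Gröbner-type step in the first part: the cancellation of top degrees is controlled only because syzygies of \(I\) lift to zero. The rest follows from that description of \(\operatorname{gr}T\).
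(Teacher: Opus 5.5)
Your proposal is correct and follows the paper's proof: the same description of the leading coefficients of \(J\) (the paper reads this off at once from \(J=(Q-q)I[Q]\) inside \(\operatorname{Frac}(S)[Q]\), and your induction on \(\max\deg h_i\) unrolls that factorization), the same multiplication by \(u_0\) using \(Qu_0=qu_0\) together with primality of \(I\), the same filtration argument for flatness, and the same Tor argument for specialization. The only difference is in the middle step: you run a minimal counterexample in \(\ker\varphi\setminus J\), whereas the paper uses the same computation to show that \(u_0\) is regular on \(T\) and then embeds \(T\hookrightarrow T[u_0^{-1}]=S[u_0^{-1}]\); you should also say explicitly that \(k=0\) cannot occur because \(\varphi\) is injective on \(S\).
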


\begin{proof}
Inside \(\operatorname{Frac}(S)[Q]\), the relation ideal is \((Q-q)I[Q]\). The leading coefficient of \((Q-q)b(Q)\) is that of \(b\); hence its initial ideal is \(IQ\). Thus \(S\) embeds in \(T\), and the filtration quotients are \(S\) in degree zero and \(S/I\) in positive degrees. Put \(v_0=qu_0\). A representative of least degree \(n>0\) has leading coefficient nonzero modulo \(I\). Multiplication by \(u_0\) and the relation \(u_0Q=v_0\) give a coefficient nonzero modulo \(I\) in degree \(n-1\); lower terms contribute elements of \(I\) there. For degree zero use that \(S\) is a domain. Thus \(u_0\) is regular on \(T\), and
\[
T\hookrightarrow T[u_0^{-1}]=S[u_0^{-1}],\qquad Q=v_0/u_0=q.
\]
This proves \(T=S[q]\). Flatness follows from the filtration and passage to its union. The last assertion follows from the exact sequence for \(I\subset S\) and flatness of \(S/I\).
\end{proof}

\begin{lemma}\label{uqg:fraction-syzygies}
Suppose \(S=P_0/(a)\) in Lemma~\ref{uqg:fractions}, with \(P_0\) polynomial. Suppose \(I\) is generated by a row \(h=(h_1,\ldots,h_v)\) of distinct coordinate variables and \((a)\subset(h)\). Choose \(C\) with \(a=hC\), representatives \(u_i\) of \(qh_i\), and a syzygy matrix \(U\) of \(a\). Put \(g=Qh-u\), and solve
\[
u_i h_j-u_j h_i=a d^{ij},\qquad -uC_\alpha=a d^\alpha.
\]
A syzygy matrix of \((a,g)\) consists of
\[
\begin{pmatrix}U\\0\end{pmatrix},\qquad
\begin{pmatrix}d^{ij}\\h_j e_i-h_i e_j\end{pmatrix},\qquad
\begin{pmatrix}Qe_\alpha+d^\alpha\\-C_\alpha\end{pmatrix},
\]
and the mixed columns \((-g_i e_\alpha,a_\alpha e_i)\). For parameter-dependent data the same construction specializes to a matrix generating all syzygies after setting \(t=0\) whenever the identification of the specialized algebra with the subalgebra generated by the specialized fraction holds.
\end{lemma}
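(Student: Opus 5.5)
We must show that the listed columns are relations and that they generate every relation \((v,w)\) with \(av+gw=0\). First the relation check. The column \((U,0)\) is a relation by definition of \(U\). For the second type,
\[
a\,d^{ij}+g_ih_j-g_jh_i=a\,d^{ij}+(Qh_i-u_i)h_j-(Qh_j-u_j)h_i=a\,d^{ij}-(u_ih_j-u_jh_i)=0 .
\]
For the third type, using \(a=hC\),
\[
a(Qe_\alpha+d^\alpha)-gC_\alpha=Qa_\alpha+ad^\alpha-QhC_\alpha+uC_\alpha=0 .
\]
The mixed columns are Koszul relations. The solutions \(d^{ij},d^\alpha\) exist because \(u_ih_j-u_jh_i\) and \(-uC_\alpha\) lie in \((a)\): they equal \(q\cdot(h_ih_j-h_jh_i)\) and \(-q\,a_\alpha\) in \(S\), respectively.

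For generation, let \((v,w)\) be a relation, with \(v\in P_0^{s}\) and \(w\in P_0[Q]^{v}\) (the polynomial ring \(P_0[Q]\) is the ambient). I would first use the columns \((Qe_\alpha+d^\alpha,-C_\alpha)\) to reduce \(v\) to have \(Q\)-degree zero, by division in \(Q\) (leading coefficient one in \(Q\) in the \(e_\alpha\) slot). Next I would reduce \(w\) modulo the mixed columns and the Koszul-type columns. The key structural input is the filtration of Lemma~\ref{uqg:fractions}: \(\operatorname{gr}T=S[Q]/IQ\). I would argue by induction on the \(Q\)-degree of \(w\), looking at the top coefficient \(w^{(n)}\). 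The top \(Q\)-degree part of \(gw\) is \(Q^{n+1}\,h\cdot w^{(n)}\), and it must be cancelled modulo \((a)\). Hence \(h\cdot w^{(n)}\in(a)\subset(h)\) for \(n\ge0\) once \(v\) has degree zero, so \(\overline{w^{(n)}}\) is a syzygy of \(h\) modulo \((a)\). Since \(h\) consists of distinct variables, syzygies of \(h\) over \(P_0\) are Koszul. Over \(S\) I would need the syzygies of \(h\) modulo \(a\), which come from Koszul relations plus lifts of \(a=hC\), that is, the columns \(C_\alpha\). Subtracting \(Q^n\) times the corresponding second- and third-type columns (whose \(w\)-parts are \(h_je_i-h_ie_j\) and \(-C_\alpha\)) together with mixed columns lowers the degree. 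At degree zero, \(av+gw=0\) with \(w\in P_0^{v}\) forces \(hw=0\) (the \(Q\)-coefficient), so \(w\) is Koszul, and after subtracting, \(av=0\) is handled by \(U\).

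The main obstacle is the syzygies of \(h\) modulo \((a)\). I expect them to be generated by Koszul relations, the vectors \(C_\alpha\), and multiples of \(a\), since \(hw\in(a)\) gives \(hw=ac\) and \(h(w-Cc)=0\). The final claim about parameter dependence would follow from Lemma~\ref{uqg:fractions}: exact specialization of the presentation gives a lift of every central syzygy, and the displayed construction at \(t=0\) is the same construction for the specialized data.
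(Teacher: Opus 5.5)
Your route is not the paper's. The paper reduces a relation modulo $(a)$ and notes that $g$ becomes $(Q-q)h$ over $\operatorname{Frac}(S)[Q]$, with $Q-q$ a nonzerodivisor, so $\operatorname{Syz}_{S[Q]}(\bar g)=\operatorname{Syz}_S(\bar h)[Q]$. That module is generated by the Koszul columns and the columns of $C$, which is exactly the argument in your last paragraph. These lift to the second- and third-type columns, the mixed columns remove the remaining entries of $w$ lying in $(a)$, and $U$ handles what is left. You instead divide by $Q$ inside $P_0[Q]$. That can be made to work, but the inductive step as you state it does not lower the degree. With $v$ of $Q$-degree zero you record only $h\cdot w^{(n)}\in(a)$, and you propose to remove a $C_\alpha$-component of $w^{(n)}$ by subtracting $Q^n$ times third-type columns. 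This is circular: their $v$-parts $Q^{n+1}e_\alpha+Q^nd^\alpha$ raise the $Q$-degree of $v$ to $n+1$, and re-reducing $v$ with the same columns puts $Q^nc_\alpha C_\alpha$ back into $w$. Likewise, subtracting $Q^n$ times second-type columns gives $v$ degree $n$, so the hypothesis that $v$ has degree zero must be restored at every step.

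The missing point is the one you use only at degree zero. The relation holds in $P_0[Q]$, not merely modulo $(a)$, and $a\cdot v$ has $Q$-degree zero. So the $Q^{n+1}$-coefficient of $a v+g w$ is exactly $h\cdot w^{(n)}$, which therefore vanishes in $P_0$. Hence $w^{(n)}=\sum_{i<j}c_{ij}(h_je_i-h_ie_j)$ with $c_{ij}\in P_0$. Subtract $Q^nc_{ij}$ times the corresponding second-type columns, then re-reduce $v$ with the third-type columns; this alters $w$ only in $Q$-degrees at most $n-1$. The induction ends with $w=0$ and $a v=0$, where $U$ applies.

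Corrected this way, your argument is a genuine alternative. Its generation step uses neither that $S$ is a domain nor Lemma~\ref{uqg:fractions}; the filtration you invoke plays no role. It also shows the mixed columns are redundant, consistent with the explicit two-column $\Sigma_1$ in Lemma~\ref{cqg:m:flat-family}. It gives the specialization claim directly, since it uses only polynomial identities that survive $t=0$ and generation of $\ker a|_{t=0}$ by $U|_{t=0}$. Lifting central syzygies, which you cite, is not what that claim needs. The paper's proof is shorter and explains the columns through the factorization $g=(Q-q)h$. However, it works over the domain $S$, and after specialization over the specialized domain, which is where the identification hypothesis enters.
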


\begin{proof}
The coefficient identities hold modulo \((a)\), since multiplication by \(q\) is \(S\)-linear on \(I\); polynomial division supplies the displayed solutions. The columns are relations by direct multiplication. Over \(S[Q]\), multiplication by \(Q-q\) is injective, so \(\operatorname{Syz}(g)=\operatorname{Syz}_S(h)[Q]\). The latter is generated by the coordinate Koszul columns and the columns of \(C\): if \(hb=ac=hCc\), then \(b-Cc\) is a coordinate Koszul relation in \(P_0\). Reduce a relation of \((a,g)\) modulo \((a)\) and remove these generators. The mixed columns remove the remaining last entries, which lie in \((a)\); the first entries then belong to \(\ker a\). This proves completeness, and the same argument applies after specialization.
\end{proof}

\subsection{Presentations of invariant rings and identification of the central fiber}\label{uqg:invariant-presentations}

\begin{proposition}\label{uqg:invariant-presentation}
Let a finite group \(\Gamma\) act linearly on a polynomial algebra \(P\), let \(H\triangleleft\Gamma\), and put \(G=\Gamma/H\). Suppose
\[
P[t]\otimes Q_2\xrightarrow{d_2(t)}P[t]\otimes Q_1\xrightarrow{d_1(t)}P[t]
\]
is an equivariant polynomial complex whose specialization at \(t=0\) is exact at the middle term. There is a finite construction of an equivariant polynomial presentation \(S[t]/(F(t))\) of \((P[t]/\operatorname{im}d_1(t))^H\), together with \(\Sigma(t)\), such that \(F\Sigma=0\) and \(\Sigma(0)\) generates \(\ker F(0)\). In particular, this quotient is flat near \(t=0\), with the prescribed invariant algebra as its central fiber.
\end{proposition}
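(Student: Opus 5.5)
The plan is to build the presentation in two stages: first compute the \(H\)-invariants of the central cokernel, then deform. Put \(M(t)=P[t]/\operatorname{im}d_1(t)\) and \(M_0=M(0)\). Since \(H\) is finite and we work over \(\mathbb{C}\), the Reynolds operator \(\rho_H=|H|^{-1}\sum_{h\in H}h\) is an exact \(\Gamma\)-equivariant projection. It commutes with specialization at \(t=0\). The central step is therefore to present \(M_0^H\) and its syzygies, and then to lift both.

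First, choose finitely many homogeneous \(H\)-invariant generators \(y_1,\ldots,y_s\) of \(P^H\). Noether's bound gives degree \(\le|H|\), and the \(y_k\) can be chosen to span a \(G\)-stable subspace, which is possible because \(\Gamma\) normalizes \(H\). Put \(S=\mathbb{C}[Y_1,\ldots,Y_s]\) with the induced \(G\)-action, and let \(\psi\colon S\to P^H\) send \(Y_k\mapsto y_k\). By exactness of \(\rho_H\), the map \(P^H[t]\to M(t)^H\) is surjective. Its kernel is \((\operatorname{im}d_1(t))^H=\rho_H(\operatorname{im}d_1(t))\), which is generated as a \(P^H[t]\)-module by the elements \(\rho_H(m\,d_1(t)(e_\beta))\), where \(m\) ranges over monomials of degree below \(|H|\) (Noether-type module generation for \(P\) over \(P^H\)). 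We take \(F(t)\) to be the concatenation of lifts of these elements to \(S[t]\) together with generators of \(\ker\psi\), which is a finitely computable toric/elimination ideal. Everything is averaged over \(\Gamma\) to keep it equivariant. This gives an equivariant presentation \(S[t]/(F(t))\cong M(t)^H\) for every \(t\), and in particular the central fiber \(S/(F(0))\cong M_0^H\).

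Second, construct the syzygies. At \(t=0\), I would compute generators \(\Sigma_0\) of \(\ker F(0)\) by a Gröbner/elimination computation, which is finite. Then, for each column of \(\Sigma_0\), I would produce a lift \(\Sigma(t)\) with \(F(t)\Sigma(t)=0\). The source of these lifts is the given complex. The relations coming from \(d_2(t)\) (with \(d_1d_2=0\)), multiplied by monomials and averaged by \(\rho_H\), are relations among the generators \(\rho_H(m\,d_1(t)e_\beta)\) that hold identically in \(t\). The relations of \(\ker\psi\) are \(t\)-independent, so they lift trivially.

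The main obstacle is showing that these lifted relations specialize to a generating set of \(\ker F(0)\). Exactness of the central complex at the middle term gives exactly this. Every relation among \(d_1(0)\)-images is a combination of \(d_2(0)\)-columns, and applying the exact functor \(\rho_H\) together with the module generation above transfers that statement to the invariant presentation. Any remaining columns of \(\Sigma_0\) are handled by direct division in \(S[t]\) after adding generators of \(\ker\psi\) tensored with the equations (mixed Koszul-type columns, as in Proposition~\ref{uqg:regular-section}). Once \(F\Sigma=0\) and \(\Sigma(0)\) generates \(\ker F(0)\), Lemma~\ref{uqg:flat-relations} gives flatness near \(t=0\) with central fiber \(M_0^H\). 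Finally, I would average over \(\Gamma\) to make \(F\) and \(\Sigma\) equivariant for \(G\), which preserves all of the above identities.
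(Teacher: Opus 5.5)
Your first stage agrees with the paper. You take $F=(k,H)$, where $k$ generates $\ker\psi$ and $H$ lifts the $d_1(t)$-images of generators $g_i=\rho_H(m\otimes e_\beta)$ of the covariant module $M_1=(P\otimes Q_1)^H$ over $C=P^H$.

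The gap is in the syzygy step. Take a central relation $kv+H(0)a=0$. Exactness at $t=0$ gives $\sum_i a_ig_i=d_2(0)(n)$ with $n\in M_2=(P\otimes Q_2)^H$. Writing $n$ in your generators $\rho_H(m\otimes f_\gamma)$ produces a combination $D(0)c$ of your $d_2$-columns. What remains is $\sum_i(a-D(0)c)_i\,g_i=0$ in $M_1$, a $C$-linear relation among the covariant generators, i.e.\ an element of $\ker(p\colon S^s\to M_1)$. This module does not depend on $d_2$, and it is usually nonzero because $M_1$ is not free over $C$. It is not produced by syzygies of $\ker\psi$, nor by Koszul or mixed Koszul columns, and your list omits it. Your sentence ``every relation among $d_1(0)$-images is a combination of $d_2(0)$-columns'' is true in the free module $P\otimes Q_1$. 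It fails for coefficient vectors over $S$ once $M_1$ is presented by non-free generators.

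Here is a concrete example. Let $H=\mu_2$ act by $-1$ on $P=\mathbb{C}[x,y]$, let $Q_1$ be the sign character, put $d_1(t)(e)=x+ty$, and take $Q_2=0$. With $Y_1,Y_2,Y_3\mapsto x^2,xy,y^2$ and $g_1,g_2=x\otimes e,\,y\otimes e$, you get $F=(Y_1Y_3-Y_2^2,\ Y_1+tY_2,\ Y_2+tY_3)$. The central syzygy $(1,-Y_3,Y_2)$ has a unit entry. Every column on your list specializes into $(Y_1,Y_2,Y_3)S^3$, so their span cannot contain it. This syzygy comes from the covariant relation $y^2\cdot(x\otimes e)-xy\cdot(y\otimes e)=0$.

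Your fallback is to lift the ``remaining columns'' of a Gr\"obner basis $\Sigma_0$ by direct division, and this is circular. An arbitrary central syzygy lifts only because the family is flat, and flatness is what the lifted syzygies are meant to prove. Division is legitimate only for relations known to hold in $C[t]$ identically in $t$, where the defect lies in $(k)S[t]$.

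The paper's construction does exactly this.
\begin{itemize}
\item Choose a matrix $L$ whose columns generate $\ker p$.
\item Lift the columns $d_2(t)n_j$ through $p$ to a matrix $D(t)$, and put $A=[L,D]$.
\item Solve $kB=-HA$. This is possible because $HA$ vanishes in $C[t]$: we have $pL=0$ and $d_1(t)d_2(t)=0$.
\end{itemize}
The reduction above then goes through. The vector $a-D(0)c$ is a combination of the columns of $L$, and what is left lies in $\ker k$, which is generated by $U_k$. Hence $\Sigma(0)=\begin{pmatrix}U_k&B(0)\\0&A(0)\end{pmatrix}$ generates $\ker F(0)$, and no Koszul columns are needed.
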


\begin{proof}
Put \(C=P^H\) and \(M_i=(P\otimes Q_i)^H\). These are finite \(C\)-modules. Exactness of invariants gives a complex \(M_2[t]\to M_1[t]\to C[t]\), exact at \(M_1\) after setting \(t=0\), with the stated invariant quotient as cokernel. Choose equivariant presentations \(S/(k)\cong C\) and \(p:S^s\to M_1\), and matrices \(U_k\) and \(L\) whose columns generate \(\ker k\) and \(\ker p\), respectively. Choose module generators \(n_j\) of \(M_2\). Lift \(d_1(t)p\) to a polynomial row \(H(t)\), and lift the columns \(d_2(t)n_j\) through \(p\) to a matrix \(D(t)\). Put \(A(t)=[L,D(t)]\). Since \(HA\) vanishes in \(C[t]\), solve \(kB=-HA\) in the ideal \((k)\), and set
\[
F=(k,H),\qquad
\Sigma=\begin{pmatrix}U_k&B\\0&A\end{pmatrix}.
\]
Then \(F\Sigma=0\). For a central relation \(kv+H(0)a=0\), exactness at \(t=0\) shows that \(p(a)\) is an image from \(M_2\). Express a preimage in the generators \(n_j\). It follows that \(a\) is a combination of the columns of \(D(0)\) and \(L\), hence of \(A(0)\). Subtract the corresponding columns of \(\Sigma(0)\); the remaining relation lies in \(\ker k\) and is generated by \(U_k\). Thus the columns of \(\Sigma(0)\) generate the entire syzygy module. The quotient by \((k,H)\) is the required cokernel, and Lemma~\ref{uqg:flat-relations} gives flatness without assuming it in the construction. All choices can be made equivariantly by averaging the finitely many coefficient lifts over \(G\).
\end{proof}

\begin{remark}\label{uqg:fixed-presentations}
We use the following finite computation throughout. For diagonal \(H=\mu_d\) with coordinate weights \(a_i\), the invariant algebra is generated by \(x_i^d\) and the nonconstant monomials \(x^u\) with \(0\le u_i<d\) and \(\sum a_i u_i\equiv0\pmod d\). The module of covariants of character \(b\) is generated by the monomials with \(0\le u_i<d\) and \(\sum a_i u_i+b\equiv0\pmod d\). Division of the exponents by \(d\) proves both assertions; for \(d=1\) use the coordinates and representation basis. For a toric algebra with specified monomial generators, its relation ideal is obtained by eliminating the ambient monomials from the equations identifying them with the new variables. Module elimination gives the corresponding covariant presentations. Fix the finite group-stable equation list before computing its full kernel by the Schreyer algorithm, retaining the change-of-generator matrices. Polynomial division then supplies every coefficient identity used above. These calculations terminate for each input and do not require a complete intersection or a minimal presentation.
\end{remark}

\begin{lemma}\label{uqg:central-components}
Let a torus \(T\) act on a reduced affine algebra \(D\). Suppose its finitely many irreducible components are \(T\)-stable and contain a common fixed point \(o\). Let \(C\) be one component, and suppose that every other component has only constant invariant functions. Restriction induces an injection \(D^T\hookrightarrow\mathcal{O}(C)^T\). If specified elements of \(D^T\) restrict to generators of \(\mathcal{O}(C)^T\), this map is an isomorphism.
\end{lemma}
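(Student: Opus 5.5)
The plan is to compare the invariant rings directly. Restriction from \(D\) to the component \(C\) is a \(T\)-equivariant surjection \(D\to\mathcal{O}(C)\), and taking \(T\)-invariants gives a ring map \(\rho\colon D^T\to\mathcal{O}(C)^T\). Injectivity comes first; once it holds, surjectivity is immediate from the hypothesis. The specified elements of \(D^T\) map to generators of \(\mathcal{O}(C)^T\), so the image of \(\rho\) is a subalgebra containing a generating set, and \(\rho\) is onto. Hence \(\rho\) is an isomorphism.

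For injectivity, let \(f\in D^T\) with \(f|_C=0\), and let \(C'\) be any other irreducible component. Since \(C'\) is \(T\)-stable, \(f|_{C'}\) is a \(T\)-invariant function on \(C'\), so by hypothesis it is a constant \(c'\). All components pass through the fixed point \(o\). Evaluating at \(o\) gives \(c'=f(o)=f|_C(o)=0\). Thus \(f\) vanishes on every irreducible component, hence on \(\operatorname{Spec}D\). Because \(D\) is reduced, the nilradical is zero, and \(f=0\).

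I do not expect a serious obstacle. The one point needing care is that \(T\)-invariance of \(f\) restricts to \(T\)-invariance on \(C'\), and this uses that \(C'\) is \(T\)-stable, so that the action descends to \(\mathcal{O}(C')\) and restriction is equivariant. A second point is that evaluation at \(o\) makes sense on each component, which is exactly the common-point hypothesis. Reductivity of \(T\) is not needed: injectivity is pointwise, and surjectivity is given by the specified generators. In particular, we do not need to know that restriction of invariants is surjective in general.
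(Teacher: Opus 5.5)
Your proposal is correct and follows essentially the same argument as the paper. You kill an invariant function vanishing on \(C\) by evaluating at the common fixed point \(o\), use constancy on the other components, and invoke reducedness; surjectivity comes directly from the generator hypothesis.
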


\begin{proof}
An invariant function vanishing on \(C\) has value zero at \(o\). On each other component it is constant and therefore zero. Reducedness implies that it is zero in \(D\). The generator condition gives surjectivity.
\end{proof}

\section{Smoothings defined by an invariant function}\label{cqg:invariant-sections}

We use the equations, group actions, and central quotient descriptions of Stipsicz--Szab\'{o}--Wahl, Wahl, and Artal Bartolo--Wahl~\cite{SSW,Wahl2011,ArtalBartoloWahl2022} for Figures~1(a)--(c), 1(h), and~2. We determine the canonical characters and invariant presentations and apply Proposition~\ref{uqg:regular-section}.

For the noncyclic families, we retain the notation of Subsections~\ref{sec:notation} and~\ref{obs:characters}. In particular, the Seifert invariants together with the attachment points determine the central graded algebra
\begin{equation}\label{cqg:section-algebra}
A=\bigoplus_{k\ge0}H^0\left(\mathbb{P}^1,\mathcal{O}_{\mathbb{P}^1}(\lfloor k\mathcal{D}\rfloor)\right).
\end{equation}
For three arms, the cited quotient graphs and Corollary~\ref{cor:three-arm-taut} identify the prescribed analytic structures. In the four-arm case, we first construct the smoothing of the actual quotient and then use the necessity assertion of Theorem~\ref{thm:cross-ratio} to identify its attachment points.

\subsection{The common lifting argument}\label{uqg:wnh:matrices}

Let \(C=Q/(a)\) be an equivariant presentation of a three-dimensional domain and \(h\in Q\) an invariant lift, chosen by averaging, of a nonzero function \(f\in C\). Proposition~\ref{uqg:regular-section} gives \(B=Q/(a,h)\), the deformation \(Q[t]/(a,h-t)\), and the full syzygy lift~\eqref{uqg:cartier-matrix}. All gradings below have \(\deg t>0\), so flatness holds over \(\mathbb{C}[t]\). For a larger cover \(D\) with group \(\Gamma\), take \(C=D^{\ker\kappa}\), using Proposition~\ref{uqg:invariant-presentation}. Exactness of invariants identifies the deformation with \((D[t]/(f-t))^{\ker\kappa}\). Smoothness and freeness then give descent by Corollary~\ref{cqg:larger-cover}.

\subsection{The three-arm families}\label{cqg:wn:section}\label{cqg:n:section}\label{cqg:h:section}\label{uqg:wnh:invariants}

\begin{proposition}\label{cqg:w}\label{cqg:n}\label{cqg:h}
Every singularity in Figures~1(a)--(c) and~1(h) admits a strong \(\mathbb{Q}\)-Gorenstein smoothing.
\end{proposition}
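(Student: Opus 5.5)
We use the surface covers of Table~\ref{table:smoothing-covers} and the common lifting argument of Subsection~\ref{uqg:wnh:matrices}. For 1(a) the cover is the hypersurface \(W=\{xy^a+yz^b+zx^c=0\}\). For 1(b),(c) it is the complete intersection \(\{x^b+z^c-yw=0,\ xw-y^az=0\}\). For 1(h) it is the complete intersection with \(\zeta=\omega^m\). In each case \(W\) is the zero set of one function \(f\) on a three-dimensional domain \(C\), either \(\mathbb{C}^3\) or the toric hypersurface cut out by the remaining equation(s). The group is \(\Gamma=\mu_L\), respectively \(\Gamma_h\), acting diagonally with weights recorded in the cited sources \cite{SSW,Wahl2011}.

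The first step is to fix these weights. We check that \(f\) is \(\Gamma\)-semi-invariant, and we rescale the action if necessary to make it invariant; equivalently, we use the \(\Gamma\)-equivariant grading in which \(f\) has positive degree. We then compute the character of the residue canonical generator \(\operatorname{Res}(dx\wedge dy\wedge dz/f)\), or its complete-intersection analogue. This gives \(\kappa\) and \(\ker\kappa\). By Corollary~\ref{cqg:larger-cover}, \(W/\ker\kappa\) is then the canonical cover and \(W/\Gamma\) is the quotient.

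The second step identifies the central quotient \(W/\Gamma\) with the prescribed singularity. The cited works compute the resolution graph of this quotient, and it is the graph of row 1(a), (b), (c) or (h). Tautness (Corollary~\ref{cor:three-arm-taut}, or directly Laufer's list) then shows that the analytic type is determined.

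Next we deform. We take \(Q[t]/(a,h-t)\), where \(h\) is an invariant lift of \(f\). Proposition~\ref{uqg:regular-section} provides the full syzygy lift \eqref{uqg:cartier-matrix}, and the weighted grading with \(\deg t=\deg f>0\) gives flatness over \(\mathbb{C}[t]\). By exactness of invariants, passing to \(\ker\kappa\)-invariants through Proposition~\ref{uqg:invariant-presentation} produces the equivariant presentation of the deformed canonical cover.

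It remains to verify smoothness of the nonzero fibers \(\{f=t\}\) and freeness of the action there. This is the step we expect to be the main obstacle. For smoothness, every singular point of \(\{f=t\}\) satisfies \(\nabla f=0\) and hence, by the weighted Euler relation \(\sum w_ix_i\partial_if=(\deg f)f\), also \(f=0\). So for \(t\neq0\) the only possible singularities come from singularities of the ambient \(C\). In the complete-intersection cases the ambient toric threefold is singular only along a locus contained in \(\{f=0\}\) or at the origin, and this has to be checked. For freeness, a nontrivial \(g\in\Gamma\) fixes only the points whose nonzero coordinates all have \(g\)-trivial character. We go through the coordinate subspaces and show in each case that \(f\) vanishes identically on the fixed subspace, or that the fixed subspace is the origin. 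This uses the explicit weights, for instance in 1(a) that \(\gcd\) conditions with \(L=abc+1\) make the coordinate characters faithful.

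Once smoothness and freeness hold, Proposition~\ref{uqg:criterion} (the powers of \(t\) exist by the nullstellensatz) and Corollary~\ref{cqg:qhd-verification} give a strong \(\mathbb{Q}\)-Gorenstein smoothing. For 1(h) the only delicate point is the fixed points of elements of \(\Gamma_h\) that are not diagonal in an obvious way. We handle it by the same check on the subspaces fixed by each group element, using the choice of \(\zeta\).
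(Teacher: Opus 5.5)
Your proposal follows the paper's proof essentially step for step: the same covers and group actions from \cite{SSW,ArtalBartoloWahl2022}, the canonical character by adjunction, identification of the central quotient through the cited graphs (with tautness), the invariant-function lift of Subsection~\ref{uqg:wnh:matrices} through the diagonal kernel \(\ker\kappa\), and smoothness of the fibers \(f=t\) via the weighted Euler field. Two points need tightening: \(f\) must be \(\Gamma\)-invariant, not merely semi-invariant, and it is in all four families, since only the undeformed ambient equations \(F\) in Figures~1(b), 1(c), and~1(h) carry nontrivial characters (rescaling the action would change the quotient); and since \(\Gamma_h\) is not diagonal, the paper proves freeness on all of \(\mathbb{A}^4\setminus\{0\}\) via \((S^iT)^2=S^{-m(2i-1)}\neq1\), which also gives the freeness on \(W\setminus\{0\}\) that Corollary~\ref{cqg:larger-cover} requires.
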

\begin{proof}
We give the equations and canonical characters in three cases, followed by their common invariant presentation. In Table~\ref{table:smoothing-covers}, the surface cover is \(W=\Spec(D/(f))\) with the group \(\Gamma\) specified in each case.

\emph{Figure~1(a).} Put \(a=p+2\), \(b=q+2\), and \(c=r+2\), so that \(a,b,c\ge2\), and set
\[
L=abc+1,
\qquad
\alpha=a(b-1)+1,
\qquad
\beta=b(c-1)+1,
\qquad
\gamma=c(a-1)+1.
\]
Let \(D=\mathbb{C}[x,y,z]\), and let \(\Gamma=\mu_L\) act with characters \((1,bc,-c)\). The monomials \(xy^a\), \(yz^b\), and \(zx^c\) have characters \(L\), \(0\), and \(0\), respectively. Hence
\begin{equation*}
f=xy^a+yz^b+zx^c,
\qquad
\mathscr{A}=(D[t]/(f-t))^\Gamma
\end{equation*}
defines the proposed family. The weights \((\alpha,\beta,\gamma)\) give all three monomials degree \(L\), since \(\alpha+a\beta=\beta+b\gamma=\gamma+c\alpha=L\). The exponent matrix of these monomials has determinant \(L\).

The integers \(1\), \(bc\), and \(-c\) are units modulo \(L\). Thus \(\Gamma\) acts freely on \(\mathbb{A}^3\setminus\{0\}\). The partial derivatives of \(f\) are
\[
y^a+czx^{c-1},
\qquad
axy^{a-1}+z^b,
\qquad
byz^{b-1}+x^c.
\]
If one coordinate vanishes at a common zero of these derivatives, all coordinates vanish. If all coordinates are nonzero, multiplying the three derivative equations and cancelling \(x^cy^az^b\) gives \(1=-abc\), a contradiction. Consequently \(Y=(f=0)\) is an isolated normal surface singularity. For \(t\ne0\), Euler's identity gives \(df(\alpha x\partial_x+\beta y\partial_y+\gamma z\partial_z)=Lt\), so the fiber \(f=t\) is smooth.

By Stipsicz--Szab\'{o}--Wahl~\cite[Example~8.9]{SSW}, the central quotient has the Seifert data in Figure~1(a).

The canonical character of \(D\), and also of \(D/(f)\), is
\[
\kappa\equiv1+bc-c\pmod L.
\]
The canonical group therefore has order \(L/\gcd(L,1+bc-c)\).

\emph{Figures~1(b) and~1(c).} Take \(a\ge1\) and \(b,c\ge2\), and put
\[
L=a(b+1)c+b,
\quad
h=bc-b+c,
\quad
k=abc+b-c,
\quad
d_x=c(a+1),
\quad
d_z=b(a+1).
\]
Let
\begin{equation*}
\begin{aligned}
D&=\mathbb{C}[x,y,z,w]/(F),&F&=x^b+z^c-yw,\\
f&=xw-y^az,&\mathscr{A}&=(D[t]/(f-t))^\Gamma,
\end{aligned}
\end{equation*}
where \(\Gamma=\mu_L(1,b+1,-a(b+1),-1)\). Each monomial in \(F\) has character \(b\): for \(z^c\), this follows from \(-ac(b+1)=b-L\). Both monomials in \(f\) have character zero. The weights \((d_x,h,d_z,k)\) give \(F\) degree \(bc(a+1)\) and \(f\) degree \(L\).

The gradient of \(F\) is \((bx^{b-1},-w,cz^{c-1},-y)\), so \(\operatorname{Spec}D\) is smooth outside its vertex. If a nonidentity element of \(\Gamma\) fixes a point of this threefold, the faithful characters of \(x,w\) force \(x=w=0\). The equation gives \(z=0\), while \(\gcd(L,b+1)=1\), since \(L\equiv-1\pmod{b+1}\), forces \(y=0\). Hence \(\Gamma\) acts freely on the punctured threefold.

The central complete intersection is isolated. On \(x\ne0\), eliminate \(w\) to obtain
\[
H=x^{b+1}+xz^c-y^{a+1}z=0.
\]
Here \(z\ne0\). If \(\partial H/\partial y=-(a+1)y^az\) vanishes, then \(y=0\), and \(\partial H/\partial z=cxz^{c-1}\ne0\). On \(x=0\), the equations imply \(y^az=0\) and \(z^c=yw\), so every nonzero point lies on the \(y\)- or \(w\)-axis. At the former, \(dF=-ydw\) and \(df=-y^adz\); at the latter, \(dF=-wdy\) and \(df=wdx\). These covectors are independent. The central complete intersection is therefore normal by Serre's criterion. The weighted Euler field is tangent to \(F=0\) and has \(df\)-value \(Lt\) on \(f=t\), proving smoothness for \(t\ne0\).

With parameters \((a-1,b-2,c-2)\) in Stipsicz--Szab\'{o}--Wahl~\cite[Example~8.10]{SSW}, the quotient \((D/(f))^\Gamma\) has the graph in Figure~1(b) for \(a\ge2\), and Figure~1(c) for \(a=1\). In Table~\ref{table:threearm}, these substitutions are \((a,b,c)=(p+2,q+2,r+2)\) and \((a,b,c)=(1,q+2,r+2)\), respectively.

Adjunction on the hypersurface threefold gives canonical character
\[
\kappa=1+(b+1)-a(b+1)-1-b=1-a(b+1)\pmod L.
\]
The canonical group has order \(L/\gcd(L,1-a(b+1))\).

\emph{Figure~1(h).} Put \(m=q+2\ge2\) and \(L=2m(m+1)\). Choose a primitive \(L\)-th root \(\omega\), and put \(\zeta=\omega^m\), of order \(2m+2\). Let \(\Gamma=\Gamma_h\) be generated by
\[
S(x,y,z,w)=(\omega^{-1}x,\omega^{2m+1}y,\omega^{-(2m+1)}z,\omega w),
\]
and
\[
T(x,y,z,w)=(\zeta^{-1}y,x,w,\zeta z).
\]
These substitutions satisfy
\[
S^L=1,
\qquad
TST^{-1}=S^{-(2m+1)},
\qquad
T^2=S^m.
\]
The identity \((2m+1)^2=1+2L\) verifies the conjugation relation. The powers of \(S\) are diagonal, whereas \(T\) interchanges the two coordinate pairs, so \(|\Gamma|=2L=4m(m+1)\). Since \(\gcd(2m+1,L)=1\), every nonidentity power of \(S\) has no nonzero fixed vector. Further,
\[
(S^iT)^2=S^{-m(2i-1)}\ne1,
\]
because \(2(m+1)\) does not divide \(2i-1\). An element \(S^iT\) fixing a nonzero vector would have a square fixing that vector, a contradiction. Thus \(\Gamma\) acts freely on \(\mathbb{A}^4\setminus\{0\}\).

Set
\begin{equation*}
\begin{aligned}
F&=zw+x^{2m}+\zeta y^{2m},&D&=\mathbb{C}[x,y,z,w]/(F),\\
f&=xw+yz,&\mathscr{A}&=(D[t]/(f-t))^\Gamma.
\end{aligned}
\end{equation*}
The polynomial \(f\) spans the invariant line in \(\operatorname{Span}\{xw,yz\}\). The polynomial \(F\) has \(S\)-character \(\omega^{-2m}\) and \(T\)-character \(\zeta\): the identities \(\zeta^{-2m}=\zeta^2\) and \(\zeta^{2m+2}=1\) give \(T(F)=\zeta F\). Thus the defining equation has a nontrivial character, whereas the equation being deformed is invariant. The weights \((1,1,m,m)\) give \(F\) degree \(2m\) and \(f\) degree \(m+1\).

The threefold \(F=0\) has an isolated singularity, as is immediate from its gradient \((2mx^{2m-1},2m\zeta y^{2m-1},w,z)\). We also verify that its central surface \(Y=(F=f=0)\) is isolated. On \(x\ne0\), eliminate \(w\), obtaining
\[
yz^2-x^{2m+1}-\zeta xy^{2m}=0.
\]
Here \(y\ne0\). Vanishing of the \(z\)-derivative forces \(z=0\), and then the \(y\)-derivative is \(-2m\zeta xy^{2m-1}\ne0\). If \(x=0\), the equations force \(y=0\) and \(zw=0\). At a nonzero point of the \(z\)-axis, \(dF=zdw\) and \(df=zdy\); at a nonzero point of the \(w\)-axis, \(dF=wdz\) and \(df=wdx\). Hence the two gradients are independent. Serre's criterion gives normality of \(Y\). The weighted Euler field proves that every fiber \(f=t\), \(t\ne0\), is smooth.

By Artal Bartolo--Wahl~\cite[Proposition~1.8]{ArtalBartoloWahl2022}, the quotient \((D/(f))^\Gamma\) has the graph in Figure~1(h), with \(m=q+2\).

Both matrices \(S,T\) have determinant one. Adjunction therefore gives
\[
\kappa(S)=\omega^{2m}=\zeta^2,
\qquad
\kappa(T)=\zeta^{-1}.
\]
The image of \(\kappa\) has order \(2m+2\). Its kernel is \(\langle S^{m+1}\rangle\), of order \(2m\): a power \(S^i\) lies in the kernel exactly when \(m+1\) divides \(i\), whereas \(\kappa(S^iT)=\zeta^{2i-1}\ne1\). The canonical group is \(\mu_{2m+2}\).

\emph{The invariant presentations.} For these four families, let \(P\) be the polynomial coordinate algebra, let \(H\) be the ambient equation when present, and put \(D=P/(H)\); in Figure~1(a), put \(D=P\). The kernel of the canonical character \(H_\Gamma=\ker\kappa\) is diagonal cyclic. Its order \(\ell\), coordinate characters \(\mathbf{c}\), and ambient equation character \(\eta\) are
\[
\begin{array}{c|c|c|c}
\text{Figure}&\ell&\mathbf{c}&\eta\\ \hline
1\mathrm{(a)}&\gcd(L,1+bc-c)&(1,bc,-c)&\text{no equation}\\
1\mathrm{(b),(c)}&\gcd(L,1-a(b+1))&(1,b+1,-a(b+1),-1)&b\\
1\mathrm{(h)}&2m&(-1,1,-1,1)&0
\end{array}
\]
All characters are taken modulo \(\ell\). In Figure~1(h), the kernel generator is \(S^{m+1}\). The character \(\eta\) need not vanish in Figures~1(b),(c), and is retained in the covariant module defining the invariant equations.

Proposition~\ref{uqg:invariant-presentation} and Remark~\ref{uqg:fixed-presentations}, applied to these characters, give \(C=D^{H_\Gamma}=Q/(a)\) with its full first syzygy matrix. The bounded-exponent monomial generators are \(\Gamma\)-stable: the cyclic actions are diagonal, and \(T\) in Figure~1(h) permutes the coordinate pairs. The residual group therefore acts on the presentation. The cited graphs identify the three-arm central quotients with \(A\) by \eqref{cqg:section-algebra}. An invariant lift of \(f\) now gives the canonical-cover deformation by Subsection~\ref{uqg:wnh:matrices}; the verified smoothness and freeness give the asserted smoothing.
\end{proof}

\subsection{The four-arm families}\label{cqg:four:section}

The four-arm canonical covers are invariant sections in three-dimensional semigroup algebras. The equations agree with Stipsicz--Szab\'{o}--Wahl~\cite[Examples~8.12 and~8.14]{SSW} and Wahl~\cite[Theorems~2.3, 3.4, and~7.4]{Wahl2011}.

\begin{proposition}\label{cqg:four:main}
Let \(P\ge2\), and let \((n,q)\) be one of \((3,3),(4,2),(6,1)\). Put
\[
m=qP(P-1)+1,\qquad \beta=m-qP+1.
\]
The three choices of \((n,q)\), in their displayed order, correspond to the three rows of Table~\ref{table:fourarm}, with \(P=p+2\). For the ordering of the attachment points in Theorem~\ref{thm:cross-ratio}, the corresponding singularity admits a strong \(\mathbb{Q}\)-Gorenstein smoothing constructed by Proposition~\ref{uqg:criterion}. Its canonical index is \(nP\).
\end{proposition}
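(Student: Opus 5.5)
I would follow the template already used for Figures~1(a)--(c) and~1(h): exhibit the surface cover \(W=\{f_n=0\}\subset Z_n\) from Table~\ref{table:smoothing-covers}, with \(d=qP\) and \(f_n=\sum_{j}\zeta^jx_{j+1}^{d-1}x_{j+2}\). I would then specify a group \(\Gamma_{n,P}\) of order \(nqmP\) acting on \(D_n\) so that \(f_n\) is invariant, and deform by \(f_n-t\). The invariant presentation comes from Proposition~\ref{uqg:invariant-presentation} and Remark~\ref{uqg:fixed-presentations}. The full syzygy lift then comes from Proposition~\ref{uqg:regular-section} in the form \eqref{uqg:cartier-matrix}, since a single invariant function on the three-dimensional domain \(D_n\) is a regular element. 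Flatness over \(\mathbb{C}[t]\) follows from Lemma~\ref{uqg:graded-flatness}, because all monomials \(x_{j+1}^{d-1}x_{j+2}\) have a common positive degree under a suitable grading of the torus \(Z_n\).

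\textbf{Step 1: the group.} I would take \(\Gamma_{n,P}\) generated by a diagonal cyclic element \(S\) of order \(qmP\) (times the torus part for \(n=4,6\)) together with the cyclic shift \(T\colon x_i\mapsto c\,x_{i+1}\), with \(z\) fixed for \(n=6\). The scalars \(c\) are chosen so that \(T(f_n)=f_n\); this is exactly where \(\zeta\) of order \(n\) enters. For \(S\), the characters of consecutive coordinates must satisfy \((d-1)\chi_{j+1}+\chi_{j+2}\equiv0\). Solving this recursion cyclically mod \(qmP\) should reproduce the relation \(m=qP(P-1)+1\). Freeness off the vertex is checked as for 1(h): nonidentity powers of \(S\) have faithful coordinate characters, and elements \(S^iT\) have a nontrivial \(n\)-th power that already acts without fixed vectors.

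\textbf{Step 2: smoothness and the canonical character.} I would check that \(W\) has an isolated singularity and that \(f_n=t\) is smooth for \(t\ne0\). The second point follows from the weighted Euler field, whose \(df_n\)-value is \(\deg(f_n)\,t\). The first point needs chart-by-chart analysis on the toric \(Z_n\), with care at the singular torus orbits of \(Z_4\) and \(Z_6\), where \(W\) must avoid the one-dimensional singular strata away from the vertex. The canonical character is computed from the toric residue form \(\omega_{Z_n}\), which is invariant up to the determinants of \(S\) and \(T\), divided by \(df_n\), which is invariant. This should give a character of order \(nP\), and hence the index claim via Corollary~\ref{cqg:larger-cover}.

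\textbf{Step 3: identification of the central quotient.} I would identify \((D_n/(f_n))^{\Gamma}\) with the singularity of the appropriate row of Table~\ref{table:fourarm}. I would cite Stipsicz--Szab\'o--Wahl [Examples~8.12, 8.14] and Wahl [Theorems~2.3, 3.4, 7.4] for the fact that this quotient has the stated four-arm graph. Once a strong \(\mathbb{Q}\)-Gorenstein smoothing exists, Corollary~\ref{completion:strong-smoothability} makes it QHD. The necessity direction of Theorem~\ref{thm:cross-ratio} then forces the attachment points to have the prescribed cross ratio, so no separate cross-ratio computation is needed; matching the ordering is a relabeling.

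\textbf{Main obstacle.} I expect the hardest part to be Steps~1--2 for \(n=6\). There \(Z_6\) is a non-complete-intersection Gorenstein toric threefold, so one must verify simultaneously that \(\Gamma\) preserves \(I_6\), that the action is free on the punctured threefold, and that \(W\) meets the singular locus of \(Z_6\) only at the vertex. I would first try to realize \(Z_n\) as \(\mathbb{C}^3/\mu_n\) (the cone over the hexagon for \(n=6\)) and lift the whole construction to \(\mathbb{C}^3\). This reduces freeness and smoothness to a larger diagonal-plus-permutation group on affine space, where the checks become as in Figure~1(h).
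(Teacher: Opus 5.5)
Your plan follows the paper's proof closely. You use the same cover \(W=\{f_n=0\}\subset Z_n\) and a group \(\langle S,T\rangle\) of order \(nqmP\). Your diagonal cyclic subgroup of order \(qmP\) is the paper's \(\langle S,T^n\rangle\), which is cyclic since \(m\equiv1\pmod{qP}\). The remaining steps also match: the canonical character \(\kappa(S)=1\), \(\kappa(T)=\eta^q\) of order \(nP\); the deformation \(f-t\) with the Cartier syzygy lift after passing to \(\ker\kappa\)-invariants; the Euler field; Wahl's quotient computations for the graph; and the necessity half of the cross-ratio theorem for the attachment points. The paper additionally writes \(D_n^{\ker\kappa}\) explicitly as a semigroup ring. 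It also takes isolatedness of \(W\) from Wahl's smoothness of the curve \((f=0)\subset\operatorname{Proj}D_n\) instead of from charts. However, the case \(n=6\), which you rightly single out, does not work as you set it up.

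\emph{\(T\) cannot fix \(z\).} A rotation \(x_i\mapsto cx_{i+1}\) preserves the binomial \(x_ix_{i+2}-x_{i+1}z\) only if \(z\mapsto cz\). Even allowing a torus twist \(x_i\mapsto c_ix_{i+1}\) with \(z\) fixed, the relations \(x_ix_{i+3}=z^2\) force \(c_ic_{i+3}=1\). Invariance of \(f_6\), namely \(c_i^{d-1}c_{i+1}=\zeta\) for all \(i\), then gives \(\zeta=\zeta^{-1}\), which is impossible. The paper takes \(T(x_i)=\eta x_{i+1}\) and \(T(z)=\eta z\), with \(\eta\) of order \(nd\) and \(\eta^d=\zeta\).

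\emph{Freeness cannot be checked on \(\mathbb{C}^7\) as for 1(h).} For \(P=2\), the nonidentity element \(T^4\) has cube equal to the identity and fixes nonzero vectors of \(\mathbb{C}^7\) supported on \(x_1,x_3,x_5\). These vectors are excluded only because they violate \(x_1x_3=x_2z\).

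\emph{Your fallback \(Z_n\cong\mathbb{C}^3/\mu_n\) is impossible for \(n=4,6\).} These cones are isolated singularities, since they are cones over smooth projectively normal surfaces; so there are no singular one-dimensional strata to avoid. But they are not quotient singularities: the link of \(Z_6\) is a circle bundle over the degree-six del Pezzo surface with \(b_2=3\), whereas links of quotients of \(\mathbb{C}^3\) are rational homology spheres.

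The paper instead argues on \(Z_6\) directly. If \(z\ne0\), then \(S^iT^j\) scales \(z\) by \(\eta^j\), forcing \(T^j=1\). Then \(x_1x_4=z^2\ne0\) together with the unit \(S\)-weights forces \(i=0\). If \(z=0\), the binomials force the support to be one vertex or two adjacent vertices of the hexagon. Hence \(6\mid j\), and the remaining scalar factors have coprime orders dividing \(P\) and \(m\).

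\emph{Smaller gaps.} Your power argument treats only \(S^iT\). For \(n=4\) and \(j\equiv2\pmod4\), the fourth power of \(S^iT^j\) can be trivial (e.g.\ \(P\) odd, \(j=2P\)), and the square is needed. For \(n=6\), \(\kappa\) is not a determinant on \(\mathbb{C}^7\). The paper computes it in the chart \(x_1\ne0\), where the Jacobians of \(S\) and \(T\) are \(1\) and \(\eta\).
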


\begin{proof}
\emph{The ambient algebras and their representations.}\par\nobreak

All generators of the following algebras have degree one:
\begin{align*}
D_3&=\mathbb{C}[x_1,x_2,x_3],\\
D_4&=\mathbb{C}[x_1,x_2,x_3,x_4]/(x_1x_3-x_2x_4),\\
D_6&=\mathbb{C}[x_1,\ldots,x_6,z]/I_6,
\end{align*}
where indices on \(x_i\) are cyclic and
\[
I_6=\left(x_ix_{i+2}-x_{i+1}z\ (i\bmod6),\quad x_ix_{i+3}-z^2\ (1\le i\le3)\right).
\]
Write \(Z_n=\operatorname{Spec}D_n\). The projective surfaces underlying these cones are \(\mathbb{P}^2\) with its hyperplane class, the smooth quadric with polarization \(\mathcal{O}(1,1)\), and the degree-six del Pezzo surface with its anticanonical polarization. If \(H\) denotes the respective polarization, then
\[
H^2=\frac nq,\qquad K=-qH.
\]
For the third surface, the monomial map
\[
[A:B:C]\dashrightarrow[AB^2:A^2B:A^2C:AC^2:BC^2:B^2C:ABC]
\]
is resolved by blowing up the three coordinate points. The exponent vectors of the six boundary monomials are the vertices of the polygon defining the anticanonical embedding, and the central monomial corresponds to its unique interior lattice point. The displayed binomials reduce every monomial to \(z^a x_i^b x_{i+1}^c\). The triangles joining the central point to adjacent vertices are unimodular; consequently these reduced monomials have distinct semigroup exponents, except for identical monomials on common faces. This proves that the binomials generate the full kernel and that the resulting semigroup is saturated. The chart \(x_1\ne0\) is
\begin{equation}\label{cqg:four:hexchart}
[x_1,\ldots,x_6,z]=[1,x,x^2y,x^2y^2,xy^2,y,xy],
\end{equation}
and the other five charts are obtained by rotation. Every nonzero point belongs to one of these charts, since the equations with \(x_i=0\) for all \(i\) imply \(z=0\). In particular the projective surface is smooth, and the punctured cone is the complement of the zero section in \(\mathcal{O}(-H)\).

All three algebras are normal and Gorenstein. The algebra \(D_3\) is regular. For \(D_4\), the Jacobian \((x_3,-x_4,x_1,-x_2)\) vanishes only at the vertex, so the hypersurface is normal by Serre's criterion and Gorenstein by adjunction. For \(D_6\), normality follows from the saturated semigroup just described. The section ring is Cohen--Macaulay: \(H^1(\mathcal{O}(jH))=0\) for all integers \(j\), by rationality for \(j=0\), Kodaira vanishing for \(j>0\), and Serre duality for \(j<0\). Since \(K=-H\), its canonical module is \(D_6(-1)\). Each \(Z_n\) is smooth away from its vertex. A canonical generator has scalar degree \(q\).

Put
\[
d=qP,\qquad a=d-1,\qquad r=1-d\pmod m.
\]
Choose primitive roots \(\xi\) and \(\eta\) of orders \(m\) and \(nd\), and put \(\zeta=\eta^d\). We choose \(\zeta=i\) when \(n=4\). Let \(E_{\mathrm{amb}}=\mathbb{C}^n\) for \(n=3,4\), and let \(E_{\mathrm{amb}}=\mathbb{C}^7\) for \(n=6\). On \(E_{\mathrm{amb}}\), define
\begin{equation*}
S(x_i)=\xi^{r^{i-1}}x_i,\qquad T(x_i)=\eta x_{i+1}.
\end{equation*}
When \(n=6\), also set \(S(z)=z\) and \(T(z)=\eta z\). These formulas describe actions on points. The relevant integer identities are
\[
\begin{array}{c|c}
n&\text{identity}\\
\hline
3&r^2+r+1=3m\\
4&r^2+1=2m\\
6&r^2-r+1=m,
\end{array}
\qquad r=1-qP\text{ on the right-hand side}.
\]
Thus \(r^n=1\pmod m\). In the hexagonal case, \(r^2=r-1\) and \(r^3=-1\) show that the two sides of each defining binomial have the same \(S\)-character. Rotation preserves these binomials. In the quadrilateral case, writing \(Q_4=x_1x_3-x_2x_4\), we have
\[
Q_4\circ S=Q_4,\qquad Q_4\circ T=-\eta^2Q_4.
\]
Consequently \(\Gamma=\Gamma_{n,P}=\langle S,T\rangle\) acts on \(Z_n\), and
\[
TST^{-1}=S^r.
\]
Every element has the form \(S^iT^j\), with \(0\le i<m\) and \(0\le j<nd\). These forms are distinct. A power of \(T\) is diagonal only when its exponent is divisible by \(n\), in which case it is scalar. No nonidentity power of \(S\) is scalar. For \(n=3\), compare its first two entries and use \(\gcd(m,3P)=1\); for \(n=4\), compare the first and third entries and use that \(m\) is odd; for \(n=6\), compare the central coordinate with the first coordinate. Hence
\begin{equation*}
|\Gamma|=nqmP,\qquad \Gamma\cap\mathbb{C}^*=\langle T^n\rangle\cong\mu_d.
\end{equation*}
The congruences suffice even when \(r\) has smaller order modulo \(m\). In particular, for \(n=6,P=2\), one has \(m=3\) and \(r=-1\pmod3\); no argument below assumes that \(r\) has order six.

Consider the monomial representation
\[
M=\operatorname{Span}_{\mathbb{C}}\{u_i=x_i^{d-1}x_{i+1}:1\le i\le n\}\subset (D_n)_d.
\]
The monomials are distinct. Each has \(S\)-character \((d-1)r^{i-1}+r^i=0\pmod m\), whereas \(T(u_i)=\zeta u_{i+1}\). The invariant line \(M^\Gamma\) is therefore generated by
\begin{equation*}
f=\sum_{j=0}^{n-1}\zeta^j x_{j+1}^{d-1}x_{j+2}.
\end{equation*}
Set \(Y_t=Z_n\cap\{f=t\}\) and \(Y=Y_0\). Thus \(W=Y\) and \(\Gamma=\Gamma_{n,P}\) in Table~\ref{table:smoothing-covers}. 

\emph{Freeness and the canonical character.}\par\nobreak

The action of \(\Gamma\) is free on \(Z_n\setminus\{0\}\). When \(n=3\), every exponent \(1,r,r^2\) is a unit modulo \(m\), and \(\gcd(m,3P)=1\). If \(3\mid j\), an eigenvalue one for \(S^iT^j\) would be an equality between roots of relatively prime orders dividing \(m\) and \(3P\). It forces both factors to be one, hence the identity element. If \(3\nmid j\), then
\[
(S^iT^j)^3=\eta^{3j}I.
\]
A fixed vector would imply \(3P\mid j\), contrary to \(3\nmid j\).

When \(n=4\), the four \(S\)-weights are \((1,r,-1,-r)\), all units modulo \(m\), and \(\gcd(m,2P)=1\). For \(4\mid j\), the same diagonal argument applies. If \(j\) is odd, then \((S^iT^j)^4=\eta^{4j}I\), so a fixed vector would force \(2P\mid j\), which is impossible. If \(j\equiv2\pmod4\), then \((S^iT^j)^2=\eta^{2j}I\), so it would force \(4P\mid j\), again impossible. Thus in the first two cases the action is free on the punctured ambient vector space.

When \(n=6\), a fixed point with \(z\ne0\) forces \(\eta^j=1\). Some boundary coordinate is then nonzero, and its unit \(S\)-weight forces \(i=0\). If \(z=0\), the binomials imply that the nonzero coordinates consist of one vertex or two adjacent vertices of the hexagon. A rotation preserving such a support has \(6\mid j\). The remaining scalar factors have relatively prime orders dividing \(P\) and \(m\), so fixing a nonzero coordinate forces both to be one. This proves freeness in the third case as well.

Let \(\kappa\) be the character of a canonical generator of \(Z_n\). For \(n=3\), determinants give \(\kappa(S)=1\) and \(\kappa(T)=\eta^3\). For \(n=4\), adjunction gives
\[
\kappa(S)=1,\qquad \kappa(T)=\frac{-\eta^4}{-\eta^2}=\eta^2.
\]
For \(n=6\), use cone coordinates \((s,x,y)\) in \eqref{cqg:four:hexchart}, with \(s=x_1\). Then
\begin{equation}\label{cqg:four:hexaction}
S(s,x,y)=(\xi s,\xi^{-P}x,\xi^{P-1}y),\qquad T(s,x,y)=(\eta xs,xy,1/x).
\end{equation}
The toric canonical form is
\[
d(sxy)\wedge\frac{dx}{x}\wedge\frac{dy}{y}=ds\wedge dx\wedge dy.
\]
The two Jacobian determinants in \eqref{cqg:four:hexaction} are \(1\) and \(\eta\). Thus in all three cases
\begin{equation*}
\kappa(S)=1,\qquad \kappa(T)=\eta^q,\qquad \operatorname{ord}(\kappa)=nP.
\end{equation*}
It follows that
\[
K=\ker\kappa=\langle S,T^{nP}\rangle,\qquad |K|=qm,\qquad \Gamma/K\cong\mu_{nP}.
\]
The subgroup \(K\) is cyclic: \(T^{nP}\) is scalar of order \(q\), it commutes with \(S\), and \(\gcd(q,m)=1\).

For these ambient algebras, group actions, and functions, the quotient calculations in Wahl~\cite[Theorems~2.3, 3.4, and~7.4]{Wahl2011} show that the projective curve \(D=(f=0)\subset\operatorname{Proj}D_n\) is smooth and that \(Y/\Gamma\) has the corresponding graph in Table~\ref{table:fourarm}. We use these identifications to construct its canonical-cover deformation below.

\emph{The central fiber and its canonical cover.}\par\nobreak

The smooth ample curve \(D\) is connected and hence irreducible. The central cone is a Gorenstein hypersurface in \(Z_n\), with smooth irreducible punctured cone; Cohen--Macaulayness excludes embedded components, so it is a domain and normal by Serre's criterion. If \(\vartheta\) is the Euler vector field, then
\[
df(\vartheta)=d\cdot f.
\]
Thus every fiber \(Y_t\), \(t\ne0\), is smooth. By the freeness proved above, its quotients by \(K\) and \(\Gamma\) are smooth as well.

In our notation, the graph calculation gives
\[
e=\frac{qP^2}{m},\qquad \frac m\beta=[2^{P-2},q+1,P].
\]
The reciprocals of the three orders of the arms with one vertex sum to one, and \(e+1+\beta/m=3\), confirming the central weight. Put \(A'=(D_n/(f))^\Gamma\); its attachment points will be identified after constructing the smoothing.

Since \(f\) is invariant, adjunction gives the same canonical character on \(Y\) as on \(Z_n\). By Corollary~\ref{cqg:larger-cover}, the canonical cover of \(A'\) and its faithful group are
\[
B=(D_n/(f))^K=R/(f),\qquad R=D_n^K,\qquad G=\Gamma/K\cong\mu_{nP}.
\]
We construct its equivariant presentation directly from the normal domain \(R\).

\emph{A common semigroup presentation and its lifted syzygies.}\par\nobreak

Let \(\Delta_n\) be the polygon whose cyclically ordered vertices are
\[
\begin{array}{c|l}
n&\text{vertices}\\
\hline
3&(0,0),(1,0),(0,1)\\
4&(0,0),(1,0),(1,1),(0,1)\\
6&(0,0),(1,0),(2,1),(2,2),(1,2),(0,1).
\end{array}
\]
The degree-one monomials of \(D_n\) have exponents \((1,b,c)\) at these vertices, together with \((1,1,1)\) when \(n=6\). Put
\[
\begin{split}
\mathcal{S}_n&=\{(\ell,b,c)\in\mathbb{Z}^3:\ell\ge0,\ (b,c)\in\ell\Delta_n\},\\
\Lambda&=\{(\ell,b,c)\in\mathbb{Z}^3:
\ell\equiv0\pmod q,\ \ell-qPb+q(P-1)c\equiv0\pmod m\}.
\end{split}
\]
Then \(D_n=\mathbb{C}[\mathcal{S}_n]\) and
\begin{equation}\label{uqg:four:semigroup}
R=\mathbb{C}[\mathcal{S}_n\cap\Lambda].
\end{equation}
Indeed, a monomial with exponent \((\ell,b,c)\) has scalar degree \(\ell\) and \(S\)-weight \(\ell-qPb+q(P-1)c\). The first congruence in \(\Lambda\) is invariance under \(T^{nP}\), and the second is invariance under \(S\). The unimodular triangulations described above show that the displayed lattice-point semigroups have precisely the given degree-one generators.

There is a common finite generating set for \eqref{uqg:four:semigroup}. Put
\[
L=qm,\qquad s=d-q+1,\qquad L=1+(d-1)s.
\]
Let \(w\) denote the monomial of exponent \((q,1,1)\). Thus \(w=x_1x_2x_3\) for \(n=3\), \(w=x_1x_3=x_2x_4\) for \(n=4\), and \(w=z\) for \(n=6\). Besides \(w\), take
\begin{equation}\label{uqg:four:generators}
b_{i,0}=x_i^L,\qquad
b_{i,j}=x_i^{d-j}x_{i+1}^{1+(j-1)s}
\quad(1\le i\le n,\ 1\le j\le d-1).
\end{equation}
These \(nd+1\) monomials generate \(R\). To verify the assertion on a boundary face, use exponents \((a,b)\) of \(x_i^ax_{i+1}^b\). Its invariant lattice is
\[
a+b\equiv0\pmod q,\qquad a+(1-d)b\equiv0\pmod m.
\]
This lattice has index \(qm=L\) in \(\mathbb{Z}^2\), since \(\gcd(q,m)=1\). In angular order, the vectors
\[
(L,0),\qquad (d-j,1+(j-1)s)\quad(1\le j\le d)
\]
begin and end at the two primitive boundary rays; the final vector is \((0,L)\). Each belongs to the invariant lattice. Their consecutive determinants are \(L\), since the first determinant is \(L\), and for \(1\le j<d\),
\[
(d-j)(1+js)-(d-j-1)(1+(j-1)s)=1+(d-1)s=L.
\]
Every consecutive pair therefore forms a basis of the invariant lattice. Its cone contains only nonnegative integral combinations of that pair, proving generation on each boundary face. The final ray is already \(b_{i+1,0}\), so it need not be repeated in \eqref{uqg:four:generators}.

For an interior lattice point, subtract \((q,1,1)\). This remains in \(\Lambda\), and it remains in \(\mathcal{S}_n\): the primitive facet functionals are \(b,c,\ell-b-c\) for \(n=3\); \(b,c,\ell-b,\ell-c\) for \(n=4\); and \(b,c,2\ell-b,2\ell-c,\ell-b+c,\ell+b-c\) for \(n=6\). Each takes value one at \((q,1,1)\). Subtracting this vector as many times as the minimum facet value reaches a boundary face. The boundary calculation proves generation of the whole semigroup. In particular, the chosen monomial orbit \(x_i^{d-1}x_{i+1}\) is the list \(b_{i,1}\).

Let \(Q=\mathbb{C}[y_0,y_{i,j}]\), with \(y_0\mapsto w\) and \(y_{i,j}\mapsto b_{i,j}\), and let \(I\) be the kernel of this surjection. It is the toric ideal of the displayed exponent vectors.

The generator list is \(G\)-stable, since
\[
T(w)=\eta^qw,\quad T(b_{i,0})=\eta^Lb_{i+1,0},\quad T(b_{i,j})=\eta^{d+(j-1)q(P-1)}b_{i+1,j}.
\]
These substitutions have order dividing \(nP\), because all displayed degrees are divisible by \(q\). Apply the procedure of Remark~\ref{uqg:fixed-presentations} to obtain an equivariant equation row \(a\) for the toric ideal and a matrix \(U\) generating its entire first syzygy module.

Put
\[
h=\sum_{i=1}^n\zeta^{i-1}y_{i,1}.
\]
It is invariant and maps to \(f\). We obtain the actual presentation of the canonical cover
\[
B=Q/(a,h).
\]
This invariant function in the domain \(R\), together with the displayed equivariant toric presentation, is the input for Subsection~\ref{uqg:wnh:matrices}. It gives the polynomial family
\[
\mathscr{B}=Q[t]/(a,h-t),\qquad \mathscr{B}/t\mathscr{B}=B,
\]
with the full syzygy lift~\eqref{uqg:cartier-matrix}. Its nonzero fibers are the smooth surfaces \(Y_t/K\), with free residual action by \(G=\Gamma/K\). Proposition~\ref{cqg:descent} therefore gives a strong \(\mathbb{Q}\)-Gorenstein smoothing of \(A'\). Since its graph occurs in Table~\ref{table:fourarm}, Corollary~\ref{cqg:qhd-verification} makes this a rational homology disk smoothing. The necessity assertion of Theorem~\ref{thm:cross-ratio} now gives the ordered attachment points. The section-ring description \eqref{cqg:section-algebra} identifies \(A'\) with the prescribed algebra \(A\), proving the assertion.
\end{proof}

\section{Deformations of regular sequences in toric algebras}\label{cqg:toric-sections}

The QHD smoothings of Figures~1(i) and~1(j) are known~\cite{BS}. We construct their canonical covers as quotients of toric algebras by regular sequences of two and three equations, respectively. Exactly one equation character is trivial, allowing a constant equivariant perturbation. Proposition~\ref{uqg:regular-section} supplies flatness and the full polynomial syzygy lift. We identify the central invariant algebras, including the contribution of additional components in the auxiliary zero loci, and verify smoothness and freeness on quotient charts.

\subsection{The family in Figure~1(i)}\label{cqg:i:section}

\begin{proposition}\label{cqg:i:existence}
Let \(m\ge3\), and put
\[
n=m^2-2m+2,\qquad a_m=m^2-3m+3.
\]
The weighted homogeneous surface singularity with Seifert data
\begin{equation*}
\left(2;(6,1),(3,1),(n,a_m)\right)
\end{equation*}
admits an algebraic strong \(\mathbb{Q}\)-Gorenstein smoothing. Taking \(m=q+3\) gives every member of Figure~1(i).
\end{proposition}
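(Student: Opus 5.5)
Following Table~\ref{table:smoothing-covers}, we take the canonical cover to be $W=\{u^3+v^3+vz^n=0\}/\mu_m(1,1,1)$ with group $\mu_{3m}$. The plan is to realize $W$ as the quotient of a toric algebra by a regular sequence of two equations, to deform the one equation of trivial character by a constant, and to conclude with Proposition~\ref{uqg:regular-section}, Proposition~\ref{cqg:descent} and Corollary~\ref{cqg:qhd-verification}.

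\emph{Step 1: the ambient toric algebra and equations.} Let $E=\mathbb{C}[u,v,z]^{\mu_m}$, with $\mu_m$ acting with weights $(1,1,1)$. This is the Veronese-type toric algebra generated by monomials of degree $m$; it is normal and Cohen--Macaulay. By Remark~\ref{uqg:fixed-presentations}, $E$ has an equivariant presentation $Q/(a)$ together with a full syzygy matrix $U$. Let $\mu_{3m}$ act on $(u,v,z)$ with characters chosen so that $u^3$, $v^3$ and $vz^n$ share a common character $\epsilon$ and the residual action on $E$ is faithful. For instance, take $v$ of character $\omega$ and $u$ of character $\omega\rho$ with $\rho^3=1$, and fix the weight of $z$ by the congruence coming from $n\equiv m^2-2m+2$.

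The defining function $g=u^3+v^3+vz^n$ is not $\mu_m$-invariant, since its degree $n$ is $\equiv 2\pmod m$. I would therefore present $W$ as cut out in a toric algebra by two invariant-type equations, for example $g\cdot u^{m-3}$ and $g\cdot v^{m-3}$ or similar covariant multiples, replaced by monomial generators of degree divisible by $m$, together with a further equation of trivial character $h$. The requirement is that exactly one equation has trivial character, so that $(a,\,h_1,\,h-t)$ is an equivariant lift. As in Lemma~\ref{uqg:central-components}, the auxiliary zero locus may acquire extra components; I would check that these components carry only constant invariants, so that the central invariant algebra is exactly $(\mathbb{C}[u,v,z]/(g))^{\mu_m}$.

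\emph{Step 2: flatness and syzygies.} I would verify that $(h_1,h)$ is a regular sequence modulo $(a)$ by a dimension count: the zero locus is two-dimensional and $E$ is Cohen--Macaulay. Proposition~\ref{uqg:regular-section} then gives the full polynomial syzygy lift and flatness. Because there is a positive grading with $\deg t>0$, flatness holds over $\mathbb{C}[t]$ by Lemma~\ref{uqg:graded-flatness}.

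\emph{Step 3: the central fiber and the smoothing.} I would compute the canonical character of $W$ by adjunction, in the same way as in Proposition~\ref{cqg:w}, and confirm that it is faithful of order $3m$, so that $W$ is itself the canonical cover. The quotient $W/\mu_{3m}=(\mathbb{C}[u,v,z]/(g))^{\mu_{3m}}$ should be identified with the data $(2;(6,1),(3,1),(n,a_m))$. The curve $\{u^3+v^3+vz^n=0\}$ carries a $\mathbb{C}^*$-action, and its orbifold quotient by the weighted projective structure gives three special orbits: $v=0$, $u=0$, and $z=0$. Their isotropy orders should produce the arms $(6,1)$, $(3,1)$ and $(n,a_m)$, and this can be checked via the Seifert computation of \eqref{cqg:section-algebra}. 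Since the graph is taut by Corollary~\ref{cor:three-arm-taut}, matching these Seifert invariants identifies the analytic type.

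Next comes smoothness of the nonzero fibers and freeness, checked on the quotient charts $u\ne0$, $v\ne0$, $z\ne0$ of the toric algebra, using the weighted Euler field as in Proposition~\ref{cqg:w}. Proposition~\ref{uqg:criterion} then yields a strong $\mathbb{Q}$-Gorenstein smoothing, and Corollary~\ref{cqg:qhd-verification} upgrades it to a QHD smoothing.

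\emph{Main obstacle.} The hardest part will be designing the perturbation. The naive deformation $g=t$ is not allowed because $g$ is not invariant, and freeness of the residual action on nonzero fibers must survive the perturbation. I would first try to arrange the deformation so that the nonzero fibers lie in the locus where $\mu_{3m}$ acts freely, namely away from the coordinate axes, and then verify freeness chart by chart.
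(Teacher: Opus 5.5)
\textbf{There is a genuine gap.} You correctly name the main obstacle, the deformation itself, but you never construct it, and the ambient you propose cannot carry it.

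First, the dimensions do not work. The algebra $E=\mathbb{C}[u,v,z]^{\mu_m}$ is three-dimensional and $B_m$ is a surface in it, hence a divisor in $\operatorname{Spec}E$. Two elements of $E$ whose zero locus is two-dimensional are therefore not a regular sequence, so the dimension count in your Step~2 fails.

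More seriously, no smoothing of $B_m$ lives in $\operatorname{Spec}E\times\mathbb{D}$ at all. Every term of $g=u^3+v^3+vz^n$ has degree $\equiv3\pmod m$, so its scalar character is $3$, not $2$; in particular $g$ is invariant when $m=3$.
\begin{itemize}
\item For $m\ge4$ this character is nonzero, so $B_m$ is a non-Cartier Weil divisor on $\mathbb{C}^3/\mu_m$. Any flat deformation inside $(\mathbb{C}^3/\mu_m)\times\mathbb{D}$ is the quotient of a hypersurface whose equation is a $\mu_m$-semi-invariant of the same nonzero character. That equation vanishes along $\{0\}\times\mathbb{D}$. So every fiber passes through the $\frac1m(1,1,1)$ point as the quotient of a surface germ by a group acting freely off the origin, and is singular there.
\item For $m=3$, the residual $\mu_9$-character of $g$ is $7\ne0$ by \eqref{cqg:i:monomial-character}. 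So no constant can be added equivariantly. Every fiber is then a Cartier divisor through the vertex of $E$, which has embedding dimension $10$, and is singular there.
\end{itemize}
So choosing better covariant multiples in $E$ cannot repair the plan; the ambient must be enlarged.

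The missing idea is a four-dimensional toric ambient in which $B_m$ is a complete intersection of two equations, exactly one of trivial character. The paper takes $Q=\mathbb{C}[S_0,S_1,S_2,h_0,h_1,h_2]^T$ for a two-dimensional torus $T$. Its weights are chosen so that the invariant monomials $L_i$, $M_{i,j}$, $Z$, $H$ map to $(v+\omega^iu)z^{m-1}$, $\overline M_{i,j}$, $z^m$, $-vz^{m-1}$. The factorization $u^3+v^3=\prod_i(v+\omega^iu)$ gives the central relations $\sum_i\omega^il_i=0$ and $l_0l_1l_2=-XZ^m$. These become the equations $F_1=\sum_i\omega^iL_i$ and $F_0=\sum_iL_i+3H$. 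The substantive work, none of which appears in your outline, is then:
\begin{itemize}
\item proving that $F_1,F_0$ is a regular sequence with $Q/(F_1,F_0)\simeq B_m$. This requires showing that the extra components $V(S_i,h_i)$ of the zero locus carry only constant invariants (Lemma~\ref{uqg:central-components}), and that the images of $Z,H,M_{i,j}$ generate $B_m$;
\item producing an element normalizing $T$ that cyclically permutes the $S_i$ and $h_i$ up to scalars and induces the $\mu_{3m}$-action, with $F_1$ invariant and $F_0$ of character $\omega^2$;
\item deforming to $F_1+3\tau$;
\item checking smoothness directly on the slices $L_i\ne0$, $S_{i+1}=S_{i+2}=1$, rather than by an Euler-field argument;
\item proving freeness from the characters of $Z$ and $M_{i,2}$.
\end{itemize}
Your Step~3, which identifies the quotient of a guessed cover, is workable in principle. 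The paper avoids it by deriving $B_m$ directly from the Seifert data, through the universal abelian cover $x^6+y^3+z^n=0$ and its discriminant group.
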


\begin{proof}
The calculations in \eqref{cqg:i:canonical-cover} and~\eqref{cqg:i:canonical-character-uac} below identify the canonical cover and its action. Lemmas~\ref{cqg:i:flatness} and~\ref{cqg:i:smooth-free}, proved below, construct an equivariant flat deformation with smooth noncentral fibers and free covering action. Corollary~\ref{cqg:qhd-verification} gives the required smoothing.
\end{proof}

\subsubsection{The canonical cover}

The identity \([2^r,m]=((r+1)m-r)/(rm-r+1)\) gives \(n/a_m=[2^{m-2},m]\). The rational Euler number and discriminant order are
\[
e=2-\frac16-\frac13-\frac{a_m}{n}=\frac{m^2}{2n},\qquad
\lvert D\rvert=18ne=9m^2.
\]
The link is a rational homology sphere, and the universal abelian cover is the Brieskorn hypersurface~\cite{Neumann1983}
\begin{equation*}
C^{\mathrm{ab}}=\mathbb{C}[x,y,z]/(x^6+y^3+z^n).
\end{equation*}
Let \(E_6\) be the component of the arm with one vertex of weight \(-6\). Its row in the negative inverse intersection matrix, restricted to the three end components, is
\[
\left(\frac16+\frac1{36e},\frac1{18e},\frac1{6ne}\right)
=\frac1{9m^2}\left(2m^2-m+1,n,3\right).
\]
Set \(A_1=2m^2-m+1\), and let \(\xi\) be a primitive \(9m^2\)-th root of unity. The meridian of \(E_6\) therefore acts by
\begin{equation*}
\delta(x,y,z)=(\xi^{A_1}x,\xi^ny,\xi^3z).
\end{equation*}
We have \(A_1\equiv1\pmod m\), while \(A_1\not\equiv0\pmod3\). Hence \(\gcd(A_1,9m^2)=1\), so this meridian has order \(9m^2\) and generates the discriminant group. The three terms in the equation have the same character, since
\[
6A_1-3n=9m^2.
\]
The residue canonical form has character
\begin{equation}\label{cqg:i:canonical-character-uac}
A_1+n+3-3n=A_1-2n+3=3m
\quad\pmod{9m^2}.
\end{equation}
Thus the canonical index is \(3m\), and the kernel of the canonical character is \(\langle\delta^{3m}\rangle\), of order \(3m\).

We first take its subgroup of order three, generated by \(\delta^{3m^2}\). Its weights on \(x,y,z\) are \((A_1,n,0)\) modulo three. The first two are nonzero and opposite, so the invariant algebra is generated by \(x^3,y^3,xy,z\). Put \(u=xy\) and \(v=x^3\). Eliminating \(y^3=-v^2-z^n\) gives
\[
u^3+v^3+vz^n=0.
\]
On these coordinates, the remaining subgroup of order \(m\) has weights
\[
(m^2-m+1,2m^2-m+1,1)\equiv(1,1,1)\pmod m.
\]
Consequently the canonical cover is
\begin{equation}\label{cqg:i:canonical-cover}
B_m=C_m^{\mu_m},
\qquad
C_m=\mathbb{C}[u,v,z]/(u^3+v^3+vz^n),
\end{equation}
where \(\mu_m\) acts scalarly. The residual canonical group is \(G=\mu_{3m}\). This is the row \(W=\Spec B_m\), \(\Gamma=G\) of Table~\ref{table:smoothing-covers}.

The polynomial defining \(C_m\) is Eisenstein at \(v\), and its partial derivatives \(3u^2,3v^2+z^n,nvz^{n-1}\) vanish simultaneously only at the origin. Thus \(C_m\) is a normal domain. The scalar action is free off the origin, and its determinant character equals the equation character. Hence \(B_m\) is normal and Gorenstein.

Let \(\epsilon=\xi^{3m}\), a primitive \(3m\)-th root, and put \(\omega=\epsilon^m\). A monomial \(u^av^bz^c\) with \(a+b+c=m\) has residual character
\begin{equation}\label{cqg:i:monomial-character}
1+(m-1)a+(2m-1)b\quad\pmod{3m}.
\end{equation}
Indeed, its numerator before passage to \(G\) is
\[
(m^2-m+1)a+(2m^2-m+1)b+c
=m\left(1+(m-1)a+(2m-1)b\right).
\]
In particular the elements
\begin{equation}\label{cqg:i:XYZ-central}
X=vz^{m-1},\qquad Y=uz^{m-1},\qquad Z=z^m
\end{equation}
have characters \(2m,m,1\), respectively.

\subsubsection{A presentation using a toric algebra}

All indices in the following construction are read modulo three. Factoring \(u^3+v^3\) gives the three linear forms \(v+\omega^iu\). Put
\[
l_i=X+\omega^iY=(v+\omega^iu)z^{m-1}.
\]
Multiplication in the coordinate ring of the canonical cover satisfies
\begin{equation}\label{cqg:i:central-products}
\sum_{i=0}^2\omega^il_i=0,
\qquad
l_0l_1l_2=-XZ^m.
\end{equation}
For the second equality, use \(u^3+v^3=-vz^n\) and \(n+2m-2=m^2\). Choose the cyclic ordering \(0,1,2\), and define
\begin{equation*}
\overline M_{i,j}=(v+\omega^iu)(v+\omega^{i+1}u)^{j-1}z^{m-j},
\qquad 1\le j\le m.
\end{equation*}
These belong to \(B_m\), and after inverting \(Z\) they satisfy \(\overline M_{i,j}=l_il_{i+1}^{j-1}/Z^{j-1}\). We now construct a toric algebra realizing these multiplication identities without inverting \(Z\).

Let
\[
P=\mathbb{C}[S_0,S_1,S_2,h_0,h_1,h_2],
\]
and let \(T=(\mathbb{C}^*)^2\) have character lattice generated by \(e_0,e_1,e_2\), with the sole relation \(e_0+e_1+e_2=0\). Give the variables the weights
\begin{equation*}
\operatorname{wt}_T(S_i)=e_i,
\qquad
\operatorname{wt}_T(h_i)=-(m-1)e_{i+1}-e_{i+2}.
\end{equation*}
The following monomials are invariant:
\begin{equation}\label{cqg:i:toric-generators}
\begin{aligned}
Z&=S_0S_1S_2,\qquad H=h_0h_1h_2,\qquad L_i=S_{i+1}^{m-1}S_{i+2}h_i,\\
M_{i,j}&=S_{i+1}^{m-j}S_{i+2}^{1+(m-2)(j-1)}h_ih_{i+1}^{j-1},
\qquad 1\le j\le m.
\end{aligned}
\end{equation}
Here \(M_{i,1}=L_i\), and direct multiplication gives
\[
L_0L_1L_2=Z^mH,
\qquad
Z^{j-1}M_{i,j}=L_iL_{i+1}^{j-1}.
\]
Set \(Q=P^T\).

\begin{lemma}\label{cqg:i:semigroup}
The algebra \(Q\) is generated by \(Z,H\), and the \(3m\) monomials \(M_{i,j}\) in \eqref{cqg:i:toric-generators}.
\end{lemma}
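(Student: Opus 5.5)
The plan is to compute the invariant semigroup of $P$ explicitly and show that its Hilbert basis is the listed set, reducing generation to planar cones spanned by consecutive pairs of determinant one, as in the boundary-face argument for the four-arm semigroups \eqref{uqg:four:generators}.

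\emph{Describing the semigroup.} Since $T$ acts diagonally, $Q$ is spanned by the invariant monomials
\[
S_0^{s_0}S_1^{s_1}S_2^{s_2}h_0^{k_0}h_1^{k_1}h_2^{k_2}.
\]
In the character lattice, with its single relation $e_0+e_1+e_2=0$, the weight vanishes exactly when the coefficients of the $e_j$ coincide. Writing $c$ for their common value, invariance is equivalent to
\[
s_j=c+(m-1)k_{j-1}+k_{j+1}\qquad (j\bmod 3),
\]
together with $s_j\ge0$ and $k_j\ge0$. Thus the semigroup is the set of integral points $(c,k_0,k_1,k_2)\in\mathbb{Z}\times\mathbb{Z}_{\ge0}^3$ satisfying these three inequalities.

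\emph{Reading off the generators.} In these coordinates, $Z$ is $(1,0,0,0)$ and $H$ is $(0,1,1,1)$; $H$ is invariant because the $h$-weights sum to $-m(e_0+e_1+e_2)=0$. A direct check of the exponents in \eqref{cqg:i:toric-generators} shows that $M_{i,j}$ is
\[
c=1-j,\qquad k_i=1,\qquad k_{i+1}=j-1,\qquad k_{i+2}=0 .
\]

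\emph{First reductions.} Subtracting $(0,1,1,1)$ preserves every inequality. So after dividing by a power of $H$ some $k$ vanishes, and after a cyclic relabeling we may assume $k_{i+2}=0$. The remaining conditions are
\[
s_i=c+k_{i+1}\ge0,\qquad s_{i+1}=c+(m-1)k_i\ge0,\qquad s_{i+2}=c+(m-1)k_{i+1}+k_i\ge0 .
\]
The third inequality follows from the first two. If both $s_i>0$ and $s_{i+1}>0$, subtracting $Z$ keeps all three inequalities and decreases $c$. Iterating this reduces to the boundary case where $s_i=0$ or $s_{i+1}=0$.

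\emph{The two planar boundary faces.} On each face $c$ is determined by $(k_i,k_{i+1})$, so the face is a planar cone.

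\begin{itemize}
\item If $s_i=0$, then $c=-k_{i+1}$ and $0\le k_{i+1}\le(m-1)k_i$. This cone has rays $(1,0)$ and $(1,m-1)$. The points $(1,j-1)$, $1\le j\le m$, are exactly the $M_{i,j}$, and consecutive ones have determinant one.
\item If $s_{i+1}=0$, then $c=-(m-1)k_i$ and $k_{i+1}\ge(m-1)k_i$. This cone has rays $(1,m-1)$, which is $M_{i,m}$, and $(0,1)$. The point $(0,1)$ with $c=0$ is $S_iS_{i+2}^{m-1}h_{i+1}=L_{i+1}=M_{i+1,1}$, and these two vectors again have determinant one.
\end{itemize}

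In both cases each cone is a union of unimodular subcones. Every lattice point of such a subcone is therefore a nonnegative integral combination of its two spanning generators. This proves that $Z$, $H$, and the $M_{i,j}$ generate $Q$.

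The main obstacle is bookkeeping rather than depth. One must confirm that the reduction steps (removing $H$, then $Z$) never leave the semigroup, that the degenerate case $k_i=k_{i+2}=0$ is covered (it lies on the second face and produces powers of $M_{i+1,1}$ times $Z$), and that the cyclic relabeling is consistent with the ordering $0,1,2$ used to define the $M_{i,j}$.
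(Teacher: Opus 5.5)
Your argument is correct and is essentially the paper's proof. Both remove $H$ and then $Z$, normalize cyclically so that one $h$-exponent vanishes, and split according to whether $s_i$ or $s_{i+1}$ is zero (the paper's cases $q\le(m-1)p$ and $q>(m-1)p$). Your unimodular subdivision of the face $0\le k_{i+1}\le(m-1)k_i$ is the paper's choice of $r_1,\ldots,r_p\in\{0,\ldots,m-1\}$ summing to $q$.

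One slip to fix: in your coordinates $H$ is $(-m,1,1,1)$, not $(0,1,1,1)$. Subtracting $(0,1,1,1)$ would lower every $s_j$ by $m$. With the correct vector the $s_j$ are unchanged, as they must be since $H$ involves no $S_i$, and that is what makes the division by $H$ legitimate.
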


\begin{proof}
Write an invariant monomial as \(\prod_iS_i^{a_i}h_i^{b_i}\). Its invariance is equivalent to the existence of an integer \(\lambda\) such that
\[
a_i=\lambda+(m-1)b_{i-1}+b_{i-2}\qquad(i=0,1,2).
\]
Divide first by the largest possible power of \(H\), and then by the largest possible power of \(Z\). The remaining monomial has \(\min b_i=\min a_i=0\). By cyclic permutation, suppose \(b_2=0\), and put \(b_0=p,b_1=q\). Then
\[
\lambda=-\min\{q,(m-1)p\},
\]
because \(p+(m-1)q\ge q\). If \(q\le(m-1)p\), the exponents of the \(S_i\) are
\[
(0,(m-1)p-q,p+(m-2)q).
\]
Choose integers \(r_1,\ldots,r_p\) in \(\{0,\ldots,m-1\}\) with sum \(q\). The monomial is then \(\prod_{\nu=1}^p M_{0,r_\nu+1}\). If \(q>(m-1)p\), it is \(M_{0,m}^pL_1^{q-(m-1)p}\). The case \(p=q=0\) gives the constant monomial. Restoring the removed powers of \(H,Z\), and applying the same argument after cyclic permutation, proves the claim.
\end{proof}

Define
\begin{equation}\label{cqg:i:central-equations}
F_1=\sum_{i=0}^2\omega^iL_i,
\qquad
F_0=\sum_{i=0}^2L_i+3H.
\end{equation}
The relation \(F_0=0\) sets \(H=-X\), where \(X=\sum_iL_i/3\). Thus \eqref{cqg:i:central-equations} is obtained directly from the multiplication identities in the central fiber \eqref{cqg:i:central-products}.

\begin{lemma}\label{cqg:i:central-identification}
The sequence \(F_1,F_0\) is regular in \(P\) and in \(Q\), and there is an isomorphism
\begin{equation*}
Q/(F_1,F_0)\simeq B_m
\end{equation*}
sending \(Z\) to \(z^m\), \(H\) to \(-vz^{m-1}\), and \(M_{i,j}\) to \(\overline M_{i,j}\).
\end{lemma}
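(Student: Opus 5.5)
I would first construct the homomorphism \(\varphi\colon Q\to B_m\) on the generators by
\[
Z\mapsto z^m,\qquad H\mapsto -vz^{m-1},\qquad M_{i,j}\mapsto\overline M_{i,j},
\]
and check that it is well defined. The clean way is to work after inverting \(Z\). On \(P[(S_0S_1S_2)^{-1}]^T\), the Laurent monomials show that \(Q[Z^{-1}]\) is generated by \(Z^{\pm1}\), \(H\), and \(L_0,L_1,L_2\), subject only to \(L_0L_1L_2=Z^mH\). Indeed, the torus \(T\) has rank two, so the invariant Laurent ring has rank three and is generated by \(L_0,L_1,Z^{\pm1}\), with \(L_2=Z^mH/(L_0L_1)\) rational.

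Next I would compare with \(B_m[z^{-m}]=\mathbb{C}[u/z,v/z,z^{\pm m}]/(\ldots)\). The assignment \(L_i\mapsto l_i\), \(Z\mapsto z^m\) respects \eqref{cqg:i:central-products} modulo \((F_1,F_0)\), since \(F_0=0\) forces \(H=-X\) and then \(l_0l_1l_2=-XZ^m=Z^mH\). The identities \(Z^{j-1}M_{i,j}=L_iL_{i+1}^{j-1}\) and \(Z^{j-1}\overline M_{i,j}=l_il_{i+1}^{j-1}\) then force the images of the \(M_{i,j}\). So \(\varphi\) is the restriction of a map of localizations, and it lands in \(B_m\) because the \(\overline M_{i,j}\) lie in \(B_m\).

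Surjectivity follows because \(B_m\) is generated by the degree-\(m\) monomials in \(u,v,z\). Each such monomial is a linear combination of products of the forms \((v+\omega^iu)^a(v+\omega^{i+1}u)^b z^c\). Modulo the cubic relation, these are expressed in the \(\overline M_{i,j}\), \(Z\), and \(-vz^{m-1}\). I would verify this by a degree count using \(u^3+v^3=-vz^n\) to reduce any occurrence of all three linear forms.

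Regularity of \(F_1,F_0\) in \(P\) is immediate, since they involve the distinct variables \(h\) through the \(L_i\) and \(H\); their initial forms in suitable variables make this easy to see. In the normal Cohen--Macaulay toric ring \(Q\) (Hochster), it suffices that \(\dim Q/(F_1,F_0)=1\)... but wait, \(\dim Q=4\), so I need \(\dim Q/(F_1,F_0)=2\). I would get this from the zero locus: on \(Z\ne0\) it is the two-dimensional \(\operatorname{Spec}B_m[z^{-m}]\), while on \(Z=0\) I would show directly that the common zeros have dimension at most one. This is exactly the point where additional components of the auxiliary zero loci must be controlled.

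Injectivity is the main obstacle. Knowing that \(Q/(F_1,F_0)\) is Cohen--Macaulay of dimension two, it has no embedded primes. Moreover, \(\varphi\) is an isomorphism after inverting \(Z\). So it remains to show that no associated prime of \(Q/(F_1,F_0)\) contains \(Z\), that is, that \(Z\) is a nonzerodivisor, or equivalently that every minimal component meets \(Z\ne0\). I would compute the locus \(\{Z=F_1=F_0=0\}\) in \(\operatorname{Spec}Q\) using the semigroup description of Lemma~\ref{cqg:i:semigroup}: each \(S_i=0\) face is treated separately. If that locus has dimension at most one, no two-dimensional component lies in it, giving injectivity and the isomorphism.
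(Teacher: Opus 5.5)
Your route is correct, and it differs from the paper's: you work downstairs in \(Q\), while the paper works upstairs in \(D_0=P/(F_1,F_0)\).

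The paper determines all components of \(V(F_1,F_0)\subset\mathbb{A}^6\): the closure \(C\) of the part over \(Z\ne0\), and the coordinate spaces \(P_i=V(S_i,h_i)\). This gives height two, hence regularity in \(P\) and unmixedness. Upstairs \(Z\) vanishes on the \(P_i\), so it is a zero divisor on \(D_0\). The paper must therefore prove reducedness by Jacobian minors, show that the \(P_i\) carry only constant invariants, and invoke Lemma~\ref{uqg:central-components}. It then gets regularity in \(Q\) by applying exact torus invariants to the Koszul resolution.

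In \(\operatorname{Spec}Q\) the \(P_i\) collapse to the origin. So Hochster's theorem plus your dimension count gives regularity in \(Q\), and unmixedness of the Cohen--Macaulay quotient alone makes \(Z\) a nonzerodivisor. You need no reducedness or Jacobian computation.

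The price is the appeal to Hochster, and regularity in \(P\) must be proved separately. The paper's template, by contrast, gets this for free and is reused for Figure~1(j). Regularity in \(P\) is not immediate for the reason you give, but it is easy:
\(F_1\) is prime in \(P\), being linear in the \(h_i\) with coprime coefficients. Restricting to \(S_0=0\), where \(F_1=S_1^{m-1}S_2h_0\) and \(F_0=h_0(S_1^{m-1}S_2+3h_1h_2)\), shows \(F_0\notin(F_1)\).

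Several steps you defer or misstate need to be written out.
\begin{itemize}
\item \emph{The count on \(Z=0\).} This is quick. The relation \(ZM_{i,2}=L_iL_{i+1}\) leaves at most one nonzero \(L_i\), \(F_1\) kills it, and \(F_0\) then gives \(H=0\). An elementary check of the torus weights, as on \(P_i\) in the paper, shows that the closed \(T\)-orbits with \(S_0S_1S_2=h_0h_1h_2=0\) and all \(L_i=0\) map to the origin or to one of the three \(M_{i,m}\)-axes. So the locus is one-dimensional.
\item \emph{The description of \(Q[Z^{-1}]\).} Your sentence is misstated. It is the four-dimensional ring \(\mathbb{C}[Z^{\pm1},L_0,L_1,L_2]\), and the redundant generator is \(H=L_0L_1L_2/Z^m\), not \(L_2\). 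Eliminating \(L_i=X+\omega^iY\) via \(F_1\) and \(H=-X\) via \(F_0\) then gives \(\mathbb{C}[X,Y,Z^{\pm1}]/(X^3+Y^3+XZ^m)\cong B_m[Z^{-1}]\). This is the isomorphism after inverting \(Z\) that you later use but only checked for well-definedness.
\item \emph{Surjectivity.} Reducing products of all three linear forms does not reach pure powers such as \((v+\omega u)^jz^{m-j}\). You need that the three forms \((v+\omega^iu)(v+\omega^{i+1}u)^{j-1}\) span the binary \(j\)-forms modulo \(u^3+v^3\). Combine this with induction on \(j\) through \((u^3+v^3)Q_{j-3}z^{m-j}=-Z^{m-2}vQ_{j-3}z^{m-j+2}\), as in the paper.
\end{itemize}
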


\begin{proof}
Put \(D_0=P/(F_1,F_0)\). After inverting \(Z\), all \(S_i\) are invertible. The torus weights of any two of them form a lattice basis, so setting \(S_1=S_2=1\) gives an algebraic slice. Define
\[
X=\frac{L_0+L_1+L_2}{3},\qquad
Y=\frac{L_0+\omega^2L_1+\omega L_2}{3}.
\]
The equation \(F_1=0\) gives \(L_i=X+\omega^iY\), while \(F_0=0\) gives \(H=-X\). Therefore
\begin{equation*}
(D_0[Z^{-1}])^T
\simeq
\mathbb{C}[X,Y,Z,Z^{-1}]/(X^3+Y^3+XZ^m).
\end{equation*}
Under \eqref{cqg:i:XYZ-central}, the right side is \(B_m[Z^{-1}]\). Indeed, the scalar invariants in \(C_m[z^{-1}]\) are generated by \(u/z,v/z,z^m,z^{-m}\); multiplying the first two generators by \(Z\) gives \(Y,X\). The right side is a domain and is smooth: at a singular point its derivatives would give \(Y=0\), \(mXZ^{m-1}=0\), and \(3X^2+Z^m=0\), which is impossible when \(Z\ne0\). The full space \(\operatorname{Spec}D_0[Z^{-1}]\) is the product of this smooth irreducible surface and \(T\). Let \(C\) denote its closure in \(\operatorname{Spec}D_0\).

We determine every other component. If exactly one variable \(S_i\) vanishes, then \(L_{i+1}=L_{i+2}=0\), and \(F_1=0\) forces \(L_i=0\). Since the other two \(S\)-variables are nonzero, this forces \(h_i=0\). The resulting component is
\[
P_i=V(S_i,h_i),
\]
which is a four-dimensional coordinate space contained in the zero set. If at least two \(S_i\) vanish, all the \(L_i\) vanish, and \(F_0=0\) gives \(H=0\). Such a stratum has dimension at most three: two \(S\)-coordinates and at least one \(h\)-coordinate vanish. It follows that \((F_1,F_0)\) has height two and that its only possible irreducible components are \(C,P_0,P_1,P_2\). As a height-two ideal generated by two elements in a polynomial ring, it is a complete intersection and is unmixed.

The main component is smooth on \(Z\ne0\). On \(P_0\), the Jacobian minor in the columns \(S_0,h_0\) is
\[
S_2^{m-1}h_1\left((\omega-1)S_1^{m-1}S_2+3\omega h_1h_2\right),
\]
which is generically nonzero; cyclic permutation treats \(P_1,P_2\). Thus the unmixed complete intersection \(D_0\) is generically reduced on every component, hence reduced.

The extra components contribute no invariant functions. On \(P_0\), the surviving variables are \(S_1,S_2,h_1,h_2\). In the basis \(e_1,e_2\), their weights are
\[
(1,0),\quad(0,1),\quad(1,-(m-2)),\quad(m-2,m-1).
\]
The linear functional \((m-1,1)\) is strictly positive on all four weights. Hence every invariant polynomial on \(P_0\) is constant. The same conclusion holds on the other \(P_i\). All these coordinate components contain the origin. The main component \(C\) also contains it, since the locus \(h_0=h_1=h_2=0\), \(S_0S_1S_2\ne0\), specializes to the origin. Lemma~\ref{uqg:central-components} therefore gives an injection
\[
D_0^T\hookrightarrow B_m[Z^{-1}].
\]
By Lemma~\ref{cqg:i:semigroup}, its image is generated by the stated images of \(Z,H,M_{i,j}\), all of which belong to \(B_m\).

We verify that these images generate all of \(B_m\). For \(j=1\), the three forms \(v+\omega^iu\) span the binary linear forms. For \(j\ge2\), the three degree-\(j\) forms
\[
(v+\omega^iu)(v+\omega^{i+1}u)^{j-1}\qquad(i=0,1,2)
\]
form a basis of \(\mathbb{C}[u,v]_j/(u^3+v^3)\mathbb{C}[u,v]_{j-3}\), where the denominator is zero for \(j=2\). To see this, evaluate at the three distinct projective roots of \(u^3+v^3\). Each of the three forms vanishes at two roots and is nonzero at the third, and the quotient has dimension three. Thus, for \(3\le j\le m\), any degree-\(j\) binary form times \(z^{m-j}\) is a linear combination of the \(\overline M_{i,j}\), plus a term
\[
(u^3+v^3)Q_{j-3}(u,v)z^{m-j}
=-Z^{m-2}\left(vQ_{j-3}(u,v)z^{m-j+2}\right).
\]
The expression in parentheses has weighted degree in the two variables \(j-2\) and total auxiliary degree \(m\). Induction on \(j\), starting with \(j=0,1,2\), therefore gives all degree-\(m\) monomials. The standard monomials in \(C_m\) are \(u^av^bz^c\) with \(0\le a\le2\). Scalar invariance means that \(a+b+c\) is divisible by \(m\). Such a monomial factors into monomials of total auxiliary degree \(m\), each still having \(u\)-exponent at most two. Hence the degree-\(m\) monomials generate \(B_m\).

Exactness of torus invariants identifies \(D_0^T\) with \(Q/(F_1,F_0)\). Applying the same functor to the Koszul resolution over \(P\) proves that \(F_1,F_0\) form a regular sequence in \(Q\).
\end{proof}

\subsubsection{The equivariant deformation}

Choose \(c_0=\epsilon,c_1=c_2=1\), and set
\[
d_i=\frac{\omega^2}{c_{i+1}^{m-1}c_{i+2}}.
\]
The transformation
\begin{equation*}
g(S_i)=c_iS_{i-1},\qquad g(h_i)=d_ih_{i-1}
\end{equation*}
normalizes \(T\), cyclically permuting its character lattice. Direct substitution gives
\begin{equation}\label{cqg:i:normalizer-invariants}
g(Z)=\epsilon Z,\qquad
g(L_i)=\omega^2L_{i-1},\qquad
g(H)=\omega^2H.
\end{equation}
For the last equality, use
\[
\prod_i d_i=\frac{\omega^6}{(c_0c_1c_2)^m}=\epsilon^{-m}=\omega^2.
\]
Moreover \(g^3(S_i)=\epsilon S_i\) and \(g^3(h_i)=\omega^2h_i\). Its \(3m\)-th power is the torus element acting as \(S_i\mapsto\omega S_i\), \(h_i\mapsto\omega^{-m}h_i\). Therefore \(g\) induces an action of \(G=\mu_{3m}\) on \(Q\). Its order is exactly \(3m\), since it acts on \(Z\) by the primitive root \(\epsilon\).

The equation characters are
\begin{equation*}
g(F_1)=F_1,\qquad g(F_0)=\omega^2F_0.
\end{equation*}
Thus the action on the equations allows a constant deformation of \(F_1\) over a base with trivial \(G\)-action. Define
\begin{equation*}
D=P[\tau]/(F_1+3\tau,F_0),
\qquad
\mathscr{B}=D^T=Q[\tau]/(F_1+3\tau,F_0),
\end{equation*}
and let \(g(\tau)=\tau\). Both equations are preserved, the first invariantly and the second with character \(\omega^2\). The central action satisfies
\[
g(X)=\omega^2X,\qquad g(Y)=\omega Y,\qquad g(Z)=\epsilon Z.
\]
These agree with \eqref{cqg:i:monomial-character}. Since \(X,Y,Z,Z^{-1}\) generate the localization of the central fiber and \(B_m\) injects into that localization, the action agrees on all of \(B_m\) with its action of the canonical covering group.

\begin{lemma}\label{cqg:i:flatness}
The algebra \(\mathscr{B}\) is flat over \(\mathbb{C}[\tau]\), and \(\mathscr{B}/\tau\mathscr{B}\simeq B_m\).
\end{lemma}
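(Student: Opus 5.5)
The plan is to apply Proposition~\ref{uqg:regular-section} to the regular sequence \(F_1,F_0\) in the polynomial ring \(P\), and then pass to torus invariants. By Lemma~\ref{cqg:i:central-identification}, \(F_1,F_0\) is a regular sequence in \(P\). Take \(a=(F_0)\), whose syzygy module is zero, and \(h=F_1\). Since \(F_0\) is a nonzerodivisor in \(P\) and \(F_1\) is regular modulo \(F_0\), the lift \(H=F_1+3\tau\) gives the Koszul column \((F_1+3\tau,-F_0)^{\mathsf t}\) as a full syzygy lift for \((F_0,F_1+3\tau)\). Its reduction modulo \(\tau\) generates all central relations. Lemma~\ref{uqg:flat-relations} then shows that \(D=P[\tau]/(F_1+3\tau,F_0)\) is flat over the local ring at \(\tau=0\), with central fiber \(D_0=P/(F_1,F_0)\).

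To get global flatness over \(\mathbb{C}[\tau]\), I would use Lemma~\ref{uqg:graded-flatness}. This needs a nonnegative grading on \(P[\tau]\) with \(\deg\tau>0\) for which \(F_1+3\tau\) and \(F_0\) are homogeneous. The \(L_i\) and \(H\) have a common degree in any grading where \(\deg S_i=\alpha\) and \(\deg h_i=\beta\) with \(m\alpha+\beta=3\beta\), that is, \(m\alpha=2\beta\). Choosing \(\alpha=2\) and \(\beta=m\), every \(L_i\) and \(H\) has degree \(3m\), and we set \(\deg\tau=3m\). This grading is positive on \(P\), so \(D\) is flat over \(\mathbb{C}[\tau]\).

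Next I take \(T\)-invariants. The Reynolds operator is exact and commutes with multiplication by \(\tau\) and by polynomials in \(\tau\), because \(\tau\) is invariant. Hence torsion-freeness, and so flatness, of \(D\) over the PID \(\mathbb{C}[\tau]\) passes to the direct summand \(\mathscr{B}=D^T\). Exactness of invariants applied to \(0\to D\xrightarrow{\tau}D\to D_0\to0\) gives \(\mathscr{B}/\tau\mathscr{B}\simeq D_0^T\). It remains to identify \(D^T\) with \(Q[\tau]/(F_1+3\tau,F_0)\). For this I apply invariants to the Koszul resolution of \(D\) over \(P[\tau]\), as in the last step of Lemma~\ref{cqg:i:central-identification}. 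The generators \(F_1+3\tau\) and \(F_0\) are \(T\)-invariant, so the invariant complex is the Koszul complex over \(Q[\tau]\). Finally, \(D_0^T\simeq Q/(F_1,F_0)\simeq B_m\) by Lemma~\ref{cqg:i:central-identification}.

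The only point needing care is the grading: the \(T\)-weights are not themselves positive, so the argument needs the auxiliary \(\mathbb{Z}\)-grading above. The remaining steps follow from results already proved.
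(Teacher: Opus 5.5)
Your argument is correct, but it takes a different route from the paper's.

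\emph{The paper's route.} The paper never uses the Koszul complex in the ambient ring \(P\). It presents \(Q=P^T\) by the monomial generators \(Z,H,M_{i,j}\) of Lemma~\ref{cqg:i:semigroup}, with toric equations \(k\) and a full syzygy matrix chosen as in Remark~\ref{uqg:fixed-presentations}. It takes \(s_1,s_2\) to be the lifts of \(F_1,F_0\) that are linear in the generators \(L_i=M_{i,1}\) and \(H\). It then applies Proposition~\ref{uqg:regular-section} to the row \((k,s_1+3\tau,s_2)\). This uses regularity of \(F_1,F_0\) in \(Q\) and the grading \(\deg S_i=2/3\), \(\deg h_i=m/3\), \(\deg\tau=m\); your grading is three times this one.

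\emph{Your route.} You deform upstairs in \(P\), where the only central syzygy is the Koszul column. You get flatness of \(D\) from Lemmas~\ref{uqg:flat-relations} and~\ref{uqg:graded-flatness}. You then descend by exactness of torus invariants, which gives three things: torsion-freeness of \(D^T\); the equality \(D^T=Q[\tau]/(F_1+3\tau,F_0)\), which the paper simply writes into the definition of \(\mathscr{B}\); and \(\mathscr{B}/\tau\mathscr{B}\simeq D_0^T\simeq B_m\) via Lemma~\ref{cqg:i:central-identification}.

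\emph{Comparison.} Your route is shorter and avoids the toric-ideal computation entirely. The paper's route buys an explicit polynomial presentation of \(\mathscr{B}\) itself, together with a polynomial lift of the full first syzygy module of a presentation of \(B_m\). That is the extra structure claimed in Theorem~\ref{cqg:main} and used in Proposition~\ref{uqg:criterion}, but the lemma as stated does not require it.

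\emph{A small point.} You use the order \(F_0,F_1\), whereas Lemma~\ref{cqg:i:central-identification} gives \(F_1,F_0\). This is harmless, since both are homogeneous of positive degree in your grading, and exactness of the Koszul complex does not depend on the order. It still deserves a sentence.
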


\begin{proof}
Use the toric presentation determined by \eqref{cqg:i:toric-generators}, with equivariant equations \(k\) and their full syzygy matrix chosen as in Remark~\ref{uqg:fixed-presentations}. The action on the monomial generators is
\[
g(Z)=\epsilon Z,\qquad g(H)=\omega^2H,\qquad
g(M_{i,j})=\omega^{2j}\epsilon^{-(j-1)}M_{i-1,j}.
\]
Let \(s_1,s_2\) be the linear equivariant lifts of \(F_1,F_0\). Lemma~\ref{cqg:i:central-identification} gives regularity and the central fiber. Proposition~\ref{uqg:regular-section} applies to \((k,s_1+3\tau,s_2)\), with the positive grading
\[
\deg S_i=\frac23,\qquad\deg h_i=\frac m3,\qquad\deg\tau=m.
\]
It gives the full polynomial syzygy lift and flatness of \(\mathscr{B}\), whose invariant presentation follows by exactness of torus invariants.
\end{proof}

\subsubsection{Smoothness, freeness, and descent}

\begin{lemma}\label{cqg:i:smooth-free}
For every \(\tau\ne0\), the fiber of \(\operatorname{Spec}\mathscr{B}\) is smooth and the covering group of the canonical cover \(G\) acts freely on it.
\end{lemma}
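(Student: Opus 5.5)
We work with the torus-level family \(D=P[\tau]/(F_1+3\tau,F_0)\) and descend to \(\mathscr{B}=D^T\). For \(\tau\ne0\) we first show that the fiber \(D_\tau\) avoids the extra components \(P_i=V(S_i,h_i)\) and the deeper strata. If some \(S_i\) vanishes, then \(L_{i+1}=L_{i+2}=0\), so \(F_1+3\tau=0\) reads \(\omega^iL_i=-3\tau\). But \(L_i\) also contains a factor \(S_{i+1}\) or \(S_{i+2}\). If a second \(S\)-coordinate vanishes, all \(L_i\) vanish, which contradicts \(\tau\ne0\). If exactly \(S_i=0\), then \(L_i=S_{i+1}^{m-1}S_{i+2}h_i\) can be nonzero, so this case needs more care.

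The cleanest route is to work with the \(T\)-quotient charts directly. Each of the quotient charts on which \(Z\) or a suitable invariant monomial is nonzero is a torus slice, as in the proof of Lemma~\ref{cqg:i:central-identification}. On \(Z\ne0\) the slice is
\[
\mathbb{C}[X,Y,Z^{\pm1}]/(X^3+Y^3+XZ^m+\text{(terms from } \tau))=0.
\]
Explicitly, \(F_1+3\tau=0\) gives \(L_i=X+\omega^iY-\omega^{2i}\tau\) up to the normalization of the \(L_i\), and \(F_0=0\) gives \(H=-X\). The relation \(L_0L_1L_2=Z^mH\) then becomes a cubic deformation of the central cubic. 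Its smoothness on \(Z\ne0\) is checked from the partial derivatives, as in the central case, using \(\tau\ne0\) to rule out common zeros.

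On \(Z=0\) we use the charts where some \(h_i\) or some \(L_i\) is invertible. Here the slices are given by setting the appropriate pair of coordinates to one, using a lattice basis of weights. We verify two things on each chart. First, the \(T\)-action on \(D_\tau\cap\{Z=0\}\) has finite stabilizers, which gives quotient singularities at worst. Second, the combined \(T\rtimes\langle g\rangle\)-stabilizers are trivial modulo \(T\), which makes the quotient smooth and \(G\) free.

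For freeness we use \eqref{cqg:i:normalizer-invariants}. A point of \(\operatorname{Spec}\mathscr{B}_\tau\) fixed by \(g^k\) satisfies \(g^k(Z)=\epsilon^kZ\), \(g^k(X)=\omega^{2k}X\), and \(g^k(Y)=\omega^kY\) in the \(Z\ne0\) chart. A fixed point with \(Z\ne0\) forces \(\epsilon^k=1\), that is, \(k\equiv0\pmod{3m}\). With \(Z=0\), a fixed point needs \(X=Y=0\) unless \(3\mid k\) or the appropriate power condition holds. We then use that the deformed equation forces some \(L_i\ne0\), because \(\sum\omega^iL_i=-3\tau\ne0\). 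We also use that \(g\) cyclically permutes the \(L_i\) with the scalar \(\omega^2\). For \(3\nmid k\) the permutation of the \(L_i\) is fixed-point-free up to scalars, which contradicts \(\sum\omega^iL_i\ne0\) after a short computation. For \(3\mid k\), \(g^k\) acts diagonally by a scalar on \(H\) and on the \(L_i\), and nonvanishing of some \(L_i\) forces that scalar to be one.

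The main obstacle is the \(Z=0\) locus. There the quotient is not a free torus quotient a priori, and the \(T\)-stabilizers on points with some \(S_i=0\) must be shown trivial. We will try to do this by exhibiting, on each such chart, two invariant-relevant coordinates whose weights form a lattice basis. Candidates are \(S_{i+1}\), \(S_{i+2}\), or \(h_i\) together with the nonvanishing \(L_i\). These give a slice as in the central argument. We then apply the Jacobian criterion to the slice equations directly, and smoothness of \(\mathscr{B}_\tau\) follows.
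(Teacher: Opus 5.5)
Your proposal has two genuine gaps, one in the smoothness half and one in the freeness half.

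\textbf{Smoothness.} Your \(Z\ne0\) chart is fine. There the slice is \(X^3+Y^3-\tau^3+3\tau XY+XZ^m=0\), and a singular point would need \(XZ^{m-1}=0\), hence \(X=Y=0\) and \(Z^m=0\).

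On \(Z=0\), however, you only announce a plan, and the criterion you state is wrong. Triviality of the \(T\rtimes\langle g\rangle\)-stabilizers modulo \(T\) is a freeness statement about \(G\), not a smoothness statement. Smoothness of the quotient needs a free \(T\)-action with a slice and a Jacobian check on that slice. That check is the real content of the lemma, and you never carry it out.

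You already have the ingredients. Since \(\sum_i\omega^iL_i=-3\tau\ne0\), the sets \(L_i\ne0\) cover the whole fiber, so the split by \(Z\) is unnecessary. On \(L_0\ne0\) the coordinates \(S_1,S_2,h_0\) are nonzero, so \(S_1=S_2=1\) is a slice. Put \(s=S_0\), \(p=h_1\), \(q=h_2\), \(a=h_0\ne0\), \(b=sp\), \(c=s^{m-1}q\). The equations become \(a+\omega b+\omega^2c=-3\tau\) and \(F=a(3pq+1)+b+c=0\).

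Eliminate \(a\) and set \(U=3pq+1\), \(T_0=a(U-1)\), \(B_0=b(1-\omega U)\), \(C_0=c(1-\omega^2U)\). Then
\[
pF_p=T_0+B_0,\qquad qF_q=T_0+C_0,\qquad sF_s=B_0+(m-1)C_0.
\]
At a critical point these give \(mT_0=0\), so \(U=1\) and \(b=c=0\). Then \(F=0\) gives \(a=0\), a contradiction. This argument is valid even where \(s,p,q\) vanish, which is exactly the \(Z=0\) stratum you flagged as the obstacle. Without it, smoothness there is not established.

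\textbf{Freeness.} For \(3\nmid k\), the twisted permutation \(g(L_i)=\omega^2L_{i-1}\) is not fixed-point-free. It fixes \(L_0(1,\omega^2,\omega)\), where \(\sum_i\omega^iL_i=3L_0\ne0\), so no contradiction with \(\tau\ne0\) arises. The correct reason is your own first observation. \(Z=0\) forces some \(S_i=0\), which kills \(L_{i+1}\) and \(L_{i+2}\). So exactly one \(L\)-coordinate is nonzero, and a power of \(g\) prime to three moves its position.

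For \(k=3r\) with \(0<r<m\), your argument fails outright. \(g^3\) acts trivially on every \(L_i\), on \(H\), and on \(X,Y\), while \(Z=0\) at the point. So these functions cannot detect \(g^{3r}\), and "the scalar is one" carries no information. You need an invariant that is nonzero at the point and has nontrivial \(g^3\)-character.

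The paper uses \(M_{i,2}=S_{i+1}^{m-2}S_{i+2}^{m-1}h_ih_{i+1}\). From \(F_0=0\) and the vanishing of the other \(L\)'s, \(H=-L_i/3\ne0\). Hence every \(h_j\ne0\), and \(M_{i,2}\ne0\). Since \(g^3(M_{i,2})=\epsilon^{-3}M_{i,2}\), a fixed point forces \(m\mid r\).
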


\begin{proof}
The equation \(\sum_i\omega^iL_i=-3\tau\) shows that the invariant open sets \(L_i\ne0\) cover a noncentral fiber. On \(L_0\ne0\), the coordinates \(S_1,S_2,h_0\) are nonzero. The first two torus weights form a lattice basis, so there is a unique torus translate with \(S_1=S_2=1\). This gives an algebraic product decomposition with \(T\), and hence an affine quotient slice. On the slice put
\[
s=S_0,\quad p=h_1,\quad q=h_2,\quad
a=h_0,\quad b=sp,\quad c=s^{m-1}q.
\]
Here \(a\ne0\), and the equations become
\begin{equation}\label{cqg:i:slice-equations}
a+\omega b+\omega^2c=-3\tau,
\qquad
a(3pq+1)+b+c=0.
\end{equation}
Eliminate \(a=-3\tau-\omega b-\omega^2c\), and denote the second left side by \(F(s,p,q)\). Put
\[
U=3pq+1,\qquad T_0=a(U-1),\qquad
B_0=b(1-\omega U),\qquad C_0=c(1-\omega^2U).
\]
Direct differentiation, including the dependence of \(a\) on \(s,p,q\), gives
\begin{equation*}
pF_p=T_0+B_0,\qquad
qF_q=T_0+C_0,\qquad
sF_s=B_0+(m-1)C_0.
\end{equation*}
At a critical point all three right sides vanish. The first two imply \(B_0=C_0=-T_0\); substitution in the third gives \(mT_0=0\). Thus \(T_0=B_0=C_0=0\). Since \(a\ne0\), we obtain \(U=1\). Since \(1-\omega\) and \(1-\omega^2\) are nonzero, we then obtain \(b=c=0\). But the second equation of \eqref{cqg:i:slice-equations} becomes \(a=0\), a contradiction. This argument permits \(s,p,q\) to vanish and therefore omits no coordinate stratum. The other two charts have the same form: after cyclic permutation and multiplication of the first equation by \(\omega^{-i}\), the nonzero parameter is replaced by \(\omega^{-i}\tau\). Hence every noncentral fiber is smooth.

We next check every possible nontrivial stabilizer. Suppose \(g^j\), with \(0<j<3m\), fixes a quotient point. If \(Z\ne0\), the equation \(g(Z)=\epsilon Z\) would give \(\epsilon^j=1\), contrary to this range. Thus \(Z=0\). Some \(L_i\) is nonzero, which forces \(S_{i+1},S_{i+2},h_i\ne0\). Consequently exactly one \(S\)-coordinate vanishes, namely \(S_i\), and \(L_i\) is the only nonzero \(L\)-coordinate. A power of \(g\) not divisible by three permutes the position of this unique nonzero coordinate, so it cannot fix the point. It remains to consider \(j=3r\).

At the point in question, \(H=-L_i/3\ne0\), so all the \(h\)-coordinates are nonzero. The polynomial monomial
\[
M_{i,2}=S_{i+1}^{m-2}S_{i+2}^{m-1}h_ih_{i+1}
\]
is therefore nonzero. From either \eqref{cqg:i:normalizer-invariants} or its monomial formula,
\[
g^3(M_{i,2})=\epsilon^{-3}M_{i,2}.
\]
Thus \(g^{3r}\) can fix the point only if \(m\mid r\), in which case it is the identity. This excludes all nontrivial stabilizers.
\end{proof}

\subsection{The family in Figure~1(j)}\label{cqg:j:section}

\begin{proposition}\label{cqg:j:existence}
Let \(m\ge4\), put \(n=m^2-3m+3\), and let \(A\) be the weighted homogeneous surface algebra with Seifert data
\[
\left(2;(2,1),(6,1),(n,(m-2)^2)\right).
\]
Then \(A\) admits an algebraic strong \(\mathbb{Q}\)-Gorenstein smoothing. Taking \(m=q+4\) gives every member of Figure~1(j).
\end{proposition}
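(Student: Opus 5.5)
The plan is to run the 1(i) argument again with the arm data \((2,1),(6,1)\) and weights \((2,1,1)\) in place of \((1,1,1)\).

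\emph{Canonical cover.} The identity \([2^r,m]=((r+1)m-r)/(rm-r+1)\) with \(r=m-3\) gives \(n/(m-2)^2=[2^{m-3},m]\). One then computes \(e=m^2/(3n)\) and \(\lvert D\rvert=12ne=4m^2\). The universal abelian cover is the Brieskorn hypersurface \(x^2+y^6+z^n=0\) (Neumann). A meridian of the \((-6)\)-vertex generates the discriminant group; read off its action \(\delta\) from the corresponding row of the inverse intersection matrix. From the residue form, obtain the canonical character and check that the index is \(2m\).

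Next, pass to the invariants of a small subgroup of \(\ker\kappa\). Aim for the coordinates \(u=x\), \(v=y^2\) (or a similar pair) and \(w=z\), giving \(C_m=\mathbb{C}[u,v,w]/(u^2+v^4+vw^n)\). The residual subgroup \(\mu_m\) should act with weights \((2,1,1)\), so that \(B_m=C_m^{\mu_m}\) with \(G=\mu_{2m}\), as in Table~\ref{table:smoothing-covers}. Normality and the Gorenstein property follow as in 1(i): the defining polynomial is Eisenstein at \(v\), the singularity is isolated, and the \(\mu_m\) action is free off the origin with determinant character equal to the equation character.

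\emph{Toric presentation.} Introduce central elements \(X=vw^{m-1}\), \(Y=uw^{m-2}\), \(Z=w^m\), together with the monomial families spanning \(B_m\) in each auxiliary degree. Factor \(u^2+v^4\) into the linear forms \(u\pm iv^2\). This should lead to a toric algebra \(Q=P^T\) with two pairs of \(S\)-type variables and auxiliary \(h\)-variables. The statement's reference to a regular sequence of three equations suggests that one extra equation is needed to encode \(v\) times a square, so the presentation has three linear equations \(F_1,F_2,F_0\), of which exactly one has trivial \(G\)-character.

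Prove the analogue of Lemma~\ref{cqg:i:central-identification} in three parts:
\begin{enumerate}
\item identify the main component after inverting \(Z\) with \(B_m[Z^{-1}]\);
\item list the extra coordinate components and show that each carries only constant torus invariants, using a functional positive on the weights;
\item show that the images generate \(B_m\) via a basis of binary forms modulo \(u^2+v^4\), with an induction on degree.
\end{enumerate}
Lemma~\ref{uqg:central-components} then gives the isomorphism. Exactness of torus invariants applied to the Koszul complex gives regularity of the sequence in \(Q\).

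\emph{Deformation, smoothness and freeness.} Build a normalizer element \(g\) of order \(2m\) whose action on \(X,Y,Z\) matches the residual characters. Deform the invariant equation by \(+\tau\). Proposition~\ref{uqg:regular-section} then supplies the lifted syzygies and flatness, using a positive grading with \(\deg\tau>0\).

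For smoothness, cover the noncentral fibers by the charts \(L_i\ne0\), slice the torus there, and do the Euler-type derivative manipulation as in Lemma~\ref{cqg:i:smooth-free}. For freeness, first use \(g(Z)=\epsilon Z\) to force \(Z=0\). Then the permutation of the nonzero \(L_i\) excludes the powers of \(g\) not divisible by the orbit length. For the remaining powers, a suitable nonzero monomial \(M_{i,2}\) carries a character of the right order.

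Corollary~\ref{cqg:qhd-verification} and tautness (Corollary~\ref{cor:three-arm-taut}) then give the result. The main obstacle is finding the correct toric algebra and equations: the non-cyclic structure of the factorization of \(u^2+v^4\) with weights \((2,1,1)\) makes the analogue of \(l_0l_1l_2=-XZ^m\) less clean than in 1(i). I would first try to find it by working out the localization \(B_m[Z^{-1}]\) explicitly.
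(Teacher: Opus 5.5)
You have the same overall scheme as the paper: Neumann's Brieskorn cover, an intermediate hypersurface with residual \(\mu_m(2,1,1)\)-action, three equations forming a regular sequence in a torus-invariant ring with exactly one invariant equation, a constant perturbation, Proposition~\ref{uqg:regular-section}, and descent. The gap is that you never give the toric algebra, its torus weights, the three equations, or the normalizing element. You call this the main obstacle yourself. Without these objects, regularity, the extra components, the central identification, the charts and freeness have nothing to be checked on, so this is a missing construction rather than an omitted detail.

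The \(1\)(i) template also points the wrong way. Here \(u^2+v^4=(u+iv^2)(u-iv^2)\) gives two conjugate factors, quadratic in \(v\) and exchanged by the covering group. It does not give three linear factors permuted cyclically. The paper's construction is as follows.
\begin{itemize}
\item Take \(P=\mathbb{C}[U,V_1,S,T,h,l]\) with a rank-one torus of weights \((1,-1,-1,1,-k,k)\), where \(k=m-2\).
\item Realize \(r_j\pm is_j\) by the two chains \(H_j=U^{j-2}T^{m-j}h\) and \(L_j=V_1^{j-2}S^{m-j}l\).
\item Use the equations
\[
F_0=V_1T-US,\qquad F_1=hl+\tfrac12(US+V_1T),\qquad F_2=T^kh+S^kl-2UV_1 .
\]
These encode the common ratio (\(F_0^2=4(y^2-xz)\)), the product \(H_2L_2=-yz^k\), and the sum \(H_2+L_2=2x\).
\item The normalizer inverts the torus and swaps the two chains. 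It fixes \(F_0\) and acts on \(F_1,F_2\) by \(-1\) and \(\zeta^{-1}\), so the deformation is \(F_0-2i\tau\).
\end{itemize}

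Your smoothness and freeness plan therefore does not transfer.
\begin{itemize}
\item There are no charts \(L_i\ne0\) covering a noncentral fiber, and no cyclic permutation of \(L_i\) or monomials \(M_{i,2}\) to use.
\item In the paper, \(V_1T-US=2i\tau\) forces \((S,T)\ne(0,0)\). The fibers are then covered by the slices \(T=1\) and \(S=1\), which an involution with \(\tau\mapsto-\tau\) interchanges.
\item Freeness holds because \(x=UV_1\) and \(z=ST\) have faithful characters \(-1\) and \(1\), and one checks they cannot vanish together on a noncentral fiber.
\end{itemize}

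Your first step is also wrong as written. The order-two subgroup of \(\ker\kappa\) is generated by \(\delta^{2m^2}\) and acts by \((-1,-1,1)\). Its invariants are \(u=x_0y_0\), \(v=y_0^2\), \(w\), and \(x_0^2=-v^3-w^n\) gives \(u^2+v^4+vw^n=0\). Your choice \(u=x\), \(v=y^2\) would give \(u^2+v^3+w^n\), and \(x\) is not even invariant.

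Finally, generation of \(B\) needs no binary-form induction. The relation is monic in \(u^2\), so invariant standard monomials have \(u\)-exponent at most one. They factor into \(r_j=v^jw^{m-j}\) and \(s_j=uv^{j-2}w^{m-j}\), and these are exactly the images of the torus invariants of \(P\).
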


\begin{proof}
Lemma~\ref{cqg:j:cover}, proved below, identifies the canonical cover and its action. Lemmas~\ref{cqg:j:flatness} and~\ref{cqg:j:smooth-free}, also proved below, supply an equivariant flat deformation with smooth noncentral fibers and free covering action. Corollary~\ref{cqg:qhd-verification} gives the required smoothing of \(A\).
\end{proof}

\subsubsection{The canonical cover}

\begin{lemma}\label{cqg:j:cover}
The canonical index of \(A\) is \(2m\). Its canonical cover and covering group are
\[
B=\left(\mathbb{C}[u,v,w]/(u^2+v^4+vw^n)\right)^{\mu_m(2,1,1)},
\qquad G=\mu_{2m}.
\]
The algebra \(B\) is generated by
\begin{equation}\label{cqg:j:generators}
r_j=v^jw^{m-j}\quad(0\le j\le m),
\qquad
s_j=uv^{j-2}w^{m-j}\quad(2\le j\le m).
\end{equation}
For a suitable generator of \(G\), their characters, written additively modulo \(2m\), are
\begin{equation}\label{cqg:j:characters}
\chi(r_j)=1+(m-1)j,
\qquad
\chi(s_j)=m+1+(m-1)j.
\end{equation}
The canonical character is \(1\).
\end{lemma}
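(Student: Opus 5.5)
The plan is to repeat the argument used for Figure~1(i). We pass through the universal abelian cover, take invariants first by the subgroup of order two, and then by the scalar-type subgroup of order \(m\). Finally we read off the residual characters.

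\emph{Discriminant and universal abelian cover.} Since \(n-(m-2)^2=m-1\), we get
\[
e=2-\tfrac12-\tfrac16-\tfrac{(m-2)^2}{n}=\tfrac13+\tfrac{m-1}{n}=\tfrac{m^2}{3n},
\qquad |D|=12ne=4m^2 .
\]
The link is therefore a rational homology sphere. By Neumann~\cite{Neumann1983} the universal abelian cover is the Brieskorn hypersurface
\[
\mathbb{C}[x,y,z]/(x^2+y^6+z^n).
\]
We then compute, exactly as for the \((-6)\)-arm in Figure~1(i), the row of the negative inverse intersection matrix at the \((-6)\)-vertex, restricted to the three end components. Its entries are \(1/6+1/(36e)\), \(1/(12e)\) and \(1/(6ne)\). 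Multiplying by \(4m^2\) gives integer weights \((A_1,A_2,A_3)\) for a meridian \(\delta\) acting diagonally on \((x,y,z)\) by a primitive \(4m^2\)-th root \(\xi\). We check that one weight is a unit modulo \(4m^2\), so that \(\delta\) generates the discriminant group. We also check that the three monomials \(x^2,y^6,z^n\) have equal character. The residue form then has character \(A_1+A_2+A_3-(\text{equation character})\). We expect this to equal \(2m\) modulo \(4m^2\), up to a unit, which gives canonical index \(2m\) and \(\ker\kappa=\langle\delta^{2m}\rangle\) of order \(2m\). This bookkeeping, namely pinning down the meridian weights and verifying that the canonical character has order exactly \(2m\), is the step I expect to require the most care.

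\emph{The two-step quotient.} The subgroup of order two in \(\ker\kappa\), generated by \(\delta^{2m^2}\), should act with weights \((1,1,0)\) modulo two on \((x,y,z)\). Its invariant ring is then generated by \(x^2,y^2,xy,z\). Put \(u=xy\), \(v=y^2\), \(w=z\). Then
\[
u^2=x^2y^2=-(y^6+z^n)y^2=-(v^4+vw^n),
\]
so after rescaling \(u\) the invariant ring is \(C=\mathbb{C}[u,v,w]/(u^2+v^4+vw^n)\). Reducing the weights of \(\delta^{2}\)-type elements on \(u,v,w\) modulo \(m\), we expect the remaining subgroup of order \(m\) to act as \(\mu_m(2,1,1)\). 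This identifies \(B=C^{\mu_m(2,1,1)}\) with residual group \(G=\mu_{2m}\). Normality and the Gorenstein property follow as in Figure~1(i). The polynomial is irreducible and has an isolated critical point. The action is free off the origin because \(\gcd(2,m)\) issues only arise at \(u\), and \(u=0\) forces \(v^4+vw^n=0\), where the \(v,w\) weights are \(1\). The determinant character, \(2+1+1=4\), equals the equation character, since \(u^2\) also has weight \(4\).

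\emph{Generators and characters.} The standard monomials of \(C\) are \(u^av^bw^c\) with \(a\le1\), and invariance means \(2a+b+c\equiv0\pmod m\). The monomials of weighted degree exactly \(m\) are precisely the \(r_j\) and the \(s_j\) in \eqref{cqg:j:generators}. A standard invariant monomial of degree \(km\) factors into \(k\) such pieces: split off the factor containing \(u\), which needs \(j\ge2\), and decompose the pure \(v,w\) remainder greedily. Hence these monomials generate \(B\).

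The characters \eqref{cqg:j:characters} follow by computing the numerator of the residual action on a degree-\(m\) monomial and dividing by \(m\), as in \eqref{cqg:i:monomial-character}. The difference \(\chi(s_j)-\chi(r_j)=m\) reflects that \(u^2\) and \(v^2w^{m-2}\cdot v^2\cdots\) differ by the character of order two. Normalizing the generator of \(G\) then makes the canonical character equal to \(1\).
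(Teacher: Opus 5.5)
Your proposal is correct and follows the paper's proof step for step: Neumann's Brieskorn cover $x^2+y^6+z^n$, the meridian $\delta$ of the $(-6)$-end generating the discriminant group of order $4m^2$, residue character $2m/(4m^2)=1/(2m)$, the quotient first by $\delta^{2m^2}$ (giving $u=xy$, $v=y^2$, $w=z$) and then by $\delta^{2m}$ acting as $\mu_m(2,1,1)$, generation by the standard monomials of degree $m$, and the characters read off from the numerator $m\bigl(1+a(m-2)+b(m-1)\bigr)$ for $u^av^bw^c$ of degree $m$.

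There are two small bookkeeping corrections. First, unlike Figure~1(i), the $(-6)$-arm is here the $y$-coordinate. The diagonal entry $1/6+1/(36e)=(m^2-m+1)/(4m^2)$ is therefore the weight of $y$, and $\delta=\frac{1}{4m^2}(n,\,m^2-m+1,\,2)$ on $(x,y,z)$. Assigning the entries in the order you list them would give $x^2,y^6,z^n$ different characters, whereas $6(m^2-m+1)-2n=4m^2$ gives the common character $2n$. Second, your freeness argument for $\mu_m(2,1,1)$ runs backwards. A fixed point of a nonidentity element has $v=w=0$ because these weights are faithful, and then the equation forces $u=0$.
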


\begin{proof}
The third arm is \(n/(m-2)^2=[2^{m-3},m]\). Let \(M\) be the negative intersection matrix and \(E_b\) the component of weight \(-6\). The Schur complement gives
\[
e=2-\frac12-\frac16-\frac{(m-2)^2}{n}=\frac{m^2}{3n}>0,
\qquad \det M=12ne=4m^2.
\]
Thus \(M\) is positive definite. Its row indexed by \(E_b\), restricted to the three end components, is
\begin{equation}\label{cqg:j:inverse-column}
(M^{-1})_{E_b,\mathrm{leaves}}
=\left(\frac1{12e},\frac16+\frac1{36e},\frac1{6ne}\right)
=\frac1{4m^2}\left(n,m^2-m+1,2\right).
\end{equation}
The description of the universal abelian cover of a weighted homogeneous singularity identifies its universal abelian cover with the Brieskorn surface associated to the three arm determinants~\cite{Neumann1983}:
\[
Z=\{x_0^2+y_0^6+w^n=0\}.
\]
By \eqref{cqg:j:inverse-column}, the discriminant action associated to \(E_b\) is
\begin{equation*}
\delta=\frac{1}{4m^2}\left(n,m^2-m+1,2\right).
\end{equation*}
This notation means that the three coordinates are multiplied by the corresponding exponentials. Since \(m^2-m+1\) is odd and is congruent to \(1\) modulo \(m\), it is relatively prime to \(4m^2\). Therefore \(\delta\) has order \(4m^2\) and generates the entire discriminant group. The three terms of the equation have the same character, because
\[
6(m^2-m+1)-2n=4m^2,
\qquad n\cdot2=2n.
\]
Thus the congruence condition is satisfied. The action is free away from the origin: a point with \(y_0\ne0\) has trivial stabilizer, while a point with \(y_0=0\) has \(x_0w\ne0\); fixing \(w\) then leaves only the possible exponent \(2m^2\), which changes the sign of \(x_0\), since \(n\) is odd. The quotient is weighted homogeneous. The three attachment points on its central rational curve have no moduli, so the standard description of a weighted homogeneous algebra by its rational divisor on the orbit curve with its specified attachment points identifies this quotient with \(A\).

The residue canonical form on \(Z\) has character
\[
\frac{n+(m^2-m+1)+2-2n}{4m^2}
=\frac{1}{2m}.
\]
Its kernel has order \(2m\) and is generated by \(\delta^{2m}\). First quotient by its order-two subgroup, generated by \(\delta^{2m^2}\). This subgroup acts by \((x_0,y_0,w)\mapsto(-x_0,-y_0,w)\). Its invariant algebra has generators \(u=x_0y_0\), \(v=y_0^2\), and \(w\): the remaining invariant \(x_0^2\) is \(-v^3-w^n\), and consequently
\[
u^2+v^4+vw^n=0.
\]
The induced action of \(\delta\) on \((u,v,w)\) has angles
\[
\frac{1}{2m^2}\left(m^2-2m+2,m^2-m+1,1\right).
\]
Its \(2m\)-th power is the action \(\mu_m(2,1,1)\), proving the displayed formula for \(B\). Thus \(W=\Spec B\) and \(\Gamma=G\) in Table~\ref{table:smoothing-covers}. The remaining quotient group has order \(2m\), and substitution in \eqref{cqg:j:generators} gives \eqref{cqg:j:characters}.

To verify generation, reduce every polynomial modulo the monic relation in \(u^2\). The resulting standard monomials have \(u\)-exponent zero or one. For the auxiliary degrees \((2,1,1)\), an invariant standard monomial has degree divisible by \(m\). A monomial \(v^bw^c\) of degree \(hm\) is a product of degree-\(m\) monomials \(r_j\). For \(uv^bw^c\), one has \(b+c=hm-2\); choose \(b'\le b\), \(c'\le c\) with \(b'+c'=m-2\), factor off \(uv^{b'}w^{c'}=s_{b'+2}\), and factor the remainder into the \(r_j\). This proves the assertion.

The intermediate hypersurface is a normal domain: its equation is Eisenstein at \(v\), and its partial derivatives vanish simultaneously only at the origin. The auxiliary \(\mu_m\)-action is free off the origin, since \(v=w=0\) forces \(u=0\). Its determinant character and equation character are both \(4\) modulo \(m\). Thus \(B\) is Gorenstein, and the residue computation above identifies its canonical character with \(1\).
\end{proof}

\subsubsection{A presentation using a toric algebra}

Put \(k=m-2\). Let \(P=\mathbb{C}[U,V_1,S,T,h,l]\), and let the torus \(\mathbb{T}=\mathbb{C}^*\) act with weights
\begin{equation*}
(1,-1,-1,1,-k,k)
\end{equation*}
on these six variables. Define
\begin{equation}\label{cqg:j:equations}
F_0=V_1T-US,
\qquad F_1=hl+\frac{US+V_1T}{2},
\qquad F_2=T^kh+S^kl-2UV_1.
\end{equation}
All three polynomials are torus invariant. We write \(C_0=P/(F_0,F_1,F_2)\).

\begin{lemma}\label{cqg:j:invariant-presentation}
The three polynomials in \eqref{cqg:j:equations} form a regular sequence, and \(C_0^{\mathbb{T}}\simeq B\). Under this isomorphism,
\[
z=ST=r_0,
\qquad y=\frac{US+V_1T}{2}=r_1,
\qquad x=UV_1=r_2,
\]
and, for \(2\le j\le m\),
\begin{equation}\label{cqg:j:two-chains}
H_j=U^{j-2}T^{m-j}h,
\qquad L_j=V_1^{j-2}S^{m-j}l,
\qquad r_j=\frac{H_j+L_j}{2},
\qquad s_j=\frac{H_j-L_j}{2i}.
\end{equation}
In particular, the presentation is obtained from the two sequences \(r_j+i s_j\) and \(r_j-i s_j\) and their multiplication relations of the coordinate ring of the canonical cover.
\end{lemma}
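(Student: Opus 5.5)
I would follow the template of Lemma~\ref{cqg:i:central-identification}: analyze the zero locus of \((F_0,F_1,F_2)\) in \(P\), identify the main component after inverting a suitable invariant, and use Lemma~\ref{uqg:central-components} together with the generation statement of Lemma~\ref{cqg:j:cover}.

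\textbf{Step 1: torus invariants of \(P\).} First I would determine the \(\mathbb{T}\)-invariant monomials of \(P\). The pairs \((U,V_1)\), \((U,S)\), \((T,V_1)\), \((T,S)\), \((h,l)\) and the mixed monomials \(U^aT^bh\), \(V_1^aS^bl\) with \(a+b=k\) are invariant. By an exponent-chasing argument like Lemma~\ref{cqg:i:semigroup}, the invariant ring \(P^{\mathbb{T}}\) is generated by the following monomials:
\[
UV_1,\ US,\ V_1T,\ ST,\ hl,\ U^{a}T^{k-a}h,\ V_1^{a}S^{k-a}l \quad (0\le a\le k).
\]
Modulo \(F_0,F_1\), the quadratic invariants reduce to \(z=ST\), \(y\), and \(x=UV_1\), since \(US=V_1T=y\) and \(hl=-y\). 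The mixed monomials are exactly \(H_j,L_j\) with \(j=a+2\).

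\textbf{Step 2: the main component.} Next I would invert \(z=ST\). Because \(S\) has weight \(-1\), the slice \(S=1\) removes the torus, and \(F_0\) gives \(U=V_1T\). Then \(F_1\) gives \(hl=-V_1T\), and \(F_2\) becomes \(T^kh+l=2V_1^2T\). Eliminating \(l\) leaves a hypersurface in \((V_1,T,h)\) with \(T\) invertible. I would match it with \(B[z^{-1}]\) by checking the images of \(r_0,r_1,r_2\) and of \(H_j\pm L_j\), and by verifying the defining relation \(u^2+v^4+vw^n=0\) through the identities
\[
(r_j+is_j)(r_j-is_j)=H_jL_j=(UV_1)^{j-2}(ST)^{m-j}hl.
\]
These identities reproduce \(v^{2j}w^{2m-2j}+u^2v^{2j-4}w^{2m-2j}=-v^{2j-3}w^{2m-2j+n}\). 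This uses \(n=m^2-3m+3\); for example, \(2m-2j+n\) together with the exponent of \(v\) must combine to total degree \(2m\) with weights \((2,1,1)\). Smoothness of this localized piece would follow by a direct Jacobian check.

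\textbf{Step 3: extra components, reducedness, and invariants.} Then I would list the components on which \(z=0\). If \(S=0\), the equations force \(V_1T=0\), \(hl=-V_1T/2\), and \(T^kh=2UV_1\), and similarly when \(T=0\). These loci have dimension at most three, or are coordinate subspaces such as \(V(S,V_1,h)\)-type spaces of dimension three. This shows that the ideal has height three, so it is a complete intersection, which is unmixed. This gives the regular sequence in \(P\), and taking invariants of the Koszul complex gives regularity in \(P^{\mathbb{T}}\). For reducedness, I would exhibit a nonvanishing \(3\times3\) Jacobian minor generically on each extra component. On each extra component, the surviving weights lie in an open half-plane (all weights of one sign), so the only invariant functions are constants. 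All components pass through the origin, so Lemma~\ref{uqg:central-components} embeds \(C_0^{\mathbb{T}}\) into \(B[z^{-1}]\). Its image is generated by \(r_j\) and \(s_j\), which by Lemma~\ref{cqg:j:cover} generate exactly \(B\). This proves the isomorphism.

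\textbf{Main obstacle.} I expect the hardest step to be Step 3, in two respects. First, the extra components must be enumerated precisely when \(S=0\) or \(T=0\), including strata where \(h\) or \(l\) vanish. Second, one must confirm that no extra component carries nonconstant invariants, since weights of opposite sign may both survive, for example \(U\) and \(V_1\) together on a component \(V(S,T,\dots)\). If such a component appears, I would instead show directly that it lies in the closure of the main component, or that \(UV_1\) vanishes there because of \(F_2\).
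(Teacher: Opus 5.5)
Your proposal is correct and follows the paper's proof essentially step by step. The paper slices at \(T=1\) instead of \(S=1\), and it makes the chart identification explicit through \(\xi=(h-S^kl)/(2iS)\), which turns the localized quotient into \(\xi^2+\eta^4+\eta z^{m-3}=0\). From there it uses the same ingredients you propose: the component analysis, Jacobian minors on \(P_+=V(V_1,S,h)\) and \(P_-=V(U,T,l)\) (with one-signed weights \(1,1,k\) and \(-1,-1,-k\)), Lemma~\ref{uqg:central-components}, and the generators from Lemma~\ref{cqg:j:cover}.

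The obstacle you flag is settled by dimension alone. On \(S=T=0\) the equations reduce to \(hl=UV_1=0\), a two-dimensional locus. A zero set of three equations in \(\mathbb{A}^6\) has no component of dimension less than three, so this locus cannot be a component. Note that your fallback ``\(UV_1=0\) by \(F_2\)'' would not suffice by itself: the stratum \(S=T=U=h=0\) still carries the nonconstant invariant \(V_1^kl\).
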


\begin{proof}
In \(B\), the generators satisfy
\[
xz=y^2,
\qquad z(r_{j+1}\pm i s_{j+1})=y(r_j\pm i s_j),
\]
and
\[
(r_2+i s_2)+(r_2-i s_2)=2x,
\qquad (r_2+i s_2)(r_2-i s_2)=-yz^k.
\]
The last equality is the hypersurface equation multiplied by \(w^{2m-4}\), since \(n+2m-4=m^2-m-1\). The factorization using the torus variables \eqref{cqg:j:two-chains} expresses these two sequences; the three equations in \eqref{cqg:j:equations} express their common ratio, their sum, and their product.

We check the central invariant algebra scheme-theoretically. On \(ST\ne0\), fix \(T=1\), so \(S=z\), \(V_1=US\), and \(U=\eta\). Put \(\xi=(h-S^kl)/(2iS)\). Then
\[
h=S(\eta^2+i\xi),
\qquad l=S^{1-k}(\eta^2-i\xi),
\]
and the remaining equation is
\begin{equation}\label{cqg:j:central-chart}
\xi^2+\eta^4+\eta z^{m-3}=0.
\end{equation}
Thus the localized quotient is \(\mathbb{C}[z,z^{-1},\eta,\xi]/(\xi^2+\eta^4+\eta z^{m-3})\). This is a domain: in \(\mathbb{C}(z)(\eta)\), the element \(-\eta(\eta^3+z^{m-3})\) has odd valuation at \(\eta=0\), and hence is not a square. The torus torsor on this open set is trivial, so the part of \(\operatorname{Spec}C_0\) meeting \(ST\ne0\) has one irreducible component, of dimension three.

The other possible components are
\[
P_+=(V_1,S,h),
\qquad P_-=(U,T,l).
\]
Indeed, if \(S=0\) and \(T\ne0\), the equations force \(V_1=h=0\); if \(T=0\) and \(S\ne0\), they force \(U=l=0\). On \(S=T=0\), the remaining equations are \(hl=UV_1=0\), so this locus has dimension two. Since an ideal generated by three elements in a polynomial ring of dimension six has no component of dimension less than three, these are exactly the three components. The ideal has height three and is a complete intersection.

For the equivalent equations \(US-V_1T\), \(hl+US\), and \(F_2\), the Jacobian minors in the columns \((V_1,S,h)\) and \((U,T,l)\) are, on \(P_+\) and \(P_-\), respectively,
\[
-U(T^{k+1}+2Ul),\qquad V_1(S^{k+1}+2V_1h).
\]
They are generically nonzero, and the main component is generically smooth by \eqref{cqg:j:central-chart}. Thus the complete intersection has no embedded associated primes and is generically reduced on every component, so \(C_0\) is reduced.

On \(P_+\), the surviving coordinate weights are \(1,1,k\); on \(P_-\), they are \(-1,-1,-k\). Thus each extra component has only constant invariants. Both coordinate components contain the origin, and the main component contains it because it is stable under the positive grading below. Lemma~\ref{uqg:central-components} gives
\[
C_0^{\mathbb{T}}\hookrightarrow(C_0[1/(ST)])^{\mathbb{T}}.
\]
The polynomial invariant ring \(P^{\mathbb{T}}\) is generated by
\[
UV_1,US,V_1T,ST,hl,H_j,L_j\quad(2\le j\le m).
\]
To verify this, factor off every possible \(hl\) from an invariant monomial, then pair variables of weight \(1\) with variables of weight \(-1\). If a power of \(h\) remains, its negative weight is balanced by blocks of \(k\) variables from \(U,T\), giving the monomials \(H_j\); the case of \(l\) gives the \(L_j\). This exhausts the monomials. Exactness of torus invariants shows that their images generate \(C_0^{\mathbb{T}}\). The equations reduce these images to \(z,y,x,r_j,s_j\), with \(r_2=x\).

Under \eqref{cqg:j:central-chart}, these generators become
\[
z=w^m,
\qquad r_j=v^jw^{m-j},
\qquad s_j=uv^{j-2}w^{m-j}.
\]
The same chart describes \(B[1/z]\), using \(\eta=v/w\), \(\xi=u/w^2\). Thus the injective map above identifies \(C_0^{\mathbb{T}}\) with the subalgebra generated by \eqref{cqg:j:generators}, which is exactly \(B\).
\end{proof}

\subsubsection{The equivariant deformation}

Let \(\zeta\) be a primitive \(2m\)-th root of unity. The following transformation normalizes \(\mathbb{T}\):
\begin{equation*}
U\mapsto-\zeta^{-1}V_1,
\quad V_1\mapsto-U,
\quad T\mapsto S,
\quad S\mapsto\zeta T,
\quad h\mapsto\zeta^{-1}l,
\quad l\mapsto-\zeta h.
\end{equation*}
It conjugates a torus element to its inverse. Its square multiplies \(U,V_1\) by \(\zeta^{-1}\), \(S,T\) by \(\zeta\), and \(h,l\) by \(-1\). Consequently its \(2m\)-th power is the torus element \(-1\), because \((-1)^k=(-1)^m\). It therefore induces an action of \(G\) on the invariant algebra. Direct substitution gives
\begin{equation}\label{cqg:j:equation-representation}
F_0\mapsto F_0,
\qquad F_1\mapsto-F_1,
\qquad F_2\mapsto\zeta^{-1}F_2.
\end{equation}
For the last identity, use \(-\zeta^{k+1}=\zeta^{-1}\), since \(k+1=m-1\). Moreover, \(H_j\mapsto\zeta^{1+(m-1)j}L_j\) and \(L_j\mapsto\zeta^{1+(m-1)j}H_j\). Thus the induced action agrees with \eqref{cqg:j:characters}, including the action on the central canonical cover.

Equation~\eqref{cqg:j:equation-representation} allows a constant perturbation in exactly one invariant direction. We use it to define
\begin{equation*}
D=P[\tau]/(F_0-2i\tau,F_1,F_2),
\qquad \mathscr{B}=D^{\mathbb{T}},
\end{equation*}
where the torus and the lifted finite action fix \(\tau\).

\begin{lemma}\label{cqg:j:flatness}
The algebra \(\mathscr{B}\) is a finitely generated flat \(\mathbb{C}[\tau]\)-algebra, and \(\mathscr{B}/\tau\mathscr{B}\simeq B\) equivariantly. The selected equation perturbation has a polynomial lift of the first syzygy matrix of a polynomial presentation of \(B\).
\end{lemma}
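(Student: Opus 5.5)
The plan mirrors the proof of Lemma~\ref{cqg:i:flatness}: apply Proposition~\ref{uqg:regular-section} to the toric presentation of \(Q=P^{\mathbb{T}}\).

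\emph{Presentation of \(Q\).} First write \(Q\) as a polynomial quotient on the monomial generators listed in the proof of Lemma~\ref{cqg:j:invariant-presentation}, namely
\[
UV_1,\quad US,\quad V_1T,\quad ST,\quad hl,\quad H_j,\ L_j\quad(2\le j\le m).
\]
These generators are permuted up to scalars by the normalizing transformation: \(H_j\) and \(L_j\) are interchanged with the factors computed above. The pairs \(US\leftrightarrow V_1T\) and \(UV_1\), \(ST\), \(hl\) are carried to scalar multiples of members of the list. Hence the list is \(G\)-stable, and Remark~\ref{uqg:fixed-presentations} gives an equivariant equation row \(k\) for the toric ideal together with a matrix \(U_k\) generating its full first syzygy module.

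\emph{Lifting the equations.} The three polynomials \(F_0,F_1,F_2\) are torus invariant, and each is linear in these generators. Let \(s_0,s_1,s_2\) be their linear lifts. By \eqref{cqg:j:equation-representation} they span characters \(1,-1,\zeta^{-1}\). Lemma~\ref{cqg:j:invariant-presentation} says \(F_0,F_1,F_2\) is a regular sequence in \(P\). Apply torus invariants, which are exact, to its Koszul resolution. This gives regularity in \(Q\), and with the lemma the central identification \(Q/(F_0,F_1,F_2)\simeq B\).

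Now apply Proposition~\ref{uqg:regular-section} with \(a=k\) and \(h=(s_0,s_1,s_2)\), replacing \(s_0\) by \(H_0=s_0-2i\tau\). Because the character of \(s_0\) is trivial and \(G\) acts trivially on \(\tau\), this lift respects the representation spanned by the \(h\)'s. The proposition then supplies the explicit polynomial syzygy lift: the columns of \(U_k\), the mixed columns, and the Koszul columns. These satisfy \(F\Sigma=0\), which is the last assertion.

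\emph{Grading and flatness.} For flatness over all of \(\mathbb{C}[\tau]\), not only locally, I need a nonnegative grading with \(\deg\tau>0\) for which the equations are homogeneous. I would try
\[
\deg U=\deg V_1=\deg S=\deg T=1,\qquad \deg h=\deg l=k+1 .
\]
Then \(F_0\) has degree \(2\). The difficulty is that \(F_1\) and \(F_2\) mix degrees \(2k+2\), \(2\) and \(2k+1\), \(2\), so this choice fails. Since the torus weights are \((1,-1,-1,1,-k,k)\), I may add any multiple of the torus weight. For invariant monomials the correct requirement is homogeneity of \(F_1\) and \(F_2\) modulo the torus.

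So I solve the linear conditions directly. Take degrees \(\alpha\) for \(U,T\) and \(\beta\) for \(V_1,S\), and \(\gamma,\delta\) for \(h,l\). The terms \(V_1T\) and \(US\) both have degree \(\alpha+\beta\). Homogeneity of \(F_1\) forces \(\gamma+\delta=\alpha+\beta\). Homogeneity of \(F_2\) forces \(k\alpha+\gamma=k\beta+\delta=\alpha+\beta\). Solving gives \(\alpha=\beta=1\), \(\gamma=\delta=2-k\), which is negative once \(k\ge3\).

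Negative variable degrees are harmless, because only the invariant generators matter. Their degrees are \(2\) for \(UV_1\), \(US\), \(V_1T\), \(ST\), \(hl\), and
\[
\deg H_j=\deg L_j=(j-2)+(m-j)+2-k=2 .
\]
So every generator of \(Q\) has degree \(2\). Setting \(\deg\tau=2\) gives a positive grading on the presentation of \(Q[\tau]\), and Lemma~\ref{uqg:graded-flatness} yields flatness over \(\mathbb{C}[\tau]\). Finite generation is immediate from the presentation. Equivariance of \(\mathscr{B}/\tau\mathscr{B}\simeq B\) follows because the induced action agrees with \eqref{cqg:j:characters}.

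The main obstacle I expect is exactly this grading check, that the invariant generators have positive degree and the lifted equations are homogeneous. A smaller point is confirming that the generator list is \(G\)-stable up to scalars, so that the averaging in Remark~\ref{uqg:fixed-presentations} applies.
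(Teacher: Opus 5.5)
Your outline follows the paper's proof. You present \(Q=P^{\mathbb{T}}\) on its monomial generators with an equivariant toric equation row and full syzygy matrix, obtain regularity of \(F_0,F_1,F_2\) in \(Q\) and the central fiber from exactness of torus invariants, perturb only the invariant equation \(F_0\) by \(-2i\tau\), and apply Proposition~\ref{uqg:regular-section}. The gap is in the step you flagged yourself: the grading needed for flatness over all of \(\mathbb{C}[\tau]\), not only after localizing at \(\tau=0\).

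Your linear system does not have the solution you state. From \(k\alpha+\gamma=k\beta+\delta=\alpha+\beta\) you get \(\gamma+\delta=(2-k)(\alpha+\beta)\). Homogeneity of \(F_1\) requires \(\gamma+\delta=\alpha+\beta\), which forces \(\beta=-\alpha\), \(\gamma=-k\alpha\), \(\delta=k\alpha\). These are only multiples of the torus weight \((1,-1,-1,1,-k,k)\), which give every invariant degree zero. With your values \(\alpha=\beta=1\), \(\gamma=\delta=2-k\), one gets \(\deg(hl)=4-2k\ne 2=\deg(US)\). So \(F_1=hl+(US+V_1T)/2\) is not homogeneous, and \(hl\) has nonpositive degree. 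Moreover, ``every generator of \(Q\) has degree \(2\)'' cannot define a grading of \(Q\) at all. The toric relation \(H_jL_j=(UV_1)^{j-2}(ST)^{m-j}\,hl\) has two generator factors on the left and \(m-1\) on the right, so it is not homogeneous.

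The repair is to pair the variables by the symmetry \(U\leftrightarrow V_1\), \(S\leftrightarrow T\), \(h\leftrightarrow l\) of the equations, rather than \(U\) with \(T\). Put \(\deg U=\deg V_1=u\), \(\deg S=\deg T=\sigma\), and \(\deg h=\deg l=\rho\). Then \(F_0\) is automatically homogeneous of degree \(u+\sigma\). Homogeneity of \(F_1\) and \(F_2\) requires \(2\rho=u+\sigma\) and \(k\sigma+\rho=2u\). Since \(k=m-2\), this is solved by \((u,\sigma,\rho)=(2m-3,3,m)\), with \(\deg\tau=2m\); this is exactly the grading the paper uses.

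Now all six variables have positive degree, so every monomial generator of \(Q\) does as well. The toric binomials are homogeneous, and so is the lifted row formed by the toric equations together with \(s_0-2i\tau,s_1,s_2\). The graded clause of Proposition~\ref{uqg:regular-section}, via Lemma~\ref{uqg:graded-flatness}, then gives flatness over \(\mathbb{C}[\tau]\). With this correction your argument coincides with the paper's.
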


\begin{proof}
Use the toric presentation of \(Q=P^{\mathbb{T}}\) by the monomial generators in the proof of Lemma~\ref{cqg:j:invariant-presentation}, with equivariant equations \(k\) and their full syzygy matrix as in Remark~\ref{uqg:fixed-presentations}. Exact torus invariants of the central Koszul resolution make \(F_0,F_1,F_2\) a regular sequence in \(Q\). Let \(s_1,s_2,s_3\) be their linear lifts. The row \((k,s_1-2i\tau,s_2,s_3)\) is equivariant by \eqref{cqg:j:equation-representation}, and homogeneous for
\[
\deg U=\deg V_1=2m-3,\quad\deg S=\deg T=3,\quad
\deg h=\deg l=m,\quad\deg\tau=2m.
\]
Proposition~\ref{uqg:regular-section} gives the full polynomial syzygy lift and flatness; exactness of torus invariants identifies the central fiber with \(B\).
\end{proof}

\subsubsection{Smoothness, freeness, and descent}

\begin{lemma}\label{cqg:j:smooth-free}
For \(\tau\ne0\), the fiber of \(\mathscr{B}\) is smooth and the action of \(G\) is free.
\end{lemma}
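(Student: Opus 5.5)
The plan follows the argument of Lemma~\ref{cqg:i:smooth-free}: cover a noncentral fiber by invariant affine charts, take an algebraic slice for the torus in each chart, verify smoothness on the slice by a derivative computation, and then exclude stabilizers using characters of suitable monomials.

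\emph{Charts.} In the fiber \(\tau\ne0\), the first equation reads \(V_1T-US=2i\tau\ne0\). Hence the open sets \(V_1T\ne0\) and \(US\ne0\) cover the fiber. The generator of \(G\) exchanges \(U\leftrightarrow V_1\) and \(S\leftrightarrow T\) up to units, so it interchanges these two sets. It therefore suffices to treat \(V_1T\ne0\), with \(\tau\) replaced by a unit multiple in the other chart.

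\emph{Slice.} On \(V_1T\ne0\) the coordinate \(T\) has torus weight \(1\), so the normalization \(T=1\) gives an algebraic slice and a product decomposition with \(\mathbb{T}\). The equations \(F_0-2i\tau\) and \(F_2\) can then be solved for \(V_1=US+2i\tau\) and \(h=2UV_1-S^kl\). What remains is one equation \(\Phi(U,S,l)=0\), namely
\[
\Phi=(2UV_1-S^kl)\,l+US+i\tau,
\]
subject to the open condition \(V_1\ne0\).

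\emph{Smoothness.} I would compute \(U\Phi_U\), \(S\Phi_S\) and \(l\Phi_l\), keeping track of the dependence of \(V_1\) and \(h\) on \(U,S\), exactly as for \eqref{cqg:i:slice-equations}. The aim is to find linear combinations of these three quantities which, together with \(\Phi=0\), force \(\tau=0\) or \(V_1=0\). The combinations should be suggested by the grading in Lemma~\ref{cqg:j:flatness}: a weighted Euler field sends \(\Phi\) to \(\deg\cdot\Phi\) minus a multiple of \(\tau\), so at a critical point on \(\Phi=0\) one obtains a nonzero multiple of \(\tau\) equal to zero. I expect this elimination to be the main obstacle. The difficulty is to keep the argument valid on the coordinate strata where \(U\), \(S\) or \(l\) vanish, as was done for Figure~1(i), so that no Jacobian case is omitted. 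I would first try multiplying by coordinates, which covers the zero strata automatically, and then handle the single remaining relation directly.

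\emph{Freeness.} Let \(g\) denote the generator of \(G\), and suppose \(g^j\), with \(0<j<2m\), fixes a point of a noncentral fiber. Since \(z=ST=r_0\) has character \(1\), the case \(z\ne0\) would force \(\zeta^j=1\), which is impossible. So \(z=0\), and exactly one of \(S,T\) vanishes. For instance, \(S=0\) forces \(US=0\), hence \(V_1T=2i\tau\ne0\).

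Odd powers of \(g\) exchange the strata \(S=0\) and \(T=0\), so they fix no such point. For an even power \(g^{2r}\), note that on \(S=0\) the equation \(F_1\) gives \(hl=-V_1T/2\ne0\). Hence \(L_m=V_1^{m-2}l\ne0\). By \eqref{cqg:j:characters}, \(g^2\) multiplies the relevant invariant \(r_m\pm is_m\) by \(\zeta^{2}\). Thus \(g^{2r}\) can fix the point only if \(m\mid r\), that is, only if it is the identity. The stratum \(T=0\) is symmetric, using \(H_m\).

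Together with Lemma~\ref{cqg:j:flatness} and Proposition~\ref{cqg:descent}, this completes the input needed for Proposition~\ref{cqg:j:existence}.
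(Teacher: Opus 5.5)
\textbf{Gap: the smoothness half is not established.} Your chart reduction and your freeness argument are correct. The opens \(V_1T\ne0\) and \(US\ne0\) are defined by torus-invariant functions, cover the fiber, and are interchanged by \(g\). Excluding stabilizers via \(z\), then the parity swap of the disjoint strata \(S=0\) and \(T=0\), then \(g^2(L_m)=\zeta^2L_m\) with \(L_m\ne0\) on \(S=0\), is valid. The paper's freeness argument is shorter: \(x=UV_1\) and \(z=ST\) have the faithful characters \(-1\) and \(1\), so a fixed point of a nonidentity element has \(x=z=0\), and this locus is empty on a noncentral fiber.

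Smoothness, however, you do not carry out, and the mechanism you propose fails as stated. After the eliminations, \(\tau\) is no longer an additive constant in \(\Phi\):
\[
\Phi=2U^2Sl+4i\tau Ul-S^kl^2+US+i\tau .
\]
Give \((U,S,l,\tau)\) the weights \((2m-6,6,6-2m,2m)\), which make \(\Phi\) homogeneous of degree \(2m\); this is the grading of Lemma~\ref{cqg:j:flatness} corrected by the torus so that \(T\) has degree zero. At a critical point on \(\Phi=0\), the Euler identity yields only \(\tau\,\partial_\tau\Phi=i\tau(4Ul+1)=0\), that is, \(Ul=-\tfrac14\). This is not a contradiction, so the promised ``nonzero multiple of \(\tau\) equal to zero'' does not appear.

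\textbf{How to close it.} One further relation suffices. You have \(U\Phi_U=US(4Ul+1)+4i\tau Ul\), which equals \(-i\tau\ne0\) when \(4Ul=-1\). The paper works in the chart \(T=1\) with \(q=Ul\), \(a=US\), \(c=i\tau\). It combines \(UF_U\), \(F-lF_l/2\), \(lF_l\) and \(SF_S\) into \((k+2)q(a+2c)=0\), then excludes both factors.

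Alternatively, work on the ambient fiber in \(\operatorname{Spec}P\). The Euler field \(E\) of the flatness grading gives \(dF_0(E)=4im\tau\ne0\) and \(dF_1(E)=dF_2(E)=0\). This only shows that a linear dependence among \(dF_0,dF_1,dF_2\) cannot involve \(dF_0\). You must still show that \(dF_1\) and \(dF_2\) are independent:
\begin{itemize}
\item \(dF_1=0\) forces every coordinate to vanish, which is impossible on the fiber.
\item \(dF_2=\mu\,dF_1\) gives \(V_1=-\mu S/4\) and \(U=-\mu T/4\) from the \(dU\)- and \(dV_1\)-components, hence \(V_1T-US=0\), contrary to \(F_0=2i\tau\ne0\).
\end{itemize}
Freeness of the torus action then transfers smoothness to the quotient. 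With either completion your proposal becomes a proof.
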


\begin{proof}
The equation \(V_1T-US=2i\tau\) implies that \(S,T\) cannot both vanish and that some nonzero coordinate has torus weight \(+1\) and some nonzero coordinate has weight \(-1\). Thus every orbit is closed and has trivial stabilizer. The torus quotient is geometric, and it is covered by the charts obtained by setting \(T=1\) or \(S=1\).

On the first chart, put \(c=i\tau\). The first and third equations eliminate
\[
V_1=US+2c,
\qquad h=2U^2S+4cU-S^kl.
\]
Substitution in the second equation gives the hypersurface
\[
F=-S^kl^2+2U^2Sl+4cUl+US+c=0.
\]
Its partial derivatives are
\[
F_U=4USl+4cl+S,
\qquad F_S=-kS^{k-1}l^2+2U^2l+U,
\qquad F_l=-2S^kl+2U^2S+4cU.
\]
At a critical point put \(q=Ul\) and \(a=US\). The equations \(UF_U=0\) and \(F-lF_l/2=0\) give
\[
a(4q+1)+4cq=0,
\qquad a(q+1)+c(2q+1)=0.
\]
The equation \(lF_l=0\) gives \(S^kl^2=aq+2cq\). Substituting this into \(SF_S=0\) gives
\[
a((2-k)q+1)-2kcq=0.
\]
Subtracting the first displayed equation yields \((k+2)q(a+2c)=0\). If \(q=0\), the first two equations force \(a=c=0\). If \(a=-2c\), the second equation gives \(-c=0\). Both contradict \(c\ne0\). This chart is smooth. The involution interchanging \(U,V_1\), \(T,S\), and \(h,l\), while replacing \(\tau\) by \(-\tau\), preserves the three equations. It identifies the other chart with the same calculation, so the entire fiber is smooth.

By \eqref{cqg:j:characters}, \(x\) and \(z\) have the faithful characters \(-1\) and \(1\). They do not vanish simultaneously on a nonzero fiber. If \(S=0\), then \(T\ne0\) and \(V_1T=2i\tau\), so \(V_1\ne0\). The additional equation \(x=UV_1=0\) gives \(U=0\). Now \(hl=-i\tau\ne0\), but \(T^kh=2UV_1=0\), a contradiction. The case \(T=0\) follows by the same involution. Thus a nonidentity element of \(G\), whose fixed points must have \(x=z=0\), fixes no point of the fiber.
\end{proof}

\section{Adjoining elements of the fraction field}\label{cqg:m}

We treat Figures~1(d)--(g) by one construction, adjoining elements of a fraction field. The existence of their QHD smoothings is known~\cite{SSW,BS}; the purpose here is to give equivariant presentations, identify the central fibers, and lift the full first syzygy module. The same multiplication formulas and normal forms work for all four families. Their only distinction in this construction is the parameter substitution at the end of the section. Let \(P\ge3\), \(Q\ge2\), and \(k\ge3\), and put
\begin{equation}\label{cqg:m:parameters}
\ell=PQ-P-Q,\qquad N=P+k\ell,\qquad s=kQ-k+1,\qquad n=(k-1)N-kQ.
\end{equation}
The identities used below are
\begin{equation}\label{cqg:m:identities}
\begin{aligned}
Ps-kQ&=N,&s(P-1)-k&=N-1,\\
s\ell+Q&=(Q-1)N,&n+kQ&=(k-1)N,\\
n\ell&=N(N-PQ)+PQ,&N-PQ&\ge0.
\end{aligned}
\end{equation}
For the last inequality, \(N-PQ=(k-1)PQ-(k-1)P-kQ\) increases with \(P,Q,k\) in the indicated range and is zero at \((P,Q,k)=(3,2,3)\). Moreover, \(n-N=(k-2)N-kQ\ge Q((k-2)P-k)\ge2Q(k-3)\ge0\). In particular \(\ell\ge1\), \(n\ge N\), and all the integers in \eqref{cqg:m:parameters} are positive.

\begin{proposition}\label{cqg:m:existence}
The weighted homogeneous surface singularity with Seifert data
\begin{equation}\label{cqg:m:seifert}
\left(2;(P,1),(Q,1),(n,n-N+1)\right)
\end{equation}
admits a strong \(\mathbb{Q}\)-Gorenstein smoothing. The smoothing is obtained by lifting equivariant equations and their first syzygy module, using the multiplication in a finite Gorenstein cover of its canonical cover. Its canonical index is \(N/\gcd(P,Q)\).
\end{proposition}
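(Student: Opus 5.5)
The plan is to follow the blueprint of Table~\ref{table:smoothing-covers}, row 1(d)--(g). We start from the Brieskorn surface \(W_0=\{x^P+y^Q+z^n=0\}\) with the diagonal action of \(H=\mu_N(-s,-k,1)\).

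\emph{Step 1: the surface cover \(W_0/H\) and its group.} The three monomials have \(H\)-characters \(-Ps\equiv -kQ\), \(-kQ\) and \(n\equiv -kQ\), by \eqref{cqg:m:identities}, so the equation is semi-invariant. Freeness of \(H\) off the origin follows from \(\gcd(N,s)\), \(\gcd(N,k)\) and the coordinate strata of \(W_0\). We then compute the canonical character of \(W_0/H\) under the residual \(\mu_N\) coming from the discriminant group. This is the same meridian calculation as in Lemma~\ref{cqg:j:cover}: the universal abelian cover of \eqref{cqg:m:seifert} is \(W_0\) by Neumann~\cite{Neumann1983}, and the discriminant order is \(PQn\,e\). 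The index of the canonical character should come out as \(N/\gcd(P,Q)\). By Corollary~\ref{cqg:larger-cover}, the canonical cover is then the invariant ring under \(\ker\kappa\), with group \(\mu_{N/\gcd(P,Q)}\). The identification of the quotient with the algebra of \eqref{cqg:m:seifert} uses that three-arm attachment points have no moduli, together with \eqref{cqg:section-algebra}.

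\emph{Step 2: presentation by adjoining fractions.} We write the \(H\)-invariants of \(\mathbb{C}[x,y,z]\) explicitly by Remark~\ref{uqg:fixed-presentations}. We then realize \(W_0/H\) starting from a simpler normal domain \(S\), a toric invariant ring in which \(x^P+y^Q\) and \(z^n\) appear as monomial-type generators. Fractions \(q\) are adjoined successively, each with \(qI\subset S\) for an ideal \(I\) generated by distinct coordinate variables. At each stage we check the hypotheses of Lemma~\ref{uqg:fractions}: \(S/I\) is a domain and \(qu_0\notin I\). This gives \(T=S[q]\), and Lemma~\ref{uqg:fraction-syzygies} supplies the full syzygy matrix. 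The deformation replaces the relation \(x^P+y^Q+z^n=0\) by \(x^P+y^Q+z^n=t\) before adjunction. The same fractions \(q(t)\) are then adjoined over \(\mathbb{C}[t]\), and the flatness clause of Lemma~\ref{uqg:fractions} applies once \(S_t/I_t\) is flat, which we check via a positive grading and Lemma~\ref{uqg:graded-flatness}. Finally we pass to \(\ker\kappa\)-invariants using Proposition~\ref{uqg:invariant-presentation}, so that the lifted syzygies descend.

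\emph{Step 3: smoothness, freeness and the central fibre.} For \(t\neq0\), the fibre \(\{x^P+y^Q+z^n=t\}\) is smooth. The group \(\mu_N\) acts on it freely because every coordinate character is a unit modulo \(N\) on the relevant strata; a point with two vanishing coordinates forces the third to be a root of \(t\), and there the stabilizer is controlled by \(\gcd\) conditions from \eqref{cqg:m:identities}. Proposition~\ref{cqg:descent} then gives the strong \(\mathbb{Q}\)-Gorenstein smoothing. Corollary~\ref{cqg:qhd-verification} makes it QHD after the substitutions of the table row, namely \(n-N+1=a\) and \((P,Q,k)\) as listed.

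\emph{Main obstacle.} The hard part is Step 2. We must show that the specialized adjoined algebra at \(t=0\) equals the subalgebra generated by the specialized fractions, which is the hypothesis at the end of Lemma~\ref{uqg:fraction-syzygies}. Equivalently, the domain and non-membership conditions must persist at \(t=0\). We would try to verify this through an explicit monomial normal form in \(x,y,z\), using the inequalities \(N-PQ\ge0\) and \(n\ge N\) to guarantee that the chosen \(u_0\) stays outside \(I\).
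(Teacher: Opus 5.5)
\textbf{Gap: the deformation in Steps 2--3 is the wrong one.} Your Steps 2--3 deform the Brieskorn equation to \(x^P+y^Q+z^n=t\) and then pass to \(H\)- and \(\Gamma\)-quotients. This cannot work.

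First, by \eqref{cqg:m:identities} the three monomials share the \(H\)-character \(-kQ\), which is in general nonzero modulo \(N\). For \((P,Q,k)=(4,2,3)\), family 1(d) with \(q=r=0\), one has \(N=10\) and \(kQ=6\). So with \(t\) fixed the family is not even \(H\)-stable.

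Second, even when it is stable, the action is not free. For \((P,Q,k)=(3,2,3)\), the graph \((2;(2,1),(3,1),(6,1))\), one has \(N=n=6\) and \(H=\mu_6(2,3,1)\). Its order-three subgroup fixes \((0,\pm\sqrt{t},0)\) on \(x^3+y^2+z^6=t\). Consistently, the Euler characteristic \(11\) of this Milnor fibre is not divisible by \(6\).

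The obstruction is structural, not an accident of the chosen perturbation. Suppose a \(\Pi\)-equivariant smoothing of \(Z\) had free action and QHD quotient \(F\). Its connected Milnor fibre would be a free \(\Pi\)-cover of \(F\), so the abelian group \(\Pi\), of order \(N^2\), would be a quotient of \(H_1(F;\mathbb{Z})\). But \(|H_1(F;\mathbb{Z})|^2=\det M=N^2\), so this group has order \(N\). Hence no admissible smoothing of the QHD singularity lifts to the Brieskorn surface \(Z\). Only the degree-\(N\) intermediate cover \(\widetilde B=\mathcal{O}(Z)^H\) deforms, and its smoothing is not induced by any deformation of \(Z\).

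\textbf{The missing idea: deform \(\widetilde B\) directly.} The paper writes \(\widetilde B\) in the invariants \(a=yz^k\), \(b=xz^s\), \(c=z^N\). It deforms the hypersurface \(c^k+ca^Q+b^P\) by the term \(tab\). Since
\(c^k+ca^Q+b^P+tab=z^{N+kQ}\bigl(x^P+y^Q+z^n+t\,xyz^{1-N}\bigr)\),
this perturbation is not a regular function on \(Z\).

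The remaining generators \(m_j\) of Lemma~\ref{cqg:m:generators} do not lift naively. They must be replaced by the specific fractions \(q_j\), the coefficients of \(L(w)/D(w)\) in \eqref{cqg:m:series}, where \(D(w)=c+tw+a^\ell w^P\). One then proves the following:
\begin{enumerate}
\item \(q_j\in(R_0)_b\cap(R_0)_c\);
\item the multiplication relations \eqref{cqg:m:linear-relations} and the divisibility \eqref{cqg:m:minor-divisibility} hold;
\item the \(q_j\) can be adjoined one at a time via Lemma~\ref{uqg:fractions}, with ideals \((b,c)\) and \((a,b,c,q_1,\ldots,q_{j-2})\);
\item the nonzero residues \(-a^Q\), \(-q_1^2\), \(q_{j-1}^2\) persist at \(t=0\), which is exactly what identifies the central fibre with \(\widetilde B\).
\end{enumerate}
Smoothness and freeness are then checked on \(\{c^k+ca^Q+b^P+tab=0\}\setminus\{b=c=0\}\). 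This uses the certificate \(t^{Q(k-1)}\in(b,c)R_k\), a critical-point computation that collapses to \(\frac{N}{P}c^k=0\), and the faithful character of \(b\).

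Your Step 1 is in line with Lemma~\ref{cqg:m:cover}: the group \(\Pi\), freeness off the vertex, and a canonical character of order \(N/\gcd(P,Q)\). Without the correct deformation of \(\widetilde B\), however, the proposal produces no smoothing.
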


\begin{proof}
We use the notation of Lemmas~\ref{cqg:m:cover}, \ref{cqg:m:rolling}, and~\ref{cqg:m:flat-family}, which are stated and proved below. In particular,
\[
\begin{aligned}
R_0&=\mathbb{C}[a,b,c,t]/(c^k+ca^Q+b^P+tab),\qquad \alpha=a^{Q-1},\\
R_k&=R_0[q_1,\ldots,q_k].
\end{aligned}
\]
The group \(\Gamma=\mu_N\) acts on \(a,b,c\) with characters \((P-1,1,-\ell)\) modulo \(N\).

The cover in Table~\ref{table:smoothing-covers} is \(W=\Spec\widetilde B\). By Lemma~\ref{cqg:m:flat-family}, \(\operatorname{Spec}R_k\to\mathbb{A}^1_t\) is a flat Gorenstein family with coordinate ring of the central fiber \(\widetilde B\). Its localizations at \(b\) and \(c\) are respectively \((R_0)_b\) and \((R_0)_c\). The covering by the two affine open subsets follows from the following ideal containment. The coefficient formula gives \(p_{k-1}-(-t)^{k-1}\in(a,c)R_k\), while \(bq_k=\alpha q_{k-1}-p_{k-1}\) and \(\alpha\in(a)\). Hence \(t^{k-1}\in(a,b,c)R_k\). Since \(a^Q=-bq_1-c^{k-1}\in(b,c)R_k\), raising this inclusion to the \(Q\)-th power gives
\begin{equation}\label{cqg:m:chart-certificate}
t^{Q(k-1)}\in(b,c)R_k.
\end{equation}
Hence the general fiber is exactly
\[
\{c^k+ca^Q+b^P+tab=0\}\setminus\{b=c=0\}.
\]

At a critical point of this hypersurface for fixed \(t\ne0\), multiplication of the three partial derivatives by \(a,b,c\) gives
\[
a^Qc=-kc^k,\qquad tab=kQc^k,\qquad b^P=-\frac{kQ}{P}c^k.
\]
Substitution into the defining equation gives
\[
0=\left(1-k-\frac{kQ}{P}+kQ\right)c^k=\frac NP c^k.
\]
Thus \(c=0\), and the three partial derivatives then force \(a=b=0\). This point has been removed, so every nonzero fiber is smooth. For every \(g\in\Gamma\setminus\{1\}\), its ideal defining the fixed locus contains \(b\), since \(b\) has character one, and contains at least one of \(a,c\), since \(\gcd(P-1,\ell)=1\). If it contains \(c\), use \eqref{cqg:m:chart-certificate}. If it contains \(a\), the initial equation gives \(c^k\in(a,b)R_k\), and the \(k\)-th power of \eqref{cqg:m:chart-certificate} applies. Thus the ideal containment
\begin{equation*}
t^{kQ(k-1)}\in(g(u)-u:u\in R_k)
\end{equation*}
holds for every nonidentity element of \(\Gamma\). Hence \(\Gamma\) acts freely on every nonzero fiber. Quotienting by \(K\) gives smooth fibers with free residual canonical action.

The central canonical character is \(-\ell\), and the lifted deformation of its canonical cover has coordinate ring \(R_k^K\), with central fiber \(B\). Corollaries~\ref{cqg:larger-cover} and~\ref{cqg:qhd-verification} now apply.
\end{proof}

\subsection{The covering algebra and its generators}

We identify the central cover and construct a polynomial lift of its equations and first syzygy module.

\begin{lemma}\label{cqg:m:cover}
Let
\[
Z=\operatorname{Spec}\mathbb{C}[x,y,z]/(x^P+y^Q+z^n),\qquad H=\mu_N(-s,-k,1),\qquad \widetilde B=\mathcal{O}(Z)^H.
\]
There is an action of \(\Gamma=\mu_N\) on \(\widetilde B\) such that \(A=\widetilde B^\Gamma\) has Seifert data \eqref{cqg:m:seifert}. The action of \(\Gamma\) is free off the vertex, and its canonical character is \(-\ell\). If \(K\subset\Gamma\) is the kernel of this character, then \(B=\widetilde B^K\) is the canonical cover of \(A\).
\end{lemma}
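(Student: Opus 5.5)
The plan is to realize \(\widetilde B\) as an intermediate quotient of the universal abelian cover, exactly as for Figures~1(i) and~1(j). For the data \eqref{cqg:m:seifert}, the orders of the three arms are \(P,Q,n\). By Neumann~\cite{Neumann1983}, the universal abelian cover is the Brieskorn surface \(Z=\{x^P+y^Q+z^n=0\}\), with the discriminant group \(\mathcal{D}\) acting diagonally and freely off the origin, and with quotient the weighted homogeneous singularity with these Seifert invariants. Since the three attachment points on the central curve carry no moduli, the section-ring description \eqref{cqg:section-algebra} then identifies \(Z/\mathcal{D}\) with \(A\). By the computation in the proof of Proposition~\ref{completion:candidate-data} (with \(U=N\) there), \(e=N^2/(PQn)\), so \(|\mathcal{D}|=PQne=N^2\).

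First I will write down the discriminant action explicitly. Take the row of the inverse negative intersection matrix indexed by an arm component, restricted to the three end components, as in \eqref{cqg:j:inverse-column}. The natural candidate is the central curve or the end of the long arm, whose row should have denominator \(N^2\). This gives a diagonal element \(\delta\) of \(\mathcal{D}\) acting on \((x,y,z)\). Next I will check that \(\mathcal{D}\) contains \(H=\mu_N(-s,-k,1)\) as a subgroup. The three monomials have equal \(H\)-character by \eqref{cqg:m:identities}: \(-sP\equiv-kQ\equiv n\pmod N\), using \(Ps-kQ=N\) and \(n+kQ=(k-1)N\). Then \(\Gamma=\mathcal{D}/H\) has order \(N\). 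I will show it is cyclic by exhibiting an element of \(\mathcal{D}\) whose image has order \(N\), and I will normalize its generator so that, on the invariants \(a,b,c\) introduced later, it acts with characters \((P-1,1,-\ell)\).

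Freeness of \(\Gamma\) off the vertex of \(\widetilde B\) follows because \(\mathcal{D}\) acts freely on \(Z\setminus\{0\}\), a property of the universal abelian cover of a rational homology sphere link. I can also check it directly from the coordinate weights, as in the proof of Lemma~\ref{cqg:j:cover}. For the canonical character, use the residue generator \(dx\wedge dy/z^{n-1}\) on \(Z\), whose character is the sum of the coordinate characters minus the equation character. Its restriction to \(H\) must be trivial, which gives the consistency check \(-s-k+1\equiv n\pmod N\). On the chosen generator of \(\Gamma\), it should reduce to \(-\ell\) modulo \(N\), and I will verify this with the identities \eqref{cqg:m:identities}. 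Since \(\widetilde B\) is Gorenstein (the \(H\)-quotient of a hypersurface with trivial canonical character and free action in codimension one), Corollary~\ref{cqg:larger-cover} then gives \(B=\widetilde B^K\) as the canonical cover, with index equal to the order of \(-\ell\) modulo \(N\).

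I expect the main obstacle to be the group theory of \(\mathcal{D}\). One must show that the element obtained from the inverse matrix, together with \(H\), generates all of \(\mathcal{D}\). One must also choose the lift of the \(\Gamma\)-generator so that the stated characters hold for all \((P,Q,k)\), including the case \(\gcd(P,Q)>1\), where \(\mathcal{D}\) is not cyclic. I would first compute the inverse-matrix row for the end of the arm of order \(Q\) and reduce it modulo \(H\), using \(s\ell+Q=(Q-1)N\) to simplify the exponents.
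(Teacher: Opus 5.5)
Your overall route is the paper's. You view \(Z\) as the universal abelian cover, read the diagonal discriminant action off the leaf block of \(M^{-1}\), and take \(\Gamma=\mathcal{D}/H\).

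The gap is in the step meant to give \(H\subset\mathcal{D}\). That \(x^P,y^Q,z^n\) have the same \(H\)-character only shows that \(H\) acts on \(Z\). Every finite subgroup of the good \(\mathbb{C}^*\)-action on \(Z\) has this property, and the finite group \(\mathcal{D}\) cannot contain all of them. Everything downstream depends on the containment: forming \(\mathcal{D}/H\), the equality \(\widetilde B^{\Gamma}=\mathcal{O}(Z)^{\mathcal{D}}\cong A\), and inheriting freeness from \(\mathcal{D}\). The missing input is an explicit word. With the leaf rows
\[
\delta_1=\tfrac1{N^2}\bigl(s(N-Q),n,Q\bigr),\quad \delta_2=\tfrac1{N^2}\bigl(n,k(P-1)N-kP,P\bigr),\quad \delta_3=\tfrac1{N^2}\bigl(Q,P,\ell(N+1)\bigr),
\]
the identities \eqref{cqg:m:identities} give \(h=\delta_2^{s}\delta_1^{-k}\). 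The paper goes further. It introduces \(\gamma=\frac1{N^2}(N+s\ell,(P-1)N+k\ell,-\ell)\) and checks \(\delta_1=h^{Q-1}\gamma^{s}\), \(\delta_2=h\gamma^{k}\), \(\delta_3=h^{\ell}\gamma^{-1}\), and conversely \(h=\delta_2^{s}\delta_1^{-k}\), \(\gamma=\delta_1\delta_2^{-(Q-1)}\). Thus \(\langle h,\gamma\rangle\) is exactly the discriminant group. The paper then verifies freeness directly and identifies the quotient by the splice-quotient theorem.

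Your choice of a generator of \(\Gamma\) also needs correcting. Modulo \(H\), the leaf of the arm of order \(Q\) reduces to \(\gamma^{k}\), of order \(N/\gcd(P,k)\). The central-vertex element \(\frac1{N^2}(Qn,Pn,PQ)\) reduces to \(\gamma^{kQ}\). For \((P,Q,k)=(3,2,3)\), where \(N=6\), these images have orders \(2\) and \(1\). Only the leaf of the long arm works uniformly. By \(s\ell+Q=(Q-1)N\), \(\delta_3\) acts on \(b=xz^s\) with character \(-1\) modulo \(N\). Hence \(\delta_3^{-1}\) has characters \((P-1,1,-\ell)\) on \((a,b,c)\) and residue character \(-\ell/N\).

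Once \(h\in\mathcal{D}\) is known, a count replaces the rest of the group theory. Since \(b\) is \(H\)-invariant, the image of \(\delta_3\) in \(\mathcal{D}/H\) has order \(N\), so \(|\langle H,\delta_3\rangle|=N^2=|\mathcal{D}|\). With these two repairs, the remaining steps of your plan go through: freeness inherited from \(\mathcal{D}\), the residue computation, Gorensteinness, and Corollary~\ref{cqg:larger-cover}.
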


\begin{proof}
Write diagonal transformations additively in \((\mathbb{Q}/\mathbb{Z})^3\), and set
\[
h=\frac1N(-s,-k,1),\qquad \gamma=\frac1{N^2}\left(N+s\ell,(P-1)N+k\ell,-\ell\right).
\]
The equation is semi-invariant under \(h\) by \eqref{cqg:m:identities}, and under \(\gamma\) because
\[
P(N+s\ell)-Q((P-1)N+k\ell)=0,\qquad P(N+s\ell)+n\ell=N^2.
\]
Moreover \(\gamma^N=h^{-\ell}\). Set \(\Pi=\langle h,\gamma\rangle\). The functions
\begin{equation}\label{cqg:m:abc}
a=yz^k,\qquad b=xz^s,\qquad c=z^N
\end{equation}
are \(H=\langle h\rangle\)-invariant, and \(\gamma\) acts on them with characters \((P-1,1,-\ell)\) modulo \(N\). Since the character on \(b\) has order \(N\), \(\Pi/H\) has order \(N\). Thus \(|\Pi|=N^2\), and \(\Gamma=\Pi/H\simeq\mu_N\).

For \(h^i\gamma^j\), \(0\le i,j<N\), put \(u=Ni-\ell j\) modulo \(N^2\). Its weights modulo \(N^2\) are
\[
Nj-su,\qquad (P-1)Nj-ku,\qquad u.
\]
Suppose that it fixes a point of \(Z\setminus\{0\}\). If \(z\ne0\), then \(u=0\). If also \(x\ne0\), the first character gives \(N\mid j\). If \(x=0\), then \(y\ne0\), and the second character and \(u=0\) give \(N\mid(P-1)j\) and \(N\mid\ell j\). Since \(\ell\equiv-1\pmod{P-1}\), these again give \(N\mid j\). In either case \(N\mid i\). If \(z=0\), the equation of \(Z\) implies \(xy\ne0\). Taking \(k\) times the first character minus \(s\) times the second gives \((1-N)Nj\), so \(N\mid j\). The remaining two characters give \(N\mid si\) and \(N\mid ki\); since \(s-k(Q-1)=1\), they imply \(N\mid i\). Hence the action is free.

Put \(a_3=n-N+1\), \(a_3^*=N-PQ+1\). Then \(1\le a_3,a_3^*<n\), and
\begin{equation*}
a_3a_3^*-1=\left((k-2)\ell-Q+1\right)n.
\end{equation*}
Thus \(a_3^*\) is the inverse of \(a_3\) modulo \(n\). The rational Euler number is
\begin{equation*}
e=2-\frac1P-\frac1Q-\frac{n-N+1}{n}
=\frac{\ell n+PQ(N-1)}{PQn}
=\frac{N^2}{PQn}>0.
\end{equation*}
For the positive plumbing matrix \(M\), put \((n_1,n_2,n_3)=(P,Q,n)\) and \((a_1^*,a_2^*,a_3^*)=(1,1,N-PQ+1)\). The chain corner entries and the Schur complement give
\[
(M^{-1})_{ij}=\frac{1}{n_i n_j e}\quad(i\ne j),\qquad (M^{-1})_{ii}=\frac{a_i^*}{n_i}+\frac{1}{n_i^2 e}.
\]
Substitution gives
\begin{equation}\label{cqg:m:leaf-block}
(M^{-1})_{\mathrm{leaves}}=\frac1{N^2}
\begin{pmatrix}
s(N-Q)&n&Q\\
n&k(P-1)N-kP&P\\
Q&P&\ell(N+1)
\end{pmatrix}.
\end{equation}
The entries follow from \eqref{cqg:m:identities}.

The generators corresponding to the end vertices of the discriminant representation are the rows of \eqref{cqg:m:leaf-block}, modulo \(\mathbb{Z}^3\). In \(\Pi\) they are
\[
\delta_1=h^{Q-1}\gamma^s,\qquad \delta_2=h\gamma^k,\qquad \delta_3=h^\ell\gamma^{-1}.
\]
Substitution using \eqref{cqg:m:identities} gives these rows. Conversely,
\[
h=\delta_2^s\delta_1^{-k},\qquad \gamma=\delta_1\delta_2^{-(Q-1)}.
\]
Hence \(\Pi\) is exactly the discriminant group with its representation on the coordinates corresponding to the end vertices. Its order agrees with \(\det M=PQne=N^2\).

The negative definite graph has one splice node with edge weights \(P,Q,n\). Its semigroup condition is supplied by \(x^P,y^Q,z^n\), and its congruence condition is their common character under the generators \(h,\gamma\), verified above. The splice-quotient theorem~\cite[Theorem~7.2]{NeumannWahl2005} identifies the resolution graph of \(Z/\Pi\) with \eqref{cqg:m:seifert}. The quotient is weighted homogeneous; its three attachment points normalize to \(0,1,\infty\), so this identifies the required coordinate ring of the central fiber.

The residue characters are
\[
\begin{aligned}
\kappa(h)&=\frac{s(P-1)-k+1}{N}=1,\\
\kappa(\gamma)&=\frac{PN+(n+s+k-1)\ell}{N^2}=1-\frac{\ell}{N}.
\end{aligned}
\]
Thus \(\widetilde B\) is Gorenstein and the induced canonical character is \(-\ell\), of order \(N/\gcd(N,\ell)=N/\gcd(P,Q)\). Freeness identifies \(\widetilde B^K\) with the canonical cover.
\end{proof}

\begin{lemma}\label{cqg:m:generators}
For \(1\le j\le k\), put
\[
e_j=\left\lfloor\frac{j-1}{P}\right\rfloor,\qquad m_j=x^{P-j+Pe_j}y^{(j-1)(Q-1)-Qe_j}z^{k-j}.
\]
Then \(\widetilde B\) is generated by \(a,b,c,m_1,\ldots,m_k\), where \(a,b,c\) are defined in \eqref{cqg:m:abc}.
\end{lemma}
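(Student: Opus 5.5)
The plan is to reduce the statement to a combinatorial assertion about monomials. Since \(H=\mu_N(-s,-k,1)\) is finite and diagonal, taking invariants is exact. Hence \(\widetilde B\) is the image of \(\mathbb{C}[x,y,z]^H\), and more precisely is spanned by the images of the \(H\)-invariant monomials. The equation \(x^P+y^Q+z^n\) is \(H\)-semi-invariant by \eqref{cqg:m:identities}, so reducing by it preserves characters. Every invariant element is therefore congruent to a combination of invariant \emph{standard} monomials \(x^\alpha y^\beta z^\gamma\) with \(0\le\alpha<P\). It thus suffices to show that every such monomial is, already in \(\mathbb{C}[x,y,z]\), a product of \(a,b,c,m_1,\ldots,m_k\). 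A first check is that each \(m_j\) is \(H\)-invariant and standard. Writing \(j-1=Pe_j+\rho\) with \(0\le\rho<P\), the \(x\)-exponent of \(m_j\) is \(P-1-\rho\in[0,P-1]\), and the invariance congruence \(-s\alpha-k\beta+\gamma\equiv0\pmod N\) follows from \(Ps-kQ=N\) and \(s(P-1)-k=N-1\).

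Next I will set up the semigroup. An invariant monomial satisfies \(\gamma\equiv s\alpha+k\beta\pmod N\). Dividing by powers of \(c=z^N\) reduces to the case \(\gamma=\gamma_0(\alpha,\beta):=[s\alpha+k\beta]_N\). The remaining task concerns the two-parameter family \(\mu(\alpha,\beta)=x^\alpha y^\beta z^{\gamma_0(\alpha,\beta)}\) with \(\alpha<P\). Observe that \(\mu(\alpha,\beta)\) is divisible by \(b=xz^s\), with invariant quotient \(\mu(\alpha-1,\beta)\) up to a power of \(c\), exactly when \(\alpha\ge1\) and \(\gamma_0(\alpha,\beta)\ge s\). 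The analogous statement holds for \(a=yz^k\) when \(\beta\ge1\) and \(\gamma_0\ge k\). I will therefore call \(\mu(\alpha,\beta)\) \emph{irreducible} if neither division is possible, and show by induction on \(\alpha+\beta\) that every irreducible one is an \(m_j\) or a product of \(m_j\)'s. By that induction, generation reduces to this claim.

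For the irreducible ones I will mimic the boundary-face argument used for the four-arm semigroups \eqref{uqg:four:semigroup}. In the \((\alpha,\beta)\)-plane, \(\gamma_0\) is piecewise linear with jumps of \(-N\). Irreducibility forces \(\gamma_0<s\) when \(\alpha>0\), and \(\gamma_0<k\) when \(\beta>0\). Solving the congruence should show that these points are exactly the exponents of \(m_j\), \(1\le j\le k\). The \(z\)-exponent \(k-j\) of \(m_j\) decreases by one as \(j\) increases, while the \((x,y)\)-exponents move along the segment determined by \(P\) and \(Q\) (each step adds \(Q-1\) to \(\beta\) and subtracts \(1\) from \(\alpha\), wrapping by \(P\) every \(P\) steps). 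I would verify consecutive-determinant identities, of the type \(s(P-1)-k=N-1\) and \(s\ell+Q=(Q-1)N\) from \eqref{cqg:m:identities}, to show that consecutive \(m_j\) span the lattice locally. This would also show that products of \(m_j\)'s together with \(a,b,c\) reach every irreducible point.

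The main obstacle is this last enumeration. The wrap-around at multiples of \(P\) (the \(e_j\) term) makes the slice nonconvex in naive coordinates, and it is unclear whether products such as \(m_im_j\) remain standard, that is, have \(x\)-exponent below \(P\). If they do not, I will use the relation \(x^P=-y^Q-z^n\) to rewrite them, and check that both resulting monomials are again of the reduced form by induction. I will try first to handle the cases \(k\le P\) (so \(e_j=0\)) and then \(k>P\), verifying the identities termwise.
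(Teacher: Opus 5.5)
\textbf{The reduction fails.} Your first paragraph reduces the lemma to showing that every \(H\)-invariant standard monomial \(x^\alpha y^\beta z^\gamma\) with \(0\le\alpha<P\) is a product of \(a,b,c,m_1,\ldots,m_k\) in \(\mathbb{C}[x,y,z]\). That claim is false, so the statement you set out to prove about irreducible \(\mu(\alpha,\beta)\) is false too.

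Take \((P,Q,k)=(3,2,4)\). Then \(\ell=1\), \(N=7\), \(s=5\), \(n=13\), and \(H=\mu_7(2,3,1)\). The monomial \(xy^4\) is invariant and standard, with \(\gamma_0=0\), so it is irreducible in your sense. Its \(z\)-exponent is zero, and the only generator with \(z\)-exponent zero is \(m_4=x^2y\). Hence \(xy^4\) is not a monomial product of the generators. In \(\widetilde B\) it is reached only through the relation: \(xy^4=xy^2\cdot y^2=-m_4^2-a^2b\), where the generator product \(m_4^2=x^4y^2\) is itself nonstandard.

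The same happens when \(k\le P\). For \((P,Q,k)=(4,2,3)\), the standard invariant \(y^{10}\) is not a product, since the only generator without \(z\) is \(m_3=xy^2\). Your fallback of rewriting nonstandard products by \(x^P=-y^Q-z^n\) runs in the wrong direction: it expresses products through standard monomials, whereas you need the converse. Induction on \(\alpha+\beta\) also does not control the exchange of \(x^P\) for \(y^Q\).

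\textbf{The missing idea} is to work first modulo the binomial \(x^P+y^Q\), without imposing any normal form. Two invariant monomials with the same \(z\)-exponent \(l\) and the same weighted degree \(d=Qi+Pj\) differ by an integral multiple of \((P,-Q,0)\). When \(\gcd(P,Q)>1\), this step uses the \(H\)-characters. Such monomials therefore agree up to sign modulo \(x^P+y^Q\).

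The paper encodes these classes by \(h=(d+\ell l)/N\) and \(u=l-kh\). It shows that the allowable pairs form the semigroup generated by \(a:(1,0)\), \(b:(Q-1,1)\), \(c:(\ell,P)\), and \(m_j:(1,-j)\). For \(u<0\), the product of \(m_j\)'s representing a class need not be its standard representative, which is exactly what your approach cannot accommodate.

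For the trinomial, the difference between a monomial and a generator product in its class is divisible by \(x^P+y^Q\). Replacing this factor by \(-z^n\) gives invariant monomials of weighted degree \(d-PQ\), and induction on \(d\) completes the proof.
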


\begin{proof}
The exponent of \(x\) in \(m_j\) lies in \(\{0,\ldots,P-1\}\). If \(j-1=Pe_j+h\), with \(0\le h<P\), its exponent of \(y\) is \(e_j\ell+h(Q-1)\), which is nonnegative. Thus all displayed monomials are polynomials. Their \(H\)-characters are zero by \(Ps-kQ=N\).

First replace \(x^P+y^Q+z^n\) by \(x^P+y^Q\). Let \(x^iy^jz^l\) be an invariant monomial, and write
\[
l-si-kj=Nw,\qquad d=Qi+Pj,\qquad h=\frac{d+\ell l}{N},\qquad u=l-kh.
\]
The identities \(Q+s\ell=(Q-1)N\) and \(P+k\ell=N\) give
\[
h=(Q-1)i+j+\ell w,\qquad u=i+Pw.
\]
In particular \(h,u\) are integers. Writing \(q=-w\), all possible representatives with these coordinates have
\begin{equation*}
i=u+Pq,\qquad j=h-(Q-1)u-Qq,\qquad l=kh+u.
\end{equation*}
The conditions \(i,j\ge0\) are equivalent to
\[
h\ge f(u):=(Q-1)u+Q\left\lceil-\frac uP\right\rceil,
\]
and \(l\ge0\) is equivalent to \(kh+u\ge0\). If \(u=Pq+r\ge0\), with \(0\le r<P\), then
\[
f(u)=\ell q+(Q-1)r.
\]
The generator coordinates are
\[
a:(1,0),\qquad m_j:(1,-j),\qquad b:(Q-1,1),\qquad c:(\ell,P).
\]
Thus an allowable pair with \(u\ge0\) is the sum of \(q\) copies of the coordinate of \(c\), \(r\) copies of that of \(b\), and \(h-f(u)\) copies of that of \(a\).

If \(u=-v<0\), write \(v=Pq+r\), with \(q\ge0\) and \(1\le r\le P\). Then
\[
f(-v)=-\ell q+Q-(Q-1)r\le1.
\]
The condition \(kh-v\ge0\) implies \(h\ge\lceil v/k\rceil\ge1\), so it is the stronger condition. Choose \(h\) integers in \(\{0,\ldots,k\}\) whose sum is \(v\). Their corresponding generators among \(a,m_1,\ldots,m_k\) have total coordinate \((h,-v)\). This proves generation of the semigroup of allowable coordinates.

We also verify that these coordinates describe monomial classes even when \(P,Q\) are not relatively prime. Two invariant monomials with the same \((h,u)\) have the same \(l\) and \(d=Qi+Pj\). Write \(\delta=\gcd(P,Q)\). The difference of their exponents has the form \((Pq/\delta,-Qq/\delta,0)\). Equality of their \(H\)-characters gives \(Nq/\delta\in N\mathbb{Z}\), so \(q\in\delta\mathbb{Z}\). Their exponents therefore differ by an integral multiple of \((P,-Q,0)\). The relation \(x^P=-y^Q\) identifies the corresponding monomials up to sign. Conversely, the component of a fixed \((d,l)\) is spanned by such monomials, and the binomial relation links consecutive representatives. Hence the semigroup calculation proves generation of the invariant binomial quotient.

Return to \(x^P+y^Q+z^n=0\). Given an invariant monomial of weighted degree in the two variables \(d=Qi+Pj\), choose a product of the proposed generators representing the same binomial class. Their difference is divisible by \(x^P+y^Q\). Replacing this factor by \(-z^n\) expresses the difference as a sum of invariant monomials of weighted degree in the two variables \(d-PQ\). The terms remain invariant because \(x^P,y^Q,z^n\) have the same character. Induction on the nonnegative integer \(d\) proves the assertion. Exactness of invariants for the finite group \(H\) ensures that invariant monomial classes span \(\widetilde B\).
\end{proof}

\subsection{Deformed generators and multiplication relations}

We next deform these generators. Put
\begin{equation*}
R_0=\mathbb{C}[a,b,c,t]/(c^k+ca^Q+b^P+tab),\qquad \alpha=a^{Q-1}.
\end{equation*}
The ring \(R_0\) is a domain because the defining polynomial is primitive and linear in \(t\), with \(\gcd(ab,c^k+ca^Q+b^P)=1\). Its central quotient is an irreducible normal hypersurface: its gradient vanishes only at the origin, and a factorization of a positively weighted homogeneous polynomial in three variables would give a positive-dimensional intersection of its factors in the singular locus. The substitution \eqref{cqg:m:abc} gives
\[
c^k+ca^Q+b^P=z^{N+kQ}(x^P+y^Q+z^n).
\]
After inverting \(c\), adjoining a root \(z^N=c\) recovers \(x=bz^{-s}\) and \(y=az^{-k}\), with deck group \(H\). Consequently
\begin{equation}\label{cqg:m:central-localization}
(R_0/(t))_c\simeq\widetilde B_c.
\end{equation}

Introduce an auxiliary variable \(w\), and define
\begin{equation}\label{cqg:m:series}
D(w)=c+tw+a^\ell w^P,\qquad L(w)=\sum_{h=0}^{P-1}\alpha^h b^{P-1-h}w^h+ta,\qquad \frac{L(w)}{D(w)}=\sum_{j\ge1}q_jw^{j-1}.
\end{equation}
The coefficients \(q_j\) initially belong to \((R_0)_c\). The polynomial identity governing the construction is
\begin{equation}\label{cqg:m:series-identity}
(b-\alpha w)L(w)=-c^k-a^QD(w).
\end{equation}
Indeed the left side is \(b^P-\alpha^Pw^P+tab-ta\alpha w\); substituting \(b^P+tab=-c^k-ca^Q\), \(\alpha^P=a^{\ell+Q}\), and \(a\alpha=a^Q\) gives the right side.

\begin{lemma}\label{cqg:m:rolling}
For \(1\le j\le k\), the coefficient \(q_j\) belongs to \((R_0)_b\cap(R_0)_c\), and
\begin{equation}\label{cqg:m:q-central}
q_j|_{t=0}=(-1)^{e_j}m_j.
\end{equation}
Moreover the following identities hold. Put
\[
p_h=[w^h]\frac{c^k}{D(w)},\qquad M(w)=\sum_{h=0}^{P-2}\alpha^h b^{P-2-h}w^h,\qquad r_i=[w^{i-1}]\frac{c^{k-1}M(w)}{D(w)}.
\]
Then
\begin{align}\label{cqg:m:linear-relations}
bq_1&=-a^Q-c^{k-1},& bq_{j+1}&=\alpha q_j-p_j,\\
aq_{i+1}&=-q_1q_i-r_i,& cq_j&=l_{j-1}-tq_{j-1}-a^\ell q_{j-P},
\end{align}
where \(l_h=[w^h]L(w)\), and coefficients with nonpositive index in the last equality are zero. For \(0\le h\le k-1\) and \(1\le i\le k-1\),
\begin{equation}\label{cqg:m:p-r-divisibility}
p_h\in c^{k-1-h}\mathbb{C}[a,c,t],\qquad r_i\in c^{k-1-i}\mathbb{C}[a,b,c,t].
\end{equation}
For \(1\le i<j\le k-1\), set \(\Delta_{i,j}=q_iq_{j+1}-q_{i+1}q_j\), and extend by \(\Delta_{j,i}=-\Delta_{i,j}\), \(\Delta_{i,i}=0\). Then
\begin{equation}\label{cqg:m:minor-divisibility}
\Delta_{i,j}\in a c^{k-1-j}\mathbb{C}[a,b,c,t].
\end{equation}
\end{lemma}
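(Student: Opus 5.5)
The plan is to work first inside the localization \((R_0)_c\), where \(D(w)=c+tw+a^\ell w^P\) is a unit power series in \(w\), and extract every statement from the single polynomial identity \eqref{cqg:m:series-identity}. Multiplying the defining expansion \(L(w)=D(w)\sum_j q_jw^{j-1}\) and comparing coefficients of \(w^{j-1}\) gives
\[
cq_j+tq_{j-1}+a^\ell q_{j-P}=l_{j-1},
\]
which is the last relation in \eqref{cqg:m:linear-relations}. Dividing \eqref{cqg:m:series-identity} by \(D(w)\) gives
\[
(b-\alpha w)\sum_j q_jw^{j-1}=-a^Q-c^k/D(w).
\]
Comparing the constant terms yields \(bq_1=-a^Q-c^{k-1}\), since \(p_0=c^{k-1}\). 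The coefficient of \(w^j\) yields \(bq_{j+1}-\alpha q_j=-p_j\). For the relation \(aq_{i+1}=-q_1q_i-r_i\), I would use \(L(w)=b\,M(w)+\alpha^{P-1}w^{P-1}+ta\) together with \(a\alpha=a^Q\). I would multiply \(L/D\) by \(a w\) and compare it with \((L/D)\cdot q_1\), replacing \(b q_1\) through the first relation. The difference should reduce to \(-c^{k-1}M(w)/D(w)\); I expect a short but fiddly series manipulation here.

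Divisibility \eqref{cqg:m:p-r-divisibility} is the next step. I would expand \(1/D(w)=c^{-1}\sum_\nu(-(tw+a^\ell w^P)/c)^\nu\). The coefficient of \(w^h\) involves only \(\nu\le h\), so it has a pole of order at most \(h+1\) in \(c\). Multiplying by \(c^k\), respectively by \(c^{k-1}M(w)\), then gives the stated powers \(c^{k-1-h}\), respectively \(c^{k-1-i}\), with polynomial coefficients. For the same reason, \(q_j\) has denominator at most \(c^{j}\).

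The membership \(q_j\in(R_0)_b\) I would prove by induction using \(bq_{j+1}=\alpha q_j-p_j\), with \(p_j\) polynomial and \(q_1\in(R_0)_b\). The specialization \eqref{cqg:m:q-central} would follow by setting \(t=0\), substituting \eqref{cqg:m:abc}, and checking that \(q_j\) equals \(\pm m_j\) in \(\widetilde B_c\) via \eqref{cqg:m:central-localization}. Concretely, \(L/D\) at \(t=0\) becomes a geometric series in \(w\) in the variables \(x,y,z\); summing it with \(x^P=-y^Q-z^n\) should produce the monomials \(m_j\). The sign \((-1)^{e_j}\) should arise from the \(e_j\) uses of \(x^P=-y^Q\) modulo \(z^n\), and the error terms should vanish for \(j\le k\) by a degree count in \(z\).

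The minor divisibility \eqref{cqg:m:minor-divisibility} I expect to be the main obstacle. I would compute \(\Delta_{i,j}\) from the recursion \(bq_{j+1}=\alpha q_j-p_j\), which gives
\[
b\Delta_{i,j}=q_jp_i-q_ip_j.
\]
I would then combine this with the relation \(aq_{i+1}=-q_1q_i-r_i\), which exhibits a factor \(a\). Taking a suitable difference, \(a\Delta_{i,j}\) is expressed through \(q_1\Delta\)-type terms and \(r\)'s. The \(c\)-powers would then be tracked through \eqref{cqg:m:p-r-divisibility}. Finally, I would use that \(R_0\) is a domain in which \(b\) and \(c\) are coprime, in order to clear the denominators.
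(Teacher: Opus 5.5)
Your derivations of the \(b\)- and \(c\)-relations in \eqref{cqg:m:linear-relations}, of \eqref{cqg:m:p-r-divisibility} from the expansion of \(1/D(w)\), and of \(q_j\in(R_0)_b\) by induction agree with the paper. The genuine gap is in \eqref{cqg:m:minor-divisibility}.

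Your identity \(b\Delta_{i,j}=p_iq_j-p_jq_i\) is correct. The \(a\)-relation likewise gives \(a\Delta_{i,j}=r_iq_j-r_jq_i\). Both, however, put \(a\) or \(b\) on the wrong side. They express \(a\Delta_{i,j}\) and \(b\Delta_{i,j}\) through the nonpolynomial \(q\)'s, so they neither exhibit \(a\) as a factor of \(\Delta_{i,j}\) nor show that \(\Delta_{i,j}\) is polynomial.

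Clearing denominators by coprimality is not available, because \(b\) and \(c\) are not coprime in \(R_0\). Since \(R_0/(b,c)\cong\mathbb{C}[a,t]\), the ideal \((b,c)\) is a height-one prime containing both. Concretely, \(q_1=(b^{P-1}+ta)/c=-(a^Q+c^{k-1})/b\) lies in \((R_0)_b\cap(R_0)_c\) but not in \(R_0\); adjoining exactly such elements is the purpose of the construction. You also cannot pass to the factorial ring \(\mathbb{C}[a,b,c,t]\), since the \(a\)- and \(b\)-relations hold only modulo the defining equation. In terms of the valuation \(v\) along \((b,c)\), one has \(v(q_j)=-j\). Your two identities then yield only \(v(\Delta_{i,j})\ge k-1-i-j\), which is negative once \(i+j\ge k\).

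The missing idea is a recurrence in \(c\) for the minors. Multiply the \(c\)-relation for \(q_{j+1}\) by \(q_i\), and subtract \(q_{i+1}\) times the \(c\)-relation for \(q_j\). Rewrite the terms coming from \(L(w)\) by \(\alpha q_i-bq_{i+1}=p_i\) and \(aq_{i+1}+q_1q_i=-r_i\), and use \(l_P=0\) and \(l_{P-1}=\alpha^{P-1}=a^{\ell+1}\). This gives
\[
\begin{aligned}
c\Delta_{i,j}&=-t\Delta_{i,j-1}+\alpha^{j-1}b^{P-1-j}p_i&&(j<P),\\
c\Delta_{i,P}&=-t\Delta_{i,P-1}+a^\ell r_i,\\
c\Delta_{i,j}&=-t\Delta_{i,j-1}-a^\ell\Delta_{i,j-P}&&(j>P).
\end{aligned}
\]
Now induct on the larger index, using antisymmetry when \(j-P<i\). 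By \eqref{cqg:m:p-r-divisibility} and induction, every right side is a polynomial divisible by \(c^{k-j}\). Dividing by \(c\) therefore gives a polynomial divisible by \(c^{k-1-j}\); this is where \(j\le k-1\) is used. The factor \(a\) comes from \(\alpha^{j-1}\) (with \(j\ge2\)) and from \(a^\ell\), and the recurrence preserves it.

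There are also two smaller problems.

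For the \(a\)-relation, comparing \(awL/D\) with \(q_1L/D\) pairs \(aq_i\) with \(q_1q_{i+1}\), which are the wrong indices. The paper uses the polynomial identity \(wL(w)+ac=aD(w)+bwM(w)\), which needs \(\alpha^{P-1}=a^{\ell+1}\) and not just \(a\alpha=a^Q\). Together with \(cq_1=b^{P-1}+ta\), it gives \(a(L/D-q_1)/w+q_1L/D=-c^{k-1}M/D\).

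For \eqref{cqg:m:q-central}, neither \(x^P=-y^Q-z^n\) nor any error terms enter. At \(t=0\), \(1/D=c^{-1}\sum_{\nu\ge0}(-a^\ell w^P/c)^\nu\). Writing \(j-1=Pe_j+h\) with \(0\le h<P\), exactly one term contributes, and \(q_j|_{t=0}=(-1)^{e_j}a^{\ell e_j}\alpha^hb^{P-1-h}c^{-e_j-1}\). The sign comes from this geometric series. Then \eqref{cqg:m:abc} and \eqref{cqg:m:identities} turn the monomial into \(m_j\) exactly. The hypothesis \(j\le k\) serves only to make the exponent \(k-j\) of \(z\) nonnegative.
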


\begin{proof}
Expand
\[
\frac1{D(w)}=\frac1c\sum_{m\ge0}(-1)^m\left(\frac{tw+a^\ell w^P}{c}\right)^m.
\]
A term contributing to \(w^h\) has \(m\le h\). Multiplication by \(c^k\) therefore leaves a factor \(c^{k-1-h}\), proving the first inclusion in \eqref{cqg:m:p-r-divisibility}. The same expansion after multiplication by \(c^{k-1}M(w)\) gives the second inclusion. Dividing \eqref{cqg:m:series-identity} by \(D(w)\) gives the first two relations in \eqref{cqg:m:linear-relations}, which inductively place \(q_1,\ldots,q_k\) in \((R_0)_b\). The last relation is coefficient comparison in \(D(w)\sum q_jw^{j-1}=L(w)\).

For the relation involving \(a\), the identity
\[
wL(w)+ac=aD(w)+bwM(w)
\]
uses \(\alpha^{P-1}=a^{\ell+1}\). Since \(cq_1=b^{P-1}+ta\), it gives
\[
a\frac{L(w)/D(w)-q_1}{w}+q_1\frac{L(w)}{D(w)}
=\frac{M(w)(a^Q+bq_1)}{D(w)}
=-\frac{c^{k-1}M(w)}{D(w)}.
\]
Taking coefficients proves the remaining relation.

The minors \(\Delta_{i,j}\) satisfy the recurrences
\begin{align*}
c\Delta_{i,j}&=-t\Delta_{i,j-1}+\alpha^{j-1}b^{P-1-j}p_i&& (j<P),\\
c\Delta_{i,P}&=-t\Delta_{i,P-1}+a^\ell r_i,\\
c\Delta_{i,j}&=-t\Delta_{i,j-1}-a^\ell\Delta_{i,j-P}&& (j>P).
\end{align*}
These follow by subtracting \(q_{i+1}\) times the equation for \(cq_j\) from \(q_i\) times that for \(cq_{j+1}\), using \(\alpha q_i-bq_{i+1}=p_i\) and \(q_1q_i+aq_{i+1}=-r_i\). For \(j>P\), the index \(j-P\) is positive; when \(j-P<i\), use antisymmetry.

Induct on the larger index \(j\). In the first recurrence \(i<j\) gives \(p_i\in c^{k-j}\mathbb{C}[a,c,t]\); the same bound holds for \(r_i\) in the middle recurrence. Each earlier minor in the other terms is divisible by \(c^{k-j}\). Hence division by \(c\) leaves a polynomial divisible by \(c^{k-1-j}\). The initial nonzero term has the factor \(\alpha^{j-1}\), with \(j\ge2\); the transition term has the factor \(a^\ell\). Both are divisible by \(a\), and the recurrence preserves this factor. This proves \eqref{cqg:m:minor-divisibility}.

Finally, write \(j-1=Pe_j+h\), with \(0\le h<P\). At \(t=0\), exactly one numerator term contributes to the coefficient in \eqref{cqg:m:series}, and
\[
q_j|_{t=0}=(-1)^{e_j}\frac{a^{\ell e_j}\alpha^h b^{P-1-h}}{c^{e_j+1}}.
\]
Under \eqref{cqg:m:abc}, the three exponents are
\[
P-1-h=P-j+Pe_j,\qquad \ell e_j+(Q-1)h=(j-1)(Q-1)-Qe_j,
\]
and
\[
k\ell e_j+k(Q-1)h+s(P-1-h)-N(e_j+1)=k-j.
\]
These are the exponents in \(m_j\), proving \eqref{cqg:m:q-central}.
\end{proof}

\subsection{Adjoining fractions and lifting relations}

We show that adjoining \(q_1,\ldots,q_k\) gives a flat Gorenstein deformation of \(\widetilde{B}\), with an extension of the covering action. The defining relations give lifts of generators of the first syzygy module.

\begin{lemma}\label{cqg:m:flat-family}
The algebra
\[
R_k=R_0[q_1,\ldots,q_k]\subset(R_0)_c
\]
is a finitely generated Gorenstein domain, flat over \(\mathbb{C}[t]\), and \(R_k/(t)\simeq\widetilde B\). The \(\Gamma\)-action on its central fiber extends to \(R_k\), trivially on \(t\).
\end{lemma}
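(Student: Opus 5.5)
The plan is to realize \(R_k\) as an iterated fraction adjunction over \(R_0\) and to apply Lemmas~\ref{uqg:fractions} and~\ref{uqg:fraction-syzygies} one fraction at a time. Put \(R_{j}=R_{j-1}[q_j]\). By \eqref{cqg:m:linear-relations}, \(bq_1=-a^Q-c^{k-1}\) and \(bq_{j+1}=\alpha q_j-p_j\), so \(q_j\in\operatorname{Frac}R_{j-1}\) multiplies a suitable ideal \(I_j\) into \(R_{j-1}\). For \(q_1\) we take \(I_1=(b,c)\), using \(cq_1=b^{P-1}+ta\). For \(q_{j+1}\) we take \(I_{j+1}\) generated by \(b\), by \(c\) (via the relation for \(cq_{j+1}\)), and by \(a\) (via \(aq_{j+1}=-q_1q_j-r_j\)). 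Lemma~\ref{uqg:fractions} then requires three things at each step. First, \(R_{j-1}\) and \(R_{j-1}/I_j\) must be flat over \(\mathbb{C}[t]\). Second, \(R_{j-1}/I_j\) must be a domain. Third, some generator \(u_0\in I_j\) must satisfy \(q_ju_0\notin I_j\).

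To verify these conditions, compute \(R_{j-1}/(a,b,c)\) from the relations in Lemma~\ref{cqg:m:rolling}. Modulo \((a,b,c)\), the previously adjoined \(q_i\) satisfy only the relations coming from \eqref{cqg:m:p-r-divisibility}, \eqref{cqg:m:minor-divisibility} and \(p_{k-1}\equiv(-t)^{k-1}\). I expect the quotient to be a polynomial-type ring in \(t\) and \(q_1\), or a rank-one determinantal-type ring. It should be visibly a flat domain, or at least torsion free over \(\mathbb{C}[t]\). The nonmembership condition is checked at \(t=0\) using \eqref{cqg:m:q-central}: there \(q_j\) restricts to \(\pm m_j\), and \(m_ju_0\notin I_j\) follows from exponent bookkeeping in \(\widetilde B\). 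Flatness propagates from step to step by Lemma~\ref{uqg:fractions}, whose gr description \(\operatorname{gr}R_j=R_{j-1}[Q]/I_jQ\) also shows that \(R_k\) is finitely generated.

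For the central fiber, note that the construction specializes exactly by the last assertion of Lemma~\ref{uqg:fractions}. So \(R_k/(t)\) equals \(R_0/(t)\) with the specialized fractions \(\pm m_j\) adjoined. This is a subalgebra of \((R_0/(t))_c\simeq\widetilde B_c\) by \eqref{cqg:m:central-localization}, and it contains \(a,b,c,m_1,\dots,m_k\). Lemma~\ref{cqg:m:generators} then gives \(R_k/(t)\supseteq\widetilde B\). The reverse inclusion holds because every element lies in \(\widetilde B_c\) and is integral, and \(\widetilde B\) is normal. Thus \(R_k/(t)\simeq\widetilde B\).

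The Gorenstein property is deduced from two facts: \(t\) is a nonzerodivisor, by flatness and because \(R_k\) is a domain as a subring of \((R_0)_c\), and \(\widetilde B\) is Gorenstein by Lemma~\ref{cqg:m:cover}. Local Gorensteinness near \(t=0\) then lifts, and the grading extends it globally. For the \(\Gamma\)-action, we let \(\Gamma\) act on \(a,b,c\) with characters \((P-1,1,-\ell)\) and trivially on \(t\). A check shows that \(c^k+ca^Q+b^P+tab\) is semi-invariant, and that \(D(w)\) and \(L(w)\) are semi-invariant once \(w\) is given a suitable character. Hence each \(q_j\) is an eigenvector, \(R_k\) is stable, and the action restricts to the given one on the center. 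I expect the main obstacle to be the domain and flatness verification of \(R_{j-1}/I_j\) for all \(j\) uniformly. I would first try to exhibit this quotient explicitly, as the ring generated over \(\mathbb{C}[t]\) by the classes of \(q_i\) modulo the leading relations.
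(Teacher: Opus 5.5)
\textbf{Verdict: genuine gap.} Your outline matches the paper's: iterated fraction adjunction via Lemma~\ref{uqg:fractions}, exact specialization, Gorenstein lifting from \(\widetilde B\), and semi-invariance of \(D(w),L(w)\). Your first two ideals, \((b,c)\) for \(q_1\) and \((a,b,c)\) for \(q_2\), also agree with it. The gap is that you keep \(I=(a,b,c)\) for \(q_3,\dots,q_k\). For this ideal, the hypotheses of Lemma~\ref{uqg:fractions} that you flag as the main obstacle are false.

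In \(R_2\), \eqref{cqg:m:linear-relations} gives
\[
q_1^2=-aq_2-c^{k-2}b^{P-2},\qquad tq_1=a^{Q-1}b^{P-2}-cq_2 .
\]
Since \(k\ge3\) and \(Q\ge2\), both \(q_1^2\) and \(tq_1\) lie in \((a,b,c)R_2\). Yet \(q_1\notin(a,b,c)R_2\). Its central value \(m_1=x^{P-1}z^{k-1}\) has \(z\)-exponent \(k-1\). Every element of the ideal \((yz^k,xz^s,z^N)\) in \(\mathbb{C}[x,y,z]/(x^P+y^Q+z^n)\) has a normal form with \(z\)-exponents at least \(k\), because the reduction \(x^P\mapsto-y^Q-z^n\) never lowers \(z\)-exponents.

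So \(R_2/(a,b,c)\) contains a nonzero nilpotent killed by \(t\), and it is neither a domain nor flat over \(\mathbb{C}[t]\). Consequently \(R_2[X]/(Xu-q_3u:u\in(a,b,c))\) is strictly larger than \(R_2[q_3]\). The element \(q_1X-q_2^2-\Delta_{1,2}\) has nonzero image \(q_1\) in \(\operatorname{gr}_1\), but it vanishes in \(R_2[q_3]\). The exact specialization you use to compute \(R_k/(t)\) is therefore unavailable. Flatness of \(R_k\) itself is automatic, since it is a domain containing \(\mathbb{C}[t]\). The real content is that \(tR_k\) is the whole kernel of \(R_k\to(R_0/(t))_c\), and that is exactly what your argument does not justify.

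\textbf{Missing idea.} At each step, enlarge the ideal to \(I_{j-1}=(a,b,c,q_1,\ldots,q_{j-2})\subset R_{j-1}\). Use the quadratic relations \(q_jq_i=q_{i+1}q_{j-1}+\Delta_{i,j-1}\), which lie in \(R_{j-1}\) by \eqref{cqg:m:minor-divisibility}. For \(j<k\), the terms \(p_{j-1},r_{j-1},\Delta_{i,j-1}\) are divisible by \(c^{k-j}\), and \(l_{j-1}\in(a,b)\). Induction then gives \(R_{j-1}/I_{j-1}=\mathbb{C}[t,q_{j-1}]\), which is a flat domain, and \(\mathbb{C}[q_{j-1}]\) at \(t=0\).

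Nonmembership is witnessed by explicit residues:
\begin{itemize}
\item \(-a^Q\) for \(bq_1\);
\item \(-q_1^2\) for \(aq_2\);
\item \(q_{j-1}^2\) for \(q_jq_{j-2}\), since \(a\mid\Delta_{j-2,j-1}\).
\end{itemize}
Running the lemma on these presentations before and after setting \(t=0\) yields \(R_k/(t)=\mathbb{C}[a,b,c,m_1,\ldots,m_k]\). This is \(\widetilde B\) by Lemma~\ref{cqg:m:generators}. Your normality argument for the reverse inclusion is then unnecessary, because each \(m_j\) already lies in \(\widetilde B\).
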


\begin{proof}
Give the variables the positive degrees
\[
\deg a=(k-1)P,\qquad \deg b=kQ,\qquad \deg c=PQ,\qquad \deg t=N.
\]
Assigning \(w\) degree \(PQ-N\) in \eqref{cqg:m:series} gives
\begin{equation*}
\deg q_j=d_j:=Q((k-1)P-k)+(j-1)(N-PQ)>0.
\end{equation*}
We apply Lemma~\ref{uqg:fractions} successively to the multiplication relations. At the first stage, \(I_0=(b,c)\), \(R_0/I_0=\mathbb{C}[a,t]\), and the products \(bq_1=-a^Q-c^{k-1}\), \(cq_1=b^{P-1}+ta\) have nonzero residue \(-a^Q\). The lemma identifies the presentation by these relations with \(R_1=R_0[q_1]\), whose quotient by \((a,b,c)\) is \(\mathbb{C}[t,q_1]\).

For \(2\le j\le k\), put
\begin{equation*}
I_{j-1}=(a,b,c,q_1,\ldots,q_{j-2})\subset R_{j-1},\qquad R_{j-1}=R_0[q_1,\ldots,q_{j-1}].
\end{equation*}
Assume \(R_{j-1}/I_{j-1}=\mathbb{C}[t,q_{j-1}]\). The products by \(a,b,c\) are given in \eqref{cqg:m:linear-relations}; the others are
\begin{equation*}
q_jq_i=q_{i+1}q_{j-1}+\Delta_{i,j-1}\quad(1\le i\le j-2).
\end{equation*}
All lie in \(R_{j-1}\). At \(j=2\), the residue of \(q_2a\) is \(-q_1^2\), since \(r_1=c^{k-2}b^{P-2}\); at \(j\ge3\), the residue of \(q_jq_{j-2}\) is \(q_{j-1}^2\), since \(a\mid\Delta_{j-2,j-1}\). These nonzero residues give a presentation of the domain of \(R_j\) by Lemma~\ref{uqg:fractions}. If \(j<k\), every relation vanishes modulo \(a,b,c,q_1,\ldots,q_{j-1}\): the corrections \(p_{j-1},r_{j-1},\Delta_{i,j-1}\) contain \(c^{k-j}\), and \(l_{j-1}\in(a,b)\). The next quotient is therefore \(\mathbb{C}[t,q_j]\), completing the induction.

At \(t=0\), the same quotient rings are \(\mathbb{C}[a]\) and \(\mathbb{C}[q_{j-1}]\), and the residues remain \(-a^Q\), \(-q_1^2\), and \(q_{j-1}^2\). Applying the lemma to these specialized presentations and using \eqref{cqg:m:q-central} gives
\[
R_j/(t)\simeq\mathbb{C}[a,b,c,q_1|_{t=0},\ldots,q_j|_{t=0}].
\]
The identification occurs in \(\operatorname{Frac}(R_0/(t))\) by \eqref{cqg:m:central-localization}. At \(j=k\), Lemma~\ref{cqg:m:generators} identifies the coordinate ring of the central fiber with \(\widetilde B\).

We make the resulting normal form explicit. The following disjoint union is a \(\mathbb{C}[t]\)-basis of \(R_k\):
\begin{equation*}
\begin{aligned}
\mathcal{N}_0&=\{a^u c^v b^w:u,v\ge0,\quad 0\le w<P\},\\
\mathcal{N}_1&=\{a^u q_1^v:u\ge0,\quad v\ge1\},\\
\mathcal{N}_j&=\{q_{j-1}^u q_j^v:u\ge0,\quad v\ge1\}\quad(2\le j\le k).
\end{aligned}
\end{equation*}
The monic equation eliminates \(b^P\), and the filtration of Lemma~\ref{uqg:fractions}, with quotients \(\mathbb{C}[a,t]\) and \(\mathbb{C}[t,q_{j-1}]\), proves spanning and independence, also at \(t=0\). Reduction of coefficients modulo \(I_{j-1}\), followed by the multiplication relations, lowers the degree in the newly adjoined variable. Iterating and finally eliminating \(b^P\) gives a terminating reduction; its recorded subtractions express each zero normal form in the defining equations.

At each stage write \(R_{j-1}=P_{j-1}/(\mathbf{R})\), let \(\mathbf{h}\) be the coordinate row generating \(I_{j-1}\), and put \(\mathbf{u}=q_j\mathbf{h}\). The old equations lie in \((\mathbf{h})\), so monomial division gives \(\mathbf{R}=\mathbf{h}C\). For each coordinate Koszul column or column \(C_\alpha\), write \(\mathbf{h}\mathbf{a}=\mathbf{R}\mathbf{b}\), with \(\mathbf{b}=0\) or \(\mathbf{b}=e_\alpha\), respectively. The normal-form reduction computes \(\mathbf{d}\) with \(\mathbf{u}\mathbf{a}=\mathbf{R}\mathbf{d}\). Lemma~\ref{uqg:fraction-syzygies} supplies the new columns
\begin{equation}\label{cqg:m:full-syzygies}
\begin{pmatrix}\mathbf{d}-Q_j\mathbf{b}\\\mathbf{a}\end{pmatrix}
\quad\text{for the equation row}\quad
F_j=(\mathbf{R},Q_j\mathbf{h}-\mathbf{u}),
\end{equation}
together with the old and mixed Koszul columns. They form \(\Sigma_j\) with \(F_j\Sigma_j=0\), and generate all central syzygies after specialization, by the specialized domain identifications above. Lemmas~\ref{uqg:flat-relations} and~\ref{uqg:graded-flatness} give flatness over \(\mathbb{C}[t]\).

For clarity, the first matrix is explicit. Put \(B_1=b^{P-1}+ta\) and \(C_1=c^{k-1}+a^Q\), so that \(R_0\) has equation \(bB_1+cC_1\). For the equation row
\[
F_1=(bB_1+cC_1,\ Q_1b+C_1,\ Q_1c-B_1),
\]
a matrix whose columns generate the first syzygy module is
\[
\Sigma_1=
\begin{pmatrix}
-Q_1&-1\\
B_1&c\\
C_1&-b
\end{pmatrix}.
\]
These columns come from the equation from the preceding stage and the Koszul relation of \((b,c)\); their specialization at \(t=0\) generates all syzygies. Formula~\eqref{cqg:m:full-syzygies} gives every subsequent matrix.

Extend the \(\Gamma\)-characters by assigning \(t\) character zero and \(w\) character \(-\ell\). Then \(D(w)\) has character \(-\ell\), \(L(w)\) has character \(P-1\), and
\begin{equation*}
\operatorname{char}_{\Gamma}(q_j)=P-1+j\ell\pmod N.
\end{equation*}
Every equation and multiplication map is equivariant. Monomial division respects characters; projecting the syzygy columns to their prescribed characters if necessary gives equivariant matrices throughout. 

The coordinate ring of the central fiber \(\widetilde B\) is Gorenstein. Since \(t\) is a nonzerodivisor, the local algebra of \(R_k\) at its homogeneous maximal ideal is Gorenstein. The Gorenstein locus is open and invariant under the positive grading. Every orbit closure contains the vertex, so its closed invariant complement is empty. Hence \(R_k\) is Gorenstein everywhere.

Proposition~\ref{uqg:invariant-presentation}, applied to \((F_k,\Sigma_k)\) and \(K\), gives an equivariant polynomial presentation of \(R_k^K\), with central fiber \(B\) and a lift of its full first syzygy module. Its monomial inputs are determined by the coordinate characters modulo \(\gcd(P,Q)\).
\end{proof}

\subsection{The four families}

The construction and all its verifications are common to the four families. To identify their Seifert data, let \(p,q,r\ge0\) be the table parameters and make the substitutions
\[
\begin{array}{c|ccc}
\text{Figure}&P&Q&k\\\hline
1(d)&r+4&2&q+3\\
1(e)&3&p+3&q+3\\
1(f)&3&2&q+3\\
1(g)&r+4&p+3&q+3
\end{array}
\]
The identity
\begin{equation*}
\frac n{N-1}=[k-1,P-1,Q,2^{k-1}]
\end{equation*}
follows successively from \([Q,2^{k-1}]=s/k\), \([P-1,Q,2^{k-1}]=(N-1)/s\), and \((k-1)(N-1)-s=n\). Hirzebruch--Jung duality therefore gives
\[
\frac n{n-N+1}=
\begin{cases}
[2^{k-3},3,2^{P-4},k+2],&P\ge4,\ Q=2,\\
[2^{k-3},4,2^{Q-3},k+1],&P=3,\ Q\ge3,\\
[2^{k-3},k+3],&P=3,\ Q=2,\\
[2^{k-3},3,2^{P-4},3,2^{Q-3},k+1],&P\ge4,\ Q\ge3.
\end{cases}
\]
These are precisely the third arms in Figures~1(d)--(g), so Proposition~\ref{cqg:m:existence} covers every parameter in all four families.

\raggedbottom


\begin{thebibliography}{99}
\bibitem{ArtalBartoloWahl2022} E. Artal Bartolo and J. Wahl, \textit{Fundamental group of rational homology disk smoothings of surface singularities}, J. Singul. \textbf{24} (2022), 126--144.

\bibitem{Artin1966} M. Artin, \textit{On isolated rational singularities of surfaces}, Amer. J. Math. \textbf{88} (1966), 129--136.

\bibitem{Beauville} A. Beauville, \textit{Les familles stables de courbes elliptiques sur \(\mathbb{P}^1\) admettant quatre fibres singuli\`eres}, C. R. Acad. Sci. Paris S\'er. I Math. \textbf{294} (1982), 657--660.

\bibitem{BPSS2026}
M. Beke, H. Park, D. Shin, A. I. Stipsicz, \textit{Normal complex surface singularities with rational homology disk smoothings}, preprint 2026.

\bibitem{BH} G. Belousov and D. Hwang, \textit{Del Pezzo surfaces with four log terminal singularities}, Taiwanese J. Math. \textbf{29} (2025), no. 6, 1133--1163; arXiv:2411.01957v2.

\bibitem{BS} M. Bhupal and A. I. Stipsicz, \textit{Weighted homogeneous singularities and rational homology disk smoothings}, Amer. J. Math. \textbf{133} (2011), no. 5, 1259--1297.

\bibitem{Blache1995} R. Blache, \textit{Riemann--Roch theorem for normal surfaces and applications}, Abh. Math. Sem. Univ. Hamburg \textbf{65} (1995), 307--340.

\bibitem{Deligne1971} P. Deligne, \textit{Th\'{e}orie de Hodge: II}, Publ. Math. Inst. Hautes \'{E}tudes Sci. \textbf{40} (1971), 5--57.

\bibitem{Dolgachev} I. V. Dolgachev, \textit{Classical Algebraic Geometry: A Modern View. II}, author's revised version, 10 August 2025, Theorem 8.3.2 and Section 8.8.1. \href{https://sites.lsa.umich.edu/idolga/wp-content/uploads/sites/1467/2025/10/CAG-2.pdf}{Author's PDF}.

\bibitem{EisenbudHarris1987} D. Eisenbud and J. Harris, \textit{On varieties of minimal degree (a centennial account)}, Algebraic geometry, Bowdoin, 1985, Proc. Sympos. Pure Math. \textbf{46}, Part 1, Amer. Math. Soc., Providence, RI, 1987, 3--13.

\bibitem{Fowler} J. R. Fowler, \textit{Rational homology disk smoothing components of weighted homogeneous surface singularities}, Ph.D. thesis, University of North Carolina at Chapel Hill, 2013.

\bibitem{GreuelLossenShustin2007} G.-M. Greuel, C. Lossen, and E. Shustin, \textit{Introduction to Singularities and Deformations}, Springer Monographs in Mathematics, Springer, Berlin, 2007.

\bibitem{GreuelSteenbrink1983} G.-M. Greuel and J. H. M. Steenbrink, \textit{On the topology of smoothable singularities}, Proc. Sympos. Pure Math. \textbf{40}, Part 1 (1983), 535--545.

\bibitem{HwangKeum2011} D. Hwang and J. Keum, \textit{The maximum number of singular points on rational homology projective planes}, J. Algebraic Geom. \textbf{20} (2011), 495--523.

\bibitem{Laufer1973} H. B. Laufer, \textit{Taut two-dimensional singularities}, Math. Ann. \textbf{205} (1973), 131--164.

\bibitem{LeeNakayama2018} Y. Lee and N. Nakayama, \textit{Grothendieck duality and \(\mathbb{Q}\)-Gorenstein morphisms}, Publ. Res. Inst. Math. Sci. \textbf{54} (2018), no. 3, 517--648.

\bibitem{Neumann1983} W. D. Neumann, \textit{Abelian covers of quasihomogeneous surface singularities}, Singularities, Part 2 (Arcata, Calif., 1981), Proc. Sympos. Pure Math. \textbf{40}, Part 2, Amer. Math. Soc., Providence, RI, 1983, 233--243.

\bibitem{NeumannWahl2005} W. D. Neumann and J. Wahl, \textit{Complete intersection singularities of splice type as universal abelian covers}, Geom. Topol. \textbf{9} (2005), 699--755.

\bibitem{Papadakis2004} S. Papadakis, \textit{Kustin--Miller unprojection with complexes}, J. Algebraic Geom. \textbf{13} (2004), no. 2, 249--268.

\bibitem{PapadakisReid2004} S. Papadakis and M. Reid, \textit{Kustin--Miller unprojection without complexes}, J. Algebraic Geom. \textbf{13} (2004), no. 3, 563--577.

\bibitem{PS} H. Park and D. Shin, \textit{Positivity of Wahl defects and specialization of reflexive powers of the canonical sheaf}, \href{https://doi.org/10.48550/arXiv.2610.21092}{arXiv.2610.21092}.

\bibitem{Pinkham} H. Pinkham, \textit{Deformations of normal surface singularities with \(\mathbb{C}^*\)-action}, Math. Ann. \textbf{232} (1978), 65--84.

\bibitem{Pinkham1977} H. Pinkham, \textit{Normal surface singularities with \(\mathbb{C}^*\) action}, Math. Ann. \textbf{227} (1977), 183--193.

\bibitem{RademacherGrosswald1972} H. Rademacher and E. Grosswald, \textit{Dedekind Sums}, Carus Mathematical Monographs \textbf{16}, Mathematical Association of America, Washington, DC, 1972.

\bibitem{SSW} A. I. Stipsicz, Z. Szab\'o, and J. Wahl, \textit{Rational blowdowns and smoothings of surface singularities}, J. Topol. \textbf{1} (2008), no. 2, 477--517.

\bibitem{Wahl1981} J. Wahl, \textit{Smoothings of normal surface singularities}, Topology \textbf{20} (1981), no. 3, 219--246.

\bibitem{Wahl1988} J. Wahl, \textit{Deformations of quasi-homogeneous surface singularities}, Math. Ann. \textbf{280} (1988), 105--128.

\bibitem{Wahl2011} J. Wahl, \textit{On rational homology disk smoothings of valency 4 surface singularities}, Geom. Topol. \textbf{15} (2011), 1125--1156.

\bibitem{Wahl2013} J. Wahl, \textit{Log-terminal smoothings of graded normal surface singularities}, Michigan Math. J. \textbf{62} (2013), 475--489.
\bibitem{Wahl2015} J. Wahl, \textit{Milnor and Tjurina numbers for smoothings of surface singularities}, Algebraic Geometry \textbf{2} (2015), no. 3, 315--331.
\end{thebibliography}
\end{document}